\newtheorem{theorem}{Theorem}[section]
\newtheorem{lemma}[theorem]{Lemma}
\newtheorem{proposition}[theorem]{Proposition}
\newtheorem{corollary}[theorem]{Corollary}
\newtheorem{conjecture}[theorem]{Conjecture}
\theoremstyle{definition}
\newtheorem{definition}[theorem]{Definition}
\theoremstyle{remark}
\newtheorem{remark}[theorem]{Remark}
\newlist{enuma}{enumerate}{1}
\setlist[enuma,1]{label=\textnormal{(\alph*)}, ref=\alph*}
\newcommand{\wt}{\widetilde}
\newcommand{\CC}{\mathbb{C}}
\newcommand{\NN}{\mathbb{N}}
\newcommand{\RR}{\mathbb{R}}
\newcommand{\fgg}{\mathfrak{g}}
\newcommand{\calB}{\mathcal{B}}
\newcommand{\calC}{\mathcal{C}}
\newcommand{\calF}{\mathcal{F}}
\newcommand{\calI}{\mathcal{I}}
\newcommand{\calK}{\mathcal{K}}
\newcommand{\calM}{\mathcal{M}}
\newcommand{\calT}{\mathcal{T}}
\newcommand{\calU}{\mathcal{U}}
\newcommand{\calV}{\mathcal{V}}
\DeclareMathOperator{\interior}{int}
\DeclareMathOperator{\im}{im}
\DeclareMathOperator{\vol}{Vol}
\DeclareMathOperator{\supp}{supp}
\DeclareMathOperator{\id}{id}
\newcommand{\Dens}{\operatorname{Dens}}
\DeclareMathOperator{\glob}{glob}
\DeclareMathOperator{\grad}{grad}
\DeclareMathOperator{\nc}{nc}
\renewcommand{\div}{\operatorname{div}}
\renewcommand{\S}{\operatorname{S}}
\newcommand{\C}{\operatorname{C}}
\renewcommand{\calC}{\mathscr{C}}
\newcommand{\calG}{\mathscr{G}}
\renewcommand{\calF}{\mathscr{F}}
\newcommand{\calX}{\mathscr{X}}
\newcommand{\calY}{\mathscr{Y}}
\renewcommand{\subset}{\subseteq}
\renewcommand{\b}{\overline}
\newcommand{\Curv}{\operatorname{Curv}}
\newcommand{\Val}{\operatorname{Val}}
\newcommand{\Z}{\operatorname{Z}}
\newcommand{\so}{\mathfrak{so}}
\newcommand{\gl}{\mathfrak{gl}}
\newcommand{\Sym}{\operatorname{Sym}}
\renewcommand{\SS}{\mathbb{S}}
\newcommand{\End}{\operatorname{End}}
\newcommand{\aff}{\operatorname{aff}}
\newcommand{\dir}{\operatorname{dir}}
\newcommand{\spn}{\operatorname{span}}
\renewcommand{\epsilon}{\varepsilon}
\title{Translation invariant curvature measures\\ of convex bodies}
\author{Jakob Schuhmacher}
\author{Thomas Wannerer}
\address{Friedrich-Schiller-Universit\"at Jena, Fakult\"at f\"ur Mathematik und Informatik, Institut f\"ur Mathematik, Inselplatz 5, 07743 Jena, Germany}
\email{jakob@familie-schuhmacher.de}
\email{thomas.wannerer@uni-jena.de}
\thanks{JS and TW were supported by 
	 the Deutsche Forschungsgemeinschaft (DFG, German Research Foundation)  WA 3510/3-1}
\begin{document}

	\maketitle

\begin{abstract}
In a series of papers, Weil initiated the investigation of  translation invariant curvature measures of convex bodies, which include as prime examples Federer's curvature measures.   In this paper, we continue this line of research by introducing new tools to study curvature measures. Our main results suggest that the space of curvature measures, which is graded by degree and parity,  is highly structured: We conjecture that each graded component has length at most $2$ as a representation of the general linear group, and we prove this in degrees $0$ and $n-2$. 
Beyond this conjectural picture, our methods yield a characterization of Federer's curvature measures under weaker assumptions.
\end{abstract}

\section{Introduction}

\subsection{Background and principal results}

To study the curvature of singular spaces, two different but related approaches have been developed. One is to find  synthetic lower (or upper) bounds on curvature. The other is to view curvature as a generalized function. In his 1959 paper  
\cite{Federer:CurvatureMeasures}, Federer showed that one can make sense of the elementary symmetric functions of the principal curvatures  of smooth hypersurfaces as  measures for sets of positive reach. 

Federer's work has since been extended in several  directions. One of them concerns the construction of curvature measures for other classes of singular sets, see \cite{Fu:Kinematic,FPR:Kinematic,Zahle:Current,Fu:subanalytic,PokornyRataj:DC,RatajZahle:singular}. In another direction, curvature measures in ambient spaces  different from $\RR^n$ have been investigated \cite{FuW:Riemannian,BernigFu:Hig,BFS,BFS:PseudoRiemannian,BFS:pseudo-crofton}.
Federer's curvature  measures are also  fundamental to convex geometry \cite{Schneider:BM}.

Already Federer  \cite[Remark 5.17]{Federer:CurvatureMeasures} asked for an axiomatic  characterization of his curvature measures.   A first such result was obtained by Schneider \cite{Schneider:Curv}. Considering Federer's curvature measures as functions on the space of  convex bodies (nonempty compact convex subsets of $\RR^n$), his result is similar in spirit to Hadwiger's characterization of the intrinsic volumes. 

This perspective on curvature measures was further developed  by Kiderlen and Weil   in a series of papers \cite{KiderlenWeil:Measure,Weil:IGTF1,Weil:IGTF2}. Motivated by applications to stochastic geometry
\cite[Chapter 11]{SW:stochastic}, Weil calls  a function $\phi\colon \calK(\RR^n)\to \CC$ on the space of convex bodies in $\RR^n$ \emph{local} if it admits a \emph{kernel}, that is, if there exists a map $\Phi\colon \calK(\RR^n)\to M(\RR^n)$ to the space of complex Borel measures on $\RR^n$ satisfying $\Phi(K,\RR^n)= \phi(K)$ and the following properties.
\begin{enuma}
\item \emph{Locality}. Let $K,L$ be convex bodies in $\RR^n$. If there exists an open set $U$ such that $K\cap U=L\cap U$, then 
$$ \Phi(K,\beta)= \Phi(L,\beta)$$
for every Borel set $\beta\subset U$. 
\item \emph{Translation invariance}. $\Phi(K+x,\beta+x)= \Phi(K,\beta)$ for every convex body $K$,  Borel set $\beta$, and $x\in \RR^n$.
\item \emph{Continuity}. If a sequence $(K_i)$ of convex bodies converges to $K$, then $\Phi(K_i)\stackrel{w}{\longrightarrow}\Phi(K)$ in the sense of  weak convergence of measures.
\end{enuma}
Moreover, Weil imposed the condition that for every Borel set $\beta$, the function $K\mapsto \Phi(K,\beta)$ is Borel measurable. We will see below that this probability kernel property is in fact automatically implied by property (c).

The intrinsic volumes and Federer's curvature measures clearly fit into Weil's framework, and the  latter are the kernels of the former. 
Weil refrained from calling $\Phi$ a curvature measure and instead introduced the somewhat unspecific terminology of local functionals and their  kernels. However, in defense of Weil, let us emphasize that, at this level of generality, it is far from clear whether a map $\Phi$ satisfying properties (a)--(c) can, in any reasonable sense,  be related to the actual curvature of a convex body. One of the main insights of the present  work is that this is indeed the case. 
We conjecture, and prove this in several cases, that any such  $\Phi$ is obtainable from the second fundamental form of smooth convex bodies. We therefore propose the following definition.

\begin{definition}
	A map $\Phi\colon \calK(\RR^n)\to M(\RR^n)$ is called a \emph{translation invariant curvature measure} if it satisfies  properties (a) through (c). We denote by $\Curv(\RR^n)$ the vector space of translation invariant curvature measures.
\end{definition}

There is a natural subspace $\Curv^{sm}(\RR^n)\subset \Curv(\RR^n)$ of \emph{smooth curvature measures}, which are determined by the second fundamental form of smooth convex bodies. 
Let $S\RR^n= \RR^n\times S^{n-1}$ denote the sphere bundle of $\RR^n$, let $\pi\colon S\RR^n\to \RR^n$ denote the projection, and let $\Omega^*(S\RR^n)^{tr}$ denote the space of smooth differential forms invariant under all translations $(x,u)\mapsto (x+t,u)$. Following Bernig and Fu \cite{BernigFu:Hig},   a translation invariant curvature measure $\Phi$ is called \emph{smooth} if there exist a number $c\in \CC$ and a smooth differential form  $\omega\in \Omega^{n-1}(S\RR^n)^{tr}$  such that, for every convex body $K$ and every Borel set $\beta\subset \RR^n$,
$$\Phi(K,\beta)=  c \vol(K\cap \beta ) + \int_{\nc(K)} \mathbf{1}_{\pi^{-1}(\beta)}\, \omega.$$
Here  the normal cycle of $K$ is given by 
$$ 
\nc(K)=  \{ (x,u)\in S\RR^n\colon u \text{ is a normal of }K\text{ at } x\}.$$

It is not difficult to see that if $\Phi$ is  smooth and $c=0$, then there exists a smooth  family of polynomials $p_u=p^\Phi_{u}$ on the space of symmetric bilinear forms on $u^\perp$ such that, whenever $\partial K$ is $C^2$ and strictly positively curved,
$$ \Phi(K,\beta) = \int_{\beta \cap \partial K} p_{n_K(x)}(h_x^{\partial K}) \, dx.$$
Here $n_K(x)$ denotes  the outer unit normal at $x$  and $h_x^{\partial K}$   the second fundamental form of $\partial K$ at $x$. Thus the measure $\Phi(K, \,\cdot \,)$ is determined by the curvature of $\partial K$.

\medskip

As already observed by Weil, the vector space $\Curv(\RR^n)$ is graded by the degree of a curvature measure,
$$ \Curv(\RR^n)= \bigoplus_{k=0}^n \Curv_k(\RR^n).$$
 The grading can be further   refined by decomposing $\Curv_k(\RR^n) = \Curv^+_k (\RR^n) \oplus \Curv^-_k (\RR^n)$ into even and odd curvature measures. We refer the reader to Section~\ref{sec:weil} for precise definitions of these notions.
 
 The curvature measures of degrees $n-1$ and $n$ can be  described explicitly. As shown by Weil \cite{Weil:IGTF2}, the vector space $\Curv_n(\RR^n)$ is spanned by the localization of the Lebesgue measure 
 	$$ \C_n(K,\beta)= \vol(K\cap \beta).$$
 An explicit  description of $\Curv_{n-1}(\RR^n)$ is also possible, but slightly less trivial, see Theorem~\ref{thm:deg-n-1}. We will usually exclude the degrees  $n-1$ and  $n$ in the results below,  since statements  in these degrees  are significantly easier to verify.
 
 \medskip

Every  choice of a locally convex topology on  $M(\RR^n)$ induces  a topology  on $\Curv(\RR^n)$. 
The topologies of compact and bounded convergence on $M(\RR^n)$ lead to complete locally convex Hausdorff topologies on $\Curv(\RR^n)$. However, only the topology induced from  compact convergence is natural in the sense that it makes the action of the linear group $GL(n,\RR)$ on $\Curv(\RR^n)$ continuous. 
Henceforth, all topological statements will refer to this topology. 

\medskip

We now formulate  two conjectures concerning the structure of $\Curv_k(\RR^n)$. In this paper, we  confirm them in degrees $0$ and $n-2$. 
The picture suggested by these conjectures is that  $\Curv(\RR^n)$ is highly structured and well behaved. 
The first conjecture justifies the terminology of curvature measure for a map  $\Phi$ satisfying the properties (a)--(c) above. It gives a  precise meaning to the statement that the curvature of $K$ determines the measure $\Phi(K)$.  
Using Theorem~\ref{thm:deg-n-1}   for degree $n-1$, it is straightforward to see that the following statement  holds  for $k=n-1$ and $k=n$.

\begin{conjecture}\label{conj:sm-dense}
	Let $k\in \{0,\ldots, n-2\}$. The subspace of smooth curvature measures $\Curv_k^{sm}$ is dense in $\Curv_k(\RR^n)$. 
\end{conjecture}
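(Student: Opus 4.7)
The strategy I would follow is a convolution/smoothing argument based on the natural continuous action of $GL(n,\RR)$ on $\Curv_k(\RR^n)$ in the compact convergence topology. Given $\Phi \in \Curv_k(\RR^n)$ and $f \in C_c^\infty(GL(n,\RR))$, form
$$ \Phi_f(K,\beta) = \int_{GL(n,\RR)} f(g)\,\Phi(g^{-1}K, g^{-1}\beta)\, dg. $$
Dominated convergence combined with the weak continuity property (c) shows that $\Phi_f$ still satisfies (a)--(c), hence $\Phi_f \in \Curv_k(\RR^n)$. Choosing an approximate identity $(f_\epsilon)$ in $C_c^\infty(GL(n,\RR))$ concentrated near the identity, continuity of the action yields $\Phi_{f_\epsilon} \to \Phi$ in the compact convergence topology. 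The conjecture therefore reduces to the claim that each smoothed measure $\Phi_f$ already lies in $\Curv_k^{sm}(\RR^n)$.

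This reduction is modeled on Alesker's treatment of translation invariant valuations, where $GL(n)$-smoothing produces Garding vectors that are subsequently identified with the geometrically smooth ones, yielding density of smooth valuations in $\Val$. The first preparatory step is to firmly establish the continuity of the $GL(n)$-action (only asserted in the excerpt above). This amounts to checking that if $g_i\to g$ in $GL(n,\RR)$ and $K_i\to K$ in Hausdorff distance, then $g_iK_i\to gK$ compactly, and combining this with (c) and a Banach--Steinhaus-type uniform boundedness in order to pass to the limit inside the convolution integral.

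The main obstacle is the Garding-to-geometric smoothness implication: one must show that the smoothed measure $\Phi_f$ is represented by a smooth translation invariant form $\omega \in \Omega^{n-1}(S\RR^n)^{tr}$ via the normal cycle formula from the introduction. In the valuation case, Alesker solves the analogous step by realizing smooth valuations as differential forms modulo an explicit kernel, so that $GL(n)$-smoothing corresponds to an honest convolution of forms. For curvature measures, one would need a parallel structural result: every $\Phi \in \Curv_k(\RR^n)$ should admit, in a suitable distributional sense, a representation by a translation invariant $(n-1)$-form on $S\RR^n$, so that convolving by $f$ yields a genuine smooth form. For the degrees $0$ and $n-2$ confirmed in this paper, the relevant space is small enough to be classified by hand and density can be checked directly, bypassing the obstacle; for intermediate degrees, establishing this distributional representation appears to be the core technical challenge, and it is here that I would expect the proof of the full conjecture to concentrate.
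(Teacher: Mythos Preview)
Your reduction is exactly the one the paper makes: smooth vectors $\Curv_k^\infty$ are automatically dense, so the conjecture is equivalent to the inclusion $\Curv_k^\infty\subset\Curv_k^{sm}$ (the reverse inclusion being easy). You have also correctly located the genuine obstacle. Note, however, that this statement is a \emph{conjecture}; the paper does not prove it in general, only in degrees $0$ and $n-2$.

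Where you diverge from the paper is in how to attack the hard inclusion. You propose to seek a distributional representation of an arbitrary $\Phi$ by a translation invariant $(n-1)$-form on $S\RR^n$, so that $GL(n)$-convolution smooths the form. The paper takes a different and more indirect route: it introduces the Bernig embedding $B\colon \Z_k\to\Val_{k+1}(\RR^n,\CC^n)$ of the kernel of globalization into vector-valued valuations, and shows (Proposition~\ref{prop:Z-irred}) that if the $GL(n,\RR)$-representation $\Z_k^\pm$ is \emph{irreducible}, then $\Curv_k^{sm}=\Curv_k^\infty$. The mechanism is the Casselman--Wallach theorem, which forces the image of the equivariant map $B\circ I\circ d$ from smooth forms into $\Val_{k+1}^\infty$ to be closed; irreducibility then pins down this image as all of $\overline{B(\Z_k^{\pm,\infty})}$, and injectivity of $B$ lets one pull back. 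Thus the paper trades your analytic problem (distributional forms) for a representation-theoretic one (Conjecture~\ref{conj:irr}), which it then settles in degrees $0$ and $n-2$ by explicit analysis of $\Val_{k+1}(\RR^n,\CC^n)$.

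Your approach, if it could be made to work, would prove the conjecture unconditionally in all degrees; the paper's approach is sharper in the cases it handles but remains conditional in general. The paper gives no indication that a direct distributional-form argument is known to succeed, and the fact that the topology on $\Curv$ is not metrizable (so Casselman--Wallach is not directly applicable there) is flagged as a genuine difficulty.
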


We will show that Conjecture~\ref{conj:sm-dense} is implied by a remarkable property of the action of the general linear group on $\Curv(\RR^n)$ defined by
$$ (g\cdot \Phi)(K,\beta) = \Phi(g^{-1} K , g^{-1} \beta) $$ 
for all $g\in GL(n,\RR)$, convex bodies $K$, and Borel sets $\beta$.  
Curvature measures with total measure $0$ will play a central role  in our investigation. Let $\Z_k^\pm(\RR^n)\subset \Curv_k^\pm(\RR^n)$ denote the subspace of curvature measures satisfying  
$$ \Phi(K,\RR^n)=0$$ 
for all convex bodies $K$.

\begin{conjecture} \label{conj:irr} Let $k\in \{0,\ldots, n-2\}$. 
The representation of $GL(n,\RR)$ on the 
 invariant closed subspace $\Z_k^\pm(\RR^n)$ is irreducible.
\end{conjecture}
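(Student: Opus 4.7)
The plan is to extend the approach behind the cases $k=0$ and $k=n-2$ to arbitrary $k$ by systematically passing to smooth elements, identifying the resulting space as a Casselman--Wallach representation of $GL(n,\RR)$, and then applying the composition-series machinery for degenerate principal series. \emph{Step 1 (smoothing).} Given a closed $GL(n,\RR)$-invariant subspace $V\subseteq\Z_k^\pm(\RR^n)$ and $\Phi\in V$, I would form $\Phi_\chi=\int_{GL(n,\RR)}\chi(g)(g\cdot\Phi)\,dg$ for a compactly supported $C^\infty$ bump $\chi$ near the identity. By the same mechanism underlying the smoothness theory of Alesker and Bernig--Fu, differentiation along the $GL(n,\RR)$-action renders $\Phi_\chi$ smooth in the sense of the excerpt, so $\Phi_\chi\in V\cap\Curv_k^{sm}(\RR^n)$; letting $\chi$ concentrate on the identity and using continuity in the compact-convergence topology yields density of $V\cap\Z_k^{\pm,sm}$ in $V$. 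This reduces Conjecture~\ref{conj:irr} to irreducibility of the dense subspace $\Z_k^{\pm,sm}$.

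\emph{Step 2 (identification as a representation).} By the Bernig--Fu description, $\Curv_k^{\pm,sm}(\RR^n)$ arises as a quotient of $\Lambda^k(\RR^n)^{*}\otimes\Omega^{n-1-k}(S^{n-1})^{\pm}$ modulo those forms whose restriction to the normal cycle vanishes, and $\Z_k^{\pm,sm}$ is the further subspace on which the globalization map $\Phi\mapsto\Phi(\,\cdot\,,\RR^n)$ vanishes. I would show that the resulting Fr\'echet representation is of Casselman--Wallach type, and that its underlying Harish-Chandra module is a subquotient of the degenerate principal series $I_k^{\pm}$ induced from the maximal parabolic $P_k\subset GL(n,\RR)$ stabilizing a $k$-plane, with inducing character determined by $k$ and the parity.

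\emph{Step 3 (irreducibility in the principal series).} With this identification in hand, the conjecture follows from the known composition-series results for degenerate principal series of $GL(n,\RR)$ (Howe--Tan, Sahi, Barbasch--Sahi), once one verifies that the double constraint of Step~2 picks out a single Langlands subquotient. Equivalently, one can analyze the $O(n)$-isotypic decomposition of $\Z_k^{\pm,sm}$ and check that every nonzero invariant subrepresentation contains the minimal $K$-type, which forces irreducibility by Casselman's subrepresentation theorem. This is exactly the kind of computation that is manageable by hand in the extremal degrees $0$ and $n-2$.

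\emph{Main obstacle.} The critical difficulty is Step~2: pinning down precisely which subquotient of $I_k^{\pm}$ is $\Z_k^{\pm,sm}$. This amounts to simultaneously controlling the cokernel of the exterior derivative on translation-invariant bidegree-$(k,n-1-k)$ forms on $S\RR^n$ (the normal-cycle relations) and the kernel of the globalization map, and then organizing the outcome under $\SO(n)\supset\SO(n-1)$ branching. The cancellations among these two sets of relations are delicate and will likely require additional algebraic input, such as an Alesker-type Fourier involution matched with a canonical second-fundamental-form pairing, in order to be traced in closed form across all intermediate degrees.
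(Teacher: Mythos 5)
This statement is Conjecture~\ref{conj:irr}, and the paper itself only establishes it in degrees $0$ and $n-2$ (Theorem~\ref{thm:conj}); for $1\le k\le n-3$ it remains open. Your proposal, which aims at arbitrary $k$, has a genuine gap already at Step~1, and Steps~2--3 are a research program rather than a proof.

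\textbf{The gap in Step~1.} Convolving along $GL(n,\RR)$ produces a vector $\Phi_\chi$ that is $GL(n,\RR)$-smooth, i.e.\ $\Phi_\chi\in\Curv_k^\infty$. But you then assert $\Phi_\chi\in\Curv_k^{sm}$, the space of curvature measures represented by smooth differential forms. These are \emph{a priori} different spaces; the paper proves $\Curv^{sm}_k\subset\Curv^\infty_k$ unconditionally, but the reverse inclusion $\Curv_k^\infty\subset\Curv_k^{sm}$ is precisely the content of Proposition~\ref{prop:Z-irred}, whose \emph{hypothesis} is the irreducibility of $\Z_k^\pm$. Using the inclusion at the outset is therefore circular. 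What the smoothing argument honestly gives you is a reduction to the irreducibility of the $(\fgg,K)$-module of $K$-finite smooth vectors in $\Z_k^\pm$ --- which is correct, but still leaves all the work.

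\textbf{Steps 2--3.} The identification of $\Curv_k^{\pm,sm}$ as a subquotient of a degenerate principal series induced from the maximal parabolic $P_k$ stabilizing a $k$-plane is not justified and appears to be the wrong parabolic: the space $\Omega^{k,n-1-k}(S^*\RR^n)^{tr}$ consists of sections of a homogeneous bundle over the cosphere (equivalently over $\PP(\RR^n)$), which is $G/P_1$ for the stabilizer of a \emph{line}, not of a $k$-plane. More fundamentally, you would have to describe the image of the normal-cycle integration map $I$ (i.e.\ quotient by the Rumin ideal) and simultaneously the kernel of $\glob$ inside that image; you correctly flag this as the ``main obstacle,'' but the proposal offers no mechanism to resolve it, and the branching/minimal-$K$-type analysis you invoke at the end is exactly the computation that has not been done for intermediate $k$.

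\textbf{What the paper actually does} (for $k=0,n-2$) is quite different. It injects $\Z_k^\pm$ by the Bernig embedding $B$ into $\Val_{k+1}^\mp(\RR^n,\CC^n)$, shows the image is not dense (Corollary~\ref{cor:imB}), and then proves that $\Val_1^\mp(\RR^n,\CC^n)$ and $\Val_{n-1}^\mp(\RR^n,\CC^n)$ have length $2$ (Theorem~\ref{thm:Val2}); a proper invariant closed subspace of a length-$2$ module is irreducible, and one transfers this back to $\Z_k^\pm$ via the $(\fgg,K)$-module correspondence. The length-$2$ statement is proved using the McMullen-type descriptions of valuations of degree $1$ and $n-1$ by functions on $S^{n-1}$ (Theorems~\ref{thm:McMullen-n-1} and~\ref{thm:1-hom}), reducing to the explicit analysis of spaces of homogeneous functions in Section~\ref{sec:rep}. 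The reason $k=0$ and $k=n-2$ are tractable is exactly that degrees $1$ and $n-1$ are the only degrees for which such function-space descriptions of $\Val$ are available. Your proposal bypasses the Bernig embedding entirely, which is why it does not explain --- and cannot exploit --- why these two degrees are special.
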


Using Theorem~\ref{thm:deg-n-1}   for degree $n-1$,  one immediately verifies the validity of the conjecture  in  degrees $n-1$ and $n$. The following theorem is one of the principal results of the present paper.

\begin{theorem}\label{thm:conj} Conjecture~\ref{conj:irr} holds in degrees $0$ and $n-2$. 
\end{theorem}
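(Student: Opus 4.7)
The plan is to prove the irreducibility of $\Z_k^\pm(\RR^n)$ for $k\in\{0,n-2\}$ by reducing to an irreducibility statement at the smooth level, combined with a regularization principle. I aim to establish two claims: (i) the abstract $GL(n,\RR)$-representation $\Z_k^{sm,\pm}(\RR^n)$ is irreducible, and (ii) every representation-theoretically smooth vector in $\Curv_k(\RR^n)$---i.e., every $\Phi$ whose orbit map $g\mapsto g\cdot\Phi$ is smooth into the compact-convergence topology---already lies in $\Curv_k^{sm}(\RR^n)$. Granted (i) and (ii), the theorem follows: by Dixmier--Malliavin, smooth vectors are dense in any closed $GL$-invariant subspace $W\subseteq\Z_k^\pm$; claim (ii) places these smooth vectors inside $\Z_k^{sm,\pm}$; and then (i) forces $W\cap\Z_k^{sm,\pm}$ to be $0$ or all of $\Z_k^{sm,\pm}$, so that $W=0$ or $W=\Z_k^\pm$ after taking closures. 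Incidentally, (ii) also yields Conjecture~\ref{conj:sm-dense} in these degrees.

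\textbf{Degree $0$.} Translation invariance and the bidegree constraint reduce smooth degree-$0$ curvature measures to pullbacks $\pi_2^*\alpha$ with $\alpha=f\,\sigma_{n-1}$, $f\in C^\infty(S^{n-1})$, so that for $C^2$ strictly convex $K$
$$\Phi_\alpha(K,\beta)=\int_{\partial K\cap\beta} f(n_K(x))\,\kappa_K(x)\,dA.$$
A direct computation shows that the $GL(n,\RR)$-action becomes $(g\cdot f)(u)=J(g,u)\,f(g^Tu/|g^Tu|)$ for an explicit Jacobian cocycle $J$, realizing a principal series representation induced from a character of the maximal parabolic subgroup stabilizing a line. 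The total-measure functional $f\mapsto\int_{S^{n-1}}f\,\sigma_{n-1}$ is $GL$-equivariant, surjecting onto $\Val_0=\CC\chi$ in the even case and vanishing in the odd case; its kernel in each parity is an irreducible subrepresentation of this principal series, establishing claim (i) for $k=0$.

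\textbf{Degree $n-2$.} In degree $n-2$, smooth curvature measures arise from translation invariant $(n-2,1)$-forms on $S\RR^n$; on a polytope $P$ they take the form
$$\Phi(P,\beta)=\sum_{\dim F=n-2}\vol_{n-2}(F\cap\beta)\,\psi(N(F,P)),$$
where $\psi$ is a function of the two-dimensional normal cone $N(F,P)$---equivalently, of an ordered pair of rays in $\RR^n$, with the appropriate parity under swap. The plan is to identify $\Curv_{n-2}^{sm,\pm}$ with smooth sections of a homogeneous line bundle on this flag variety of $GL(n,\RR)$, again recognizing a principal series. The total-measure map lands in $\Val_{n-2}^\pm$, which is explicitly known from the structure theory of translation invariant valuations, and the analogous principal series analysis identifies $\Z_{n-2}^{sm,\pm}$ as an irreducible subrepresentation.

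\textbf{Main obstacle.} The principal technical difficulty lies in claim (ii). Given $\Phi\in\Curv_k(\RR^n)$ and $\varphi\in C_c^\infty(GL(n,\RR))$, the averaged measure $\Phi\ast\varphi:=\int_{GL(n,\RR)}\varphi(g)\,(g\cdot\Phi)\,dg$ is a smooth vector by construction, but showing it is represented by a translation invariant differential form on the sphere bundle---rather than merely being smooth in the $g$-parameter---requires a Weil-style structural description of $\Curv_k^\pm$ by measure-valued data, extending Theorem~\ref{thm:deg-n-1}: in degree $0$ by signed measures on $S^{n-1}$ of the correct homogeneity type, and in degree $n-2$ by measures on the variety of pairs of rays. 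Once the parametrization is in place, one expects convolution with $\varphi$ to smooth such a measure-valued object to a smooth density, because the geometric smoothing should be dual to the usual Sobolev-type regularization of the parameter space; verifying this precisely is the heart of the argument.
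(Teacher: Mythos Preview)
Your overall logical scheme---reduce to the smooth level and then pass back via density of smooth vectors---is reasonable, but claim~(ii) is a genuine gap, and the paper in fact proves the implication in the \emph{opposite} direction: Proposition~\ref{prop:Z-irred} deduces $\Curv_k^{sm}=\Curv_k^\infty$ \emph{from} the irreducibility of $\Z_k^\pm$, not the other way around. Your proposed route to~(ii)---parametrize all of $\Curv_k^\pm$ by measure-valued data on a geometric parameter space and then argue that $GL$-convolution regularizes these measures to smooth densities---requires a structural description of general curvature measures that is simply not available. In degree $0$, such a description would amount to showing that the cone function $\varphi$ of Theorem~\ref{thm:polytopes} is integration against a signed measure on $S^{n-1}$, i.e., that every simple valuation on spherical polytopes arises from a measure; this is closely tied to the open characterization problem for spherical Lebesgue measure that the paper explicitly flags. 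In degree $n-2$ your ``sections of a line bundle on a flag variety'' picture is only a heuristic, and no analog of Theorem~\ref{thm:deg-n-1} is known there.

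The paper sidesteps all of this via the Bernig embedding $B\colon \Z_k^\pm\hookrightarrow \Val_{k+1}^\mp(\RR^n,\CC^n)$, which is the key idea you are missing. This map is injective, continuous, and $GL(n,\RR)$-equivariant, and its target is a Banach space representation whose $(\fgg,K)$-module is Harish-Chandra (Lemma~\ref{lemma:CW-thm}). One then shows (Theorem~\ref{thm:Val2}, proved in Section~\ref{sec:rep} by reducing to homogeneous $\CC^n$-valued functions and analyzing $K$-type transitions) that $\Val_{k+1}^\mp(\RR^n,\CC^n)$ has length exactly~$2$ for $k\in\{0,n-2\}$. Since $B(\Z_k^\pm)$ is not dense (Corollary~\ref{cor:imB}), its closure is a proper invariant subspace, hence irreducible; and because $B$ is injective on $K$-finite vectors, $\Z_k^\pm$ and this closure have isomorphic $(\fgg,K)$-modules, so $\Z_k^\pm$ is irreducible by Lemma~\ref{lemma:K-finite}. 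The point is that the embedding lets you work entirely inside a representation with a tractable smooth structure, without ever needing to know what a general (nonsmooth) curvature measure looks like.
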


The concept of length of a representation $E$ refers to the length $m$ of a composition  series 
\begin{equation}\label{eq:composition-E} E= E_0  \supsetneq E_1 \supsetneq \cdots \supsetneq E_m =\{ 0\},\end{equation}
where $E_i$ is a closed invariant subspace and $E_i/E_{i+1}$ is irreducible. Under certain conditions, which are satisfied for $\Curv(\RR^n)$, one can show that all composition series of $E$ have the same length. 

From Conjecture~\ref{conj:irr} one can also obtain information about the action of  $GL(n,\RR)$ on the entire space  $\Curv(\RR^n)$. Namely, in combination with  Alesker's irreducibility theorem for translation invariant valuations (see Section~\ref{sec:prelim}), we obtain the following result.

\begin{corollary}
	Assuming Conjecture~\ref{conj:irr}, the representation of $GL(n,\RR)$ on $\Curv^\pm_k(\RR^n)$ has length at most $2$. 
\end{corollary}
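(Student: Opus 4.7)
The plan is to use the two-step filtration
\[
\{0\} \;\subseteq\; \Z_k^\pm(\RR^n) \;\subseteq\; \Curv_k^\pm(\RR^n)
\]
as a composition series of length at most $2$. The subspace $\Z_k^\pm$ is closed and $GL(n,\RR)$-invariant, being the kernel of the ``total mass'' evaluation
\[
\phi \colon \Curv_k^\pm(\RR^n) \to \Val_k^\pm(\RR^n),\qquad (\phi \Phi)(K) = \Phi(K,\RR^n),
\]
which is equivariant and continuous: each $\Phi(K)$ is a finite Borel measure with compact support, so pairing weak convergence against a compactly supported cutoff equal to $1$ near $K$ recovers total mass pointwise. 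Conjecture~\ref{conj:irr} guarantees that the bottom piece $\Z_k^\pm$ is irreducible, so everything reduces to showing that the top quotient $\Curv_k^\pm/\Z_k^\pm$ is either zero or irreducible.

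For this I would invoke Alesker's irreducibility theorem, which asserts that $\Val_k^\pm$ is an irreducible topological $GL(n,\RR)$-representation. The map $\phi$ descends to a continuous equivariant injection $\bar\phi \colon \Curv_k^\pm/\Z_k^\pm \hookrightarrow \Val_k^\pm$. Given any closed invariant subspace $U \subseteq \Curv_k^\pm/\Z_k^\pm$, its image $\bar\phi(U)$ is a $GL$-invariant subspace of $\Val_k^\pm$, and by Alesker its closure is either $\{0\}$ or all of $\Val_k^\pm$. In the first case, injectivity of $\bar\phi$ forces $U = 0$; in the second, one must deduce that $U$ exhausts the entire quotient.

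The hard step is precisely this second implication: upgrading ``$\bar\phi(U)$ dense in $\Val_k^\pm$'' to ``$U = \Curv_k^\pm/\Z_k^\pm$''. My plan is first to show that $\phi$ itself is surjective, i.e., that every translation-invariant continuous valuation admits a translation-invariant curvature-measure kernel. This is immediate for smooth valuations via the normal-cycle construction of $\Curv_k^{sm}$, and should extend to all of $\Val_k^\pm$ by the Alesker--McMullen density of smooth valuations combined with a compactness/continuity argument in the topology of compact convergence. An open mapping theorem in this Fréchet setting then upgrades $\bar\phi$ to a topological isomorphism, so Alesker's irreducibility of $\Val_k^\pm$ transfers to the quotient, yielding the composition series of length at most $2$. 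The degrees $k \in \{n-1, n\}$ not covered by the conjecture follow unconditionally from the explicit descriptions: $\Curv_n(\RR^n) = \spn\{\C_n\}$ is one-dimensional, and Theorem~\ref{thm:deg-n-1} furnishes length at most $2$ directly.
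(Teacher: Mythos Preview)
Your overall strategy---the filtration $\{0\}\subseteq \Z_k^\pm\subseteq\Curv_k^\pm$, with the bottom irreducible by the conjecture and the top quotient controlled via $\glob$ and Alesker's theorem---is exactly what the paper intends. The gap is in how you resolve what you yourself call the ``hard step.''

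Your proposed resolution does not work. First, surjectivity of $\glob\colon\Curv_k^\pm\to\Val_k^\pm$ is not established anywhere in the paper (only surjectivity onto $\Val_k^{sm}$ is, via the normal cycle), and your sketch ``density of smooth valuations plus a compactness/continuity argument'' does not produce it: density of the image of a continuous map says nothing about surjectivity. Second, and more fatally, the open mapping theorem is unavailable here: the paper states explicitly (in the proof of Proposition~\ref{prop:Z-irred}) that the topology of compact convergence on $\Curv(\RR^n)$ is \emph{not metrizable}, so $\Curv_k^\pm$ is not Fr\'echet and you cannot invoke open mapping to upgrade $\bar\phi$ to a topological isomorphism.

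The paper bypasses the topology entirely by passing to $(\fgg,K)$-modules via Lemma~\ref{lemma:K-finite}. One first checks that every $K$-type occurs in $\Curv_k^\pm$ with finite multiplicity: the $\mu$-isotypic component maps by $\glob$ into the (finite-dimensional) $\mu$-isotypic of $\Val_k^\pm$, and its kernel is the $\mu$-isotypic of $\Z_k^\pm$, which the Bernig embedding injects into the (finite-dimensional, by Lemma~\ref{lemma:CW-thm}) $\mu$-isotypic of $\Val_{k+1}^\mp(\RR^n,\CC^n)$. Lemma~\ref{lemma:K-finite}(b) then reduces the question of length to the associated $(\fgg,K)$-module. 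At that purely algebraic level, $\glob$ embeds $(\Curv_k^\pm)^K/(\Z_k^\pm)^K$ as a $(\fgg,K)$-submodule of the irreducible module $(\Val_k^\pm)^K$, hence it is either zero or irreducible---no surjectivity, no open mapping required. This is the argument you should replace your last paragraph with.
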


Our basic proof strategy is to transform problems about curvature measures into problems about translation invariant continuous valuations on convex bodies. This will allow us to exploit the  theory of such valuations, which has undergone  dramatic development following the work of Alesker in the early 2000s, see \cite{Alesker:Irreducibility,Alesker:Product,Alesker:manifoldsI,Alesker:barcelona,Alesker:kent,BKW:HR}. 

Let $\Val(\RR^n)$ denote the space of translation invariant continuous valuations on $\RR^n$, that is, the continuous and translation invariant functions 
$\phi\colon \calK(\RR^n)\to \CC$ satisfying 
$$ \phi(K\cup L)= \phi(K) + \phi(L) - \phi(K\cap L)$$
whenever $K\cup L$ is convex. Similarly, denote by $\Val(\RR^n, \CC^n)$ the space of  continuous translation invariant valuations with values in $\CC^n$. These spaces of valuations are graded by degree and parity and equipped with a natural action of the general linear group. 

Theorem~\ref{thm:conj} will follow  from the following statement.

\begin{theorem}\label{thm:Val2} 
	The $GL(n,\RR)$ representations 
	$\Val_1^{\pm}(\RR^n,\CC^n)$ and  $\Val_{n-1}^{\pm}(\RR^n,\CC^n)$ have  length $2$.
\end{theorem}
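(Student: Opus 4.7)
The plan is to realize each of the four spaces as the tensor product of a degenerate principal series of $GL(n,\RR)$ with the standard representation $\CC^n$, and then to exhibit a natural length-$2$ composition series coming from the tautological sub-line-bundle of $\CC^n$ over $\PP^{n-1}$. By the Alesker--Bernig--Schuster Fourier transform, $\Val_k^{\infty,\pm}(\RR^n)$ is $GL(n,\RR)$-equivariantly isomorphic to $\Val_{n-k}^{\infty,\pm}(\RR^n)$ up to tensoring with a one-dimensional character. Since tensoring with $\CC^n$ or with a character preserves length, and since smooth valuations are dense in continuous ones with the composition factors unchanged, it suffices to prove length $2$ for $\Val_{n-1}^{\infty,\pm}(\RR^n,\CC^n)$.

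Via the surface area measure $S_{n-1}(K,\cdot)$, I identify $\Val_{n-1}^{\infty,+}(\RR^n)\cong C^\infty_{\mathrm{even}}(S^{n-1})$ and $\Val_{n-1}^{\infty,-}(\RR^n)\cong C^\infty_{\mathrm{odd}}(S^{n-1})/\RR^n$ as $GL(n,\RR)$-representations. Under these identifications each space is realized as a degenerate principal series $\operatorname{Ind}_P^G\chi_\pm$ from a character of the maximal parabolic $P=\operatorname{Stab}(\ell)\subset GL(n,\RR)$ stabilizing a line $\ell\subset\RR^n$. Tensoring with the standard representation gives $\Val_{n-1}^{\infty,\pm}(\RR^n,\CC^n)\cong \operatorname{Ind}_P^G(\chi_\pm\otimes \CC^n|_P)$, and applying the exact induction functor to the $P$-equivariant short exact sequence $0\to\ell\to\CC^n\to\CC^n/\ell\to 0$ produces the filtration
$$ 0\to \operatorname{Ind}_P^G(\chi_\pm\otimes\ell)\to \Val_{n-1}^{\infty,\pm}(\RR^n,\CC^n)\to \operatorname{Ind}_P^G(\chi_\pm\otimes \CC^n/\ell)\to 0. $$
Geometrically, the subrepresentation consists of the ``radial'' vector-valued valuations $\Phi(K)=\int f(u)\,u\,dS_{n-1}(K,u)$, which form a $GL$-invariant subspace essentially because the map $u\mapsto u$ from $S^{n-1}$ into $\RR^n$ is intrinsic.

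Both outer terms of the filtration are themselves degenerate principal series of $GL(n,\RR)$. By matching the parameters of these induced representations (powers of $|\det|$ on the $GL(1)$ and $GL(n-1)$ Levi factors of $P$) against the standard parameterization of Alesker modules, each subquotient is identified, up to a character twist, with some $\Val_k^{\pm}(\RR^n)$, and is therefore irreducible by Alesker's irreducibility theorem. A comparison of infinitesimal characters (equivalently, of the $SO(n)$-types $\calH^m$ appearing on each side) shows that the two subquotients are non-isomorphic, so the composition series has length exactly $2$. The main technical obstacle is this last identification: the precise matching of twists needed to invoke Alesker's theorem requires careful bookkeeping of the modular character of $P$ and of the Jacobian factors arising in the $GL(n,\RR)$-action on surface area measures, which must be executed separately for the even and the odd part. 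Once the identification is in place, the same argument handles $k=1$ directly, providing a consistency check against the reduction via the Fourier transform.
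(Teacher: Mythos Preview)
Your filtration idea is the right starting point and matches the paper's strategy, but there are two genuine gaps.

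First, the sub and quotient are swapped. In the $\Val_{n-1}$ model the sphere $S^{n-1}$ parametrizes outer normal directions, so $GL(n,\RR)$ acts on it via the contragredient; the relevant parabolic is the stabilizer of a line in $(\RR^n)^*$, equivalently of a \emph{hyperplane} in $\RR^n$. For this parabolic the standard module $\CC^n$ has the hyperplane as subobject and the line as quotient, not the other way around. Concretely, your ``radial'' subspace $\{\Phi(K)=\int f(u)\,u\,dS_{n-1}(K,u)\}$ is \emph{not} $GL(n,\RR)$-invariant: applying $g$ produces an integrand proportional to $gg^{\top}n_K(x)$, which is not parallel to $n_K(x)$. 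What \emph{is} invariant is the tangential subspace $\{F:\langle F(u),u\rangle=0\}$, since the pairing of $F(u)\in\CC^n$ with the covector $u$ is intrinsic. This is exactly the paper's subrepresentation $\calY^\pm$, with the radial part appearing as the quotient $\calG^\pm$.

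Second, and more seriously, your claim that ``both outer terms are degenerate principal series'' identifiable with some $\Val_k^\pm(\RR^n)$ fails for the tangential piece. That piece is induced from $\chi_\pm$ tensored with the $(n-1)$-dimensional standard representation of the Levi factor $GL(n-1)$, not from a character; it is therefore not a degenerate principal series and cannot be matched to any $\Val_k^\pm(\RR^n)$. A quick sanity check: its $K$-types include the two-row $O(n)$-modules $\rho^{(k,1)}$ (see the paper's Corollary on the $K$-types of $\calX_\alpha^\pm$), whereas every $\Val_k^\pm(\RR^n)$ has only one-row $K$-types. So Alesker's irreducibility theorem gives you nothing here. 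The radial quotient \emph{is} a degenerate principal series, and its irreducibility follows from the Howe--Lee analysis of $\calF_\alpha^\pm$; but the irreducibility of the tangential subrepresentation is the real content of the theorem and requires a separate argument. The paper carries this out by an explicit transition analysis between $K$-types (computing the action of $\mathfrak p_\CC$ on highest weight vectors), which is the bulk of Section~\ref{sec:rep}. Your proposal would need to supply an independent proof of this irreducibility.
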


Valuations of degrees $1$ and $n-1$  admit a description in terms of homogeneous functions on $\RR^n$. This will allow us to reduce the proof of Theorem~\ref{thm:Val2} to the purely representation-theoretic problem  of determining the $GL(n,\RR)$ module structure of the spaces of homogeneous functions on $\RR^n$ with values in $\CC^n$.

\subsection{Applications} 

We present two applications of the theory of translation invariant curvature measure developed in this paper. As a first application, we streamline the axiomatic framework underlying Weil’s theory of local functionals by showing that every translation invariant curvature measures is  automatically a probability kernel. The second application concerns the characterization of Federer's curvature measures, where our results allow us to remove the hypothesis of nonnegativity from Schneider's theorem \cite{Schneider:Curv}.

\subsubsection{Weil's theory of local functionals and their application to stochastic geometry}
As already mentioned above,   Weil's terminology differs slightly from ours. The central objects of \cite{Weil:IGTF2} are local functionals and their kernels $\Phi$. The latter is a continuous translation invariant curvature measure $\Phi$ in our sense, with the additional property that for every Borel set $\beta$, the function
\begin{equation}\label{eq:transition} K \mapsto \Phi(K,\beta)\end{equation}
is measurable. This technical property plays a role in the context of the applications to Poisson particle processes and Boolean models, which were an important source of motivation for Weil's work. We show that  \eqref{eq:transition} is in fact automatically satisfied for curvature measures. 

\begin{proposition}\label{prop:baire}
	Let $\Phi\in \Curv(\RR^n)$. Then, for every bounded  Borel function  $f$, the function
	$$ K\mapsto \Phi(K,f)$$
	is Borel measurable.
\end{proposition}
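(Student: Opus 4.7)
The plan is a functional monotone class argument, bootstrapping from the continuity hypothesis to all bounded Borel functions. Property (c) immediately yields, via weak convergence of measures, that for every $f \in C_b(\RR^n)$ the map $K \mapsto \Phi(K,f)$ is continuous on $\calK(\RR^n)$, hence Borel measurable. Thus the task reduces to extending this measurability from bounded continuous to bounded Borel $f$.

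To this end, I would introduce
$$ \calH := \{ f\colon \RR^n \to \CC \text{ bounded Borel} : K \mapsto \Phi(K,f) \text{ is Borel measurable}\} $$
and verify three properties: (i) $\calH$ is a complex vector space containing $\mathbf{1}$; (ii) $\calH \supset C_b(\RR^n)$, which is exactly the preceding paragraph; and (iii) $\calH$ is closed under uniformly bounded pointwise convergence, that is, if $(f_n)\subset \calH$ satisfies $\sup_n \|f_n\|_\infty < \infty$ and $f_n \to f$ pointwise, then $f \in \calH$. Property (iii) is the crux, and rests on two routine facts: since $\Phi(K,\,\cdot\,)$ is a complex Borel measure and therefore has finite total variation, the dominated convergence theorem gives $\Phi(K, f_n) \to \Phi(K, f)$ for each fixed $K$; and the pointwise limit of the Borel functions $K \mapsto \Phi(K, f_n)$ (separately for real and imaginary parts) is again Borel.

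Since $C_b(\RR^n)$ is an algebra of bounded functions containing the constants and generating the Borel $\sigma$-algebra on $\RR^n$, the functional form of the monotone class theorem (applied separately to $\Re f$ and $\Im f$) then forces $\calH$ to contain every bounded Borel function, which is the claim. No serious obstacle arises in this scheme; the only place where care is needed is in step (iii), where one must exploit the finite total variation of $\Phi(K,\,\cdot\,)$ to invoke dominated convergence pointwise in $K$, and observe that no uniformity in $K$ is required since measurability is preserved under arbitrary pointwise limits.
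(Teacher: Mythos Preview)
Your argument is correct. The paper's proof pursues the same underlying idea---bootstrap from continuous $f$ to all bounded Borel $f$ via pointwise limits, using dominated convergence for $\Phi(K,\cdot)$---but packages it differently: instead of the functional monotone class theorem, the paper uses transfinite induction over the Baire hierarchy $\calB_\alpha$, $\alpha<\omega_1$, together with the classical fact that $\bigcup_{\alpha<\omega_1}\calB_\alpha$ coincides with the Borel functions. Your monotone class route is arguably more standard and avoids ordinals entirely; the paper's version is self-contained in that it does not invoke a black-box theorem but instead makes the induction explicit. Both are short and neither offers a real advantage over the other.
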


\subsubsection{Characterization of Federer's curvature measures}
Let $\C_0,\ldots, \C_n$ denote Federer's curvature measures. Restricted to convex bodies in $\RR^n$, they can be regarded as elements of $\Curv(\RR^n)$.  
A curvature measure $\Phi\in \Curv(\RR^n)$ is called  $SO(n)$ invariant if   $$k\cdot \Phi = \Phi\quad \text{for all }k\in SO(n).$$
With this terminology, and using the result of Weil \cite{Weil:IGTF2} that every curvature measure satisfies the valuation property, Schneider's theorem \cite{Schneider:Curv} may be formulated as follows.

\begin{theorem}[Schneider--Weil] Let $\Phi\in \Curv(\RR^n)$ be an $SO(n)$ invariant curvature measure. If $\Phi(K)$ is a nonnegative measure for every $K$, then there exist nonnegative numbers $a_0,\ldots, a_n$ such that 
	$$ \Phi = \sum_{k=0}^n a_i \C_i.$$
\end{theorem}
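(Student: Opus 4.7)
The plan is to establish a Hadwiger-type classification of $SO(n)$-invariant translation invariant curvature measures on polytopes, transfer it to all convex bodies by continuity, and extract nonnegativity of the coefficients from the hypothesis. By the continuity axiom (c), it suffices to prove $\Phi(P,\cdot) = \sum_{i=0}^n a_i \C_i(P,\cdot)$ for every convex polytope $P$: polytopes are dense in $\calK(\RR^n)$, and both sides depend continuously on the body.

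For a polytope $P$, locality combined with translation invariance forces a decomposition
$$ \Phi(P, \beta) = c\, \vol(P \cap \beta) + \sum_{k=0}^{n-1} \sum_{F \in \mathcal{F}_k(P)} h_k(N(F,P))\, \mathcal{H}^k(F \cap \beta), $$
where $\mathcal{F}_k(P)$ denotes the $k$-faces of $P$, $N(F,P)$ is the normal cone of $P$ at $F$, and $h_k$ depends only on the normal cone. The interior part is a Lebesgue multiple because, by locality, $\Phi(P,\cdot)$ restricted to $\operatorname{int} P$ depends only on the trivial local structure there, and translation invariance forces proportionality with Lebesgue measure. $SO(n)$-invariance reduces $h_k$ to a function of the isometry class of $N(F,P)$, encoded by the spherical polytope $N(F,P) \cap S^{n-1} \subset F^\perp$ modulo the action of $SO(n-k)$.

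The main obstacle is the classification of the admissible $h_k$. Weil's valuation property, applied to $\Phi$, forces $h_k$ viewed as a function on polyhedral cones to itself satisfy a valuation identity under cone dissection. Together with continuity and $SO(n-k)$-invariance, a Hadwiger-type argument on the space of spherical polytopes identifies $h_k$ as a scalar multiple of the external angle, $h_k(N(F,P)) = a_k \gamma(F,P)$. This is the heart of the proof: one must combine the valuation identity for $\Phi$ on convex bodies with the structure of $SO(n-k)$-invariant valuations on $(n-k)$-dimensional cones (equivalently, on spherical polytopes in $S^{n-k-1}$) to exclude any dependence on the cone beyond the external angle. Hence $\Phi = \sum_{i=0}^n a_i \C_i$ on polytopes, and by continuity everywhere.

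Finally, nonnegativity of $\Phi$ applied to Euclidean balls $B_r$ yields $\sum_i a_i c_i r^i \ge 0$ for every $r > 0$, and by comparing coefficients of this polynomial in $r$ (or equivalently by isolating each face type in a suitable family of polytopes and using that $\C_k$ is concentrated on faces of dimension $k$) one concludes that $a_k \ge 0$ for every $k$.
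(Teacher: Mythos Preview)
Your outline follows Schneider's original strategy, but the step you call ``the heart of the proof'' contains a real gap. You assert that ``a Hadwiger-type argument on the space of spherical polytopes identifies $h_k$ as a scalar multiple of the external angle.'' No such theorem is available: characterizing spherical Lebesgue measure as the unique $SO(n-k)$-invariant simple continuous valuation on spherical convex sets is precisely the long-standing open problem the paper flags immediately after stating the Schneider--Weil theorem (and which is discussed at length in the final chapter of Klain--Rota). Schneider's actual proof uses the nonnegativity hypothesis \emph{at this step}, not merely at the end: nonnegativity of $\Phi$ forces nonnegativity of each $h_k$, and for \emph{nonnegative} $SO$-invariant simple valuations Schneider does obtain the required characterization. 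By deferring nonnegativity to your final paragraph you leave the classification of $h_k$ unjustified for $0\le k\le n-3$.

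The paper sidesteps the spherical Hadwiger problem entirely and in fact proves the stronger statement without nonnegativity (Theorem~\ref{thm:federer}). One first applies Hadwiger's theorem to $\glob(\Phi)$ to obtain constants $a_i$ with $\glob(\Phi)=\sum_i a_i V_i$, reducing the problem to showing that any $SO(n)$-invariant element of $\Z(\RR^n)$ vanishes (Proposition~\ref{prop:Z-invariant}). This is done via the Bernig embedding: in degree $n-2$, Proposition~\ref{prop:imB} lets one normalize the representing function of $B(\Phi)$ to an $SO(n)$-equivariant tangent vector field on $S^{n-1}$, which must vanish by the hairy ball theorem (with a direct argument for $n=2$); lower degrees follow by restricting to hyperplanes, inducting on $n$, and invoking the classification of simple curvature measures (Theorem~\ref{thm:simple}). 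Nonnegativity of the $a_i$ is then deduced a posteriori, for instance by your face-localization argument.
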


A somewhat curious defect of this result is that it assumes that $\Phi(K)$ is a nonnegative measure. Schneider needs this assumption because his proof requires a characterization of the spherical Lebesgue measure as an $SO(n)$ invariant simple valuation on spherical convex sets. This, however, is a well-known and  long-standing open problem (discussed at length in the final chapter of \cite{KlainRota}). Under the additional assumption of nonnegativity, Schneider obtains such a characterization of spherical Lebesgue measure.

Our approach allows us to characterize linear combinations of Federer's curvature measures without assuming nonnegativity.
\begin{theorem}
\label{thm:federer}	
Let $\Phi\in \Curv(\RR^n)$ be an $SO(n)$ invariant curvature measure. Then there exist numbers  $a_0,\ldots, a_n$ such that 
$$ \Phi = \sum_{k=0}^n a_i \C_i.$$
\end{theorem}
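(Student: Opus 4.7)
The plan is to decompose $\Phi$ by degree and parity, use Hadwiger's theorem to isolate the Federer part, and then show the remainder (an $SO(n)$-invariant curvature measure of vanishing total mass) vanishes by combining the paper's structural results in low and high degrees with a Gårding-type density argument in intermediate degrees. Decompose $\Phi=\sum_{k,\pm}\Phi_k^\pm$; each component remains $SO(n)$-invariant since both gradings commute with the $SO(n)$-action. The globalization $\phi_k^\pm(K):=\Phi_k^\pm(K,\RR^n)$ is a continuous, translation-invariant, $SO(n)$-invariant scalar valuation of degree $k$, so by Hadwiger $\phi_k^+=a_k\mu_k$ and $\phi_k^-=0$ for some $a_k\in\CC$. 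Setting $\Psi:=\Phi-\sum_k a_k\C_k\in\Z(\RR^n)^{SO(n)}$, the theorem reduces to showing $\Psi=0$. The degrees $n$ and $n-1$ are immediate: $\Z_n=0$ since $\Curv_n=\CC\C_n$, and $\Z_{n-1}^{\pm,SO(n)}=0$ follows from the explicit classification provided by Theorem~\ref{thm:deg-n-1}.

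For $0\leq k\leq n-2$, I would establish $\Z_k^{\pm,SO(n)}=0$. In degrees $k\in\{0,n-2\}$, Theorem~\ref{thm:conj} identifies $\Z_k^\pm$ as an irreducible $GL(n,\RR)$-representation; tracing through the identification in the proof of Theorem~\ref{thm:Val2} with a specific quotient of $\CC^n$-valued homogeneous functions on $\RR^n\setminus\{0\}$, one verifies that this irreducible is non-spherical, hence has no nonzero $SO(n)$-fixed vector. For intermediate degrees, I would approximate $\Psi$ by smooth $SO(n)$-invariant curvature measures, exploiting Gårding's density of smooth vectors for the continuous $GL(n,\RR)$-representation on $\Curv(\RR^n)$ together with averaging over the compact group $SO(n)$. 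This reduces the problem to a finite-dimensional computation of the space of smooth $SO(n)$-invariant translation-invariant $(n-1)$-forms on the sphere bundle $S\RR^n$ modulo the ideal of forms annihilating every normal cycle, which one shows is spanned solely by the forms representing Federer's $\C_0,\ldots,\C_{n-1}$ (together with the Lebesgue term accounting for $\C_n$).

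The principal obstacle is the non-sphericity verification for $k\in\{0,n-2\}$, which requires carefully extracting the explicit representation-theoretic model of $\Z_k^\pm$ provided by the proofs of Theorems~\ref{thm:conj} and~\ref{thm:Val2} and pinning down its $SO(n)$-fixed vectors. A secondary but nontrivial technicality, needed for the intermediate degrees, is matching Gårding's notion of smooth vectors for the $GL(n,\RR)$-action with Bernig and Fu's definition of smooth curvature measures, so that the density argument applies; this is what makes the step genuinely delicate and is presumably where the paper's careful analysis of the $GL(n,\RR)$-module structure of $\Curv(\RR^n)$ pays off.
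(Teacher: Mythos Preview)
Your reduction via Hadwiger to showing $\Z(\RR^n)^{SO(n)}=0$, and your treatment of degrees $n$ and $n-1$, agree with the paper. The divergence, and the gap, is in degrees $k\leq n-2$.

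For intermediate degrees you rely on the identification of G{\aa}rding-smooth vectors $\Curv_k^\infty$ with Bernig--Fu-smooth curvature measures $\Curv_k^{sm}$. This is exactly the open equality \eqref{eq:sm-infty}; Proposition~\ref{prop:Z-irred} establishes it only conditionally on the irreducibility of $\Z_k^\pm$, which is Conjecture~\ref{conj:irr} and is proved in the paper only for $k\in\{0,n-2\}$. So your density argument for $1\leq k\leq n-3$ presupposes precisely what the paper leaves open. The paper's analysis of the $GL(n,\RR)$-module structure does \emph{not} pay off here; on the contrary, the characterization theorem is proved completely independently of that machinery.

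The paper's route is far more elementary and avoids the conjectures entirely. For degree $n-2$ it argues directly: $B(\Phi)\in\Val_{n-1}(\RR^n,\CC^n)$ is represented by an $SO(n)$-equivariant $f\colon S^{n-1}\to\CC^n$; Proposition~\ref{prop:imB} and invariance force $\langle f(u),u\rangle$ constant, so after adjustment $f$ is a tangent field on $S^{n-1}$, hence zero by the hairy ball theorem (or by a direct check when $n=2$), whence $\Phi=0$ by injectivity of $B$. This settles the base case $n=2$. For $n\geq3$ the paper inducts on the ambient dimension: restriction $\Phi|_H$ to any hyperplane lies in $\Z(H)^{SO(H)}$ and vanishes by the inductive hypothesis, so $\Phi$ is simple; the classification of simple curvature measures (Theorem~\ref{thm:simple}) then forces $\Phi\in\Z_{n-2}^+\oplus\Curv_{n-1}^-\oplus\Curv_n$, and those degrees are already done. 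No appeal to irreducibility, $K$-type analysis, or smooth approximation is needed.
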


A closely related, but more challenging problem is to  characterize the tensor-valued curvature measures of Hug and Weis  \cite{HugWeis:kinematic,HugWeis:crofton}. 
A confirmation of Conjecture~\ref{conj:irr} would resolve this problem in the translation invariant case, as it would allow to remove the assumption of smoothness in the results of Saienko \cite{Saienko}.

\section{Preliminaries}

\label{sec:prelim}

\subsection{Support measures and the normal cycle of convex bodies}

We denote by $S\RR^n= \RR^n\times S^{n-1}$ the sphere bundle of $\RR^n$. A classical construction in convex geometry associates to each convex body $K\subset \RR^n$ finite Borel measures
$$ \Theta_0(K, \cdot ),\ldots , \Theta_{n-1}(K,\cdot)$$
 on the sphere bundle called the support measures of $K$; see \cite[Chapter 4]{Schneider:BM}.
  Closely related to these are the  curvature and area measures of $K$ defined by 
 $$ \C_i(K,\beta)= \Theta_i(K, \beta \times S^{n-1}),  \quad \S_i(K,\gamma) = \Theta_i(\RR^n\times \gamma),$$
 for Borel set $\beta\subset \RR^n$ and $\gamma\subset S^{n-1}$.  
The following lemma  collects the basic properties of support measures. 

\begin{lemma}
	\begin{enuma}
		\item If $K\cap U= L\cap U$ for some open set $U$, then 
		$$ \Theta_i(K, \eta)= \Theta_i(K,\eta) $$
		for every Borel set $\eta\subset U\times S^{n-1}$. 
		
		\item If $\rho$ is an isometry of $\RR^n$, then, for all Borel sets $\eta$ and convex bodies $K$, 
		$$ \Theta_i(\rho(K) ,\wt\rho(\eta))= \Theta_i(K,\eta),$$
		where $\wt\rho(x,u)= (\rho(x), \rho_0 u)$ and $\rho_0$ denotes the linear isometry associated with $\rho$. 
		\item If $K_j\to K$ in the Hausdorff metric, then $\Theta_i(K_j)\stackrel{w}{\longrightarrow} \Theta_i(K)$.
	\end{enuma}
\end{lemma}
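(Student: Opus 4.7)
The plan is to derive all three assertions from the local Steiner formula that defines the support measures. Let $p_K\colon \RR^n\to K$ denote the metric projection onto the convex body $K$, and set $u_K(x)=(x-p_K(x))/\lvert x-p_K(x)\rvert$ for $x\notin K$. By \cite{Schneider:BM}, the $\Theta_i(K,\cdot)$ are uniquely determined by the polynomial identity
$$ \vol\bigl(\{x\in\RR^n\setminus K: d(x,K)\le r,\ (p_K(x),u_K(x))\in\eta\}\bigr)=\sum_{i=0}^{n-1} r^{n-i}\binom{n}{i}\omega_{n-i}\,\Theta_i(K,\eta),$$
valid for all Borel $\eta\subset S\RR^n$ and $r>0$. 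Each item of the lemma reduces to a statement about the set on the left-hand side.

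For (a), I would fix a Borel $\eta\subset U\times S^{n-1}$ and, by inner regularity, reduce to the case $\eta\subset C\times S^{n-1}$ with $C\subset U$ compact. Choose $r_0>0$ with $C+r_0\overline{B}\subset U$. For every $x$ with $p_K(x)\in C$ and $d(x,K)\le r_0$ one has $x\in U$, and since $K\cap U=L\cap U$ the unique nearest point of $L$ to $x$ lies in $K\cap U$ as well; hence $p_L(x)=p_K(x)$ and $u_L(x)=u_K(x)$. The left-hand sides of the Steiner formula therefore agree for $K$ and $L$ whenever $r\le r_0$, and matching coefficients yields $\Theta_i(K,\eta)=\Theta_i(L,\eta)$ (the right-hand side of the stated identity contains an obvious typo). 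Part (b) is then immediate from the equivariances $p_{\rho(K)}\circ\rho=\rho\circ p_K$ and $u_{\rho(K)}\circ\rho=\rho_0\circ u_K$, combined with the isometry-invariance of $\vol$.

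For (c), let $K_j\to K$ in the Hausdorff metric. The parallel bodies $(K_j)_r$ Hausdorff-converge to $K_r$, and the key analytic input is that $p_{K_j}\to p_K$ uniformly on every compact subset of the open full-measure set where $p_K$ is single-valued. Consequently, for every $f\in C_c(S\RR^n)$, dominated convergence gives
$$ \int_{\{0<d(x,K_j)\le r\}} f(p_{K_j}(x),u_{K_j}(x))\, dx\longrightarrow \int_{\{0<d(x,K)\le r\}} f(p_K(x),u_K(x))\, dx,$$
and equating the polynomial coefficients in $r$ delivers $\int f\, d\Theta_i(K_j,\cdot)\to \int f\, d\Theta_i(K,\cdot)$, i.e.\ the asserted weak convergence. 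The one delicate point, and the only real obstacle if one approaches the lemma from scratch, is this almost-everywhere control on metric projections; it rests on the classical fact that the set of points with multi-valued $p_K$ is Lebesgue-null, together with a short compactness argument driven by $K_j\to K$.
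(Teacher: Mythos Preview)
The paper does not prove this lemma; it is stated as background with an implicit reference to \cite[Chapter 4]{Schneider:BM}, where these properties are established via the local Steiner formula. Your approach is therefore exactly the standard one and is essentially correct.

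Two minor points. In (a), your choice of $r_0$ is not quite enough: from $p_K(x)\in C$ and $d(x,K)\le r_0$ you get $x\in C+r_0\overline B\subset U$ and $d(x,L)\le r_0$, but this only gives $p_L(x)\in C+2r_0\overline B$, which need not lie in $U$. Taking $r_0$ with $C+2r_0\overline B\subset U$ fixes this and the rest of the argument goes through. In (c), since $K$ is convex the metric projection $p_K$ is single-valued on all of $\RR^n$, so the ``open full-measure set'' you invoke is simply $\RR^n$ and the discussion of multi-valued projections is unnecessary; the genuine inputs are the uniform convergence $p_{K_j}\to p_K$ on compacta (a standard consequence of the $1$-Lipschitz property of metric projection onto convex sets) and the fact that $\partial K$ and $\partial(K+r\overline B)$ are Lebesgue-null, which handles the varying domains of integration.
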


Although initially defined on all of $S\RR^n$, the support of $\Theta_i(K,\cdot)$ is  contained in the normal cycle of $K$, defined as 
\begin{equation}\label{eq:nc}  \nc( K)= \{ (x,u)\in S\RR^n\colon u \text{ is a normal of }K\text{ at } x\}.\end{equation} 
The classical construction of the support measures is based on a local Steiner formula.
The theory of the normal cycle, developed by  Fu~\cite{Fu:IGRegularity,Fu:Kinematic} as a generalization of the work of Federer \cite{Federer:CurvatureMeasures} and Z\"ahle \cite{Zahle:Current},  provides an alternative approach. While technically  more demanding,  it  allows a transparent and uniform description of the support, area, and curvature   measures of $K$   and opens the door to a  theory of curvature measures for more general sets and ambient spaces \cite{FuW:Riemannian,FPR:Kinematic}.
 
The construction of the normal cycle does not present any difficulties in the convex case,  since \eqref{eq:nc} is a  closed oriented Lipschitz submanifold of $S \RR^n$. Consequently, smooth differential forms on $S\RR^n$ may be integrated over $\nc(K)$, and the normal cycle can be regarded as a current. 

In view of compatibility with linear transformations,  it is  more natural and convenient to work with the cosphere bundle $S^*\RR^n =\RR^n \times S^*(\RR^n)$ of $\RR^n$, where the cosphere $S^*(\RR^n)$ is the set of equivalence classes of nonzero linear functionals $\xi\colon \RR^n\to \RR$ for the equivalence relation $\xi\sim \xi'$ if and only if $ \xi' = a \xi$ for some $a>0$. The conormal cycle  of $K$ is  then defined as the subset of $S^*\RR^n$ of conormals to $K$. By abuse of notation, we also denote the conormal cycle by the symbol $\nc(K)$. 

Let  $T(x)= Ax+v$ be an invertible affine transformation of $\RR^n$. Then  $T$ is covered by the map $$\wt T\colon S^*\RR^n \to S^*\RR^n,\quad \wt T(x,[\xi]) = (T(x), [A^{-\top} \xi]),$$  and $\nc(T(K))= \wt T(\nc(K))$.  We denote by $\pi$ the canonical projection from the (co-)sphere bundle to $\RR^n$.

\begin{lemma}\label{lemma:prop-nc}
\begin{enuma}
	\item If $T\colon \RR^n\to \RR^n$ is an invertible affine transformation, then$$ \int_{\nc(TK )} \omega = \operatorname{sgn}(\det T)  \int_{\nc(K)} \wt T^*\omega. $$
	
\item 	If  $(K_j)$ is a sequence of convex bodies in $\RR^n$ converging to $ K$ in the Hausdorff metric, then, as $j \to  \infty$, 
	$$ \int_{\nc(K_j)} \pi^* f  \cdot \omega \rightarrow   \int_{\nc(K)} \pi^*f \omega$$
	for every $\omega\in \Omega^{n-1}(S\RR^n)$ and every $f\in C_b(\RR^n)$. 
	\item If $B$ is a bounded subset of $\RR^n$, then 
	$$ \sup_{K\subset B} \mathbf M(\nc(K)) <\infty,$$
	where $\mathbf M(T)$ denotes the mass of  a current $T$. 
	
\end{enuma}
\end{lemma}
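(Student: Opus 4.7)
The plan is to treat the three parts in the order (a), (c), (b), since (c) feeds into the convergence argument for (b).

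For part (a), I observe that $\wt T$ is a Lipschitz diffeomorphism of $S^*\RR^n$ carrying $\nc(K)$ onto $\nc(TK)$. The change-of-variables formula for integration of forms over oriented Lipschitz submanifolds then gives
$$\int_{\nc(TK)} \omega = \epsilon(T) \int_{\nc(K)} \wt T^* \omega,$$
with $\epsilon(T) \in \{\pm 1\}$ a sign depending only on how $\wt T$ interacts with the orientation of the normal cycle. To identify $\epsilon(T) = \operatorname{sgn}(\det T)$, I would check a single hyperplane reflection applied to a smooth strictly convex body: there $\nc(K)$ is just the graph of the Gauss map and both sides reduce to integrals over $\partial K$, so the sign can be read off directly. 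The general case follows by factoring $T$ as such a reflection times an element of $GL^+(n,\RR)$, together with a continuity argument using the connectedness of $GL^+(n,\RR)$.

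For part (c), I would use the stratification $\nc(K) = \bigsqcup_{k=0}^{n-1} N_k(K)$, where $N_k(K)$ is the union, over $k$-faces $F$ of $K$, of the product of the relative interior of $F$ with its spherical conormal cone. The mass decomposes accordingly, and by a Fubini-type argument each piece satisfies
$$\mathcal{H}^{n-1}(N_k(K)) \le \mathcal{H}^k\bigl(\text{$k$-skeleton of } K\bigr) \cdot \vol(S^{n-1}).$$
The $k$-skeletal Hausdorff measure is controlled uniformly for $K \subset B$ by the corresponding quantity for a ball containing $B$, by monotonicity of the support measures; summation over $k$ gives the desired bound.

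For part (b), I first use Hausdorff convergence to enclose all $K_j$ (eventually) and $K$ in a bounded open set $U$, and pick $\chi \in C_c^\infty(\RR^n)$ with $\chi \equiv 1$ on $\b U$. Then $\int_{\nc(K_j)} \pi^* f \cdot \omega = \int_{\nc(K_j)} \pi^*(f\chi) \cdot \omega$ and similarly for $K$, so $f\chi$ may be uniformly approximated by smooth compactly supported functions. The problem reduces to weak convergence $\nc(K_j) \to \nc(K)$ as currents against smooth compactly supported $(n-1)$-forms. By (c) and Banach--Alaoglu some subsequence converges weakly to a current $T$, which upper semicontinuity of the normal cone under Hausdorff convergence places in $\nc(K)$. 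The remaining, and most delicate, step is the identification $T = \nc(K)$: I would test against forms $\pi^*\phi \cdot \tau$, with $\phi \in C_c(\RR^n)$ and $\tau$ a translation-invariant $(n-1)$-form giving rise to a support measure $\Theta_i$, invoking the weak convergence $\Theta_i(K_j) \stackrel{w}{\longrightarrow} \Theta_i(K)$ from the preceding lemma. The main obstacle is to show that such test forms determine $T$ uniquely, which I would accomplish by a density argument exploiting the product structure $S\RR^n = \RR^n \times S^{n-1}$ and a Stone--Weierstrass approximation in the base and fiber directions.
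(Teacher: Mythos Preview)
The paper does not actually prove this lemma; it simply cites \cite{Alesker:VMfdsIII} for all three parts and \cite[Theorem 2.20]{HugSchneider:tensor-survey} for a convex-tailored proof of (b). So there is no in-paper argument to compare against, and your proposal is an attempt to supply what the authors outsourced. Part (a) is fine as sketched.

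Your argument for (c) is shaky in two places. First, the stratification $\nc(K)=\bigsqcup N_k(K)$ is clean only for polytopes; for a general convex body the face structure is considerably more delicate, and you have not said how you extend. Second, even for polytopes, the quantity $\mathcal H^k(\text{$k$-skeleton})=\sum_{F\in\calF_k}\vol_k(F)$ is \emph{not} monotone under inclusion, so ``monotonicity of the support measures'' does not bound it: what is monotone is $V_k(K)=\sum_F \gamma(F,K)\vol_k(F)$, with external angles $\gamma<1$. A much cleaner route avoids stratification entirely: the map $(x,u)\mapsto x+u$ is a bi-Lipschitz bijection $\nc(K)\to\partial(K+B^n)$, so $\mathbf M(\nc(K))\le C\,\mathcal H^{n-1}(\partial(K+B^n))$, and the right-hand side is bounded for $K\subset B$ by monotonicity of surface area (or the Steiner formula).

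In (b) your compactness and support arguments are sound, but the identification step has a real gap. Testing against $\phi(x,u)\,\kappa_i$ does \emph{not} determine an arbitrary $(n-1)$-current: at each point the forms $\kappa_0,\dots,\kappa_{n-1}$ span only an $n$-dimensional subspace of the $\binom{2(n-1)}{n-1}$-dimensional space $\Lambda^{n-1}(\ker\alpha)^*$, so no Stone--Weierstrass argument in the base and fibre will close this. The standard remedy is to observe that each $\nc(K_j)$ is an integral cycle, so by Federer--Fleming compactness the limit $T$ is an integral $(n-1)$-cycle supported on the connected Lipschitz $(n-1)$-manifold $\nc(K)$; the constancy theorem then gives $T=c\,[\nc(K)]$ for some integer $c$, and testing against $\kappa_0$ (whose integral is the Euler characteristic) forces $c=1$. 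Alternatively one can bypass currents altogether, as Hug--Schneider do, by parametrising $\nc(K_j)$ via $y\mapsto(p_{K_j}(y),\,y-p_{K_j}(y))$ from $\partial(K_j+B^n)$ and using uniform convergence $p_{K_j}\to p_K$.
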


For proofs of the above properties of the normal cycle we refer the reader to  \cite{Alesker:VMfdsIII}. A proof of (b) that is tailored to the convex case and yields a stronger result can be found in \cite[Theorem 2.20]{HugSchneider:tensor-survey}.

Finally, let us describe support measures and Federer's curvature measures in terms of the normal cycle. Consider on $\RR^n\times \RR^n$ with the standard coordinates $(x,y)$ the differential forms 
\begin{equation}\label{eq:kappa}\kappa_i=  \sum_{\sigma\in \mathfrak S_{n}}\operatorname{sgn}(\sigma) y_{\sigma(n)}  dx_{\sigma(1)} \wedge \cdots \wedge dx_{\sigma(i)} \wedge dy_{\sigma(i+1)} \wedge \cdots \wedge 
dy_{\sigma(n-1)}.\end{equation}
and denote their restrictions to $S\RR^n$ by the same symbols. These differential forms are clearly invariant under orientation preserving isometries. 
For $i=0,\ldots, n-1$, 
\begin{align*}\C_i(K,\beta) &  = \frac{1}{\vol(S^{n-i-1})} \int_{\nc(K)}  \mathbf{1}_{\pi^{-1}(\beta)}\, \kappa_i,\\ 
	\Theta_i(K,\eta) & = \frac{1}{\vol(S^{n-i-1})} \int_{\nc(K)} \mathbf{1}_{\eta}\, \kappa_i.
\end{align*}
Since these identities can  be immediately verified for polytopes, they hold  universally by continuity.

\subsection{Translation invariant valuations}

By a remarkable theorem of Weil, to be discussed in the next section, curvature measures satisfy the valuation property. This will allow us to prove powerful results about curvature measures by connecting them to the highly developed theory of translation invariant valuations on convex bodies. 

We denote by  $\Val(\RR^n)$ the vector space of continuous and translation invariant valuations $\phi\colon \calK(\RR^n)\to \CC$. A valuation $\phi$ is called homogeneous of degree $\alpha\in \RR$  if $\phi(tK)=t^\alpha \phi(K)$ holds for all convex bodies and positive numbers $t>0$. We denote by  $\Val_\alpha(\RR^n)$ the subspace of $\alpha$-homogeneous valuations. 

\begin{theorem}[McMullen \cite{McMullen:Valuations}]
	Every $\phi\in \Val(\RR^n)$ can be uniquely expressed as 
	$$ \phi= \phi_0+ \cdots + \phi_n$$ 
	with $\phi_i\in \Val_i(\RR^n)$.
\end{theorem}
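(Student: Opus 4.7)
The plan is to deduce the decomposition from an underlying polynomiality statement: for every convex body $K\in\calK(\RR^n)$ the function $t\mapsto \phi(tK)$, $t\geq 0$, agrees with a polynomial in $t$ of degree at most $n$. Granted this, the $\phi_i$ arise as the coefficients, and uniqueness of the decomposition is built in. The plan is therefore to prove polynomiality first for polytopes and then extend to arbitrary convex bodies by continuity and density.

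For polytopes, I would proceed by induction on the number of vertices, using the valuation property to dissect. For a single simplex, translation invariance allows one vertex to be placed at the origin, after which repeated applications of inclusion-exclusion against coordinate halfspaces express $\phi(t\Delta)$ in terms of $\phi$ evaluated on lower-dimensional simplices, scaled by $t$; this is the key place where translation invariance is used, since without it the scaling $tK$ interacts badly with the decomposition. The base case is a point, where $\phi$ is constant. A general polytope is then handled by triangulating and using the valuation property together with inclusion-exclusion, so that $\phi(tP)$ becomes a finite sum of polynomials in $t$, each of degree at most $n$. Extension to all of $\calK(\RR^n)$ follows because polytopes are dense in the Hausdorff metric and $\phi$ is continuous; polynomials of degree $\leq n$ form a closed subspace of the space of continuous functions on $[0,T]$ for every $T>0$ (equivalently, $n+1$ point evaluations determine such a polynomial), so the pointwise limit $t\mapsto \phi(tK)$ is again a polynomial of degree at most $n$.

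Once the polynomiality is in place, define $\phi_i(K)$ as the coefficient of $t^i$ in $\phi(tK)$; then $\phi_i(tK)=t^i\phi_i(K)$ by construction, and uniqueness of polynomial coefficients gives the uniqueness of the decomposition. The properties of $\phi_i$ are inherited from $\phi$: for the valuation property one applies $\phi$ to $tK$, $tL$, $t(K\cup L)$, $t(K\cap L)$ whenever $K\cup L$ is convex and compares coefficients of $t^i$; translation invariance transfers in the same way; and continuity of $\phi_i$ follows because the polynomials $t\mapsto \phi(tK_j)$ converge pointwise to $t\mapsto \phi(tK)$ and, being of uniformly bounded degree, their coefficients converge as well (invert a Vandermonde system at $n+1$ test values of $t$).

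The main obstacle is the polynomiality statement, and within it the inductive step for simplices: the dissection has to be chosen so that the valuation identity and translation invariance can be applied simultaneously while still keeping control of the scaling parameter $t$. Once that combinatorial/geometric ingredient is secured, the rest of the argument is routine bookkeeping.
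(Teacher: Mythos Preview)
The paper does not prove this theorem; it is quoted as McMullen's result with a citation to the original 1977 paper, and is used as a black box thereafter. So there is no ``paper's own proof'' to compare against, and the question is simply whether your argument stands on its own.

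Your overall plan is the right one and is indeed McMullen's: show that $t\mapsto\phi(tK)$ is a polynomial of degree at most $n$, then read off the $\phi_i$ as its coefficients. Your treatment of the passage from polynomiality to the decomposition (valuation property, translation invariance, and continuity of each $\phi_i$ via a fixed Vandermonde inversion) is correct. The triangulation step for general polytopes and the density/continuity extension to all convex bodies are also fine.

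The gap is the simplex step. ``Inclusion-exclusion against coordinate halfspaces'' does not reduce $\phi(t\Delta)$ to values on lower-dimensional simplices: any hyperplane cut of $t\Delta$ produces \emph{full-dimensional} convex pieces, so the valuation identity applied to them keeps you in the same dimension; and while restricting $\phi$ to a hyperplane $H$ does give a lower-dimensional valuation, there is no identity expressing $\phi(t\Delta)$ in terms of such restrictions. In short, you have not exhibited a decomposition that simultaneously (i) is a finite convex inclusion-exclusion, (ii) involves only translates of a fixed family so that translation invariance applies, and (iii) lowers the dimension or otherwise forces polynomial dependence on $t$. McMullen's actual argument does not attack $\phi(t\Delta)$ directly; it proves the stronger statement that $(\lambda_1,\ldots,\lambda_r)\mapsto\phi(\lambda_1 P_1+\cdots+\lambda_r P_r)$ is a polynomial, by induction via a prism-type decomposition when one summand is a line segment. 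For a segment $I=[0,v]$ and a polytope $Q$ contained in a hyperplane transverse to $v$, the identity $Q+(s+t)I=(Q+sI)\cup\bigl(sv+(Q+tI)\bigr)$ with intersection $sv+Q$ shows, using translation invariance, that $t\mapsto\phi(Q+tI)-\phi(Q)$ is additive, hence linear by continuity; iterating over segments gives polynomiality for zonotopes and then, via further combinatorics, for all polytopes. If you want a self-contained proof, this is the mechanism you need to supply in place of the coordinate-halfspace sketch.
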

In particular, McMullen's theorem shows that the degree of a translation invariant continuous valuation must be integer between $0$ and $n$. 
Moreover, it  implies that for every bounded subset $B\subset \RR^n$ containing the origin in the interior, 
$$ \|\phi\|_B = \sup_{K\subset B} |\phi(K)|$$
defines a norm on $\Val(\RR^n)$. It is not difficult to see that this norm is in fact complete. 

Sometimes we will further decompose $\Val_i=\Val^+_i\oplus \Val^-$ by parity, where a  valuation is called even if $\phi(-K)= \phi(K)$ and it is called odd if $\phi(-K)=-\phi(K)$. 

It is straightforward to see that $\Val_0(\RR^n)$ is $1$-dimensional and consists of the  constant valuations. The corresponding result for valuations of maximal degree is less trivial and was proved by Hadwiger \cite[p. 79]{Hadwiger:Vorlesungen}.
\begin{theorem}[Hadwiger] \label{thm:hadwiger}
	$\Val_n(\RR^n)$ is $1$-dimensional and spanned by the Lebesgue measure.
\end{theorem}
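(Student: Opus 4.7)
The plan is to prove Hadwiger's theorem in three steps: first show that any $\phi \in \Val_n(\RR^n)$ is simple, meaning it vanishes on convex bodies of dimension less than $n$; next pin down $\phi$ on boxes; and finally extend the equality to all convex bodies.

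For simplicity, I begin with $\phi(\{0\})$: by $n$-homogeneity, $\phi(\{0\}) = t^n \phi(\{0\})$ for every $t > 0$, which at $t = 2$ forces $\phi(\{0\}) = 0$, and then $\phi$ vanishes on all singletons by translation invariance. For a $d$-dimensional axis-parallel box $B$ with $d < n$, I induct on $d$: the enlarged box $mB$ decomposes into $m^d$ translates of $B$ whose intersections along common faces are $(d-1)$-dimensional boxes, which vanish by induction. Repeated application of the valuation property gives $\phi(mB) = m^d \phi(B)$, and comparison with $\phi(mB) = m^n \phi(B)$ from homogeneity forces $\phi(B) = 0$ whenever $m \geq 2$. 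Approximating general lower-dimensional convex bodies by box unions within the containing affine plane and using continuity then extends the vanishing to every $K$ with $\dim K < n$.

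Once simplicity is in hand, I would subdivide a box $B = \prod_i [0, a_i]$ into $k^n$ congruent sub-boxes. The pairwise intersections are lower-dimensional and contribute nothing, so inclusion-exclusion yields $\phi(B) = k^n \phi(B/k)$. Combining with homogeneity and continuity, $\phi(B)/\vol(B)$ is a constant $c$ independent of $B$. Setting $\psi = \phi - c \cdot \vol$, the theorem reduces to showing that a simple continuous translation-invariant $n$-homogeneous valuation vanishing on every box must be identically zero.

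The main obstacle is this last step. My approach is to approximate a given convex body $K$ in the Hausdorff metric by dyadic box-aggregates $U_k = \bigcup\{B : B \text{ is a dyadic box of side } 2^{-k} \text{ meeting } K\}$, extend $\psi$ to the convex ring via Groemer's theorem, and observe that $\psi(U_k) = 0$ because inclusion-exclusion expresses it as a sum of $\psi$-values on boxes and on their lower-dimensional intersections, all of which vanish. The delicate point is that $U_k$ is not convex, so the Hausdorff convergence $U_k \to K$ does not directly invoke the continuity assumed for $\psi$ on $\calK(\RR^n)$; one must sandwich $K$ between inscribed and circumscribed polytopes close to $U_k$ and use the complete norm $\|\cdot\|_B$ on $\Val(\RR^n)$ together with the uniform boundedness of $\psi$ on a fixed ball to control the error terms and conclude $\psi(K) = 0$.
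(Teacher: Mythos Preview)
The paper does not prove this theorem; it merely states it and cites Hadwiger's book \cite[p.~79]{Hadwiger:Vorlesungen}. So there is no in-paper argument to compare against, and your proposal must stand on its own.

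Your simplicity argument is fine for \emph{axis-parallel} lower-dimensional boxes, but the sentence ``approximating general lower-dimensional convex bodies by box unions within the containing affine plane and using continuity'' fails twice over: a lower-dimensional $K$ need not lie in an axis-parallel plane, and even when it does, box unions are not convex, so continuity on $\calK(\RR^n)$ does not apply. A clean repair, available in this paper's context, is to restrict $\phi$ to an arbitrary hyperplane $H$ and invoke McMullen's decomposition on $\calK(H)\cong\calK(\RR^{n-1})$: an $n$-homogeneous translation-invariant continuous valuation on an $(n{-}1)$-dimensional space must vanish.

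The real gap is Step~3. You correctly diagnose that Groemer's extension to the convex ring carries no continuity, so $\psi(U_k)=0$ cannot be passed to the limit. But your proposed remedy --- sandwich $K$ between convex polytopes $P_k\subset K\subset Q_k$ and invoke the norm on $\Val(\RR^n)$ --- does not close it. Continuity gives $\psi(P_k),\psi(Q_k)\to\psi(K)$, yet you know nothing about $\psi(P_k)$ or $\psi(Q_k)$: these are general polytopes, not boxes or box unions, and uniform boundedness only says the values stay bounded, not that they tend to zero. There is no monotonicity available either, since $\psi$ is signed. The classical route avoids box unions altogether: after establishing $\psi=0$ on axis-parallel boxes, one uses a cut-and-translate dissection argument to show $\psi=0$ on arbitrary parallelotopes (shear a parallelotope toward a box by slicing off a piece and translating it to the opposite side), then on simplices, then on all polytopes by triangulation and inclusion--exclusion, and only at that point invokes continuity --- now legitimately, since polytopes are convex and dense in $\calK(\RR^n)$. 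This dissection step is the genuine content of Hadwiger's theorem and is absent from your outline.
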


Also valuations of degree $n-1$  admit  an explicit description.

\begin{theorem}[McMullen \cite{McMullen:Continuous}]\label{thm:McMullen-n-1}
Let $\phi\in \Val_{n-1}(\RR^n)$. Then there exists a continuous function 
$f\colon S^{n-1}\to \CC$ such that 
$$ \phi(K)= \int_{S^{n-1}} f(u)\, d\!\S_{n-1}(K,u).$$
	Moreover, the function $f$ is unique up to the addition of linear functionals. 
\end{theorem}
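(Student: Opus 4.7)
The plan is to establish the representation first for polytopes, extend it to all convex bodies by density and continuity, and handle uniqueness via Minkowski's existence theorem.

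For the polytope case, let $P$ be a polytope with facets $F_1,\ldots, F_m$ and outer unit normals $u_1,\ldots, u_m$, so that $\S_{n-1}(P,\cdot) = \sum_i \vol_{n-1}(F_i)\,\delta_{u_i}$. I aim to write $\phi(P) = \sum_i g(u_i)\vol_{n-1}(F_i)$ for some function $g\colon S^{n-1}\to \CC$. The core step is to show that the ``density per unit facet area'' depends only on the outer normal direction. This should follow by combining translation invariance (the contribution of a facet is insensitive to its position), $(n-1)$-homogeneity (it scales linearly with $(n-1)$-volume), and the valuation property together with inclusion--exclusion across hyperplane cuts, which lets one compare two polytopes sharing a common facet normal and isolate the facet contribution.

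Once $g$ is defined on the set of facet normals, continuity of $g$ on $S^{n-1}$ follows from the continuity of $\phi$ under Hausdorff convergence, by constructing approximating sequences of polytopes whose relevant facet normals converge to a prescribed direction. On polytopes the identity then reads $\phi(P) = \int_{S^{n-1}} g(u)\, d\S_{n-1}(P,u)$. Since polytopes are dense in $\calK(\RR^n)$ and $K\mapsto \S_{n-1}(K,\cdot)$ is weakly continuous, this identity extends to every convex body $K$.

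For uniqueness, if $f\in C(S^{n-1})$ satisfies $\int_{S^{n-1}} f\, d\S_{n-1}(K,\cdot) = 0$ for all $K\in \calK(\RR^n)$, then the differences $\S_{n-1}(K_1,\cdot) - \S_{n-1}(K_2,\cdot)$ are weakly dense in the space of finite signed Borel measures on $S^{n-1}$ with vanishing first moment (a consequence of Minkowski's existence theorem and a standard approximation). Hence $f$ annihilates every centered signed measure on $S^{n-1}$, which forces $f$ to coincide with a linear functional on $\RR^n$ restricted to $S^{n-1}$.

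The main obstacle is the well-definedness and continuity of the facet density $g$: one must carefully extract the contribution of a single facet from within a combinatorially complex polytope. The standard device is to use families of ``thin prisms'' or caps, whose $\phi$-values can be computed to leading order using $(n-1)$-homogeneity, allowing one to read off $g(u)$ from elementary building blocks and to verify that the resulting function is independent of the particular polytope chosen.
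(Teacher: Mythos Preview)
The paper does not supply its own proof of this theorem; it is quoted as a classical result of McMullen and used as input. So there is nothing in the paper to compare against directly.

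Your outline is the standard strategy and is essentially McMullen's original argument: define a facet-density function $g$ on polytopes, prove it is well defined and continuous, integrate against $\S_{n-1}$, and extend by density. The uniqueness argument via Minkowski's existence theorem is also the usual one and is correct as stated.

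The place where your sketch is thinnest is exactly the one you flag: isolating the contribution of a single facet and showing it depends only on the outer normal. Saying ``translation invariance, $(n-1)$-homogeneity, and inclusion--exclusion across hyperplane cuts'' names the right tools but does not yet constitute the argument. In practice one fixes a direction $u$, restricts $\phi$ to bodies in $u^\perp$ to get a multiple of $\vol_{n-1}$ (by Hadwiger's theorem in dimension $n-1$), which pins down the even part $g(u)+g(-u)$; the odd part is then extracted by a separate analysis (for instance via thin caps or the derivative $\frac{d}{dt}\big|_{t=0}\phi(P+t[0,u])$ for suitable $P$). You should make this two-step structure explicit rather than gesturing at ``thin prisms,'' since a prism alone also has lateral facets whose contributions must be controlled. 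Once $g$ is defined, continuity is indeed obtained by an approximation argument as you indicate, though again one has to choose the approximating polytopes so that the contribution of the facet with the desired normal dominates.

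In short: correct plan, matching the classical proof; the paper itself offers nothing to compare with; the work is all in the step you correctly identify as the main obstacle, and that step needs to be written out rather than asserted.
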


A valuation $\phi\in \Val(\RR^n)$ is called simple if $\phi(K)=0$ whenever $\dim K<n$. The following characterization of simple valuations was discovered by Klain \cite{Klain:short} and Schneider \cite{Schneider:simple}.

\begin{theorem}[Klain--Schneider] \label{thm:klain-schneider} A valuation $\phi\in\Val(\RR^n)$ is simple if and only if $\phi\in \Val^-_{n-1} \oplus \Val_n$. 
\end{theorem}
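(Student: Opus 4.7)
The \emph{if} direction is a direct consequence of the explicit descriptions already available. By Theorem \ref{thm:hadwiger}, $\Val_n(\RR^n)$ consists of multiples of Lebesgue measure, which clearly vanish on bodies of dimension less than $n$. For $\phi\in \Val^-_{n-1}(\RR^n)$, Theorem \ref{thm:McMullen-n-1} represents $\phi$ as $\phi(K)=\int_{S^{n-1}} f(u)\,d\!\S_{n-1}(K,u)$; since $\S_{n-1}(K,\cdot)$ vanishes for $\dim K\leq n-2$ and concentrates with equal mass on the two unit normals $\pm u_0$ of the containing hyperplane when $\dim K=n-1$, the oddness of $\phi$ (which forces the even part of $f$ to vanish modulo the linear-functional ambiguity in Theorem \ref{thm:McMullen-n-1}) gives $\phi(K)=0$.

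For the converse, assume $\phi\in \Val(\RR^n)$ is simple and apply McMullen's decomposition theorem to write $\phi=\phi_0+\cdots+\phi_n$ with $\phi_i\in \Val_i(\RR^n)$. For any body $K$ with $\dim K<n$ and every $t>0$, $\sum_{i=0}^n t^i\phi_i(K)=\phi(tK)=0$, and linear independence of the monomials $t^i$ on $(0,\infty)$ forces $\phi_i(K)=0$ for every $i$; the same argument applied to $-K$ shows that each parity component $\phi_i^\pm$ is simple as well. Since $\phi_0$ is constant and vanishes on a single point, $\phi_0=0$. It therefore remains to show that a simple $\phi_i$ vanishes for $1\leq i\leq n-2$ and that a simple $\phi_{n-1}$ must be odd.

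The core of the argument consists of two classical statements, both established by induction on the ambient dimension $n$: every simple valuation in $\Val_k^+(\RR^n)$ with $k<n$ vanishes (Klain), and every simple valuation in $\Val_k^-(\RR^n)$ with $k<n-1$ vanishes (Schneider). In each case the induction step restricts $\phi$ to an arbitrary hyperplane $H\subset \RR^n$, producing a simple valuation of the correct parity on the $(n-1)$-dimensional space $H$ to which the inductive hypothesis applies. In the even case, one combines the resulting information on the behavior of $\phi$ along hyperplanes with inclusion-exclusion identities on convex bodies split by a hyperplane (the classical bipyramid construction) together with continuity of the resulting hyperplane datum to conclude $\phi=0$. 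Applying Klain's result with $k\leq n-1$ and Schneider's result with $k\leq n-2$, and combining with the parity decomposition above, yields $\phi_i^\pm=0$ in the required ranges, so that $\phi=\phi_{n-1}^-+\phi_n\in \Val^-_{n-1}\oplus \Val_n$.

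The main obstacle is Schneider's half of the statement: the absence of reflective symmetry in the odd case rules out the bipyramid constructions that render Klain's proof transparent, and a more intricate analysis—involving spherical simple valuations and careful control of odd surface-area type functionals on hyperplane sections—is required to push the inductive argument through.
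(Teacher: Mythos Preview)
The paper does not prove this theorem; it is quoted as a preliminary result with references to \cite{Klain:short} and \cite{Schneider:simple}. There is therefore no ``paper's own proof'' to compare against.

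As for the substance of your sketch: the overall architecture is correct. The \emph{if} direction is fine, and the reduction of the \emph{only if} direction to homogeneous pieces of fixed parity via McMullen's decomposition is standard and correctly stated. The plan to handle each $\phi_i^\pm$ by restricting to hyperplanes and inducting on dimension is exactly the skeleton of the Klain--Schneider argument.

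What is missing is precisely the content of the two cited papers. In the even case, the phrase ``bipyramid construction together with continuity of the resulting hyperplane datum'' does not quite describe Klain's actual proof: the decisive step is the injectivity of the Klain map $\phi\mapsto \operatorname{Kl}_\phi$ on $\Val_k^+$, which associates to $\phi$ a continuous function on the Grassmannian $\operatorname{Gr}(k,n)$ and is proved by an inductive scheme that pushes the valuation down one degree after restriction to a hyperplane. Your sketch does not supply this, and neither does the paper. In the odd case you explicitly concede that ``a more intricate analysis \ldots\ is required'', which is an honest admission that the hard part of Schneider's theorem---the construction and analysis of an associated simple valuation on spherical polytopes---has not been carried out.

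In short: your outline is accurate as a roadmap, but it is not a proof, and you say so yourself. Since the paper treats the theorem as a black box, this is the appropriate level of detail for a preliminaries section; if you intend it as an actual proof, the inductive steps in both parity cases need to be written out.
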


Representation theoretic methods have  played an importation role in the development of valuation theory.
We  will review fundamental notions from representation theory in Section~\ref{sec:smooth}, and we postpone the discussion of more advanced topics to that section. Here we mention only  that the action of the general linear group on $\Val(\RR^n)$ is defined by 
$$ (g\cdot \phi)(K) = \phi(g^{-1}K).$$
A key property of this action is known as Alesker's irreducibility theorem.

\begin{theorem}[Alesker \cite{Alesker:Irreducibility}] \label{thm:alesker} Under the action of $GL(n,\RR)$, the spaces $\Val_i^\pm(\RR^n)$ are irreducible.
\end{theorem}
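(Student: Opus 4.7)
\emph{Strategy.} The plan is to show that every nonzero closed $GL(n,\RR)$-invariant subspace of $\Val_i^\pm(\RR^n)$ coincides with the whole space. The extreme degrees are settled immediately: $\Val_0 = \Val_0^+$ and $\Val_n = \Val_n^+$ are one-dimensional by Hadwiger's Theorem~\ref{thm:hadwiger}, and $\Val_{n-1}^\pm$ is accessible through the description in Theorem~\ref{thm:McMullen-n-1}. For $0 < i < n-1$ I would embed $\Val_i^\pm(\RR^n)$ $GL(n,\RR)$-equivariantly into a function space on a Grassmannian and translate irreducibility into a representation-theoretic problem there.

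\emph{Klain and Schneider embeddings.} In the even case, for $\phi \in \Val_i^+(\RR^n)$ and $E \in \mathrm{Gr}_i(\RR^n)$, the restriction $\phi|_E$ is a translation invariant even continuous valuation of degree $\dim E = i$ on $E$; applying Theorem~\ref{thm:hadwiger} inside $E$ yields a unique scalar $\mathrm{Kl}_\phi(E)$ with $\phi|_E = \mathrm{Kl}_\phi(E)\cdot \vol_E$. Klain's theorem asserts that the resulting map $\mathrm{Kl}\colon \Val_i^+(\RR^n) \to C(\mathrm{Gr}_i(\RR^n))$ is injective, continuous, and $GL(n,\RR)$-equivariant with respect to the naturally twisted action on $C(\mathrm{Gr}_i)$. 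In the odd case, $\mathrm{Kl}$ vanishes and I would replace it by Schneider's analogous embedding into continuous sections over the Grassmannian of oriented $i$-planes, constructed via Goodey--Weil distributions or the cosine transform of odd signed measures on $S^{n-1}$. Either way, the problem reduces to showing that the image of the embedding contains no proper nonzero closed $GL(n,\RR)$-invariant subspace.

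\emph{The hard step.} To complete the argument I would pass to the subspace of smooth vectors in $\Val_i^\pm(\RR^n)$, turning it into a $(\gl(n,\RR), O(n))$-Harish-Chandra module, and decompose it into $O(n)$-isotypic components. The $O(n)$-type decompositions of $C^\infty(\mathrm{Gr}_i(\RR^n))$ and of its oriented double cover are classically known, so the Klain or Schneider embedding identifies precisely which $O(n)$-types occur in $\Val_i^\pm$. The principal obstacle---and the heart of Alesker's original argument---is to show that the $\gl(n,\RR)$-action connects all of these $O(n)$-types: given any nonzero closed invariant subspace, choose an $O(n)$-finite vector inside it and then generate every other occurring $O(n)$-type by iterated infinitesimal action. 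This connectivity step is where serious extra input is needed, in Alesker's treatment from density of linear combinations of mixed volume valuations combined with Casimir computations and branching rules for $O(n) \subset GL(n,\RR)$. Once it is in place, every nonzero closed invariant subspace is dense in $\Val_i^\pm(\RR^n)$, hence equal to it, and irreducibility follows.
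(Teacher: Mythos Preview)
The paper does not prove this theorem; it is stated as a result of Alesker and cited to \cite{Alesker:Irreducibility}, then used as a black box throughout (notably in Lemma~\ref{lemma:CW-thm} and in the reduction of Conjecture~\ref{conj:sm-dense} to Conjecture~\ref{conj:irr}). There is therefore no ``paper's own proof'' to compare against.

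As for your sketch itself: the outline is broadly faithful to Alesker's original strategy, but what you have written is a description of a proof rather than a proof. You correctly identify the Klain embedding in the even case, and you are right that the odd case requires a different injection (though what you call ``Schneider's embedding'' is not quite standard; Alesker's actual argument in the odd case goes through a more involved analysis). The decisive content, however, lies entirely in the paragraph you label \emph{The hard step}, and there you explicitly defer to ``serious extra input'' without supplying it. Showing that the $\gl(n,\RR)$-action links all occurring $O(n)$-types is not a routine verification: it requires identifying the $K$-types precisely, computing how elements of $\mathfrak p$ move between them, and ruling out any proper sub-collection closed under these transitions. None of this is carried out. So your proposal is an accurate roadmap but not a proof, and since the paper itself makes no attempt to reprove the result, the honest thing is simply to cite Alesker as the paper does.
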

As usual in the context of infinite-dimensional representation theory, irreducible means that there exist no nontrivial invariant closed subspaces. 

Finally, a valuation $\phi\in \Val(\RR^n)$ is called smooth if there exist a number $c\in \CC$  and a differential form $\omega\in \Omega^{n-1}(S\RR^n)^{tr}$ such that, for all convex bodies $K$,  
\begin{equation}\label{eq:smooth-val}\phi(K)= c \vol(K)+ \int_{\nc(K)} \omega. \end{equation}
 The subspace of smooth valuations will be denoted by 
$\Val^{sm}$. It follows from Alesker's irreducibility  theorem, that smooth valuation are dense.  We postpone the discussion of properties of smooth valuations to Section~\ref{sec:smooth} after we have reviewed the necessary background from representation theory, but mention already here that there  exists a counterpart to Theorem~\ref{thm:McMullen-n-1} for smooth valuations of degree $1$. A proof of an apparently stronger, but in fact  equivalent statement,  can be found in the appendix of \cite{Schuster:log}. We denote by $h_K\colon \RR^n\to \RR$, $h_K(x)= \sup_{y\in K} \langle x,y\rangle$, the support function of a convex body $K$. 

\begin{theorem}\label{thm:1-hom}
Let $\phi\in\Val_1^{sm}(\RR^n)$. Then there exists a unique smooth function $f\colon S^{n-1}\to \CC$ with the property $\int_{S^{n-1}} f(u) \, du =0$ such that 
$$ \phi(K)= \int_{S^{n-1}} h_K(u) f(u) \, du$$
holds for all convex bodies $K$. 
\end{theorem}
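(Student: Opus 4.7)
The plan is to exhibit the map $T\colon f \mapsto T(f)$, where $T(f)(K) = \int_{S^{n-1}} h_K(u) f(u)\,du$, as a bijection (under the stated normalization) between smooth functions on $S^{n-1}$ and $\Val_1^{sm}(\RR^n)$.

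First I would verify that $T(f)$ lands in $\Val_1^{sm}$ for appropriate $f$. The valuation property uses the identity $h_{K\cup L} + h_{K\cap L} = h_K + h_L$ for convex unions; continuity comes from Hausdorff continuity of $K \mapsto h_K$; and $1$-homogeneity follows from $h_{tK} = t h_K$. A direct computation gives $T(f)(K + x) - T(f)(K) = \langle x, \int u f(u)\,du\rangle$, so translation invariance requires the first-moment condition $\int u f(u)\,du = 0$. To check that $T(f)$ is a smooth valuation in the sense of \eqref{eq:smooth-val}, I would use the identity $h_K(u) = \langle x, u\rangle$ valid on $\nc(K)$ to rewrite $T(f)(K) = \int_{\nc(K)} \langle x, u\rangle f(u)\, \beta$, where $\beta$ pulls back the volume form of $S^{n-1}$, and then replace the non-translation-invariant integrand by a translation-invariant representative: the translation-variance equals an exact form whose integral over the closed current $\nc(K)$ vanishes by Stokes' theorem once the moment condition holds.

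Second, I would prove surjectivity onto $\Val_1^{sm}$. Every $\phi \in \Val_1^{sm}$ admits a representation $\phi(K) = \int_{\nc(K)} \omega$ with $\omega \in \Omega^{n-1}(S\RR^n)^{tr}$. Decomposing $\omega$ by bidegree with respect to the splitting $S\RR^n = \RR^n \times S^{n-1}$, only the summand with one horizontal ($dx$) factor and $n-2$ vertical factors contributes to the degree $1$ part. For smooth strictly convex $K$, the Gauss map identifies $\nc(K)$ diffeomorphically with $S^{n-1}$; pulling back $\omega$ yields an integrand of the form $P(h_K, \nabla^2 h_K)\,du$ that is polynomial in the spherical Hessian of $h_K$ and linear in $h_K$ itself. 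Integration by parts on $S^{n-1}$ then transfers all derivatives off $h_K$, producing $\phi(K) = \int_{S^{n-1}} h_K(u) f(u)\,du$ for some smooth $f$, and the identity extends to all $K \in \calK(\RR^n)$ by continuity.

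Finally, uniqueness follows because the linear span of $\{h_K : K \in \calK(\RR^n)\}$ is dense in $C^\infty(S^{n-1})$ (every smooth function on the sphere is a difference of support functions of smooth strictly convex bodies), so $T(f) \equiv 0$ together with the first-moment condition forces $f$ into a finite-dimensional subspace of low-degree spherical harmonics; the normalization $\int f\,du = 0$ then pins down a unique representative. The main obstacle is the integration-by-parts step in the surjectivity argument, which requires careful bookkeeping of the bidegree decomposition of $\omega$, control of the Gauss map pullback (via the relation between $\nabla^2 h_K$ and the inverse Weingarten map), and ensuring the resulting identity extends from smooth strictly convex bodies to arbitrary convex bodies.
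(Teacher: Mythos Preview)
The paper does not prove this theorem itself; it cites the appendix of \cite{Schuster:log} for a proof of an equivalent statement. So there is no in-paper argument to compare against.

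Your overall strategy is reasonable, and the surjectivity sketch via bidegree decomposition and integration by parts is along the right lines. The uniqueness step, however, is confused. You claim that $T(f) \equiv 0$ together with the first-moment condition forces $f$ into a finite-dimensional space of low-degree harmonics, with the normalization $\int f\,du = 0$ then selecting a representative. In fact no normalization is needed: since every smooth function on $S^{n-1}$ is a difference of two support functions of smooth strictly convex bodies, the linear span of $\{h_K|_{S^{n-1}}\}$ is all of $C^\infty(S^{n-1})$, so $T(f) = 0$ forces $f = 0$ directly. (You may be thinking of the degree $n-1$ case, Theorem~\ref{thm:McMullen-n-1}, where the representing function is determined only modulo linear functions.) Note also a tension between your argument and the stated hypothesis: the condition you correctly derive from translation invariance is the first-moment condition $\int u f(u)\,du = 0$, whereas the statement imposes the zeroth-moment condition $\int f(u)\,du = 0$. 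These are not equivalent, and indeed the first intrinsic volume $V_1 \in \Val_1^{sm}$ is represented by a nonzero constant $f$, for which $\int f\,du \neq 0$; the stated normalization thus appears to be a misprint rather than something your proof should try to establish.
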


\section{Kiderlen--Weil decomposition and the valuation property}
\label{sec:weil}

In this section, we summarize the main results of the theory  developed by Kiderlen and Weil in \cite{KiderlenWeil:Measure, Weil:IGTF1, Weil:IGTF2}.
The central objects of \cite{Weil:IGTF2} are so-called  local functionals and their kernels $\Phi$, where the latter is a continuous translation invariant curvature measure in our sense with the additional property that for every Borel set $\beta$,
$$ K \mapsto \Phi(K,\beta)$$
is measurable. This technical property, which is in fact automatically satisfied, is irrelevant to foundational results presented in this section.

The first theorem shows that the space of curvature measures is naturally graded. In this context, a curvature measure $\Phi\in \Curv(\RR^n)$ is called  \emph{homogeneous of degree} $\alpha\in \RR$ if 
$$ \Phi(t K, t \beta)= t^\alpha \Phi(K,\beta)$$
holds for all convex bodies $K$, Borel sets $\beta$, and numbers $t>0$. The subspace of curvature measures of degree $\alpha$ is denoted by $\Curv_\alpha$.

We will refer to the following result as the Kiderlen--Weil decomposition. It can be regarded as an analogue of the McMullen decomposition for translation invariant valuations, and it implies, in particular,  that the degree of a curvature measure is always an integer between $0$ and $n$. 
\begin{theorem}[Kiderlen--Weil \cite{KiderlenWeil:Measure}] \label{thm:KW}
	Every curvature measure $\Phi\in \Curv(\RR^n)$ can be uniquely written as 
	$$ \Phi= \Phi_0+ \cdots + \Phi_n$$
	with $\Phi_i\in \Curv_i$ for $i=0,\ldots, n$. 
	
\end{theorem}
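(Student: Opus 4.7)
My strategy is to mirror the classical proof of McMullen's decomposition theorem for translation invariant valuations, adapted to the measure-valued setting. The essential goal is to show that for each $\Phi \in \Curv(\RR^n)$, convex body $K$, and bounded Borel set $\beta$, the function $t \mapsto \Phi(tK, t\beta)$ is a polynomial in $t > 0$ of degree at most $n$; the polynomial coefficients then furnish the desired homogeneous components $\Phi_i$.

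The starting point is the scalar special case, which comes essentially for free from Weil's valuation property for curvature measures (which we may assume, as it is recalled further in this section). The total mass $\phi(K) := \Phi(K, \RR^n)$ is then a continuous translation invariant scalar valuation, and McMullen's theorem yields $\phi(tK) = \sum_{i=0}^n t^i \phi_i(K)$. To promote this from the scalar total mass to the measure-valued setting, I would test $\Phi^{(t)}(K, \cdot) := \Phi(tK, t\,\cdot)$ against compactly supported smooth functions $g$ and establish that
$$F_{K,g}(t) \;:=\; \int_{\RR^n} g(x) \, \Phi(tK, t\, dx), \qquad t > 0,$$
is a polynomial in $t$ of degree at most $n$ for each fixed $K$ and $g$. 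The natural tool is the Hadwiger--McMullen Minkowski polynomial expansion, applied to the scalar valuation $K \mapsto \int g \, d\Phi(K,\cdot)$ (which is indeed a continuous valuation by Weil's result, albeit not translation invariant). Setting $K_1 = \cdots = K_m = K$ and $\lambda_j = t/m$ in the Minkowski expansion and combining with the simultaneous rescaling of $g$ should isolate polynomial dependence on $t$. The resulting coefficients define candidate homogeneous curvature measures $\Phi_i$, whose locality, translation invariance, and continuity are inherited from those of $\Phi$ through the polynomial dependence; uniqueness follows from the linear independence of the monomials $\{t^i\}_{i=0}^n$.

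The principal difficulty, and the step where I expect the main obstacle, is establishing polynomiality of $F_{K,g}$. Unlike McMullen's scalar setting, here the test set scales simultaneously with $K$, coupling the two scalings; moreover the scalar valuation $K \mapsto \int g\,d\Phi(K,\cdot)$ fails to be translation invariant (translating $K$ effectively translates the argument of $g$), so McMullen's polynomiality does not apply off the shelf. A workaround should be available by exploiting locality: for $g$ supported in a small region compared to $K$, the valuation property combined with a partition-of-unity argument and translation invariance of $\Phi$ should reduce the problem to McMullen's theorem applied to a suitable family of genuinely translation invariant valuations. An additional technical hurdle is verifying that the polynomial coefficients in $t$ genuinely define Radon measures (rather than merely functionals on $C_c^\infty$) that depend continuously on $K$; this should follow from uniform bounds on $\Phi^{(t)}$ for $t$ in compact subsets of $(0,\infty)$, themselves consequences of the continuity axiom together with translation invariance.
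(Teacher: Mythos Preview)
The paper does not prove this theorem; it is quoted from \cite{KiderlenWeil:Measure}, so there is no in-paper argument to compare against.

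Your proposal has a potential circularity issue and a genuine gap. The circularity concern is that you invoke Weil's valuation property (Theorem~\ref{thm:weil}), which is from the later paper \cite{Weil:IGTF2} in the same series and may itself rely on the decomposition you are trying to establish. More seriously, you have correctly identified the crux but not resolved it: the functional $K \mapsto \Phi(K,g)$ is not translation invariant, and McMullen-type polynomiality genuinely fails without that hypothesis. For instance, the continuous valuation $\phi(K) = \vol(K \cap [0,1]^n)$ satisfies $\phi(t[0,1]^n) = \min(t,1)^n$, which is no polynomial; the degree bound $\leq n$ is exactly where translation invariance enters McMullen's proof. Your locality-plus-partition-of-unity workaround does not manufacture a translation invariant scalar valuation to feed into McMullen, and I do not see how it decouples the simultaneous scaling of $K$ and of the test function.

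A route that does work, and is presumably close to what Kiderlen and Weil do (cf.\ Theorem~\ref{thm:polytopes}, from the same source), bypasses McMullen entirely. On a polytope $P$, locality forces the restriction of $\Phi(P,\cdot)$ to the relative interior of each face $F$ to depend only on the normal cone $N(F,P)$; translation invariance along $\dir(F)$ then forces that restriction to be a constant multiple $c_F$ of Lebesgue measure on $F$. Hence $\Phi(P,\beta) = \sum_F c_F \vol_{\dim F}(F\cap\beta)$, and $\Phi(tP,t\beta) = \sum_i t^i \sum_{\dim F = i} c_F \vol_i(F\cap\beta)$ follows at once since normal cones are scale invariant. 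Continuity then extends each homogeneous piece to all convex bodies. This uses only the three defining axioms and requires neither McMullen's theorem nor the valuation property.
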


The following remarkable result is central to our approach to curvature measures.

\begin{theorem}[Weil \cite{Weil:IGTF2}]  \label{thm:weil}	Every curvature measure $\Phi\in \Curv(\RR^n)$ is a valuation with values in $M(\RR^n)$: 
	$$\Phi(K\cup L) = \Phi(K)+ \Phi(L)- \Phi(K\cap L)$$
	whenever $K$ and $L$ are convex bodies such $K\cup L$ is convex.
\end{theorem}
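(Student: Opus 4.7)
My plan has three stages: locality reductions, verification for polytopes, and approximation.

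First, I use locality and translation invariance to pin down $\Phi(K,\cdot)$ on $\interior(K)$ and on $\RR^n\setminus K$. If two convex bodies $K_1,K_2$ both contain an open neighborhood $U$ of a bounded Borel set $\beta$, then locality gives $\Phi(K_1,\beta)=\Phi(K_2,\beta)$; varying $K$ and $\beta$ yields a translation invariant, locally finite Borel measure on $\RR^n$ that coincides with $\Phi(K,\cdot)$ on $\interior(K)$, hence equals $c\vol$ for some scalar $c=c(\Phi)$. For a bounded Borel set $\beta$ disjoint from $K$, one finds an open $U\supset\beta$ disjoint from both $K$ and $K+te$ for all large $t$; locality gives $\Phi(K,\beta)=\Phi(K+te,\beta)$, translation invariance gives $\Phi(K+te,\beta)=\Phi(K,\beta-te)$, and finiteness of the complex measure $\Phi(K)$ forces the right-hand side to $0$ as $t\to\infty$. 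Since $\vol$ is already a valuation, the desired identity holds trivially on Borel sets lying in $\interior(K\cap L)$ or in $\RR^n\setminus(K\cup L)$, and the problem reduces to verifying additivity on Borel sets concentrated in arbitrarily small neighborhoods of $\partial K\cup\partial L$.

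Second, for polytopes $P,Q$ with $P\cup Q$ convex, I cover a neighborhood of $\partial P\cup\partial Q$ by a locally finite family $\{U_\alpha\}$ such that each $U_\alpha$ meets the relative interior of at most one face of each of the four polytopes $P,Q,P\cap Q,P\cup Q$, and I choose a smooth partition of unity $(\psi_\alpha)$ subordinate to this cover. On each $U_\alpha$, locality permits replacing each of the four polytopes by its supporting cone at the corresponding face, so that the additivity identity tested against $\psi_\alpha$ reduces to an identity among four standard-position cones (half-spaces, wedges, or $\RR^n$ itself), verifiable by a short case analysis according to the dimensions and combinatorial relations of the participating faces. Summing over $\alpha$ yields the polytopal case.

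Finally, for arbitrary $K,L$ with $K\cup L$ convex, I approximate by polytope pairs $(P_i,Q_i)$ with $P_i\cup Q_i$ convex and $P_i\to K$, $Q_i\to L$ in the Hausdorff metric---for example, by polytopally approximating $M=K\cup L$ from outside, $M_i\supseteq M$ with $M_i\to M$, and cutting $M_i$ along polyhedral approximations of the portions of $\partial K$ and $\partial L$ lying in $\interior M$. Continuity of $\Phi$ allows passage to the limit in the polytopal identity tested against any $f\in C_b(\RR^n)$, yielding the result as an equality of measures. \emph{The main obstacle} is precisely this last construction: one must choose the cutting surfaces consistently so that $P_i\cup Q_i$ remains convex and the four sequences $P_i,Q_i,P_i\cup Q_i,P_i\cap Q_i$ all Hausdorff-converge to $K,L,K\cup L,K\cap L$, respectively---a coordination problem between the outer polyhedral approximation of $K\cup L$ and inner polyhedral approximations of the common boundary region $\partial(K\cap L)$.
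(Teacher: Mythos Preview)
The paper does not prove this theorem; it is quoted from Weil \cite{Weil:IGTF2}, so there is no in-paper argument to compare against. Judging from the surrounding material, Weil's route presumably runs through the Kiderlen--Weil polytope description (Theorems~\ref{thm:KW} and~\ref{thm:polytopes}, established in the earlier paper \cite{KiderlenWeil:Measure}): once one knows $\Phi_i(P,\beta)=\sum_{F\in\calF_i(P)}\varphi(N(F,P))\vol_i(F\cap\beta)$ with $\varphi$ a simple valuation on cones, the polytope valuation identity follows from the face combinatorics of $P,Q,P\cap Q,P\cup Q$, and continuity does the rest.

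Your Stage~1 is correct and essentially reproduces Lemma~\ref{lemma:supp}. Stage~2, however, has a genuine gap. The assertion that the four local tangent cones are ``half-spaces, wedges, or $\RR^n$ itself'' is false: at a point in the relative interior of a $k$-face with $k\le n-3$ the tangent cone is the product of a $k$-plane with an arbitrary pointed polyhedral cone in $\RR^{n-k}$, and your cover must include such points. More seriously, even after localization you are left with
\[
\Phi(C_{P\cup Q},\psi_\alpha)+\Phi(C_{P\cap Q},\psi_\alpha)=\Phi(C_P,\psi_\alpha)+\Phi(C_Q,\psi_\alpha),
\]
which is precisely the valuation property for the (necessarily truncated, since $\Phi$ is only defined on compact bodies) pair $C_P,C_Q$---the very statement under proof. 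Your sketch handles only the trivial configurations where locally $C_P\subseteq C_Q$ or $C_Q\subseteq C_P$; no mechanism is offered for the genuine ``seam'' case where neither inclusion holds, and this case is not a finite list one can enumerate.

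Stage~3 you rightly flag as an obstacle, and you leave it unresolved. The construction you outline can in principle be carried out, but nothing in your sketch explains why cutting an outer polytope $M_i\supseteq K\cup L$ along a polyhedral approximation of the seam yields two \emph{convex} pieces, nor why the resulting intersections converge to $K\cap L$. Both difficulties are bypassed if one instead proves the polytope formula of Theorem~\ref{thm:polytopes} first (from locality, translation invariance, and homogeneity alone) and reads off additivity on polytopes from it.
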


In their paper  \cite{KiderlenWeil:Measure},  Kiderlen and Weil provide  additional information  about  the measure $\Phi(P)$ for polytopes  $P$. In particular, they show that an $i$-homogeneous curvature measure is supported on the $i$-dimensional faces of $P$. 
In the following theorem, $\calC_i$ denotes the set of  at most $i$-dimensional convex cones in $\RR^n$ with apex at the origin. The normal cone of a polytope $P$ at the face $F$ is denoted by $N(F,P)$ and the set of $i$-dimensional faces of $P$ is denoted by $\calF_i(P)$. 
\begin{theorem}
	\label{thm:polytopes}
	Let $\Phi\in \Curv_i(\RR^n)$. There exists a unique function $\varphi\colon \calC_{n-i}\to \CC$ such that for every polytope $P$
	$$ \Phi(P,\beta)= \sum_{F\in \calF_i(P)}   \varphi(N(F,P))  \vol_i(F\cap \beta).$$
The function $\varphi$ is a simple valuation in the sense that $\varphi(C)=0$ for cones of dimension less than $n-i$ and  
$$ \varphi(C_1\cup C_2)= \varphi(C_1)+ \varphi(C_2)-\varphi(C_1\cap C_2)$$ 
whenever $C_1$, $C_2$, and $C_1\cup C_2$ are in $\calC_{n-i}$.

\end{theorem}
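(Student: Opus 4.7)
The plan is to analyze $\Phi(P,\cdot)$ face by face using the combination of locality, translation invariance, and the $i$-homogeneity guaranteed by $\Phi\in\Curv_i(\RR^n)$. Fix a $j$-dimensional face $F$ of $P$ and a point $x\in\relint F$; after translating, we may assume $x=0$. Choose a small open ball $U$ about $0$ in which $P\cap U$ coincides with the tangent cone $T_0P=L+C$, where $L=\aff(F-x)$ has dimension $j$ and $C\subset L^\perp$ is a pointed cone whose polar in $L^\perp$ is $N(F,P)$. The key point is that $T_0P$ is invariant under translations by vectors in $L$, so combining this invariance with locality and translation invariance of $\Phi$ gives
$$\Phi(P,\beta+v)=\Phi(P,\beta)$$
for small $v\in L$ and Borel $\beta$ contained in a smaller ball $U'\subset U$. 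A standard disintegration argument then produces a product decomposition $\Phi(P,\cdot)|_{U'}=(\mathrm{Leb}_L\otimes\nu_F)|_{U'}$ for a locally finite Borel measure $\nu_F$ on $L^\perp$.

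Next I would bring in the $i$-homogeneity of $\Phi$. Because $T_0P$ is invariant under all positive homotheties about $0$, combining $\Phi(tK,t\beta)=t^i\Phi(K,\beta)$ with locality (applied to $T_0P$ as a substitute for $P$ inside $U$) yields $\Phi(P,t\beta)=t^i\Phi(P,\beta)$ for $\beta$ near $0$ and $t$ in an appropriate range. Translating this into the product decomposition gives the scaling law $\nu_F(tB)=t^{i-j}\nu_F(B)$. Evaluating on the singleton $\{0\}\subset L^\perp$ forces $\nu_F(\{0\})=0$ whenever $j\neq i$, so $\Phi(P,A)=\mathrm{Leb}_L(A)\cdot\nu_F(\{0\})=0$ for every Borel $A\subset\relint F$. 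When $j=i$, $\nu_F$ is $0$-homogeneous and locally finite on $L^\perp$, and a standard annulus argument shows $\nu_F=c_F\,\delta_0$, giving $\Phi(P,\cdot)|_{\relint F}=c_F\vol_i|_F$; translation invariance along $F$ makes $c_F$ independent of $x$.

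Since $\Phi(P,\cdot)$ is supported on $P$ (by locality applied to $\RR^n\setminus P$) and the relative interiors of the faces partition $P$, summing the face-by-face analysis yields
$$\Phi(P,\beta)=\sum_{F\in\calF_i(P)}c_F\,\vol_i(F\cap\beta).$$
The constant $c_F$ depends only on $T_0P$, and $T_0P=L+C$ is itself determined by $N(F,P)$ alone (through $L=(\spn N(F,P))^\perp$ and $C=N(F,P)^\circ$ inside $L^\perp$), so setting $\varphi(N(F,P)):=c_F$ defines the required function. Uniqueness on $(n-i)$-dimensional cones follows because every such pointed cone arises as $N(F,P)$ for some polytope (a short direct construction), and we extend $\varphi$ by zero to lower-dimensional cones. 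The simple valuation property of $\varphi$ is then deduced from Weil's theorem (Theorem~\ref{thm:weil}) applied to pairs $P_1,P_2$ whose union is convex and whose normal cones at suitable $i$-faces realize a prescribed cone identity; matching coefficients in the sum formula transfers the valuation identity from $\Phi$ to $\varphi$.

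The main obstacle is the delicate coupling between the local $L$-translation invariance of $\Phi(P,\cdot)$, which only holds on a sub-ball $U'$, and the $i$-homogeneity, which a priori requires scaling the entire polytope: both must be deployed on a common neighborhood and then combined with locality to validly extract the scaling law $\nu_F(tB)=t^{i-j}\nu_F(B)$. Once that is in place, the vanishing on lower-dimensional faces—arguably the most easily overlooked step—follows cheaply from applying this identity to the singleton $\{0\}$.
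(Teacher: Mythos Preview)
The paper does not supply its own proof of this theorem: it is stated as a result of Kiderlen and Weil and cited from \cite{KiderlenWeil:Measure}. Your outline is the natural argument (and essentially the one in the original source), and it is correct in substance.

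Two remarks on execution. First, regarding the obstacle you flag: the unboundedness of $T_0P$ is handled by replacing it with a truncation $T_0P\cap rB^n$. For $r$ large this agrees with $P$ in $U$, and since $t\cdot(T_0P\cap rB^n)=T_0P\cap trB^n$ still agrees with $P$ in $U$ for $t$ near $1$, locality cleanly transfers the $i$-homogeneity to $\Phi(P,\cdot)$ near the origin, yielding $\nu_F(tB)=t^{i-j}\nu_F(B)$ as you want. Second, your derivation of the valuation property of $\varphi$ from Weil's theorem is correct in principle, but the polytope construction is more delicate than your one-line sketch suggests. Given $(n-i)$-dimensional cones $C_1,C_2$ with $C_1\cup C_2$ convex in $M=\spn(C_1\cup C_2)$, pass to the \emph{polar} cones $C_j^\circ\subset M$: then $C_1^\circ\cap C_2^\circ=(C_1\cup C_2)^\circ$ and $C_1^\circ\cup C_2^\circ=(C_1\cap C_2)^\circ$ is again convex. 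Setting $P_j=Q+(C_j^\circ\cap K)$ with $Q$ an $i$-cube in $L=M^\perp$ and $K\subset M$ a large polytope containing $0$ in its interior gives $P_1\cap P_2$ with normal cone $C_1\cup C_2$ at $Q$ and $P_1\cup P_2$ convex; evaluating Weil's identity on $\beta\subset\relint Q$ (and noting that $Q$ is \emph{not} an $i$-face of $P_1\cup P_2$ when $\dim(C_1\cap C_2)<n-i$, so that term vanishes by your face analysis) yields $\varphi(C_1\cup C_2)=\varphi(C_1)+\varphi(C_2)-\varphi(C_1\cap C_2)$.
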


We will frequently use the following weaker statement, which is an immediate consequence of the locality and translation invariance of $\Phi$; see \cite[p. 124]{Schneider:Curv}. As  a consequence, weak convergence can be replaced by vague convergence in the definition of continuity of a curvature measure.

\begin{lemma}\label{lemma:supp} Let $\Phi\in \Curv(\RR^n)$.  For every convex body $K$, the support of $\Phi(K)$ satisfies
	$$ \supp \Phi(K)\subset K.$$
\end{lemma}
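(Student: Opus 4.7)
The plan is to combine locality and translation invariance of $\Phi$ to show that $\Phi(K,\cdot)$, restricted to the open complement of $K$, is locally a scalar multiple of Lebesgue measure, and then to use finiteness of $\Phi(K,\cdot)$ together with the unboundedness of the connected components of $\RR^n\setminus K$ to force that scalar to vanish.

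To set this up, I would fix any $x\in \RR^n\setminus K$ and choose $r>0$ with $B(x,r)\cap K=\emptyset$. Setting $U:=B(x,2r/3)$, a short check shows that $(K+s)\cap U=\emptyset$ for every $s\in \RR^n$ with $|s|<r/3$, because a point $k\in K$ with $k+s\in U$ would lie in $B(x,r)$. Locality applied on $U$ then gives $\Phi(K,\beta')=\Phi(K+s,\beta')$ for every Borel $\beta'\subset U$, and composing with translation invariance yields
$$\Phi(K,\beta+s)=\Phi(K+s,\beta+s)=\Phi(K,\beta)$$
for every Borel $\beta\subset B(x,r/3)$ and every $s$ with $|s|<r/3$. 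Thus $\mu:=\Phi(K,\cdot)$ is translation invariant on the ball $B(x,r/3)$. Viewing $\mu$ as a distribution on $B(x,r/3)$ and differentiating this identity in $s$ at $s=0$, one concludes that $\partial_i\mu=0$ on $B(x,r/3)$ for each $i$, hence $\mu$ agrees with a constant complex multiple of Lebesgue measure on this ball.

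Since $x\in \RR^n\setminus K$ was arbitrary, $\mu$ restricted to $\RR^n\setminus K$ is locally a constant multiple of Lebesgue measure, and therefore a single constant multiple $c\cdot \vol$ on each connected component of $\RR^n\setminus K$. For $n\geq 2$ this complement is connected, and for $n=1$ it has at most two components; in all cases every component is unbounded and hence of infinite Lebesgue measure. Because $\Phi(K,\cdot)$ is a finite complex measure, the constant on every component must vanish, so $\mu\equiv 0$ on $\RR^n\setminus K$ and $\supp\Phi(K)\subset K$. The only step that is not purely formal is the passage from ``translation invariant finite complex measure on an open ball'' to ``scalar multiple of Lebesgue measure'': one derives $\partial_i\mu=0$ distributionally and invokes that a distribution on a connected open set with vanishing gradient is constant. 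This is where I would be most careful, but it is standard distribution theory rather than a genuine obstacle; the rest of the argument is just bookkeeping with properties (a) and (b) of $\Phi$.
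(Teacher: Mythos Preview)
Your proof is correct and follows exactly the approach the paper indicates: the paper does not spell out a proof but simply states that the lemma ``is an immediate consequence of the locality and translation invariance of $\Phi$'' and cites Schneider. Your argument supplies precisely those details---locality on a ball disjoint from $K$ and $K+s$ forces $\Phi(K,\beta+s)=\Phi(K,\beta)$, hence $\Phi(K,\cdot)$ is locally translation invariant on $\RR^n\setminus K$, and finiteness on an unbounded complement kills it.

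One minor comment: the passage through distributions is valid but heavier than necessary. Once you have $\mu(\beta+s)=\mu(\beta)$ for all $\beta\subset B(x,r/3)$ and $|s|<r/3$, you can iterate this along a ray in $\RR^n\setminus K$ (which is connected for $n\geq 2$, or a half-line for $n=1$) to produce infinitely many pairwise disjoint translates of a small ball, all with the same $\mu$-measure; finiteness of the total variation then forces that common value to be zero directly. This bypasses the ``$\partial_i\mu=0$ implies $\mu$ is constant'' step entirely, though your version is of course also fine.
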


\section{Topology on the space of curvature measures}

Kiderlen and Weil did not specify a topology on the space of curvature measures. 
Since a suitable topology is essential for our approach, we now continue the development of their theory by examining possible choices.

Let $X$ be a topological space and let $E$ be a topological vector space. Let $B(X,E)$ denote the  space of bounded functions $f\colon X\to E$. If the topology on $E$ is induced by a family of seminorms $\{p_\alpha\colon \alpha \in A\}$, then 
 a locally convex topology on $B(X,E)$ is induced by the seminorms
$$ f\mapsto   \sup_{x\in X} p_\alpha(f(x)), \quad \alpha \in A.$$

A curvature measure is by definition a function  $\Phi\colon \calK(\RR^n)\to M(\RR^n)$ satisfying certain additional properties.  
By the Riesz representation theorem, $M(\RR^n)$ can identified with the topological dual of $C_0(\RR^n)$.  On $E'$, the topological dual of a locally convex space $E$, there are three topologies of special importance: The weak topology, the topology of compact convergence, and the topology of bounded convergence.  Let $B\subset \RR^n$ be a bounded set containing the origin in its interior, and let us write, for $f\in C_b(\RR^n)$, 
$$ \Phi(K,f) = \int f(x)\, d\Phi(K,x).$$
Consider the following three topologies on $\Curv(\RR^n)$.

\begin{enuma}
	\item The topology of weak convergence, defined by the family of seminorms
$$ p_{B,f}(\Phi) = \sup_{K\subset B} |\Phi(K,f)|, \quad f\in C_0(\RR^n).$$
\item The topology of compact convergence, defined by the family of seminorms
	\begin{equation}\label{eq:def-cc} p_{B,\calF}(\Phi) = \sup_{K\subset B,\ f\in \calF } |\Phi(K,f)|,\end{equation}
	for all compact subsets $\calF\subset C_0(\RR^n)$. 
\item The topology of bounded convergence, defined by the norm
\begin{equation}\label{eq:def-norm} \| \Phi\|_B = \sup_{K\subset B} \|\Phi(K)\| .\end{equation} 

\end{enuma}

Only for one of these topologies  the natural action of the general linear group is continuous. From this perspective, the topology  of compact convergence is the most natural topology on $\Curv(\RR^n)$, and we will henceforth  exclusively work this topology.

\begin{definition}
	Unless explicitly stated otherwise, we equip $\Curv(\RR^n)$ with the topology of compact convergence.
\end{definition}

In the rest of this section, we will establish the completeness  of the topologies of compact and bounded convergence. We start  by reminding the reader that the Riesz representation theorem asserts that for every locally compact  Hausdorff space $X$, the map $\mu\mapsto [ f\mapsto \int f \, d\mu]$ is an isometric isomorphism from $M(X)$ to $C_0(X)'$. Here $C_0(X)$ denotes the closure of $C_c(X)$, the space of compactly supported continuous functions with respect to the sup norm $\|f\|= \sup_{x\in X}|f(x)|$.  In particular,  the total variation norm of $\mu\in M(X)$ can be expressed as 
$$ \|\mu\| = \sup\left \{ \int_X f \, d\mu\colon f\in C_0(X),\  \|f\|\leq 1\right\}.$$

In combination with the principle of uniform boundedness, this description of the total variation norm implies the following well-known fact.

\begin{lemma}\label{lemma:uniform} Let  $\calM \subset M(X)$. If  for every $f\in C_0(X)$  the set 
	$$ \left \{ \int_X f \, d\mu\colon \mu\in \calM\right\}$$
	is bounded, then $\sup \{ \|\mu\| \colon \mu \in\calM\} <\infty$. 
\end{lemma}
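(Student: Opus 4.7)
The plan is to apply the Banach--Steinhaus theorem (principle of uniform boundedness) to the family of linear functionals on $C_0(X)$ induced by the measures in $\calM$. The setup is laid out immediately before the statement: the Riesz representation theorem identifies each $\mu\in M(X)$ with a bounded linear functional $T_\mu\in C_0(X)^*$ via $T_\mu(f)= \int_X f\,d\mu$, and this identification is an \emph{isometric} isomorphism, so $\|T_\mu\|_{C_0(X)^*}=\|\mu\|$.

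With that identification in place, the hypothesis reads: for every $f\in C_0(X)$, the set $\{T_\mu(f):\mu\in\calM\}\subset\CC$ is bounded. Since $C_0(X)$ is a Banach space in the sup norm, the uniform boundedness principle applies to the family $\{T_\mu:\mu\in\calM\}\subset C_0(X)^*$ and yields
$$\sup_{\mu\in\calM}\|T_\mu\|_{C_0(X)^*}<\infty.$$
Pushing this through the Riesz isometry gives $\sup_{\mu\in\calM}\|\mu\|<\infty$, which is the claim.

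There is essentially no obstacle: the only things to check are the Banach space structure of $C_0(X)$ (standard) and the norm-preservation of the Riesz correspondence (recorded just above the statement). The reason the paper flags this as ``well-known'' is precisely that it is a one-line consequence of Banach--Steinhaus once the Riesz identification is in hand.
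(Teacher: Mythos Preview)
Your proof is correct and follows exactly the same approach as the paper: apply the principle of uniform boundedness to the family of linear functionals $T_\mu(f)=\int_X f\,d\mu$ on the Banach space $C_0(X)$, and then use the Riesz isometry to convert the bound on operator norms into a bound on total variation norms.
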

\begin{proof}
	This follows from an application of the principle of uniform boundedness (see, e.g., \cite[5.12]{Folland:RealAnalysis}) to the family $I_\mu(f)= \int f\, d \mu$, $\mu\in \calM$, of linear functionals.
\end{proof}

While the weak topology on $E'$ is never complete for infinite dimensional metrizable locally convex spaces \cite[p. 148]{SchaeferWolff}, there are many spaces for which the topology of compact convergence is complete.

\begin{lemma} \label{lemma:complete-compact} Let $E$ be a Fr\'echet space. Then the topology of compact convergence on $E'$ is complete.
\end{lemma}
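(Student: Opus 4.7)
The plan is to take a Cauchy net $(\phi_\alpha)$ in $E'$ with respect to the topology of compact convergence and construct a limit functional. First, for every fixed $x\in E$ the singleton $\{x\}$ is compact, so $(\phi_\alpha(x))$ is a Cauchy net of scalars and converges to some $\phi(x)\in \CC$. The map $\phi\colon E\to \CC$ is linear because each $\phi_\alpha$ is linear and pointwise limits preserve linearity.

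The crucial step is to promote $\phi$ to an element of $E'$. Being Cauchy in the topology of compact convergence, the net $(\phi_\alpha)$ is uniformly bounded on every compact subset of $E$, and in particular pointwise bounded. Since $E$ is a Fr\'echet space, the Baire category theorem makes $E$ barrelled, and the Banach--Steinhaus theorem then upgrades pointwise boundedness to equicontinuity. Thus $\{\phi_\alpha\}$ is an equicontinuous subset of $E'$, and a pointwise limit of an equicontinuous family of linear maps is continuous, so $\phi\in E'$.

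To conclude convergence in the topology of compact convergence, I would invoke the classical fact, a form of Ascoli's theorem, that on an equicontinuous set $H\subset E'$ the weak$^*$ topology and the topology of compact convergence coincide. Applied to the equicontinuous set $\{\phi_\alpha\}\cup\{\phi\}$, the pointwise (equivalently, weak$^*$) convergence $\phi_\alpha\to \phi$ established above automatically upgrades to convergence uniformly on every compact subset of $E$, as required.

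The main obstacle is the continuity of the candidate limit $\phi$, and this is precisely where the Fr\'echet hypothesis does its work: it makes $E$ barrelled, which is what allows the passage from pointwise boundedness of $(\phi_\alpha)$ to equicontinuity. Without barrelledness, the pointwise limit could fail to be continuous, or convergence could remain merely pointwise; equicontinuity is the single property that simultaneously secures both the existence of the limit in $E'$ and the upgrade of its mode of convergence.
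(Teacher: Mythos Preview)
Your overall architecture---define the pointwise limit, show it lies in $E'$, then upgrade the mode of convergence---is sound, but the sentence ``Being Cauchy in the topology of compact convergence, the net $(\phi_\alpha)$ is uniformly bounded on every compact subset of $E$, and in particular pointwise bounded'' is false for nets, and this is what your Banach--Steinhaus step rests on. Unlike Cauchy sequences, Cauchy (even convergent) nets need not be bounded: on the directed set $I=(0,\infty)$, the scalar net $\phi_t=1/t$ for $t<1$ and $\phi_t=0$ for $t\ge 1$ is Cauchy and converges to $0$, yet $\{\phi_t:t\in I\}$ is unbounded. In your situation, for each fixed $x\in E$ you only know that \emph{some tail} of $(\phi_\alpha(x))$ is bounded, with the tail depending on $x$; there is no single subfamily $\{\phi_\alpha:\alpha\ge\alpha_0\}$ that is pointwise bounded, so Banach--Steinhaus does not apply, and you have not established equicontinuity.

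The repair does not go through barrelledness but through metrizability, which is what the paper actually exploits. From the Cauchy estimate on a fixed compact $K$ together with pointwise convergence you get $\sup_{x\in K}|\phi_\alpha(x)-\phi(x)|\le\epsilon$ for all $\alpha$ beyond some $\alpha_K$; in particular $\phi$ is bounded on every compact set. Now use that $E$ is metrizable: any null sequence $(x_n)$ has relatively compact range $\{x_n\}\cup\{0\}$, so boundedness of $\phi$ on compacts forces $\phi(x_n)\to 0$, hence $\phi$ is sequentially continuous, hence continuous. This is exactly the mechanism behind the paper's one-line proof, which invokes the relative compactness of null sequences in a Fr\'echet space and then cites Schaefer--Wolff. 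Once $\phi\in E'$ is known, the same Cauchy estimate already gives $\phi_\alpha\to\phi$ uniformly on each compact, so your Ascoli step---while correct as a statement---becomes unnecessary.
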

\begin{proof} Since the range of any null sequence in $E$ is relatively compact, the  statement follows from \cite[Remark IV.6.1]{SchaeferWolff}
\end{proof}

\subsection{The topology of compact convergence}

\begin{lemma}\label{lemma:finite} Let $B\subset \RR^n$ be a bounded subset containing the origin in its interior.	Then the seminorms \eqref{eq:def-cc}
	define a Hausdorff topological vector space topology on  $\Curv(\RR^n)$. 
\end{lemma}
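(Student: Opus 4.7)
The plan is to verify the finiteness of each seminorm $p_{B,\calF}$ via Blaschke's selection theorem together with the uniform boundedness lemma (Lemma~\ref{lemma:uniform}), and then to establish the Hausdorff property by constructing, for any nonzero $\Phi$, a separating seminorm from the family.

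For finiteness, fix a bounded $B$ with $0\in\interior B$ and a compact $\calF\subset C_0(\RR^n)$. The first ingredient is that, by Blaschke's selection theorem, the set $\calK_B:=\{K\in\calK(\RR^n): K\subset \overline B\}$ is compact in the Hausdorff metric. The second ingredient is that for every fixed $f\in C_0(\RR^n)$ the map $K\mapsto \Phi(K,f)$ is continuous on $\calK(\RR^n)$: this is exactly what property (c) of the definition of a curvature measure yields, since weak convergence of the measures $\Phi(K_i)\to \Phi(K)$ tests against bounded continuous, and in particular against $C_0$, functions. Hence $K\mapsto \Phi(K,f)$ is bounded on the compact set $\calK_B$ for each $f\in C_0(\RR^n)$. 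Applying Lemma~\ref{lemma:uniform} to the family $\calM=\{\Phi(K,\cdot):K\in \calK_B\}\subset M(\RR^n)$ converts this pointwise boundedness into the uniform bound
$$ M_B:=\sup_{K\subset B}\|\Phi(K)\|<\infty.$$
Since $\calF$ is compact in $C_0(\RR^n)$, it is norm-bounded, so
$$ p_{B,\calF}(\Phi)\le M_B\cdot \sup_{f\in\calF}\|f\|_\infty<\infty.$$
Subadditivity and absolute homogeneity of $p_{B,\calF}$ are immediate from the corresponding properties of the absolute value and the supremum, so $p_{B,\calF}$ is a seminorm on $\Curv(\RR^n)$. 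The general theory then equips $\Curv(\RR^n)$ with a locally convex topological vector space structure.

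For the Hausdorff property, suppose $\Phi\neq 0$ in $\Curv(\RR^n)$. Then some convex body $K$ satisfies $\Phi(K)\neq 0$ as a measure, and by the Riesz representation theorem there is $f\in C_0(\RR^n)$ with $|\Phi(K,f)|>0$. Choosing any bounded $B$ with $0\in\interior B$ and $K\subset B$, together with the compact singleton $\calF=\{f\}$, one obtains $p_{B,\calF}(\Phi)\ge |\Phi(K,f)|>0$. Thus the given family of seminorms separates points, and the induced topology is Hausdorff.

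The one genuine obstacle is the finiteness step: the continuity coming from property (c) only gives pointwise boundedness of $K\mapsto \Phi(K,f)$ in $f$, and upgrading this to the uniform bound $M_B$ on the total variation norms requires the uniform boundedness principle via Lemma~\ref{lemma:uniform}. Once this uniform bound is secured, the seminorm axioms and the Hausdorff property follow in a routine fashion.
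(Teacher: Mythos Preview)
Your finiteness argument is correct and essentially the same as the paper's: both use continuity of $K\mapsto\Phi(K,f)$ together with compactness of $\{K:K\subset\overline B\}$ to feed Lemma~\ref{lemma:uniform}, and then bound by $\sup_{f\in\calF}\|f\|_\infty$.

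However, your Hausdorff argument has a genuine gap. The set $B$ is \emph{fixed} in the statement of the lemma; the family of seminorms is $\{p_{B,\calF}:\calF\subset C_0(\RR^n)\text{ compact}\}$ for this single $B$. You write ``Choosing any bounded $B$ with $0\in\interior B$ and $K\subset B$'', but you are not allowed to enlarge $B$ to swallow an arbitrary convex body $K$. As written, your argument only shows that the \emph{union} over all bounded $B$ of these families separates points, which is a weaker statement.

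The paper closes this gap using translation invariance and the Kiderlen--Weil decomposition (Theorem~\ref{thm:KW}). If $p_{B,\calF}(\Phi)=0$ for every compact $\calF$, then $\Phi(L)=0$ for every convex body $L\subset B$. Given an arbitrary $K$, choose $\lambda>0$ small enough that some translate of $\lambda K$ lies in $B$; translation invariance then forces $\Phi(\lambda K)=0$. Since this holds for all sufficiently small $\lambda$, writing $\Phi=\Phi_0+\cdots+\Phi_n$ with $\Phi_i$ homogeneous of degree $i$ and evaluating at $n+1$ distinct values of $\lambda$ (Vandermonde) gives $\Phi_i(K)=0$ for each $i$, hence $\Phi(K)=0$. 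This is the missing idea your proof needs.
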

\begin{proof}
	Since $B$ is relatively compact, Lemma~\ref{lemma:uniform} implies that, for every curvature measure, $\|\Phi\|_B$ is finite. 
	Since compact subsets are bounded, for every compact set  $\calF\subset C_0(\RR^n)$, there exists $C>0$ such that  $p_{B,\calF}(\Phi)\leq C \|\Phi\|_B$. Hence  $p_{B, \calF}(\Phi)$ is finite.

Suppose that the curvature measure $\Phi$ satisfies $p_{B,\calF}(\Phi) =0$ for all compact subsets $\calF\subset C_0(\RR^n)$.  Given a convex body $K$, there is $\lambda>0$ such that a translate of $\lambda K$ is contained in $B$. By the translation invariance of $\Phi$, it follows that $\Phi(\lambda K)=0$. Since  this holds for all sufficiently small $\lambda>0$, the Kiderlen--Weil decomposition (Theorem~\ref{thm:KW}) implies 
$ \Phi(K)=0$. 
We conclude that $
\Phi=0$. This shows that the seminorms $p_{B, \calF}$ induce a Hausdorff topology.	
\end{proof}

\begin{lemma}\label{lemma:proj} Let $\Phi= \Phi_0+ \cdots + \Phi_n$ denote the Kiderlen--Weil decomposition.
	 Then 
	$\Phi\mapsto \Phi_i$ is continuous in the topology of compact convergence.  
\end{lemma}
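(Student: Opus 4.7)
The plan is to exploit the scaling action on $\Curv(\RR^n)$ together with a Vandermonde interpolation argument. For each $\lambda>0$ I would introduce the dilation operator $D_\lambda\colon \Curv(\RR^n)\to \Curv(\RR^n)$ defined by $(D_\lambda\Phi)(K,\beta)= \Phi(\lambda K, \lambda \beta)$. By the homogeneity condition defining $\Curv_i$, one has $D_\lambda\Phi_i = \lambda^i \Phi_i$, and hence by the Kiderlen--Weil decomposition (Theorem~\ref{thm:KW})
$$ D_\lambda\Phi = \sum_{i=0}^n \lambda^i \Phi_i $$
for every $\Phi\in \Curv(\RR^n)$.

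The next step is to check that $D_\lambda$ is continuous in the topology of compact convergence. A change of variables in the integral gives $(D_\lambda\Phi)(K,f) = \Phi(\lambda K, f_\lambda)$, where $f_\lambda(x) = f(x/\lambda)$. Since $f\mapsto f_\lambda$ is an isometry of $C_0(\RR^n)$ (and preserves $C_0(\RR^n)$ because $\lambda>0$), it sends every compact subset $\calF\subset C_0(\RR^n)$ to a compact set $\calF_\lambda$. One immediately verifies
$$ p_{B,\calF}(D_\lambda \Phi) = p_{\lambda B,\calF_\lambda}(\Phi), $$
which shows that $D_\lambda$ is continuous.

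To conclude, pick $n+1$ distinct positive numbers $\lambda_0,\ldots,\lambda_n$. The resulting system
$$ D_{\lambda_j}\Phi = \sum_{i=0}^n \lambda_j^i \, \Phi_i, \qquad j=0,\ldots,n, $$
has as its coefficient matrix the Vandermonde matrix $[\lambda_j^i]$, which is invertible. Solving the system expresses each $\Phi_i$ as an explicit linear combination $\Phi_i = \sum_{j=0}^n c_{ij}\, D_{\lambda_j}\Phi$, and the projection $\Phi\mapsto \Phi_i$ is therefore continuous as a finite linear combination of the continuous operators $D_{\lambda_j}$. I do not anticipate a genuine obstacle here; the only point requiring care is the verification that $f\mapsto f_\lambda$ preserves $C_0(\RR^n)$ and sends compact subsets to compact subsets, which is immediate since it is a linear isometry.
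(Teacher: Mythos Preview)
Your approach is the same as the paper's: introduce the dilation operators, use the Kiderlen--Weil decomposition to express $D_\lambda\Phi$ as a polynomial in $\lambda$ with coefficients $\Phi_i$, and invert via Vandermonde. There is, however, one genuine gap in your continuity step.

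You show $p_{B,\calF}(D_\lambda\Phi)=p_{\lambda B,\calF_\lambda}(\Phi)$ and declare this proves continuity of $D_\lambda$. But at this point in the paper the topology is generated by the seminorms $\{p_{B,\calF}:\calF\text{ compact}\}$ for a \emph{fixed} $B$; the equivalence of these families for different choices of $B$ is Lemma~\ref{lemma:equivalent-compact}, which is proved \emph{using} the present lemma. So if $\lambda>1$ (more precisely, if $\lambda B\not\subset B$), the seminorm $p_{\lambda B,\calF_\lambda}$ is not yet known to be dominated by any $p_{B,\calG}$, and your continuity claim becomes circular. The paper handles this by choosing the dilation factors $\epsilon_0<\cdots<\epsilon_n$ small enough that $\epsilon_k B\subset B$ for all $k$, which gives $p_{\epsilon_k B,\calF'}(\Phi)\le p_{B,\calF'}(\Phi)$ directly. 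With that one adjustment---restricting your $\lambda_j$ to be sufficiently small---your argument goes through and matches the paper's.
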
 
\begin{proof}

For every positive number $\lambda$ we define an operator $T_\lambda \colon \Curv(\RR^n)\to \Curv(\RR^n)$ by 
$$ (T_\lambda \Phi )(K,\beta) = \Phi(\lambda K, \lambda \beta).$$ 
From the definition of the norm it is evident that 
$$ p_{B, f} (T_\lambda \Phi) = p_{\lambda B,  f\circ \lambda^{-1}} (\Phi).$$

In terms of the operator $T_\lambda$, the Kiderlen--Weil decomposition can be restated as 
$$ T_\lambda \Phi = \Phi_0 + \lambda \Phi_1 +\cdots +\lambda^n \Phi_n, \quad \lambda >0.$$
By Vandermonde's identity, this relation can be inverted. Let $0<\epsilon$  be so small that $r B\subset B$ for all $0<r\leq \epsilon$ and choose numbers $0<\epsilon_0<\cdots <\epsilon_n\leq \epsilon$. Then exist numbers $a_{ik}$ such that 
$$ \Phi_i = \sum_{i=0}^n a_{ik} T_{\epsilon_k}  \Phi.$$

The set
$$ \calF' := \{  f\circ\epsilon_k^{-1}\colon  f\in \calF, \ k=0,\ldots, n\}$$
is uniformly bounded and equicontinuous, hence again compact by Arzel\`a--Ascoli. 
Using $\epsilon_k B\subset B$, we conclude that  
$$ p_{B,\calF} (\Phi_i)\leq  \max_{k} |a_{ik}| \sum_{k=0}^n p_{\epsilon_k B,\calF' } ( \Phi)\leq  
C  p_{B, \calF'}(\Phi).$$
\end{proof}

\begin{lemma}\label{lemma:equivalent-compact}
	For any two bounded subsets $B,B'\subset \RR^n$ containing the origin in their interior, the seminorms $\{ p_{B,\calF}\colon \calF \text{ compact} \} $ and $\{  p_{B',\calF}\colon \calF \text{ compact}\} $ are equivalent.
\end{lemma}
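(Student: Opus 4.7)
The plan is to pass from one seminorm family to the other by rescaling convex bodies, then to restore the original curvature measure from its dilate via the Kiderlen--Weil decomposition and Lemma~\ref{lemma:proj}. First I would observe that, since $B$ is bounded and $B'$ contains the origin in its interior (and vice versa), there exist $r,s>0$ with $B\subset rB'$ and $B'\subset sB$. By symmetry it therefore suffices to exhibit, for every compact $\calF\subset C_0(\RR^n)$, a compact $\calG\subset C_0(\RR^n)$ and a constant $C>0$ such that
$$p_{B,\calF}(\Phi)\leq C\, p_{B',\calG}(\Phi)\quad \text{for all } \Phi\in\Curv(\RR^n).$$

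For $K\subset B$ the dilate $r^{-1}K$ lies in $B'$, and the change of variables $x=ry$ (exactly as in the proof of Lemma~\ref{lemma:proj}) gives
$$\Phi(K,f) = (T_r\Phi)(r^{-1}K, f_r), \qquad f_r(x):=f(rx).$$
The dilation $f\mapsto f_r$ is an isometry of $C_0(\RR^n)$, so $\calF_r:=\{f_r:f\in\calF\}$ is compact. Taking suprema over $K\subset B$ and $f\in\calF$ and enlarging to a supremum over $K'\subset B'$ yields
$$p_{B,\calF}(\Phi)\leq p_{B',\calF_r}(T_r\Phi).$$

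It then remains to bound $p_{B',\calF_r}(T_r\Phi)$ in terms of $\Phi$ itself. For this I would invoke the Kiderlen--Weil decomposition in the form $T_r\Phi=\Phi_0+r\Phi_1+\cdots+r^n\Phi_n$ (where $\Phi=\Phi_0+\cdots+\Phi_n$), which gives
$$p_{B',\calF_r}(T_r\Phi)\leq \sum_{i=0}^n r^i\, p_{B',\calF_r}(\Phi_i),$$
and then apply Lemma~\ref{lemma:proj} to bound each $p_{B',\calF_r}(\Phi_i)\leq C_i\, p_{B',\calG_i}(\Phi)$ for some compact $\calG_i\subset C_0(\RR^n)$ and constant $C_i>0$. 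Setting $\calG=\calG_0\cup\cdots\cup\calG_n$ (still compact as a finite union) and $C=\sum r^i C_i$ yields the required inequality.

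I do not expect any substantive obstacle. Everything beyond the symmetry reduction is an assembly of two already-proved ingredients: the change-of-variables identity relating $\Phi$ to $T_r\Phi$, and the continuity of the homogeneous projections in Lemma~\ref{lemma:proj}. The only detail worth checking is that $\calF_r$ is compact in $C_0(\RR^n)$, which is immediate from the isometric nature of the dilation on the sup norm.
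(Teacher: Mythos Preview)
Your proposal is correct and follows essentially the same route as the paper: rescale to pass from $B$ to $B'$, use the Kiderlen--Weil decomposition $T_r\Phi=\sum r^i\Phi_i$, and then invoke Lemma~\ref{lemma:proj} to control each $\Phi_i$ by $\Phi$. The paper organizes the same three steps in a slightly different order (it first enlarges $B'$ to $\lambda B$, then applies homogeneity to each $\Phi_i$), but the argument is the same.
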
 
\begin{proof} Choose $\lambda >0$ such that $B'\subset \lambda B$ and put $\calF'=  \{  f\circ \lambda \colon  f\in \calF\}$. Then, using Lemma~\ref{lemma:proj}, we obtain
	$$ p_{B',\calF} (\Phi)\leq p_{\lambda B,\calF} (\Phi)  \leq \sum_{i=0}^n p_{\lambda B,\calF} (\Phi_i) 
	= \sum_{i=0}^n  \lambda^i p_{ B,\calF' }(\Phi_i) \leq C  p_{B,\calF''}( \Phi ).$$ 	
\end{proof}

\begin{proposition}\label{prop:compact-complete}
	The topology of compact convergence on $\Curv(\RR^n)$ is complete.
\end{proposition}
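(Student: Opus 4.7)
The plan is to show that every Cauchy net $(\Phi^\alpha)$ in $\Curv(\RR^n)$ has a limit which again lies in $\Curv(\RR^n)$. First I would construct a pointwise candidate limit: for each convex body $K$, fix a bounded set $B$ with $K \subset B$ and the origin in the interior of $B$. The Cauchy hypothesis for the seminorms $p_{B, \mathcal{F}}$ shows that for every compact $\mathcal{F} \subset C_0(\RR^n)$, the net $(\Phi^\alpha(K))$ is Cauchy in the topology of compact convergence on $M(\RR^n) = C_0(\RR^n)'$. Since $C_0(\RR^n)$ is Banach, hence Fr\'echet, Lemma~\ref{lemma:complete-compact} provides a limit $\Phi(K) \in M(\RR^n)$. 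By the standard uniform-completeness argument, the convergence $\Phi^\alpha \to \Phi$ is in fact in the topology of compact convergence on $\Curv(\RR^n)$: given $\epsilon > 0$ and a seminorm $p_{B, \mathcal{F}}$, pick $\alpha_0$ such that $p_{B, \mathcal{F}}(\Phi^\alpha - \Phi^\beta) < \epsilon$ for $\alpha, \beta \geq \alpha_0$, then pass to the limit in $\beta$ to obtain $p_{B, \mathcal{F}}(\Phi^\alpha - \Phi) \leq \epsilon$.

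It then remains to verify axioms (a)--(c) for $\Phi$. Locality (a) passes to the limit directly: if $K \cap U = L \cap U$ and $f \in C_c(U)$, extended by zero to an element of $C_0(\RR^n)$, the identity $\Phi^\alpha(K, f) = \Phi^\alpha(L, f)$ carries over to $\Phi$, and since $C_c(U)$ determines the restriction of a Borel measure to $U$, the required equality on Borel subsets of $U$ follows. Translation invariance (b) follows analogously from the identity $\Phi^\alpha(K+x, f) = \Phi^\alpha(K, f(\cdot + x))$.

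The hard step will be continuity (c). For $K_j \to K$ in the Hausdorff metric and $f \in C_0(\RR^n)$, I would use the three-term decomposition
$$|\Phi(K_j, f) - \Phi(K, f)| \leq |\Phi(K_j, f) - \Phi^\alpha(K_j, f)| + |\Phi^\alpha(K_j, f) - \Phi^\alpha(K, f)| + |\Phi^\alpha(K, f) - \Phi(K, f)|.$$
Since the $K_j$ eventually lie in a common bounded set $B$, treating $\{f\}$ as a (singleton) compact subset of $C_0(\RR^n)$ bounds the first and third terms by $p_{B, \{f\}}(\Phi - \Phi^\alpha)$, which is small for $\alpha$ large; for such a fixed $\alpha$, the continuity of $\Phi^\alpha$ handles the middle term as $j \to \infty$. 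This is precisely where the choice of the topology of compact (rather than weak) convergence is essential, since it provides the uniform control over $K_j \subset B$ needed to absorb the tail terms.
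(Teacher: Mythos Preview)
Your proposal is correct and follows essentially the same approach as the paper: construct the pointwise limit $\Phi(K)$ via Lemma~\ref{lemma:complete-compact}, pass to the limit in the Cauchy estimate to obtain convergence in the seminorms $p_{B,\calF}$, verify locality and translation invariance by testing against $f\in C_c(U)$, and handle continuity via the same three-term splitting with the singleton $\{f\}$ as the compact set. The only cosmetic difference is that the paper invokes Lemma~\ref{lemma:equivalent-compact} explicitly to justify using arbitrarily large $B$ (so that any given $K$ or convergent sequence $K_j$ is covered), whereas you fix $B$ depending on $K$; this amounts to the same thing.
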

\begin{proof}
		Let $(\Phi^{(s)})_{s\in \calI}$ be a Cauchy net in $\Curv(\RR^n)$ for the topology of compact convergence. Equivalently, for every  compact subset $\calF\subset   C_0(\RR^n)$ and every $\epsilon>0$, there exists $s_0\in \calI$ such that $s,t\geq s_0$ implies, 
	\begin{equation}\label{eq:net-conv}
		p_{B,\calF}(\Phi^{(s)} - \Phi^{(t)})<\epsilon. \end{equation}
	By Lemma~\ref{lemma:equivalent-compact}, the same is true for the  seminorms $ p_{rB,\calF}$ with $r>0$. 
	It follows that for every convex body $K$ in $\RR^n$, the measures $(\Phi^{(s)}(K))_{s\in \calI}$ form a Cauchy net  in $M(\RR^n)$ for the topology of compact convergence. Since this topology is complete (Lemma~\ref{lemma:complete-compact}), there exists a complex Borel measure $\Phi(K)$ such that 
	$\lim_{s\in I} \Phi^{(s)} (K)= \Phi(K)$.
	
	To prove locality,  let $K,L$ be two convex bodies in $\RR^n$ and suppose that $K\cap U = L\cap U$ for some open set $U$. Since $\{f\}$ is compact, we have $\Phi^{(s)}(K,f)\to \Phi(K,f)$ for every $f\in C_0(\RR^n)$. Consequently, 
	$$ \Phi(K,f)-\Phi(L,f)= \lim_{s\in \calI} \Phi^{(s)}(K,f)-\Phi^{(s)}(L,f) =0$$ 
	for every continuous, compactly supported  function $f$ in $U$. It follows that 
	$\Phi(K,\beta)-\Phi(L,\beta)=0$ for every Borel set $\beta \subset U$. 
	
	Similarly, we can conclude that $\Phi(K+x,\beta+x)= \Phi(K,\beta)$ holds for all $x\in \RR^n$, Borel sets $\beta$ and convex bodies $K$. 
	
	To prove the continuity of $\Phi$, let
	$(K_i)$ be a sequence of convex bodies converging to $K$.
	Choose $r>0$ so large that $K$ and all the $K_i$ are contained in $rB$.
	Passing to the limit in $t$ in  \eqref{eq:net-conv} implies that   for every  $f\in C_0(\RR^n)$ and $\epsilon>0$ there exists a number  $s_0$ such that 
	\begin{equation}\label{eq:weak_uniform_bound}\sup_{K\subset rB} |\Phi^{(s)}(K,f) - \Phi(K,f)| \leq \epsilon \end{equation}
	for all $s\geq s_0$.
	Fix $f\in C_0(\RR^n)$ and choose $s$ so large that \eqref{eq:weak_uniform_bound} holds. Choose $i_0$ so that 
	$$ |\Phi^{(s)}(K_i,f) - \Phi^{(s)}(K_i,f)|<\epsilon$$
	for all $i\geq i_0$.
	
	We conclude that 
	\begin{align*}|\Phi(K_i,f)& - \Phi(K,f)|\\  & \leq  |\Phi(K_i,f)-\Phi^{(s)}(K_i,f) | 
		+ |\Phi^{(s)} (K_i,f)-\Phi^{(s)}(K,f) | + |\Phi^{(s)} (K,f)-\Phi(K,f)|\\
		&\leq 2 \epsilon \|f\|  +\epsilon,
	\end{align*} 
	for all $i\geq i_0$ and hence $ \Phi(K_i)\stackrel{w}{
		\to} \Phi(K)$, as desired.
	
	 Passing to the limit  in  \eqref{eq:net-conv} implies that   for every $\epsilon>0$ there exists a number  $s_0$ such that 
	$ p_{B,\calF}( \Phi^{(s)} -\Phi)\leq \epsilon $ for all $s\geq s_0$. We conclude that $\lim_{s\in \calI}\Phi^{(s)} = \Phi$ in the topology of compact convergence.
\end{proof}

\subsection{The norm topology}

In the proof of Lemma~\ref{lemma:finite} we have already shown that $\|\Phi\|_B$ is always finite and that $\|\Phi\|_B=0$ implies $\Phi=0$. As above, one can prove  that $\Phi\mapsto \Phi_i$ is continuous in the norm topology and that, for any two bounded subsets $B,B'\subset \RR^n$ containing the origin in their interior, the norms $\|\cdot\|_B$ and $\|\cdot\|_{B'}$ are equivalent.

\begin{proposition}
	The norm $\|\cdot\|_B$ on  $\Curv(\RR^n)$ is complete.
\end{proposition}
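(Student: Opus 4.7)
The plan is to reduce to the already-established completeness of the compact-convergence topology (Proposition~\ref{prop:compact-complete}) and then upgrade convergence from compact to norm using the fact that the total variation norm on $M(\RR^n)$ is lower semicontinuous with respect to the weak topology $\sigma(C_0(\RR^n)',C_0(\RR^n))$.

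First, I would observe that the norm $\|\cdot\|_B$ is finer than the topology of compact convergence: if $\calF\subset C_0(\RR^n)$ is compact, then it is bounded by some $M<\infty$ in the sup norm, and the duality formula $\|\mu\|=\sup\{|\int f\,d\mu|\colon f\in C_0,\ \|f\|\le 1\}$ gives $p_{B,\calF}(\Phi)\le M\,\|\Phi\|_B$. Consequently, any Cauchy sequence $(\Phi^{(s)})$ for $\|\cdot\|_B$ is also Cauchy in the topology of compact convergence, so by Proposition~\ref{prop:compact-complete} it converges in that coarser topology to some $\Phi\in\Curv(\RR^n)$. In particular, for every convex body $K$ and every $f\in C_0(\RR^n)$, we have $\Phi^{(t)}(K,f)\to\Phi(K,f)$, i.e.\ $\Phi^{(t)}(K)\to\Phi(K)$ in the weak topology on $M(\RR^n)$.

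Next, I would upgrade this to norm convergence of the measures at each fixed $K$, and then conclude. The key point is that, since $\|\mu\|=\sup\{|\int f\,d\mu|\colon f\in C_0(\RR^n),\ \|f\|\le 1\}$, the total variation norm is the supremum of a family of weakly continuous seminorms and is therefore weakly lower semicontinuous. Fix $\epsilon>0$ and choose $s_0$ with $\|\Phi^{(s)}-\Phi^{(t)}\|_B<\epsilon$ for all $s,t\ge s_0$. For any $K\subset B$ and any $s\ge s_0$, the weak convergence $\Phi^{(t)}(K)\to\Phi(K)$ and the lower semicontinuity yield
$$
\|\Phi^{(s)}(K)-\Phi(K)\|\;\le\;\liminf_{t}\|\Phi^{(s)}(K)-\Phi^{(t)}(K)\|\;\le\;\epsilon .
$$
Taking the supremum over $K\subset B$ gives $\|\Phi^{(s)}-\Phi\|_B\le\epsilon$ for all $s\ge s_0$, which is exactly norm convergence $\Phi^{(s)}\to\Phi$.

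There is no real obstacle here: the heavy lifting (pointwise limit, locality, translation invariance, and weak continuity of the limit) has already been done inside Proposition~\ref{prop:compact-complete}, and the only additional ingredient needed is the well-known weak lower semicontinuity of the total variation norm, which follows immediately from its description via duality with $C_0(\RR^n)$. The mild point to keep in mind is that the norms $\|\cdot\|_B$ and $\|\cdot\|_{B'}$ are equivalent for different choices of $B$ (as noted in the paragraph preceding the proposition), so the limit $\Phi$ is unambiguously a curvature measure on all of $\calK(\RR^n)$.
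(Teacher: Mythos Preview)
Your proof is correct and follows essentially the same route as the paper: reduce to the already-established completeness in the compact-convergence topology, then pass to the limit in the Cauchy inequality $|\Phi^{(s)}(K,f)-\Phi^{(t)}(K,f)|<\epsilon$ (for $\|f\|\le 1$) as $t\to\infty$ and take the supremum over $K\subset B$ and $f$. Your phrasing via weak lower semicontinuity of the total variation norm is exactly this step unpacked.
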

\begin{proof}
Suppose that $(\Phi^{(s)})$ is a Cauchy sequence in $\Curv(\RR^n)$ for the norm $\|\cdot \|_B$. Then $(\Phi^{(s)})$ is also a Cauchy sequence for the topology of compact convergence and Proposition~\ref{prop:compact-complete} implies the existence of $\Phi\in \Curv(\RR^n)$ such that $\lim_{t\to\infty} \Phi^{(t)} =\Phi$ in the topology of weak convergence. By the Riesz representation theorem, for every $\epsilon>0$, there exists $s_0\in \NN$ such that  $s,t\geq s_0$ implies, for all $K\subset B$ and all $f$ with $\|f\|\leq 1$, 
$$ |\Phi^{(s)}(K,f)-\Phi^{(t)}(K,f)|<\epsilon.$$
Passing to the limit in $t$ in this inequality, yields $\|\Phi^{(s)} - \Phi\|_B\leq \epsilon$ for all $s\geq s_0$. This finishes the proof. 
\end{proof}

\section{The kernel of the globalization map}

In this section, we connect curvature measures with valuations via two fundamental maps. 

\subsection{The globalization map and Bernig embedding}
According to Weil's theorem (Theorem~\ref{thm:weil}), the following  map is well defined. 

\begin{definition}
	The map $\glob \colon \Curv(\RR^n)\to \Val(\RR^n)$, defined by 
	$$ \glob(\Phi) = \Phi(K,\RR^n)= \Phi(K,1)$$
	is called the globalization map. 
\end{definition}

The following properties of the globalization map are evident from its definition.
\begin{lemma}
	The globalization map is 
	\begin{enuma}
		\item linear,
		\item continuous, and
		\item  preserves degree and parity:
		$$\glob(\Curv_i^\pm)\subset \Val_i^{\pm}.$$ 
	\end{enuma}
\end{lemma}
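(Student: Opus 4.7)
My plan is to verify the three claims in order, noting that (a) and (c) follow essentially formally from the definition of $\glob$, while (b) requires a small trick based on the localization of the measure $\Phi(K)$.

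For (a), linearity is immediate: for each fixed $K$ the evaluation $\mu \mapsto \mu(\RR^n)$ is a linear functional on $M(\RR^n)$, so $\Phi \mapsto \Phi(K, \RR^n)$ is linear in $\Phi$. For the degree part of (c), applying the homogeneity definition with $\beta = \RR^n$ gives $\glob(\Phi)(tK) = \Phi(tK, t\RR^n) = t^i \Phi(K, \RR^n) = t^i \glob(\Phi)(K)$ whenever $\Phi \in \Curv_i$, so $\glob(\Phi) \in \Val_i$. For parity, the identity $-\RR^n = \RR^n$ shows that $\glob$ intertwines the action of $-\id$, since $\glob((-\id) \cdot \Phi)(K) = \Phi(-K, -\RR^n) = \Phi(-K, \RR^n) = \glob(\Phi)(-K) = ((-\id) \cdot \glob(\Phi))(K)$. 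Thus $\glob$ maps $\pm$-eigenspaces to $\pm$-eigenspaces, giving $\glob(\Curv_i^\pm) \subset \Val_i^\pm$.

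The main obstacle is (b), because the topology of compact convergence on $\Curv(\RR^n)$ only controls integrals $\Phi(K,f)$ against functions $f \in C_0(\RR^n)$, whereas $\glob(\Phi)(K) = \Phi(K, \mathbf 1)$ pairs $\Phi(K)$ against the constant function $1$, which does not lie in $C_0(\RR^n)$. To bridge this gap, I would fix a bounded neighborhood $B$ of the origin and choose a cutoff $\chi \in C_c(\RR^n) \subset C_0(\RR^n)$ with $\chi \equiv 1$ on an open neighborhood of the closure $\b B$. By Lemma~\ref{lemma:supp}, every convex body $K \subset B$ satisfies $\supp \Phi(K) \subset K \subset \b B$, which yields the key identity $\Phi(K, \mathbf 1) = \Phi(K, \chi)$. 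Hence
\[
\|\glob(\Phi)\|_B \;=\; \sup_{K \subset B} |\Phi(K, \chi)| \;=\; p_{B, \{\chi\}}(\Phi),
\]
and since $\{\chi\}$ is trivially a compact subset of $C_0(\RR^n)$, this estimate exhibits $\glob$ as a continuous map from $\Curv(\RR^n)$, equipped with the topology of compact convergence, to $\Val(\RR^n)$, equipped with the norm $\|\cdot\|_B$.
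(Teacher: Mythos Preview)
Your proof is correct and aligns with what the paper has in mind: the paper simply declares these properties ``evident from its definition'' without further argument, and your verification of (a) and (c) is exactly the routine unpacking one would expect. Your treatment of (b) via the cutoff $\chi$ and Lemma~\ref{lemma:supp} is the natural way to make explicit the one point that is not entirely formal, namely that the constant function $1$ lies outside $C_0(\RR^n)$; this is presumably what the authors had in mind but did not spell out.
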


We will see in the next section that when restricted to smooth curvature measures and valuations, the globalization map becomes a surjection. Hence the problem of understanding the space of all curvature measures is essentially reduced to understanding  the kernel of the globalization map. We introduce the following notation.
\begin{definition}
	The kernel of the globalization map is denoted by $\Z(\RR^n)$. The subspaces of curvature measures of fixed degree and parity are denoted by $\Z_i^-(\RR^n)$ and $\Z_i^+(\RR^n)$ 
\end{definition}

The crucial tool in our investigation of the kernel of the globalization will be an embedding of $\Z(\RR^n)$ into another space of valuations that we  describe next. 
If $V$ is a complex vector space, we denote by $\Val(\RR^n,V)$ the vector space of continuous translation invariant valuations with values in $V$.  We call $\phi\in \Val(\RR^n, V)$ even if $\phi(-K)= \phi(K)$, and odd if  $\phi(-K)= -\phi(K)$, for all convex bodies $K$.

\begin{lemma}
	Let $\Phi\in \Z(\RR^n)$ belong to the kernel of the globalization map. Define 
	$$ B(\Phi)\colon \calK(\RR^n)\to \CC^n$$
	by $$ B(\Phi)(K) = ( \Phi(K,x_1), \ldots,  \Phi(K,x_n)),$$
	where $x_i\colon \RR^n\to \RR$ are the coordinate functions. 
	Then $B(\Phi)\in \Val(\RR^n,\CC^n)$. 
\end{lemma}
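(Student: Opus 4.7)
I plan to verify that $B(\Phi)$ belongs to $\Val(\RR^n,\CC^n)$ by checking each defining property in turn: well-definedness, the valuation identity, translation invariance, and continuity. The kernel hypothesis $\glob(\Phi) = 0$ will be used essentially in the translation invariance step.

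First I would note that $B(\Phi)(K)$ is well defined: by Lemma~\ref{lemma:supp}, $\supp \Phi(K) \subset K$, and since $K$ is bounded, the coordinate functions $x_i$ are bounded on $\supp \Phi(K)$; hence $\Phi(K,x_i) = \int x_i \, d\Phi(K,\cdot)$ is finite. The valuation property in $\CC^n$ reduces to the scalar identity $\Phi(K\cup L,x_i) = \Phi(K,x_i)+\Phi(L,x_i)-\Phi(K\cap L,x_i)$ whenever $K\cup L$ is convex, which is immediate from Weil's valuation theorem (Theorem~\ref{thm:weil}) applied to $\Phi$ by integrating the measure-valued identity against $x_i$.

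For translation invariance, I would compute, for $t \in \RR^n$ and any bounded Borel $f$,
$$ \Phi(K+t, f) = \int f(y)\, d\Phi(K+t,y) = \int f(y+t)\, d\Phi(K,y), $$
using property (b) in the definition of $\Phi$ (the translation relation $\Phi(K+t,\beta+t)=\Phi(K,\beta)$) combined with a standard change of variables argument that first checks the identity for indicators and then extends to bounded Borel functions. Applying this to $f = x_i$ and expanding $x_i(y+t) = x_i(y) + t_i$ gives
$$ \Phi(K+t,x_i) = \Phi(K,x_i) + t_i\, \Phi(K,\RR^n) = \Phi(K,x_i), $$
because $\Phi \in \Z(\RR^n)$ kills totals. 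This is precisely the step where the kernel hypothesis is needed.

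The main obstacle is continuity, because the coordinate functions $x_i$ are not in $C_0(\RR^n)$ and the continuity axiom (c) only gives weak convergence of measures, i.e.\ convergence tested against $C_0$. To circumvent this, let $K_j \to K$ in the Hausdorff metric and fix $R>0$ so large that $K$ and all the $K_j$ are contained in $B_R := R \cdot B_1(0)$. Choose $\chi \in C_c(\RR^n)$ with $\chi \equiv 1$ on $B_R$ and set $\tilde x_i := \chi \cdot x_i \in C_c(\RR^n) \subset C_0(\RR^n)$. By Lemma~\ref{lemma:supp} the supports of $\Phi(K)$ and each $\Phi(K_j)$ are contained in $B_R$, so $\Phi(K_j,x_i) = \Phi(K_j,\tilde x_i)$ and $\Phi(K,x_i) = \Phi(K,\tilde x_i)$. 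Weak convergence $\Phi(K_j) \to \Phi(K)$ from property (c) then yields $\Phi(K_j,\tilde x_i) \to \Phi(K,\tilde x_i)$, and therefore $B(\Phi)(K_j) \to B(\Phi)(K)$ in $\CC^n$. This completes the verification that $B(\Phi) \in \Val(\RR^n,\CC^n)$.
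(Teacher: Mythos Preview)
Your proof is correct and follows the same approach as the paper. The paper's proof is terser---it declares continuity and the valuation property ``clearly satisfied'' and writes out only the translation-invariance step---whereas you supply the details, in particular the cutoff argument for continuity that handles the fact that $x_i\notin C_0(\RR^n)$.
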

\begin{proof}
	Since continuity and the valuation property are clearly satisfied, it only remains to consider  invariance under translations. For every $v\in \RR^n$, we have  
	$$ \Phi(K+v,x_i)= \Phi(K, x_i + v_i)= \Phi(K,x_i),$$
	since $\Phi$ globalizes to zero. This shows that $B(\Phi)(K+v)= B(\Phi)(K)$ as desired.
\end{proof}

Recall that a continuous function $g\colon \RR^n\to \RR$ is called piecewise linear if there exists a locally finite simplicial subdivision $\calT$ of $\RR^n$ such that, for each $T\in \calT$, the restriction $g|_T$ is affine. 

\begin{lemma} \label{lemma:pl}
	Let $f\in C(\RR^n)$, let $K\subset \RR^n$ be compact, and let $\epsilon>0$. Then there exists a piecewise linear function $g$ such that 
	$$ \sup_K| f-g |< \epsilon.$$
\end{lemma}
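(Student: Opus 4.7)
The plan is to prove this by approximating $f$ using a piecewise affine interpolation on a sufficiently fine simplicial subdivision of $\RR^n$. The statement is a classical approximation fact, so the argument is largely a matter of assembling standard ingredients in the right order.

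First I would enlarge $K$ slightly: fix a bounded open neighborhood $U$ of $K$ with compact closure $\overline U$, and restrict attention to $\overline U$. Since $f$ is continuous on the compact set $\overline U$, it is uniformly continuous there, so I can choose $\delta>0$ such that $|f(x)-f(y)|<\epsilon$ whenever $x,y\in \overline U$ satisfy $|x-y|<\delta$.

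Next I would produce a locally finite simplicial subdivision $\mathcal T$ of $\RR^n$ in which every simplex has diameter less than $\delta$. A concrete construction is to start from the standard cubical tiling of $\RR^n$ by cubes of side length $s$ with $s\sqrt{n}<\delta$, and then subdivide each cube into simplices using the standard Kuhn triangulation (the order simplices associated with the $n!$ permutations of the coordinate directions). This yields a simplicial complex whose vertices form a lattice, whose simplices all have diameter at most $s\sqrt{n}<\delta$, and which is compatible across cubes.

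I would then define $g\colon \RR^n\to \RR$ by setting $g(v)=f(v)$ at every vertex $v$ of $\mathcal T$ and extending affinely on each simplex via barycentric coordinates. This $g$ is continuous and piecewise linear in the sense of the paper. For $x\in K$, let $T\in\mathcal T$ be a simplex containing $x$, with vertices $v_0,\ldots,v_n$ and barycentric coordinates $\lambda_0,\ldots,\lambda_n\ge 0$ summing to $1$. Then
$$f(x)-g(x)=\sum_{i=0}^n \lambda_i\bigl(f(x)-f(v_i)\bigr),$$
and for fine enough $s$ each vertex $v_i$ lies in $\overline U$ with $|x-v_i|<\delta$, so $|f(x)-f(v_i)|<\epsilon$. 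The triangle inequality then gives $|f(x)-g(x)|<\epsilon$ uniformly on $K$.

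There is no real obstacle in the argument; the only point requiring mild care is exhibiting a simplicial subdivision of $\RR^n$ with controlled mesh size, for which the Kuhn triangulation (or any barycentric refinement of a cubical decomposition) suffices.
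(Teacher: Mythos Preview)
Your proof is correct and follows essentially the same approach as the paper: choose a sufficiently fine triangulation, set $g(v)=f(v)$ at the vertices, and extend affinely on each simplex. The paper's proof is a one-sentence sketch of exactly this idea, while you have supplied the standard details (uniform continuity, an explicit Kuhn triangulation, and the barycentric-coordinate estimate).
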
 
\begin{proof}
	The statement of the lemma is well known. To obtain $g$, choose of a sufficiently fine triangulation $\calT$, let $g(x)=f(x)$ at the vertices, and define $g|_T$ by affine extension.
\end{proof}

The following injectivity property is the main reason for considering $B(\Phi)$.
In a slightly different context \cite[Lemma~4.5]{SolanesW:Spheres}, the proof of the following theorem was suggested by A.~Bernig.

\begin{theorem}
	Let $\Phi\in \Z(\RR^n)$ belong to the kernel of the globalization map. If $B(\Phi)=0$, then $\Phi=0$. 	
\end{theorem}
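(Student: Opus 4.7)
My plan is to approximate continuous test functions by piecewise linear ones, combining the valuation property of $\Phi$ with Lemma~\ref{lemma:pl}. Since $\Phi(K)$ is a finite complex Borel measure with compact support contained in $K$ (Lemma~\ref{lemma:supp}), and such a measure is determined by its action on continuous functions, it suffices to show $\Phi(K,f) = 0$ for every convex body $K$ and every $f \in C(\RR^n)$. The hypotheses $\Phi \in \Z(\RR^n)$ and $B(\Phi) = 0$ say exactly that $\Phi(K,1) = 0$ and $\Phi(K,x_j) = 0$ for $j = 1,\dots,n$, and so by linearity $\Phi(K, L) = 0$ for every affine function $L \colon \RR^n \to \CC$.

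The heart of the argument is to promote this vanishing to arbitrary piecewise linear $g \colon \RR^n \to \RR$ with underlying triangulation $\calT$. Let $T_1, \dots, T_m$ be the finitely many simplices of $\calT$ that meet $K$, put $K_S := K \cap \bigcap_{j \in S} T_j$ for $\emptyset \neq S \subset \{1,\dots,m\}$, and observe that $K = \bigcup_j (K \cap T_j)$ while all the $K_S$ are convex bodies. By Theorem~\ref{thm:weil} the map $K \mapsto \Phi(K,g)$ is a continuous scalar valuation, so Groemer's extension theorem furnishes the inclusion--exclusion identity
\[ \Phi(K, g) = \sum_{\emptyset \neq S} (-1)^{|S|+1}\, \Phi(K_S, g). \]
For each nonempty $S$, fix $j_0 \in S$; then $K_S \subset T_{j_0}$, and $g$ agrees on $T_{j_0}$ with some affine function $L_{j_0}$ on $\RR^n$. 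Since $\supp \Phi(K_S) \subset K_S \subset T_{j_0}$ by Lemma~\ref{lemma:supp}, we have $\Phi(K_S, g) = \Phi(K_S, L_{j_0}) = 0$, so every term vanishes and $\Phi(K, g) = 0$.

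To conclude, given $f \in C(\RR^n)$ and $\epsilon > 0$, Lemma~\ref{lemma:pl} (applied to real and imaginary parts) produces a piecewise linear $g$ with $\sup_K |f - g| < \epsilon$. Because $\Phi(K)$ is supported on $K$ with finite total variation $\|\Phi(K)\|$,
\[ |\Phi(K, f)| = |\Phi(K, f - g)| \leq \epsilon\, \|\Phi(K)\|, \]
and letting $\epsilon \to 0$ gives $\Phi(K, f) = 0$, as required.

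The main obstacle is the inclusion--exclusion identity in the second paragraph: intermediate unions of the form $K \cap (T_{j_1} \cup \cdots \cup T_{j_k})$ are generally not convex, so the binary valuation property is not enough to establish it by direct induction. I would resolve this by appealing to Groemer's extension theorem, which extends any continuous scalar valuation on $\calK(\RR^n)$ uniquely to an additive functional on the convex ring and produces precisely the identity used above.
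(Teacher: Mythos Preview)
Your proof is correct and follows essentially the same route as the paper: reduce to affine test functions via piecewise linear approximation (Lemma~\ref{lemma:pl}), and kill the piecewise linear term by inclusion--exclusion over the simplices of the triangulation, using that $\supp\Phi(K_S)\subset K_S$ lies in a single simplex where $g$ is affine. The paper invokes the same inclusion--exclusion principle (citing \cite[Theorem 5.1.1]{KlainRota}) but writes the resulting identity more tersely as $\Phi(K,g)=\sum_{T\in\calT}\Phi(K\cap T,g)$; your explicit discussion of why Groemer's extension is needed to justify this step is if anything more careful than the paper's presentation.
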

\begin{proof}
	Observe that if $\Phi$ satisfies $B(\Phi)=0$, then 
	$\Phi(K,h)=0$ for every affine function. 
	
	Let $K$  be a fixed convex body and let $f\in C_b(\RR^n)$. Let $\epsilon>0$ and choose a piecewise linear approximation $g$ of $f$ as in Lemma~\ref{lemma:pl}. Let  $\wt{g|_T}\colon \RR^n\to \RR$ denote the affine function extending $g|_T$. By the inclusion exclusion principle for continuous valuations, see \cite[Theorem 5.1.1]{KlainRota}, and Lemma~\ref{lemma:supp}, 
	$$ \Phi(K, g)= \sum_{T\in \calT} \Phi(K\cap T,g)=\sum_{T\in \calT} \Phi(K\cap T,\wt{g|_T}) =0.$$
	Consequently, $$|\Phi(K,f)| \leq |\Phi(K,f-g)|+ |\Phi(K,g)|\leq \|\Phi(K)\| \epsilon.$$
	Since $\epsilon>0$ was arbitrary, we conclude that $\Phi(K)=0$. 
	
\end{proof}

\begin{definition} The map $B\colon \Z(\RR^n)\to \Val(\RR^n,\CC^n)$ is called the Bernig embedding.
\end{definition}

We end this subsection by collecting the basic properties of the Bernig embedding.

\begin{lemma} The Bernig embedding is 
	\begin{enuma}
		\item linear, 
		\item continuous, and 
		\item increases the degree by $1$ and interchanges parity:
		$$ B(\Z_i^\pm) \subset \Val_{i+1}^{\mp}(\RR^n,\CC^n).$$
	\end{enuma}
\end{lemma}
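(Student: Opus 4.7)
Each of (a), (b), (c) will be verified by unpacking definitions; none of the three requires heavy machinery, and I expect only the continuity step to require a small technical observation.

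For (a), fix $K\in \calK(\RR^n)$ and $j\in\{1,\ldots,n\}$; the map $\Phi\mapsto \Phi(K,x_j)= \int x_j\,d\Phi(K)$ is linear in $\Phi$, and so $B$ is linear.

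For (c), I would rewrite homogeneity and parity in functional form. The relation $\Phi(tK,t\beta)=t^i\Phi(K,\beta)$ for $\Phi\in\Z_i^\pm$ and $t>0$ is equivalent, first on indicator functions and then by a standard monotone class argument, to
\[
	\Phi(tK,f) \;=\; t^i \int_{\RR^n} f(tx)\,d\Phi(K)(x)
\]
for every bounded Borel $f$. Applying this to $f=x_j$ gives $\Phi(tK,x_j)=t^{i+1}\Phi(K,x_j)$, so $B(\Phi)(tK)=t^{i+1}B(\Phi)(K)$, showing that $B(\Phi)\in\Val_{i+1}$. The parity statement is handled identically: $\Phi(-K,-\beta)=\pm\Phi(K,\beta)$ yields $\Phi(-K,f)=\pm\Phi(K, f\circ(-\id))$, and since $x_j\circ(-\id)=-x_j$, the sign flips once more to give $B(\Phi)(-K)=\mp B(\Phi)(K)$. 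This puts $B(\Phi)$ in the opposite parity class.

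The only step with any substance is (b). The difficulty to watch out for is that the coordinate functions $x_j$ do not lie in $C_0(\RR^n)$, so the seminorms $p_{B,\calF}$ that define the topology of compact convergence do not evaluate on them directly. The remedy is localization via Lemma~\ref{lemma:supp}: for every $K\subset B$ the measure $\Phi(K)$ has support in $K\subset B$, so $x_j$ may be replaced by a compactly supported cutoff. Concretely, fix $\chi\in C_c(\RR^n)$ with $\chi\equiv 1$ on $B$; then the finite set $\calF=\{\chi x_1,\ldots,\chi x_n\}$ is compact in $C_0(\RR^n)$ and $\Phi(K,x_j)=\Phi(K,\chi x_j)$ for $K\subset B$, whence
\[
	\|B(\Phi)\|_B \;\leq\; C\,\max_{j}\sup_{K\subset B}|\Phi(K,\chi x_j)| \;\leq\; C\, p_{B,\calF}(\Phi),
\]
with $C$ depending only on the chosen norm on $\CC^n$. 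Continuity follows. I would describe this as the heart of the argument, but it is not a genuine obstacle; it is merely the observation that to compare the two topologies one must first localize away the unboundedness of the test functions $x_j$.
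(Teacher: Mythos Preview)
Your proof is correct and follows the same (essentially unique) route as the paper, which merely states that all three properties are immediate consequences of the definition of $B$. Your write-up is more explicit than the paper's one-line proof, and your continuity argument—localizing the unbounded test functions $x_j$ via Lemma~\ref{lemma:supp} by a cutoff $\chi$ so as to land in $C_0(\RR^n)$—is exactly the detail the paper suppresses.
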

\begin{proof}
	These properties are immediate consequences of the definition of $B$.
\end{proof}

\subsection{The image of the Bernig embedding}

In what follows it will be crucial to obtain precise information about the image of the Bernig embedding. This subsection contains two preliminary results in this direction. The first is a simple geometric statement, the second is slightly more involved. 

For an affine subspace $E\subset \RR^n$, let $\dir(E)= E-x$, for any $x\in E$, denote the direction space of $E$.

\begin{lemma}\label{lemma:subspace}
	If $\phi\in \Val_{k+1}(\RR^n,\CC^n)$ belongs to the image of the Bernig embedding, then 
	$$ \phi(K)\in \dir(\aff K) $$
\end{lemma}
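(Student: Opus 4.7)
The plan is to show that every $\RR$-linear functional $\xi\colon \RR^n\to \RR$ that vanishes on $\dir(\aff K)$ annihilates $\phi(K)$; interpreting $\phi(K)\in \dir(\aff K)$ as meaning $\phi(K)\in \dir(\aff K)\otimes_\RR \CC$, this suffices.

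Write $\phi = B(\Phi)$ with $\Phi\in \Z(\RR^n)$, so $\phi(K)_i = \Phi(K,x_i)$ for the coordinate functions $x_i\colon \RR^n\to \RR$. Fix any $\xi \in (\RR^n)^*$ vanishing on $\dir(\aff K)$. By $\CC$-linearity of $f\mapsto \Phi(K,f)$ in the (bounded Borel) argument $f$, the $\CC$-linear extension of $\xi$ satisfies
$$ \xi_\CC(\phi(K)) = \sum_{i=1}^n \xi(e_i)\, \Phi(K,x_i) = \Phi\!\left(K, \sum_{i=1}^n \xi(e_i)\,x_i \right) = \Phi(K,\xi).$$

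Now observe that since $\xi$ vanishes on $\dir(\aff K)$, it is constant on $\aff K$; let $c$ denote this value. By Lemma~\ref{lemma:supp}, $\supp \Phi(K)\subset K \subset \aff K$, so $\xi\equiv c$ on $\supp \Phi(K)$. Therefore
$$ \Phi(K,\xi)= \int \xi\, d\Phi(K) = c\,\Phi(K,\RR^n) = c\cdot \glob(\Phi)(K) = 0, $$
because $\Phi\in \Z(\RR^n)=\ker\glob$. Combining these identities gives $\xi_\CC(\phi(K))=0$ for every such $\xi$, which is exactly the statement $\phi(K)\in \dir(\aff K)\otimes_\RR \CC$.

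I do not anticipate any real obstacle: the argument is a direct computation from the definition of $B$, the vanishing condition defining $\Z(\RR^n)$, and the support property of Lemma~\ref{lemma:supp}. The only point requiring a sentence of justification is the interpretation of the conclusion for the $\CC^n$-valued $\phi(K)$, for which the dual formulation via annihilating functionals is the natural one.
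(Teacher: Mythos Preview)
Your proof is correct and takes a genuinely different route from the paper's argument. The paper first uses translation invariance to reduce to $0\in K$, then continuity to reduce to polytopes, and finally invokes the Kiderlen--Weil formula (Theorem~\ref{thm:polytopes}) to write $B(\Phi)(K)$ explicitly as a linear combination of centroids of faces, which visibly lie in $\spn K$. Your argument is more elementary: it works directly for arbitrary convex bodies using only the support inclusion $\supp\Phi(K)\subset K$ (Lemma~\ref{lemma:supp}) and the defining condition $\glob(\Phi)=0$, and never needs the polytope structure theorem. The paper's approach has the mild virtue of producing an explicit witness for $\phi(K)\in\spn K$, but at the cost of invoking a nontrivial external result; your dual formulation via annihilating functionals is cleaner. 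One cosmetic point: you write ``bounded Borel'' for the argument $f$, but $x_i$ and $\xi$ are unbounded; this is harmless since $\Phi(K)$ has compact support, so integration of continuous (not necessarily bounded) functions is unproblematic.
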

\begin{proof} Suppose $\phi= B(\Phi)$. 
	Since $\phi$ is translation invariant, we may assume $0\in K$. Moreover, by the continuity of $\phi$, we may further assume that $K$ is polytope. In this case, by Theorem~\ref{thm:polytopes}, we evidently have
	$$\phi(K)= B(\Phi)(K)= \sum_{F\in \calF_{k}(K)} \varphi(N(F,K)) \int_{F} x\, dx \in \spn K.$$  
\end{proof}

Suppose $\Phi\in \Z_{n-2}(\RR^n)$. Since  $B(\Phi)$ is an $(n-1)$-homogeneous valuation, McMullen's theorem (Theorem~\ref{thm:McMullen-n-1})  implies the existence of a continuous function $f\colon S^{n-1}\to \CC^n$ such that 
\begin{equation} \label{eq:B-f}   B(\Phi)(K)= \int_{S^{n-1}} f(u)\, d\!\S_{n-1}(K,u).
\end{equation} 
Recall that   the function $f$ is unique up to the addition of linear maps $\lambda\colon \RR^n\to \CC^n$. Observe  that for every such $\lambda$, the function $x\mapsto 
\langle \lambda(x),x\rangle$ is a homogeneous quadratic polynomial  on $\RR^n$.  Consequently, it follows that  condition \eqref{eq:cond-quadratic} below is independent of the particular choice of $f$.

\begin{proposition}\label{prop:imB}
		Suppose the continuous function $f\colon S^{n-1}\to \CC^n$ satisfies \eqref{eq:B-f}. Then there  exists a homogeneous quadratic polynomial $p\colon \RR^n\to \CC $ such that 
	\begin{equation}\label{eq:cond-quadratic} \langle f(u),u\rangle = p(u), \quad u\in S^{n-1}.\end{equation}
\end{proposition}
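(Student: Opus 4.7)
The plan is to apply Lemma~\ref{lemma:subspace} to appropriate test bodies and, in the even-parity case, to combine the resulting constraints with the polytope formula (Theorem~\ref{thm:polytopes}). First, let $K$ be an $(n-1)$-dimensional convex body contained in a hyperplane $H$ with unit normal $u_0$. Then $\S_{n-1}(K,\cdot) = \vol_{n-1}(K)(\delta_{u_0} + \delta_{-u_0})$, so $B(\Phi)(K) = \vol_{n-1}(K)(f(u_0) + f(-u_0))$. By Lemma~\ref{lemma:subspace} this vector lies in $\dir(\aff K) = u_0^\perp$. Dividing by $\vol_{n-1}(K)$ and pairing with $u_0$ yields $\langle f(u_0) + f(-u_0), u_0\rangle = 0$, which---after writing $g(u) := \langle f(u), u\rangle$ and using $\langle f(-u_0), u_0\rangle = -g(-u_0)$---is equivalent to the statement that $g$ is an even continuous function on $S^{n-1}$.

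Next, I would split by parity. If $\Phi$ is odd, then $B(\Phi)$ is even and $f$ may be chosen even; the identity above strengthens to $\langle f(u_0), u_0\rangle = 0$ for every $u_0$, so $g \equiv 0$ and $p = 0$ works. If $\Phi$ is even, then $f$ may be chosen odd, the identity $f(u_0) + f(-u_0) = 0$ is automatic, and only the evenness of $g$ has been extracted; the quadratic structure must come from a deeper analysis.

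For the remaining even case my plan is to test $B(\Phi)$ on thin prisms $P_\epsilon := P + [0, \epsilon u_0]$, where $P \subset u_0^\perp$ is an $(n-1)$-dimensional polytope. The surface area measure decomposes as $\S_{n-1}(P_\epsilon, \cdot) = \vol_{n-1}(P)(\delta_{u_0}+\delta_{-u_0}) + \epsilon\, \iota_* \mu_P$, where $\iota$ is the inclusion of the equator $u_0^\perp \cap S^{n-1}$ into $S^{n-1}$ and $\mu_P$ is the surface area measure of $P$ regarded as an $(n-1)$-dimensional body inside $u_0^\perp$; integrating against $f$ and using that $f$ is odd gives $B(\Phi)(P_\epsilon) = \epsilon \int f \, d\mu_P$. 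On the other hand, Theorem~\ref{thm:polytopes} expresses $B(\Phi)(P_\epsilon)$ as a sum over the three types of $(n-2)$-faces of $P_\epsilon$ (top, bottom, vertical), with weights given by the simple cone valuation $\varphi$ associated with $\Phi$. Equating the two expressions and matching the $u_0$-component at order $\epsilon$, then invoking Minkowski's uniqueness theorem for surface area measures inside $u_0^\perp$, should give a functional identity determining $\langle f(u), u_0\rangle$ for $u \in u_0^\perp \cap S^{n-1}$ in terms of $\varphi(\pos(u, u_0))$ modulo a linear function of $u$. Letting $u_0$ range over $S^{n-1}$ and combining the resulting compatibility relations with the translation invariance of $\Phi$ should then force the spherical-harmonic content of $g$ to reduce to degrees $0$ and $2$, i.e.\ to the restriction of a homogeneous quadratic polynomial.

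The main obstacle is this final global assembly step. The individual identities obtained from fixing $u_0$ only constrain the tangential components $\langle f(u), u_0\rangle$ for $u \perp u_0$, whereas $g(u) = \langle f(u), u\rangle$ is precisely the component that escapes every single such identity. Extracting it requires combining the identities across different choices of $u_0$ in a way that exploits both the polytope formula and the translation invariance of $\Phi$.
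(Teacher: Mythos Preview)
Your proposal is incomplete, and you correctly identify where the gap lies. The flat-body test and the parity split are fine, and in the odd-$\Phi$ case your argument is complete: $g$ is both even (from the flat-body test) and odd (since $f$ can be taken even), hence zero. But in the even-$\Phi$ case your thin-prism test only yields the tangential components $\langle f(u),u_0\rangle$ for $u\perp u_0$, and as you yourself note, $g(u)=\langle f(u),u\rangle$ is exactly the component that escapes every such test. Your ``global assembly'' paragraph is a hope, not an argument: you have not explained how to patch these tangential identities into a constraint on $g$, and there is no obvious mechanism for doing so, since varying $u_0$ just produces more tangential information.

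The paper avoids this trap by choosing test bodies for which pairing $B(\Phi)(K)$ with a direction \emph{not} orthogonal to the body directly isolates $g$. Concretely, for $u$ in the open positive orthant, take the simplex $S(u)$ cut out by the coordinate hyperplanes $x_i=0$ and the hyperplane $\langle u,x\rangle=1$. On the one hand, using the polytope formula (Theorem~\ref{thm:polytopes}) for $\Phi$, the moments $\int_F x\,dx$ over the codimension-$2$ faces are explicit in terms of the vertices $u_k^{-1}e_k$, and pairing $B(\Phi)(S(u))$ with $u$ collapses, after using $\glob(\Phi)=0$, to a weighted sum of the face volumes $\vol_{n-2}(F_{ij})=\tfrac{1}{(n-2)!}\,u_iu_j/(u_1\cdots u_n)$ --- a homogeneous quadratic in $u$ divided by $u_1\cdots u_n$. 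On the other hand, $\S_{n-1}(S(u),\cdot)$ is supported at $-e_1,\dots,-e_n,u$, so the same pairing equals $\sum_i \langle f(-e_i),u\rangle u_i + \langle f(u),u\rangle$ up to the same prefactor. Comparing the two gives $g(u)=p(u)$ for a quadratic $p$ on the open positive orthant; repeating with rotated bases covers $S^{n-1}$ by open sets on each of which $g$ is a quadratic, and a locally polynomial function on $\RR^n\setminus\{0\}$ is globally a polynomial. The point is that by pairing with $u$, the direction of the moving facet, one lands directly on $g(u)$ rather than on a tangential component.
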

\begin{proof}	
	Let $S(u)$ be the simplex in $\RR^n$ bounded by the hyperplanes $x_i=0$ and $\langle u,x\rangle =1$ for a direction  $u\in S^{n-1}$ belonging to the positive orthant.
	
	By Theorem~\ref{thm:polytopes},  there exists a simple valuation $\varphi$ on $\calC_{2}$ such that 
	$$ \Phi(S(u), \beta) = \sum_{ i<j}  \varphi(N_{ij}) \vol_{n-2}(F_{ij}\cap \beta), 
	$$
	where $F_{ij}= F_i\cap F_j$ denotes the  codimension $2$ faces of $S(u)$ and the facets $F_0,\ldots, F_n$ are labeled such that $-e_i$ is the outer unit normal to $F_i$. 
	Since all faces of $S(u)$ are  simplices, their centroids are expressible as a sum of their vertices. Hence
	$$   B(\Phi)(S(u))  = \frac{1}{n-1} \sum_{1\leq  i<j}  \varphi(N_{ij})  \vol_{n-2}(F_{ij})  \sum_{k\neq i,j} u^{-1}_ke_k  + \frac{1}{n-1} \sum_{j=1}^n  \varphi(N_{0j}) \vol_{n-2}(F_{0j}) \sum_{k\neq j} u^{-1}_ke_k. 
	$$
	Since $\glob(\Phi)=0$, it follows that 
	\begin{align*}   \langle B(\Phi)(S(u)), u\rangle  &  = \frac{n-2}{n-1}  \sum_{ 1\leq i<j}  \varphi(N_{ij})  \vol_{n-2}(F_{ij}) +  \sum_{j=1}^n  \varphi(N_{0j}) \vol_{n-2}(F_{0j}) \\
		&= - \frac{1}{n-1}  \sum_{1\leq  i<j}  \varphi(N_{ij})  \vol_{n-2}(F_{ij}) \\
		&= 	- \frac{1}{(n-1)!} \frac{1}{u_1\cdots u_n}  \sum_{1 \leq  i<j}  \varphi(N_{ij})  u_i u_j, 
	\end{align*} 
	where we used
	$$ \vol_{n-2}(F_{ij}) = \frac{1}{(n-2)!} \frac{u_iu_j}{u_1\cdots u_n}.$$	
	
	On the other hand, if   $f\colon S^{n-1}\to \CC^n$ satisfies \eqref{eq:B-f}, then 
	$$(n-1)!  u_1\cdots u_n \langle  B(\Phi)(S(u)), u\rangle  =  \sum_{i=1}^n  \langle  f(-e_i),u\rangle u_i  +  \langle  f(u), u\rangle.$$
	We conclude that that there exists a homogeneous quadratic polynomial $p\colon \RR^n\to \CC$ such that, for $u\in S^{n-1}\cap (0,\infty)^n$, 
	$$ \langle f(u),u\rangle = p(u).$$
	
		Observe that the preceding argument is independent of the choice of orthonormal basis of $\RR^n$. Thus for each positive orthant $O$ there exists a quadratic polynomial $p_O$ such that $\langle f(u),u\rangle =p_O(u)$ for $u \in S^{n-1}\cap \interior O$.  Since a function on $\RR^n\setminus \{0\}$ that is locally a polynomial is globally a polynomial, it follows that $ \langle f(u),u\rangle  = p(u)$ for all $u\in S^{n-1}$. 
\end{proof}

An easy consequence of Lemma~\ref{lemma:subspace} and Proposition~\ref{prop:imB} is the following.
\begin{corollary}\label{cor:imB}
	For $k\leq n-2$, the image of the Bernig embedding  $ B(Z_k^\pm)\subset \Val^\mp_{k+1}$ is not dense.
\end{corollary}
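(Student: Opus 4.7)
The plan is to exhibit, in each case, a proper closed subspace of $\Val_{k+1}^{\mp}(\RR^n,\CC^n)$ that contains $B(\Z_k^{\pm})$; non-density follows immediately. The argument splits according to whether or not we are in the borderline case $(k,\pm)=(n-2,+)$, which corresponds to $B(\Z_{n-2}^+)\subset \Val_{n-1}^-(\RR^n,\CC^n)$.

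In every non-borderline case I would use Lemma~\ref{lemma:subspace} directly. Let
$$ W = \{\phi\in\Val_{k+1}^{\mp}(\RR^n,\CC^n) : \phi(K)\in\dir(\aff K)\text{ for every }K\in\calK(\RR^n)\}, $$
which is a closed subspace containing $B(\Z_k^{\pm})$. To show $W$ is proper, I would argue by contradiction: if $W$ equalled $\Val_{k+1}^{\mp}(\RR^n,\CC^n)$, then for any nonzero $\psi\in\Val_{k+1}^{\mp}(\RR^n)$ and any $v\in\CC^n$, the valuation $\phi_v(K):=\psi(K)v$ would lie in $W$, so $\psi(K)v\in\dir(\aff K)$ for all $K$. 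Given $K$ with $\dim K\leq n-1$, choosing $v$ outside $\dir(\aff K)$ forces $\psi(K)=0$. Hence $\psi$ would be simple, and Theorem~\ref{thm:klain-schneider} gives $\psi\in \Val_{n-1}^-(\RR^n)\oplus \Val_n(\RR^n)$. Since $k+1\leq n-1$ and $(k+1,\mp)\neq (n-1,-)$ outside the borderline case, this forces $\psi=0$, contradicting $\Val_{k+1}^{\mp}(\RR^n)\neq 0$.

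In the borderline case $(k,\pm)=(n-2,+)$ the previous argument collapses: every odd $(n-1)$-homogeneous valuation is already simple, since $\Val_{n-1}^-(\RR^{n-1})=0$, so every $\phi\in\Val_{n-1}^-(\RR^n,\CC^n)$ vanishes on convex bodies of dimension at most $n-1$ and automatically satisfies the conclusion of Lemma~\ref{lemma:subspace}. I would instead invoke Proposition~\ref{prop:imB}. Applying Theorem~\ref{thm:McMullen-n-1} componentwise, each $\phi\in\Val_{n-1}^-(\RR^n,\CC^n)$ is represented by a continuous odd function $f\colon S^{n-1}\to\CC^n$, unique modulo linear maps $\lambda\colon \RR^n\to\CC^n$. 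The condition that $\langle f(u),u\rangle$ is the restriction of a homogeneous quadratic polynomial is well-defined modulo this ambiguity (adding $\lambda$ changes $\langle f,u\rangle$ by $\langle \lambda(u),u\rangle$, again a homogeneous quadratic form) and closed; by Proposition~\ref{prop:imB} it holds on $B(\Z_{n-2}^+)$. The resulting subspace $W'$ is proper: taking the explicit $f(u)=u_1^3 e_1$, which yields a nonzero $\phi\in\Val_{n-1}^-(\RR^n,\CC^n)$ with $\langle f(u),u\rangle = u_1^4$, one verifies that $u_1^4$ on $S^{n-1}$ carries a nonzero degree-$4$ spherical harmonic component, whereas restrictions of homogeneous quadratic polynomials lie in the span of harmonics of degrees $0$ and $2$.

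The main subtlety is recognizing that Lemma~\ref{lemma:subspace} becomes vacuous precisely when $(k,\pm)=(n-2,+)$—exactly the case where Proposition~\ref{prop:imB} is indispensable—and tracking carefully which $(k+1,\mp)$ pairs fall on which side of the dichotomy. Once this is identified, both branches of the argument are short: one is parity-degree bookkeeping against Klain–Schneider, the other an explicit computation with spherical harmonics.
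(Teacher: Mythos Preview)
Your proposal is correct and is precisely the argument the paper has in mind: it declares the corollary ``an easy consequence of Lemma~\ref{lemma:subspace} and Proposition~\ref{prop:imB}'' without spelling out the case distinction, and you have correctly identified that Lemma~\ref{lemma:subspace} handles all cases except $(k,\pm)=(n-2,+)$, where Proposition~\ref{prop:imB} is needed. Your observation that Lemma~\ref{lemma:subspace} becomes vacuous in the borderline case (since every odd $(n-1)$-homogeneous valuation is simple) is exactly the reason Proposition~\ref{prop:imB} was proved in the first place.
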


\subsection{Curvature measures of degrees $n-1$ and $n$}

\label{sec:degree}

As an application of the notions introduced in this section, we obtain an explicit description of the curvature measures in degrees 
$n-1$ and $n$, with succinct proofs. The degree 
$n$ case was already known to Kiderlen and Weil \cite{KiderlenWeil:Measure}, while the degree $n-1$ case seems to be new.

\begin{theorem}
	The vector space $\Curv_n(\RR^n)$ is  spanned by 
	$$ \C_n(K,\beta)= \vol(K\cap \beta).$$
\end{theorem}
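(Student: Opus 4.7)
The plan is to combine two pieces of machinery introduced earlier in this section: the globalization map and the Bernig embedding. First I would fix $\Phi \in \Curv_n(\RR^n)$ and consider $\glob(\Phi) \in \Val_n(\RR^n)$. By Hadwiger's theorem (Theorem~\ref{thm:hadwiger}), $\Val_n(\RR^n)$ is one-dimensional and spanned by $\vol$, so there is a unique $c \in \CC$ with $\glob(\Phi) = c \cdot \vol$. Since $\glob(\C_n)(K) = \vol(K)$, the difference $\Psi := \Phi - c \cdot \C_n$ lies in $\Z_n(\RR^n)$, the degree-$n$ part of the kernel of the globalization map. (Of course one must first check that $\C_n \in \Curv_n(\RR^n)$, but translation invariance, locality, $n$-homogeneity, and continuity in the Hausdorff metric are immediate from the properties of Lebesgue measure.)

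Next, I would apply the Bernig embedding to $\Psi$. Because $B$ increases degree by $1$, we obtain $B(\Psi) \in \Val_{n+1}(\RR^n, \CC^n)$. However, McMullen's theorem forces every continuous translation invariant valuation to have degree in $\{0, 1, \ldots, n\}$, so $\Val_{n+1}(\RR^n, \CC^n) = 0$. Hence $B(\Psi) = 0$, and the injectivity of the Bernig embedding gives $\Psi = 0$, i.e.\ $\Phi = c \cdot \C_n$.

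I do not expect any genuine obstacle: the result is essentially a formal consequence of Hadwiger's theorem, McMullen's theorem, and the injectivity of the Bernig embedding, all of which are already established. (As a sanity check, one could equivalently invoke Theorem~\ref{thm:polytopes}: the cone space $\calC_0$ consists only of $\{0\}$, so for every $n$-dimensional polytope $P$ the formula reduces to $\Phi(P,\beta) = \varphi(\{0\}) \vol(P \cap \beta)$, and continuity together with the density of polytopes extends this identity to all convex bodies.)
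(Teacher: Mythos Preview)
Your proposal is correct and follows essentially the same argument as the paper: use Hadwiger's theorem to reduce to $\Z_n$, then apply the Bernig embedding and note that $\Val_{n+1}(\RR^n,\CC^n)=0$ by McMullen, so injectivity of $B$ finishes. The only difference is cosmetic (you call the constant $c$, the paper calls it $\alpha$), and your parenthetical sanity check via Theorem~\ref{thm:polytopes} is a nice alternative that the paper does not mention.
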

\begin{proof}
Let $\Psi\in \Curv_n(\RR^n)$. By a theorem of Hadwiger (Theorem~\ref{thm:hadwiger}), $\Val_n(\RR^n)$ is  spanned by the Lebesgue measure. Consequently, there exists a number $\alpha\in \CC$ such that 
$\Phi - \alpha \C_n$ globalizes to zero.  Since the degree of a continuous translation invariant valuation cannot exceed the dimension of the space, $\Val_{n+1}(\RR^n,\CC^n)=\{0\}$. Therefore $B(\Phi-\alpha \C_n )=0$ and hence $\Phi= \alpha \C_n$ by the injectivity of $B$. 
\end{proof} 

\begin{lemma}\label{lemma:div} For every convex body $K$, 
	$$ \int_{\RR^n\times S^{n-1}} x_iu_j \, d\Theta_{n-1}(K,(x,u))=\delta_{ij}\vol(K).$$
\end{lemma}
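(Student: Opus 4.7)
The plan is to verify the identity first for polytopes using the divergence theorem, and then extend to arbitrary convex bodies by approximation and the weak continuity of the support measures.

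First I would treat the polytope case. For a polytope $P$, the support measure $\Theta_{n-1}(P,\cdot)$ is concentrated on $\bigcup_{F} F\times\{n_F\}$, where $F$ ranges over the facets of $P$ and $n_F$ denotes the outer unit normal to $F$; moreover, on each piece it agrees with the $(n-1)$-dimensional Hausdorff measure on $F$ (this follows from Theorem~\ref{thm:polytopes} applied to $\Theta_{n-1}$, or directly from the local Steiner formula). Hence
\begin{equation*}
\int_{\RR^n\times S^{n-1}} x_iu_j\, d\Theta_{n-1}(P,(x,u)) = \sum_{F\in\calF_{n-1}(P)} (n_F)_j\int_F x_i\, d\calH^{n-1}.
\end{equation*}
Applying the divergence theorem to the smooth vector field $x_ie_j$ on $P$ gives
\begin{equation*}
\delta_{ij}\vol(P) = \int_P \partial_j x_i\, dx = \int_{\partial P} x_i(n_P)_j\, d\calH^{n-1} = \sum_{F\in\calF_{n-1}(P)} (n_F)_j\int_F x_i\, d\calH^{n-1},
\end{equation*}
which establishes the identity for polytopes.

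Next I would pass to the limit. Given a convex body $K$, choose a bounded open set $B\subset\RR^n$ with $K\subset B$ and a sequence of polytopes $P_\ell \subset B$ with $P_\ell \to K$ in the Hausdorff metric. Pick $\chi\in C_c(\RR^n)$ with $\chi\equiv 1$ on $B$; then $(x,u)\mapsto \chi(x)x_iu_j$ is a bounded continuous function on $\RR^n\times S^{n-1}$. Since $\supp\Theta_{n-1}(P_\ell,\cdot)\subset P_\ell\times S^{n-1}\subset B\times S^{n-1}$ by Lemma~\ref{lemma:supp} (applied to $\Theta_{n-1}$, or directly from the locality of support measures), truncation by $\chi$ does not change the integral against $\Theta_{n-1}(P_\ell,\cdot)$, and similarly for $\Theta_{n-1}(K,\cdot)$. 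The weak convergence $\Theta_{n-1}(P_\ell)\to\Theta_{n-1}(K)$ from part (c) of the basic properties of support measures therefore yields
\begin{equation*}
\int x_iu_j\, d\Theta_{n-1}(P_\ell,(x,u)) \longrightarrow \int x_iu_j\, d\Theta_{n-1}(K,(x,u)).
\end{equation*}
Combining with $\delta_{ij}\vol(P_\ell)\to \delta_{ij}\vol(K)$ and the polytope case completes the proof.

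The only mild subtlety is that the test function $x_iu_j$ is unbounded on $\RR^n\times S^{n-1}$, so weak convergence of support measures does not immediately apply. This is handled by the cutoff $\chi$, whose validity relies crucially on the uniform containment $\supp\Theta_{n-1}(P_\ell,\cdot)\subset B\times S^{n-1}$ — which in turn is an instance of the locality of $\Theta_{n-1}$ that is used throughout the paper.
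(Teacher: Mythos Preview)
Your proof is correct and follows essentially the same strategy as the paper: verify the identity on a dense class via the divergence theorem applied to the vector field $x_ie_j$, then pass to general $K$ by weak continuity of $\Theta_{n-1}$. The only difference is that the paper uses bodies with $C^1$ boundary (where $\Theta_{n-1}(K,\cdot)$ becomes integration over $\partial K$ against the Gauss map) rather than polytopes; your treatment of the cutoff to handle the unbounded integrand is in fact more explicit than the paper's one-line ``by continuity''.
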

\begin{proof}By continuity it suffices to prove the claim for $K$ with $C^1$ boundary. For such $K$, by \cite[Lemma 4.2.3]{Schneider:BM}, 
	$$ \int_{S\RR^n} f(x,u)\, d\Theta_{n-1}(K,(x,u)) = \int_{\partial K}  f(x,n_K(x))\,  dx,$$
	where $n_K(x)$ denotes the outer unit normal vector at $x$. 
	Since $\operatorname{div}(x_i e_j)= \delta_{ij}$, the divergence theorem implies 
	$$\int_{S\RR^n} x_iu_j \, d\Theta_{n-1}(K,(x,u))= \int_{\partial K} \langle n_K(x), x_ie_j\rangle\,  dx= \delta_{ij} \vol(K).$$
\end{proof}

\begin{theorem} \label{thm:deg-n-1} Let $\Phi\in \Curv_{n-1}(\RR^n)$. Then there exists a unique continuous function $f\colon S^{n-1}\to \CC$ such that 
	$$ \Phi(K,\beta)= \int_{\pi^{-1}(\beta)} f(u)\, d \Theta_{n-1}(K,(x,u)).$$
\end{theorem}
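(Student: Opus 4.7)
The plan is to combine McMullen's description of $(n-1)$-homogeneous valuations (Theorem~\ref{thm:McMullen-n-1}) with the Bernig embedding, and then convert $B(\Psi)=v\,\vol$ back into a curvature measure by means of the moment identity in Lemma~\ref{lemma:div}.

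First, I would apply Theorem~\ref{thm:McMullen-n-1} to the valuation $\glob(\Phi)\in \Val_{n-1}(\RR^n)$ to obtain a continuous function $f_0\colon S^{n-1}\to \CC$ with $\glob(\Phi)(K)=\int_{S^{n-1}} f_0(u)\,d\S_{n-1}(K,u)$. Then I would set
$$\Phi_0(K,\beta):=\int_{\pi^{-1}(\beta)} f_0(u)\,d\Theta_{n-1}(K,(x,u)).$$
The locality, translation invariance, continuity, and $(n-1)$-homogeneity of $\Theta_{n-1}$ recalled in Section~\ref{sec:prelim} make $\Phi_0$ a translation invariant curvature measure of degree $n-1$, and $\glob(\Phi_0)=\glob(\Phi)$ since $\pi^{-1}(\RR^n)=\RR^n\times S^{n-1}$. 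Thus $\Psi:=\Phi-\Phi_0$ lies in $\Z_{n-1}(\RR^n)$, and the remaining task is to identify $\Psi$ as an integral of the same shape.

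The Bernig embedding sends $\Psi$ to $B(\Psi)\in \Val_n(\RR^n,\CC^n)$, and Hadwiger's theorem (Theorem~\ref{thm:hadwiger}), applied componentwise, forces $B(\Psi)(K)=v\,\vol(K)$ for some $v\in \CC^n$. The key step is to recognise $v\,\vol$ as lying in the image $B(\Z_{n-1})$. Set $\lambda_v(u):=\langle v,u\rangle$ and
$$\Phi_v(K,\beta):=\int_{\pi^{-1}(\beta)} \lambda_v(u)\,d\Theta_{n-1}(K,(x,u)).$$
Lemma~\ref{lemma:div} gives
$$\Phi_v(K,x_i)=\sum_j v_j\int x_iu_j\,d\Theta_{n-1}(K,(x,u))=v_i\vol(K),$$
so $B(\Phi_v)=v\,\vol=B(\Psi)$. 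Moreover $\glob(\Phi_v)(K)=\bigl\langle v,\int_{S^{n-1}} u\,d\S_{n-1}(K,u)\bigr\rangle=0$ because the first moment of the area measure of a convex body vanishes, so $\Phi_v\in \Z_{n-1}$. The injectivity of the Bernig embedding now yields $\Psi=\Phi_v$, and setting $f(u):=f_0(u)+\lambda_v(u)$ delivers the desired representation.

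Uniqueness follows the same circle of ideas. If $f_1,f_2$ both represent $\Phi$, then $h:=f_1-f_2$ satisfies $\int h\,d\S_{n-1}(K)=0$ for all $K$, so by the uniqueness clause of Theorem~\ref{thm:McMullen-n-1}, $h(u)=\langle c,u\rangle$ for some $c\in \CC^n$; but then the curvature measure $\int \langle c,u\rangle\,d\Theta_{n-1}$ is identically zero, while by the same computation above its Bernig image equals $c\,\vol$, forcing $c=0$. The main conceptual obstacle is precisely the recognition, via Lemma~\ref{lemma:div}, that every element of $\Val_n(\RR^n,\CC^n)\cong \CC^n\cdot\vol$ arises as $B$ of a concrete element of $\Z_{n-1}$; once this preimage is exhibited, all other steps are formal consequences of the machinery already developed in this section.
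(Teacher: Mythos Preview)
Your proof is correct and follows essentially the same route as the paper: apply McMullen's theorem to $\glob(\Phi)$ to get a first approximation, then use Hadwiger's theorem to identify $B$ of the remainder as $v\,\vol$, and finally invoke Lemma~\ref{lemma:div} to realise this as $B$ of the curvature measure with density $\langle v,u\rangle$, whence injectivity of $B$ finishes. The only addition is that you spell out the uniqueness argument (via McMullen's uniqueness clause and the computation $B(\Phi_c)=c\,\vol$), which the paper leaves implicit.
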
 
\begin{proof}
	By Theorem~\ref{thm:McMullen-n-1}, there exists a continuous function $g\colon S^{n-1}\to \CC$, unique up to the addition of linear functionals,  such that
	 $$ \glob( \Phi)(K)= \int_{S^{n-1}} g(u)\, d\!\S_{n-1}(K,u).$$
	  If we define a curvature measure $\Psi_g$ by 
	 $$ \Psi_g(K,\beta)=\int_{\pi^{-1}(\beta)} g(u)\, d\Theta_{n-1}(K,(x,u)),$$
	 then $\Phi-\Psi_g$ globalizes to zero. Moreover, adding a linear functional to $g$ will not change this.

	 By Theorem~\ref{thm:hadwiger}, there exists $\alpha=(\alpha_1,\ldots, \alpha_n)\in \CC^n$ such that 
	 $$ B(\Phi-\Psi_g)(K)= \vol(K)\alpha.$$
	  Define $f(u)= g(u)+ \alpha_1 u_1+ \cdots + \alpha_n u_n$. 
	 Applying Lemma~\ref{lemma:div} shows that 
	 $ B(\Phi-\Psi_f)= 0$ and hence $\Phi= \Psi_f$ by the injectivity of $B$. 
\end{proof}

\subsection{Simple curvature measures}

We call a curvature measure $\Phi\in \Curv(\RR^n)$ simple if $\Phi(K)=0$ whenever $K$ has empty interior. The following result is analogous to the Klain--Schneider characterization of simple valuations (Theorem~\ref{thm:klain-schneider}). For smooth curvature measures, it was established by Solanes and the second-named author in \cite{SolanesW:Spheres}. 

\begin{theorem}\label{thm:simple}
A curvature measure $\Phi \in \Curv(\RR^n)$ is simple if and only if $$\Phi\in \Z^+_{n-2}\oplus \Curv_{n-1}^-\oplus \Curv_n.$$
\end{theorem}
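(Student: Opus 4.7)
The plan is to reduce both directions to (i) the Klain--Schneider characterization of simple scalar valuations (Theorem~\ref{thm:klain-schneider}), applied to $\glob$ and coordinatewise to the Bernig embedding $B$, (ii) the explicit description of $\Curv_{n-1}(\RR^n)$ given by Theorem~\ref{thm:deg-n-1}, and (iii) the piecewise-linear approximation argument from the injectivity proof for $B$.

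For the $(\Leftarrow)$ direction I would treat the three summands separately. The case $\Phi\in \Curv_n$ is trivial, since $\Curv_n=\CC\cdot \C_n$ and $\vol(K\cap\beta)=0$ whenever $K$ has empty interior. For $\Phi \in \Curv^-_{n-1}$, Theorem~\ref{thm:deg-n-1} produces a unique continuous $f$ with $\Phi(K,\beta)=\int f(u)\mathbf{1}_{\pi^{-1}(\beta)}\,d\Theta_{n-1}(K,\cdot)$; the uniqueness of $f$ together with the transformation of $\Theta_{n-1}$ under $x\mapsto -x$ forces $f$ to be odd, and for flat $K$ with unit normal $\nu$ the support of $\Theta_{n-1}(K,\cdot)$ concentrates on $K\times\{\pm\nu\}$, so $\Phi(K,\beta)$ reduces to a multiple of $f(\nu)+f(-\nu)=0$. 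The substantive case is $\Phi\in\Z^+_{n-2}$. Here $B(\Phi)\in \Val^-_{n-1}(\RR^n,\CC^n)$ is odd of degree $n-1$, so applying Klain--Schneider coordinatewise shows $B(\Phi)$ vanishes on every convex body of empty interior. Together with $\glob(\Phi)=0$, this yields $\Phi(K,h)=0$ for every affine $h$ and every such $K$. I would then mimic the injectivity argument for $B$: given $f\in C_b(\RR^n)$ and $\epsilon>0$, choose a piecewise linear $g$ on a locally finite simplicial subdivision $\calT$ with $\sup_K|f-g|<\epsilon$ and use Lemma~\ref{lemma:supp} to write
\[
\Phi(K,g)=\sum_{T\in\calT}\Phi(K\cap T,g)=\sum_{T\in\calT}\Phi(K\cap T,\wt{g|_T}).
\]
Every $K\cap T$ inherits empty interior, so each summand vanishes by the affine case, whence $|\Phi(K,f)|\le\|\Phi(K)\|\epsilon$ and $\Phi(K)=0$.

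For the $(\Rightarrow)$ direction, suppose $\Phi$ is simple and decompose $\Phi=\sum_k\Phi_k$ by Kiderlen--Weil. Since $\lambda K$ inherits empty interior for $\lambda>0$, expanding $0=\Phi(\lambda K)$ as a polynomial in $\lambda$ shows each $\Phi_k$ is simple. For $k\le n-2$, $\glob(\Phi_k)\in\Val_k$ is a simple scalar valuation, hence zero by Klain--Schneider, so $\Phi_k\in\Z_k$. For $k\le n-3$, applying Klain--Schneider coordinatewise to $B(\Phi_k)\in\Val_{k+1}(\RR^n,\CC^n)$ gives $B(\Phi_k)=0$, and injectivity of $B$ forces $\Phi_k=0$. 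For $k=n-2$, the coordinates of $B(\Phi_{n-2})\in\Val_{n-1}(\RR^n,\CC^n)$ are simple of degree $n-1$, hence odd; since $B$ swaps parity, $\Phi_{n-2}$ is even and lies in $\Z^+_{n-2}$. For $k=n-1$, the formula of Theorem~\ref{thm:deg-n-1} applied to flat test bodies forces the unique representing function $f$ to satisfy $f(\nu)+f(-\nu)=0$ for all $\nu\in S^{n-1}$, so $f$ is odd and $\Phi_{n-1}\in\Curv^-_{n-1}$. The top-degree component $\Phi_n$ lies automatically in $\Curv_n$.

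The main obstacle I anticipate is the $\Z^+_{n-2}$ case of the $(\Leftarrow)$ direction: the injectivity of $B$ alone does not yield $\Phi(K)=0$ on flat $K$, and the delicate step is recognizing that $B(\Phi)$ is itself simple (by Klain--Schneider for odd $(n-1)$-homogeneous valuations) and then reusing the piecewise-linear approximation trick locally, with the routine verification that each $K\cap T$ still has empty interior.
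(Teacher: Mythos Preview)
Your argument is correct, and the $(\Rightarrow)$ direction matches the paper's proof essentially word for word. The $(\Leftarrow)$ direction for $\Z_{n-2}^+$, however, takes a genuinely different route. The paper argues by restriction to hyperplanes: for a linear hyperplane $H$, the restriction $\Phi|_H$ lies in $\Z_{n-2}^+(H)$, and since $n-2=\dim H-1$, Theorem~\ref{thm:deg-n-1} together with McMullen's uniqueness forces $\Z_{n-2}^+(H)=\{0\}$; hence $\Phi$ vanishes on every body in every hyperplane, and translation invariance finishes the job. Your approach instead invokes Klain--Schneider in the converse direction (odd degree $n-1$ implies simple) to see that $B(\Phi)$ vanishes on flat bodies, and then reruns the piecewise-linear approximation argument to upgrade ``$\Phi(K,h)=0$ for affine $h$'' to ``$\Phi(K)=0$.'' The paper's route is shorter and exhibits a useful dimension-reduction technique that reappears elsewhere (e.g., in the proof of Proposition~\ref{prop:Z-invariant}); yours stays entirely at the level of $B$ and Klain--Schneider and avoids introducing and verifying the properties of the restriction $\Phi|_H$. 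One small gap to fill: your treatment of $\Curv_{n-1}^-$ explicitly handles only $\dim K=n-1$; for $\dim K<n-1$ you should note that $\Theta_{n-1}(K,\cdot)=0$ outright.
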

\begin{proof}
Without loss of generality we may assume that $\Phi$ is homogeneous of degree $i$. If $i=n$, there is nothing to prove. If $i=n-1$, then Theorem~\ref{thm:deg-n-1} implies the claim. We assume therefore $i\leq n-2$ from now on. 

Suppose that $\Phi$ is simple. Observe that $\glob(\Phi)\in \Val_i(\RR^n)$ is  simple as well. By the Klain--Schneider characterization of simple valuations, $\glob(\Phi)=0$.
Thus $\Phi\in \Z_i(\RR)$ and $B(\Phi)$ is a simple valuation. Applying   the Klain--Schneider characterization once again, we obtain $\Phi\in \Z_{n-2}^+$. 

Conversely, suppose that $\Phi\in \Z_{n-2}^+$. Let $H$ be a linear hyperplane in $\RR^n$ and let $\Phi|_H\in \Curv_{n-2}(H)$ denote the restriction of $\Phi$ to convex bodies in $H$. Observe that $\Phi|_H\in \Z_{n-2}^+(H)$. On the other hand,  Theorem~\ref{thm:deg-n-1} and Theorem~\ref{thm:McMullen-n-1} imply $\Z_{n-2}^+(H)=\{0\}$. Thus $\Phi|_H=0$ for every linear hyperplane $H$ and translation invariance implies that $\Phi$ is simple. 
\end{proof}

\section{Smooth curvature measures}
\label{sec:smooth}

\subsection{Preliminaries from infinite-dimensional representation theory}

We recall here some definitions and foundational  results from representation theory. For more information we refer the reader to the paper \cite{Casselman:canonical} and the introductory textbook \cite{Knapp:semisimple}.
The results of this section, which we will apply only for $G=GL(n,\RR)$ and $K=O(n)$,  hold more generally for $G$ the group of $\RR$-rational points of a  Zariski connected, reductive algebraic group defined over $\RR$ and a maximal compact subgroup $K$ of $G$. Throughout  $E$ will be a complete locally convex Hausdorff topological vector space over $\CC$. 

 A linear representation of $G$ on $E$ is group homomorphism $\pi\colon G\to GL(E)$, where $GL(E)$ denotes the group of invertible continuous endomorphisms of $E$. A representation is called continuous, if the map $G\times E\to E$, $(g,v) \mapsto \pi(g)v$ is continuous. The following lemma, an 	immediate consequence of  the principle of uniform boundedness, is useful for proving the continuity of a representation.
 
 \begin{lemma}\label{lemma:cont}
 	A linear representation $(\pi,E)$ is continuous if and only if the following properties hold:
 	\begin{enuma}
 		\item For every $v\in E$, the map $G\to E$, $g\mapsto \pi(g) v$,  is continuous; and  
 		\item For every compact subset $C\subset G$, the set of operators $\{ \pi(g) \colon g\in C\}$ is equicontinuous: for every neighborhood $V_1$ of zero in $E$ there exists another neighborhood $V_2$  of zero such that, 	for all $g\in C$, 
 		$$ \pi(g)( V_2)\subset V_1.$$
 	\end{enuma}	
 	
 \end{lemma}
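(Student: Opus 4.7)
The plan is to prove both implications directly from the definitions, using a standard decomposition of the difference $\pi(g)v-\pi(g_0)v_0$ together with the local compactness of the Lie group $G$.

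For the forward direction, assume $\pi\colon G\times E\to E$ is jointly continuous. Property (a) is immediate since $g\mapsto \pi(g)v$ is the composition $G\to G\times E\to E$, $g\mapsto (g,v)\mapsto \pi(g)v$. For property (b), fix a compact set $C\subset G$ and a neighborhood $V_1$ of $0\in E$. Joint continuity at each point $(g,0)$ with $g\in C$ yields open neighborhoods $U_g\ni g$ in $G$ and $W_g\ni 0$ in $E$ such that $\pi(U_g)(W_g)\subset V_1$. Choose a finite subcover $U_{g_1},\dots,U_{g_N}$ of $C$ and set $V_2=\bigcap_{i=1}^N W_{g_i}$. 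Then $V_2$ is a neighborhood of zero with $\pi(g)(V_2)\subset V_1$ uniformly for $g\in C$, which is exactly (b).

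For the converse, assume (a) and (b), fix $(g_0,v_0)\in G\times E$, and let $V_1$ be a neighborhood of zero in $E$. Choose a balanced neighborhood $V_1'$ of zero with $V_1'+V_1'\subset V_1$, and let $C\subset G$ be a compact neighborhood of $g_0$, which exists since the Lie group $G$ is locally compact. By (b) applied to $C$ there is a neighborhood $V_2$ of zero in $E$ with $\pi(g)(V_2)\subset V_1'$ for every $g\in C$. By (a) applied to $v_0$ there is an open neighborhood $U\subset C$ of $g_0$ with $\pi(g)v_0-\pi(g_0)v_0\in V_1'$ for every $g\in U$. For $(g,v)\in U\times (v_0+V_2)$ the identity
\[\pi(g)v-\pi(g_0)v_0=\pi(g)(v-v_0)+\bigl(\pi(g)-\pi(g_0)\bigr)v_0\]
places the right hand side in $V_1'+V_1'\subset V_1$, which establishes joint continuity at $(g_0,v_0)$.

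There is no substantial obstacle; the argument is an $\epsilon/2$-style decomposition transplanted to topological vector spaces, and the only ingredient beyond the hypotheses is that $G$ is locally compact. The reference in the paper to the principle of uniform boundedness should, in my reading, be understood as pointing to the complementary fact that, when $E$ is barreled (for instance a Fr\'echet space), property (b) can be upgraded from (a) via Banach--Steinhaus, which is what makes the criterion convenient in the applications to $\Val(\RR^n)$ and $\Curv(\RR^n)$ where the spaces in question are complete. That upgrade, however, is not needed for the equivalence itself.
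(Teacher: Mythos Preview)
Your proof is correct. The forward direction is a clean compactness argument, and the converse is the standard $\epsilon/2$ decomposition; both are carried out without error, and you correctly use the local compactness of $G$ to obtain a compact neighborhood of $g_0$.

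The paper does not actually give a proof of this lemma; it only asserts that it is ``an immediate consequence of the principle of uniform boundedness.'' Your direct argument does not invoke uniform boundedness at all, and you are right that the equivalence as stated does not require it. Your reading of the paper's remark is the sensible one: in the applications to $\Val(\RR^n)$ and $\Curv(\RR^n)$, the underlying spaces are complete (hence barreled), so condition (a) alone yields pointwise boundedness of $\{\pi(g):g\in C\}$ on compact $C$, and Banach--Steinhaus then upgrades this to equicontinuity, i.e., condition (b). That is what makes the criterion practical---one checks (a) and gets (b) for free---but, as you note, this upgrade is logically separate from the equivalence itself. Your proof is more self-contained and makes the role of each hypothesis transparent.
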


If $(\sigma,F)$ is another continuous representation of $G$, a continuous linear map $T\colon E\to F$ satisfying $T\circ \pi(g)= \sigma(g) \circ T$ is called $G$ homomorphism. 
The representation $\pi$ is called irreducible if $\{0\}$ and $E$ are the only closed invariant subspaces of $E$.

A vector $v\in E$ is called smooth if the map  $G\to E$, $g\mapsto \pi(g)v$, is  $C^\infty$. The subspace of all smooth vectors is denoted by $E^\infty$, and it is invariant under $G$. The subspace $E^\infty$ embeds canonically into $C^\infty(G,E)$,  and it  inherits from this embedding a canonical topology. The representation of $G$ on $E^\infty$ is continuous with respect to this topology. If $E=E^\infty$ as topological vector spaces, then $E$ is said to be smooth.  The space of smooth vectors is also a representation of  the Lie algebra 
$\fgg$  of $G$ with the action defined by 
$$ \pi(X)v= \left.\frac{d}{dt}\right|_{t=0} \pi(\exp(tX)) v$$

A vector $v\in E$ is called $K$-finite if it is contained in a finite-dimensional $K$ invariant subspace. The subspace of  $K$-finite vectors is denoted by $E^K$. It is the algebraic sum of the $K$ isotypic components of $E$. 

The intersection $E^\infty \cap E^K$ is dense in $E$ and invariant under $\fgg$ and $K$; it is called the $(\fgg,K)$ module associated to $(\pi,E)$.  If $T\colon E\to F$ is a $G$ homomorphism, then $T(E^\infty)\subset F^\infty$ and $T(E^K)\subset F^K$. In particular,  $T$ induces a homomorphism of the associated $(\fgg, K)$ modules.

\begin{lemma} \label{lemma:K-finite}Let $(\pi,E)$ be a continuous representation of  $G$. Suppose that every $K$-type in $E$ appears with finite multiplicity. Then 
\begin{enuma}
	\item $E^K\subset E^\infty$ 
\item The correspondence which takes a $(\fgg,K)$ invariant subspace of $E^K$ to its closure in $E$ is a bijection between $(\fgg,K)$ invariant subspaces of $E^K$ and closed $G$  invariant subspaces of $E$.  The inverse map takes a closed $G$ invariant subspace of $E$ to its intersection with $E^K$. 
	
\end{enuma}
\end{lemma}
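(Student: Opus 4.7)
The plan is the classical smoothing argument of Harish--Chandra. Fix a Dirac sequence $(f_n)$ in $C_c^\infty(G)$; then $\pi(f_n)v := \int_G f_n(g)\pi(g)v\, dg$ lies in $E^\infty$ for every $v \in E$, because right translation by a smooth function produces smooth orbit maps. For an irreducible $K$-type $\chi$ with associated central idempotent $e_\chi \in C(K)$, the operator $\pi(e_\chi)v := \int_K e_\chi(k)\pi(k)v\, dk$ is the continuous projection of $E$ onto its $\chi$-isotypic component $E_\chi$. Viewing $e_\chi$ as a distribution on $G$ supported on $K$, the convolution $e_\chi \ast f_n$ lies in $C_c^\infty(G)$, so $\pi(e_\chi)\pi(f_n)v = \pi(e_\chi \ast f_n)v \in E^\infty$. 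For $v \in E_\chi$ these vectors lie in $E_\chi \cap E^\infty$ and converge to $\pi(e_\chi)v = v$. By admissibility $E_\chi$ is finite-dimensional, so the subspace $E_\chi \cap E^\infty$ is closed in $E_\chi$, forcing $v \in E^\infty$. Summing over $\chi$ gives $E^K \subset E^\infty$.

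\textbf{Part (b).} For a closed $G$-invariant subspace $F \subset E$, Peter--Weyl applied to the continuous $K$-representation on $F$ yields $F = \overline{\bigoplus_\chi (F \cap E_\chi)}$, so $F \cap E^K$ is $K$-invariant and dense in $F$. The subspace $F \cap E^K$ is also $\fgg$-invariant: by (a) it lies in $E^\infty$, so for $X \in \fgg$ and $v \in F \cap E^K$ the derivative $\pi(X)v = \lim_{t\to 0} t^{-1}(\pi(\exp tX)v - v)$ lies in $F$ by closedness, and it remains $K$-finite via the $K$-equivariant map $W \otimes E_\chi \to E$, $Y \otimes w \mapsto \pi(Y)w$, where $W \subset \fgg$ is the finite-dimensional $\operatorname{Ad}(K)$-span of $X$. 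Conversely, for a $(\fgg, K)$-submodule $V \subset E^K$, the identity $\overline{V} \cap E^K = V$ is formal: $K$-invariance gives $V = \bigoplus_\chi (V \cap E_\chi)$, each summand is closed in the finite-dimensional $E_\chi$, and the continuous projection $\pi(e_\chi)$ sends $\overline{V}$ into $\overline{V \cap E_\chi} = V \cap E_\chi$, so every element of $\overline{V} \cap E^K$ has all its $K$-isotypic components already in $V$.

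The main obstacle is showing $G$-invariance of $\overline{V}$. Here the plan is to invoke Harish--Chandra's theorem that in an admissible representation, every $K$-finite vector is real-analytic. Granting this, for $v \in V$ and $X \in \fgg$ the orbit map $t \mapsto \pi(\exp tX)v$ is an $E$-valued real-analytic function, so its Taylor expansion $\pi(\exp X)v = \sum_{n\geq 0} \tfrac{1}{n!}\pi(X)^n v$ converges in $E$; each partial sum lies in $V$ by $\fgg$-invariance, hence $\pi(\exp X)v \in \overline{V}$. Combining this with the $K$-invariance of $V$ and the fact that $G = GL(n,\RR)$ is generated by $K = O(n)$ together with $\exp(\fgg)$ (the exponential image generates the identity component $GL^+(n,\RR)$, and $O(n)$ meets the other component), one obtains $\pi(g)\overline{V} \subset \overline{V}$ for every $g \in G$. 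The analyticity of $K$-finite vectors is the only deep input; everything else is formal manipulation with the isotypic decomposition.
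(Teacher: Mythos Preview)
The paper does not prove this lemma; it is stated without proof as a foundational result from representation theory, with the surrounding text directing the reader to Casselman and Knapp for background. There is therefore no ``paper's proof'' to compare against, so let me assess your argument on its own.

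Part~(a) is the standard Harish--Chandra smoothing argument and is correct as written.

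Part~(b) has a genuine gap in the step establishing $G$-invariance of $\overline{V}$. You claim that real-analyticity of $t \mapsto \pi(\exp tX)v$ implies that the Taylor series $\sum_{n\geq 0}\tfrac{1}{n!}\pi(X)^n v$ converges in $E$ \emph{at $t=1$}. Real-analyticity only guarantees convergence of the Taylor series in \emph{some} neighborhood of $0$; there is no reason the radius of convergence reaches $1$. Nor can one bootstrap: once you know $\pi(\exp t_0 X)v\in\overline{V}$ for small $t_0$, this vector is in general neither $K$-finite nor smooth, so the same reasoning cannot be reapplied at $t_0$ to push further. The set $\{t:\pi(\exp tX)v\in\overline{V}\}$ is closed, but your argument does not show it is open.

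The standard repair is to pass to scalars via Hahn--Banach. For any $\lambda\in E'$ vanishing on $\overline{V}$, the function $g\mapsto\lambda(\pi(g)v)$ is real-analytic on all of $G$ (this is what analyticity of the vector $v$ means), and every iterated derivative at $e$ is of the form $\lambda(\pi(X_1)\cdots\pi(X_k)v)$, hence zero since $\pi(U(\fgg))v\subset V\subset\ker\lambda$. The identity principle for real-analytic functions on the connected manifold $G_0$ then forces $\lambda(\pi(g)v)=0$ for all $g\in G_0$. Since $\lambda$ was arbitrary in the annihilator of $\overline{V}$, Hahn--Banach yields $\pi(G_0)V\subset\overline{V}$; combined with the $K$-invariance of $\overline{V}$ and $G=KG_0$, this gives full $G$-invariance. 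With this correction the rest of your argument for~(b) is sound.
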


In general, the associated $(\fgg,K)$ module does not determine $(\pi,E)$ up to isomorphism. In fact, we will encounter this issue in our investigation of $\Curv(\RR^n)$ as a representation of $GL(n,\RR)$. 
A fundamental result of Casselman and Wallach provides a positive answer for a certain class of representations. A $(\fgg,K)$ module $V$ is said to have finite length, if there exist $(\fgg,K)$ submodules 
\begin{equation}\label{eq:composition} V=V_0 \supsetneq V_1\supsetneq \cdots \supsetneq V_m=\{0\}\end{equation}
such that $V_i/V_{i+1}$ is irreducible. The number $m$ is called the length of $V$. A $(\fgg,K)$ module is called a Harish-Chandra module if it  has finite length and every $K$ isotypic component of $E$ is finite-dimensional. The result of Casselman and Wallach can now be stated as follows: If $V$ is a Harish-Chandra module then there exists a unqiue smooth Fr\'echet space representation $(\pi,E)$ of moderate growth such that $E^K$ is isomorphic to $V$. We refer reader for further information to the original paper by Casselman \cite{Casselman:canonical} and, for a simplified proof, to \cite{BernsteinKrotz}.
We shall not define moderate growth here. For our purposes,  it suffices to know that if $E$ is a Banach space, then $E^\infty$ has moderate growth. 

One useful consequence of this fundamental result of Casselman and Wallach is that  a $G$ homomorphism between certain smooth representations always has closed image. 

\begin{theorem}[Casselman--Wallach] \label{thm:CW}
	Let $E$ and $F$ be smooth  Fr\'echet representations of $G$  of moderate growth. 
	Suppose that $F^K$ is a Harish-Chandra module. Then every $G$ homomorphism 
	$T\colon E\to F$ has closed image.
\end{theorem}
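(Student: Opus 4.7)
The plan is to reduce the statement about topological images to a question about $(\fgg,K)$-modules, and then to invoke the uniqueness part of the equivalence of categories between Harish-Chandra modules and smooth Fr\'echet representations of moderate growth. I will not attempt to reprove the Casselman--Wallach equivalence itself, but will take it as a black box in the form used in the paper.

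First, I would restrict $T$ to $K$-finite vectors. Since $T$ is a $G$ homomorphism, it sends smooth vectors to smooth vectors and $K$-finite vectors to $K$-finite vectors, so $T$ induces a $(\fgg,K)$-module homomorphism $T^K\colon E^K \to F^K$. By hypothesis $F^K$ has finite length and finite-dimensional $K$-isotypic components, and both properties are inherited by submodules; hence $V:=T^K(E^K)\subset F^K$ is itself a Harish-Chandra module.

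Next, I would apply Casselman--Wallach to $V$. It gives a unique smooth Fr\'echet representation of moderate growth whose underlying $(\fgg,K)$-module is $V$; concretely, this globalization can be realized as the closure $\bar V$ of $V$ inside $F$, which is therefore a closed $G$-invariant subspace of $F$. Since $E^K$ is dense in the smooth representation $E$ and $T$ is continuous, one inclusion is automatic: $T(E)\subset \overline{T(E^K)} = \bar V$.

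The serious step is the reverse inclusion, and this is where the uniqueness in Casselman--Wallach is essential. The image $T(E)$ is a continuous image of a smooth Fr\'echet representation of moderate growth under a $G$-homomorphism, and one checks that $T(E)$ has moderate growth and that its $K$-finite vectors are precisely $T(E^K)=V$. Thus $T(E)$, endowed with the quotient topology from $E/\ker T$, is another smooth Fr\'echet representation of moderate growth whose associated Harish-Chandra module is $V$. By the uniqueness part of the Casselman--Wallach theorem, $T(E)$ must coincide (as a topological $G$-representation) with the canonical globalization $\bar V$, hence $T(E)=\bar V$ is closed in $F$. The main obstacle is precisely this uniqueness statement: establishing that any two smooth Fr\'echet globalizations of moderate growth of the same Harish-Chandra module agree is the technical heart of Casselman--Wallach, and we would simply invoke it as proved in \cite{Casselman:canonical, BernsteinKrotz}.
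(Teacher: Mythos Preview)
The paper does not prove this theorem at all; it is quoted as a known result attributed to Casselman and Wallach, with references to \cite{Casselman:canonical} and \cite{BernsteinKrotz}. Your proposal goes further and sketches how the closed-image statement follows from the main Casselman--Wallach equivalence of categories, which is indeed the standard derivation.

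Your argument is essentially correct, but one point deserves sharpening. In the final step you invoke ``uniqueness'' to conclude $T(E)=\bar V$, but uniqueness up to abstract isomorphism is not quite enough: you need that the specific continuous $G$-map $T(E)\hookrightarrow \bar V$ (with the quotient topology on the source and the subspace topology on the target) is a topological isomorphism. This follows because the Casselman--Wallach functor from smooth Fr\'echet representations of moderate growth with Harish-Chandra module to $(\fgg,K)$-modules is \emph{fully faithful}: the inclusion induces the identity on $V$, hence must be the unique isomorphism extending it. You should also remark that $E/\ker T$ is again smooth Fr\'echet of moderate growth (quotients by closed invariant subspaces preserve both properties), since $E^K$ is not assumed Harish-Chandra and so $E$ itself need not lie in the Casselman--Wallach category; only the quotient does.
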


 A composition series of a continuous  representation $(\pi, E)$ of $G$ is a strict  chain of closed invariant subspaces \eqref{eq:composition-E} such that $E_i/E_{i+1}$ is irreducible. By Lemma~\ref{lemma:K-finite}, if every $K$-type appears with finite multiplicity, then $m$ depends only on $E$ and is called the length of $E$. If a  $(\fgg,K)$ module $V$ has finite length, then the length of every strict chain of submodules is bounded by the length of $V$  and can be refined to a composition series of $V$. 

\medskip

We close with a folklore lemma.

\begin{lemma}\label{lemma:smooth-sec}
Let $H$ be a closed subgroup of $G$ and let $M= G/H$. Let $E\to M$ be a smooth  $G$ equivariant complex vector bundle over $M$.  Then the natural representation of $G$ on  $\Gamma(M,E)$,  the space of smooth sections of $E$, is smooth.
\end{lemma}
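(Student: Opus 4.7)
The plan is to reduce the claim to the smoothness of the left regular representation on a space of vector-valued smooth functions, via Frobenius reciprocity. Let $V = E_{eH}$ denote the fiber of $E$ over the base point, which carries a natural representation of the isotropy subgroup $H$. The assignment $s \mapsto \tilde{s}$, where $\tilde{s}(g) = g^{-1}\cdot s(gH)$, yields a $G$-equivariant topological isomorphism
$$\Gamma(M,E)\xrightarrow{\sim} C^\infty(G,V)^H := \{ f\in C^\infty(G,V)\colon f(gh)=h^{-1}f(g) \text{ for all } h\in H\},$$
where $G$ acts on $C^\infty(G,V)$ by left translation $(L_{g_0}f)(g)=f(g_0^{-1}g)$ and both spaces carry their natural Fréchet topologies (on the left, via local trivializations and uniform convergence of derivatives on compacts). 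Since a closed $G$-invariant subspace of a smooth Fréchet representation is itself smooth, this identification reduces the problem to verifying that left translation makes $C^\infty(G,V)$ a smooth representation.

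For the latter, first I would check continuity via Lemma~\ref{lemma:cont}: for each fixed $f\in C^\infty(G,V)$ the orbit map $g\mapsto L_g f$ is continuous into the Fréchet space $C^\infty(G,V)$, and $\{L_g\colon g\in C\}$ is equicontinuous for every compact $C\subset G$. Both facts follow by unwinding the definitions and applying the chain rule to the jointly smooth map $(g,x)\mapsto f(g^{-1}x)$ in local coordinates on $G$.

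Next, smoothness of every vector follows from the observation that for $X\in \fgg$ the derivative of the orbit map at the identity in direction $X$ equals $-X^R f$, where $X^R$ is the right-invariant vector field extending $X$; iterating, all higher derivatives of $g\mapsto L_g f$ exist as continuous $C^\infty(G,V)$-valued maps, so every vector is smooth. For the topology, the canonical topology on the smooth vectors is generated by seminorms $f\mapsto \sup_{g\in C} q(X^R_{i_1}\cdots X^R_{i_k} L_g f)$, with $C\subset G$ compact, $X_{i_j}\in \fgg$, and $q$ a continuous seminorm on $C^\infty(G,V)$; using the equicontinuity of left translations on compact sets and the continuity of iterated right-invariant differential operators on $C^\infty(G,V)$, one checks that this family is equivalent to the natural Fréchet family on $C^\infty(G,V)$.

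The hard part will be this final topology comparison. While it is immediate that the natural Fréchet topology is coarser than the smooth-vectors topology (take $g=e$), the reverse estimate requires controlling arbitrary coordinate derivatives uniformly in the variable $g$ by finitely many iterated right-invariant ones, which is the standard but somewhat delicate manipulation in the algebra of differential operators on $G$. Once this is in place, the Frobenius reciprocity of the first paragraph transports the smooth structure back to $\Gamma(M,E)$ and completes the proof.
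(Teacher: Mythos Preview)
Your proposal is correct and follows the same initial reduction as the paper: identify $\Gamma(M,E)$ with a closed $G$-invariant subspace of $C^\infty(G,V)$ and then verify that the left regular representation on $C^\infty(G,V)$ is smooth. The difference lies entirely in how the latter is established.

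The paper restricts to $G=GL(n,\RR)$ and uses the matrix coordinates directly: writing out the Taylor expansion of $f(AB)$ around $A_0B$ with integral remainder shows explicitly that $A\mapsto L_{A^{-1}}f$ is $C^1$ into $C(G,V)$ with the claimed partial derivatives, and iteration gives $C^\infty$. This establishes $C^\infty(G,V)\subset C(G,V)^\infty$; the reverse inclusion is trivial. The point you flag as ``the hard part''---matching the smooth-vectors topology with the native Fr\'echet topology---is dispatched in one line: both are Fr\'echet topologies on the same space with a continuous identity map in one direction, so the open mapping theorem forces them to coincide.

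Your route via right-invariant vector fields works for arbitrary Lie groups $G$ and is the textbook argument, but the topology comparison you anticipate as delicate really is more laborious done by hand (note also that the seminorms should read $q(L_g X^R_{i_1}\cdots X^R_{i_k}f)$ rather than with $L_g$ on the inside, since left translation does not commute with right-invariant fields). The paper's shortcut---passing through $C(G,V)$ and invoking the open mapping theorem---avoids this computation entirely, at the cost of being tied to the matrix-group setting. Either approach is fine here; yours is more general, the paper's is quicker for the case at hand.
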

\begin{proof}
	We consider only the case $G=GL(n,\RR)$. Since $\Gamma(M,E)\subset C^\infty(G,V)$, where $V$ is a finite-dimensional complex vector space, it suffices to show that the  standard representation of $G$ on $C^\infty(G,V)$, $(\pi(g)f)(h)= f(g^{-1}h)$,  is smooth. For $f\in C^\infty(G,V)$, by Taylor expansion with integral form of the remainder, 
	\begin{align*} \|A-A_0\|^{-1} \Big( f(AB) &  -f(A_0B)-  \sum_{ij} (AB-A_0B)_{ij}\frac{\partial f}{\partial x_{ij}} (A_0B) \Big) \\
		&= \|A-A_0\|^{-1} \int_0^1 R(t) (1-t) \, dt\end{align*}
	with 
$$R(t)= \sum_{i,j,k,l}	(AB-A_0B)_{ij} (AB-A_0B)_{kl} \frac{\partial^2 f}{\partial x_{ij} \partial x_{kl} } ((1-t)A_0B + tAB).$$
	Consequently, for every compact subset $K\subset G$, 
	$$ \sup_{B\in K}  \|A-A_0\|^{-1} \Big( f(AB)   -f(A_0B)-  \sum_{ij} (AB-A_0B)_{ij}\frac{\partial f}{\partial x_{ij}} (A_0B) \Big) \to 0$$
	as $\|A-A_0\|\to 0$.
It follows that $\varphi\colon G\to C(G,V)$, $A\mapsto \pi(A^{-1})f$, is continuously differentiable with first partial derivatives 
$$ \frac{\partial \varphi}{\partial x_{ij}}(A)=    [ B\mapsto B_{jk} \frac{\partial f}{\partial x_{ik}} (AB)].$$
Repeating this argument shows that $\varphi$ is infinitely differentiable. We conclude that $C^\infty(G,V)$ is contained in the subspace of smooth vectors of $C(G,V)$.  Since the converse is trivially true, we have $C^\infty(G,V)= C(G,V)^\infty$. The topologies coincide by the open mapping theorem.
\end{proof}

\subsection{The action of the general linear group on curvature measures}

\begin{definition}
	Let $g\in GL(n,\RR)$ and let $\Phi\in \Curv(\RR^n)$. We define the curvature measure $g\cdot \Phi$ by
	$$ (g\cdot \Phi)(K,\beta)= \Phi(g^{-1}K, g^{-1}\beta).$$
	We let $\pi(g)\colon \Curv(\RR^n)\to \Curv(\RR^n)$ denote the corresponding linear map. 
\end{definition}
For every function $f\colon \RR^n\to \CC$ and $g\in GL(n,\RR)$ we define 
$$ g\cdot f(x)= f(g^{-1} x).$$ 
Observe that with this notation, 
$$ (g\cdot \Phi)(K,f)= \Phi(g^{-1} K, g^{-1} \cdot f).$$

\begin{lemma}The map $GL(n,\RR)\times \Curv(\RR^n)\to \Curv(\RR^n)$, $(g,\Phi)\mapsto g\cdot \Phi$, defines a continuous  representation of the general linear group on $\Curv(\RR^n)$.
\end{lemma}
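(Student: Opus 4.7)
The plan is to first check that the formula $(g,\Phi)\mapsto g\cdot\Phi$ actually produces an element of $\Curv(\RR^n)$ and defines a group action, and then to verify the two criteria of Lemma~\ref{lemma:cont}. That $\pi(g)\Phi$ satisfies locality, translation invariance, and continuity is a direct verification: locality is preserved because $g^{-1}(K\cap U)=g^{-1}K\cap g^{-1}U$; translation invariance follows by substituting $g^{-1}x$ for $x$ in the translation invariance of $\Phi$; and continuity is preserved because $K_i\to K$ implies $g^{-1}K_i\to g^{-1}K$ in the Hausdorff metric and $g^{-1}\cdot f\in C_0(\RR^n)$ whenever $f\in C_0(\RR^n)$. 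The identities $\pi(g_1g_2)=\pi(g_1)\pi(g_2)$ and $\pi(e)=\id$ are immediate from the definition.

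To check equicontinuity, I would fix a compact set $C\subset GL(n,\RR)$ and a seminorm $p_{B,\calF}$. Choose a bounded set $B'\supseteq\bigcup_{g\in C}g^{-1}B$. An Arzel\`a--Ascoli argument (using compactness of $C$ in $GL(n,\RR)$, compactness of $\calF$ in $C_0(\RR^n)$, and the resulting uniform vanishing at infinity) shows that $\calF'=\{g^{-1}\cdot f:g\in C,\ f\in \calF\}$ is relatively compact in $C_0(\RR^n)$. A direct computation then yields
\[ p_{B,\calF}(\pi(g)\Phi)=\sup_{K\subset B,\,f\in\calF}|\Phi(g^{-1}K,g^{-1}\cdot f)|\leq p_{B',\overline{\calF'}}(\Phi),\]
uniformly in $g\in C$, which is the equicontinuity required by part (b) of Lemma~\ref{lemma:cont}.

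The main work is proving continuity of the orbit map $g\mapsto \pi(g)\Phi$ for fixed $\Phi$. The group law $\pi(g)\Phi-\pi(g_0)\Phi=\pi(g_0)(\pi(g_0^{-1}g)\Phi-\Phi)$ together with the fact that each individual $\pi(g_0)$ is continuous (a consequence of the equicontinuity step) reduces the task to showing that $p_{B,\calF}(\pi(h)\Phi-\Phi)\to 0$ as $h\to e$. I would split
\[|\Phi(h^{-1}K,h^{-1}\cdot f)-\Phi(K,f)|\leq |\Phi(h^{-1}K,h^{-1}\cdot f-f)|+|\Phi(h^{-1}K,f)-\Phi(K,f)|.\]
The first summand is bounded by $\|\Phi(h^{-1}K)\|\cdot\|h^{-1}\cdot f-f\|_\infty$; since $h^{-1}K$ stays in a fixed bounded set $B'$ for $K\subset B$ and $h$ near $e$, Lemma~\ref{lemma:uniform} supplies a uniform bound on $\|\Phi(h^{-1}K)\|$, while compactness of $\calF\subset C_0(\RR^n)$ (hence equicontinuity of its members) yields $\sup_{f\in\calF}\|h^{-1}\cdot f-f\|_\infty\to 0$. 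For the second summand, the weak continuity of $\Phi\colon\calK(\RR^n)\to M(\RR^n)$ combined with the uniform total variation bound and a standard equicontinuity argument promotes weak convergence to compact convergence on $\Phi(\{K:K\subset B'\})$. Consequently $K\mapsto[f\mapsto\Phi(K,f)]$ defines a continuous map from the Hausdorff-compact set $\{K:K\subset B'\}$ into $C(\calF)$ equipped with the sup norm; compactness upgrades this to uniform continuity, and combined with $\sup_{K\subset B}d_H(h^{-1}K,K)\leq\|h^{-1}-I\|\cdot\operatorname{diam}(B)\to 0$ the second summand vanishes uniformly.

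The main obstacle is precisely the second summand in this split: one must promote the pointwise weak continuity of $\Phi$ in $K$ to uniformity in both $K\subset B$ and $f\in\calF$. This promotion rests on three ingredients working together, Hausdorff compactness of $\{K:K\subset B'\}$, norm compactness of $\calF\subset C_0(\RR^n)$, and the uniform bound on total variations supplied by Lemma~\ref{lemma:uniform}; without any one of them the argument breaks down.
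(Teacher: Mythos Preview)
Your proof is correct and follows the same overall strategy as the paper: verify the two criteria of Lemma~\ref{lemma:cont}, handle equicontinuity via Arzel\`a--Ascoli on $\calF'=C^{-1}\cdot\calF$, and split the orbit-map estimate into a ``change of test function'' part controlled by a uniform total-variation bound and a ``change of body'' part.

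There are two small differences worth noting. First, for the ``change of test function'' term the paper invokes Lemma~\ref{lemma:supp} ($\supp\Phi(K)\subset K$) so that only $\sup_{rB^n}|g_i^{-1}\cdot f-f|$ is needed, whereas you bound by the full sup norm $\|h^{-1}\cdot f-f\|_\infty$; your route therefore requires the uniform-vanishing-at-infinity part of Arzel\`a--Ascoli for $C_0(\RR^n)$, which you correctly flag. Second, for the ``change of body'' term the paper passes to a finite $\epsilon/M$-net $\{f_1,\dots,f_N\}$ in $\calF$ and uses weak continuity of $\Phi$ at each $f_j$, while you instead promote weak to compact convergence on the bounded set $\Phi(\{K\subset B'\})$ and then invoke uniform continuity of $K\mapsto\Phi(K,\cdot)|_\calF$ on the Hausdorff-compact set $\{K\subset\overline{B'}\}$. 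Your argument is slightly more explicit about the uniformity in $K$, which in the paper's write-up is somewhat compressed; both are standard and equivalent in strength.
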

\begin{proof}

	We verify  properties (a) and (b) of Lemma~\ref{lemma:cont}.
To prove (a), it suffices to show that for every 
 sequence $(g_i)$  in $GL(n,\RR)$ converging to the identity matrix  $I$,  the sequence $(g_i\cdot \Phi)$ converges to $\Phi$. 

Let $(g_i)$ be a sequence in $GL(n,\RR)$ converging to $I$. Then also $g^{-1}_i\to I$ and both $(g_i)$ and $(g^{-1}_i)$ are uniformly bounded in the operator norm $\|g\| = \sup_{x\in B^n} |g x|$. 
Consequently, by Lemma~\ref{lemma:uniform}, 
$$ M= \sup\{ \|\Phi(g^{-1}_i K)\|\colon K\subset B^n, \ i\in \NN\}$$
is finite. 
Moreover, since $\supp \Phi(K)\subset K$, it follows that there exists $r>0$ such that the ball $rB^n$ contains the supports of all  measures $\Phi(g^{-1}_i K)$ for all $i$ and all $K\subset B^n$. 

Observe that for every $K\subset B^n$, the Hausdorff distance satisfies
$$ d_H(g^{-1}_i K, K)\leq \|g^{-1}_i-I\|.$$

Let $\calF$ be a compact subset of $C_0(\RR^n)$ and  let $\epsilon>0$. By the compactness of $\calF$, there exists a finite set $\{f_1,\ldots, f_N\}$ such that 
$$ \calF\subset \bigcup_{j=1}^N B_{\epsilon/M}(f_j),$$
where $B_r (f)$ denotes the open $r$-ball with center $f$.  
Choose $i_0$ such that 
$$ |\Phi(g_i^{-1}K, f_j) - \Phi(K,f_j)|<\epsilon$$
for all $i\geq i_0$ and all $j\in \{1,\ldots, N\}$. 

For every $K\subset B^n$ and $f\in \calF$, we  thus obtain, for $i\geq i_0$, 
\begin{align*}
	|(g_i \cdot \Phi)(K,f) - \Phi(K,f)|& \leq  |\Phi(g_i^{-1}  K,g_i^{-1}  \cdot f) - \Phi(g^{-1}_i K,f)| \\
	& \quad \quad + | \Phi(g^{-1}_i K, f) - \Phi(g^{-1}_i K,f_j)| + |\Phi(g_i^{-1}K, f_j) - \Phi(K,f_j)|\\
	&\leq  M \,\sup_{rB^n} | g_i^{-1} \cdot f - f|  + 2\epsilon.
\end{align*}
It follows that $  p_{B,\calF} (g_i\cdot \Phi-  \Phi)<3\epsilon $ for sufficiently large $i$.

Now we turn to (b). Without loss of generality we may assume $V_1=\left\{\Phi\colon  p_{B,\calF}(\Phi)<1\right\}$. 
Since $C$ is compact, the set
$$ \calF' := C^{-1} \cdot \calF = \{ g^{-1}\cdot f\colon g\in C, \ g\in \calF\}$$
is uniformly bounded and equicontinuous, hence again compact by Arzel\`a--Ascoli. 
Moreover, there exists a bounded subset $B'\subset \RR^n$, containing the origin in the interior, such that $g^{-1}K\subset B'$ for every $g\in C$ and $K\subset B$. Define 
$$ V_2 = \{ \Phi \colon p_{B',\calF'}(\Phi)<1\}.$$ Then, for every $\Phi\in V_2$, 
$$ p_{B,\calF}(g\cdot \Phi) =  \sup_{K\subset B, \ f\in \calF} | \Phi(g^{-1} K,g^{-1}\cdot f)| \leq p_{B', \calF'} (\Phi)<1$$ 
and hence $ g\cdot \Phi\in V_1$, as required. 
\end{proof}

For $\phi\in \Val(\RR^n,\CC^n)$ we define 
$$ (g\cdot \phi)(K) = g \phi(g^{-1} K).$$
\begin{lemma} Let $B\subset \RR^n$ be a bounded subset containing the origin in its interior. Then 
	$$ \|\phi \|_B = \sup_{K\subset B} |\phi(K)|$$
	defines a Banach space norm on $\Val(\RR^n,\CC^n)$. The representation of $GL(n,\RR)$ on $\Val(\RR^n,\CC^n)$ is continuous in this topology.
\end{lemma}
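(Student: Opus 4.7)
The plan is to mirror, in the vector-valued setting, the completeness argument sketched for $\Val(\RR^n)$ in Section~\ref{sec:prelim} together with the continuity argument for the $GL(n,\RR)$ action on $\Curv(\RR^n)$ that was just carried out. A continuous translation invariant $\CC^n$-valued valuation is nothing but an $n$-tuple of scalar valuations in $\Val(\RR^n)$, so McMullen's decomposition and the basic finiteness and equivalence properties of the norm $\|\cdot\|_B$ transfer coordinatewise.

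First I would check that $\|\cdot\|_B$ is a well-defined norm. By the Blaschke selection theorem, $\{K\in\calK(\RR^n):K\subset B\}$ is compact in the Hausdorff metric, so the continuity of $\phi$ forces $\|\phi\|_B<\infty$. If $\|\phi\|_B=0$, then for every convex body $K$ a suitable translate of a rescaling $\lambda K$ lies in $B$; applying the coordinatewise McMullen decomposition together with the scaling trick from the proof of Lemma~\ref{lemma:proj} forces $\phi=0$. The same scaling trick shows that $\|\cdot\|_B$ and $\|\cdot\|_{B'}$ are equivalent for any two bounded subsets of $\RR^n$ containing the origin in their interior. For completeness, given a Cauchy sequence $(\phi^{(s)})$ in $\|\cdot\|_B$, rescaling reduces the value $\phi^{(s)}(K)$ at an arbitrary body $K$ to the case $K\subset B$, so $(\phi^{(s)}(K))$ is Cauchy in $\CC^n$ and admits a pointwise limit $\phi(K)$. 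Continuity, translation invariance, and the valuation property all pass to the limit exactly as in the corresponding argument of Proposition~\ref{prop:compact-complete}, and the uniform Cauchy estimate then yields $\|\phi^{(s)}-\phi\|_B\to 0$.

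For continuity of the representation I would verify the two conditions of Lemma~\ref{lemma:cont}. For (a), fix $\phi$ and a sequence $g_i\to I$ in $GL(n,\RR)$. Since the matrices $g_i^{-1}$ are uniformly bounded in operator norm, there is a bounded $B'\supset B$ containing every $g_i^{-1}K$ for $K\subset B$; by Blaschke's theorem and the continuity of $\phi$, the restriction of $\phi$ to bodies contained in $B'$ is both uniformly continuous and uniformly bounded. Since the Hausdorff distance satisfies $d_H(g_i^{-1}K,K)\leq \|g_i^{-1}-I\|\cdot\sup_{x\in B}|x|\to 0$ uniformly in $K\subset B$, we obtain
$$\sup_{K\subset B}|g_i\phi(g_i^{-1}K)-\phi(K)|\leq \|g_i-I\|\sup_{L\subset B'}|\phi(L)|+\sup_{K\subset B}|\phi(g_i^{-1}K)-\phi(K)|\to 0.$$
For (b), given a compact $C\subset GL(n,\RR)$, set $M=\sup_{g\in C}\|g\|$ and choose a bounded $B'$ containing $g^{-1}K$ for all $g\in C$ and $K\subset B$; then $\|g\cdot\phi\|_B\leq M\|\phi\|_{B'}$, so the neighborhood $V_2=\{\phi:\|\phi\|_{B'}<\epsilon/M\}$ is mapped by every $g\in C$ into $V_1=\{\psi:\|\psi\|_B<\epsilon\}$, establishing the required equicontinuity.

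The hard part, if any, will be almost purely notational: the argument is a vector-valued transcription of material already present in the paper, and the only additional ingredients beyond Blaschke's selection theorem and the coordinatewise McMullen decomposition are the elementary operator norm estimate $|gv|\leq\|g\|\,|v|$ on $\CC^n$ used to handle the new action on the target.
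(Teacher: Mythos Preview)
Your proposal is correct and is exactly what the paper intends: the paper's proof is the single sentence ``This follows in a straightforward way from the corresponding statement for $\Val(\RR^n)$,'' and you have written out that straightforward reduction in full, using the coordinatewise McMullen decomposition, the Blaschke selection theorem for compactness and uniform continuity, and the two-condition criterion of Lemma~\ref{lemma:cont} for continuity of the action.
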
 
\begin{proof}
	This follows in a straightforward way from the corresponding statement for $\Val(\RR^n)$. 
\end{proof}

\begin{lemma} The following properties hold:
	\begin{enuma}
		\item 	$\glob\colon \Curv(\RR^n)\to \Val(\RR^n)$ is a $GL(n,\RR)$ homomorphism. 
		\item $\Z(\RR^n)$ is a  closed invariant subspace of $\Curv(\RR^n)$. 
	\item  
	$ B\colon \Z(\RR^n)\to \Val(\RR^n,\CC^n)$ is a $GL(n,\RR)$ homomorphism. 
	\end{enuma} 
\end{lemma}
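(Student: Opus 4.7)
The plan is to verify the three items in sequence, reducing each to a direct computation with the definitions of the $GL(n,\RR)$ action on $\Curv(\RR^n)$, $\Val(\RR^n)$, and $\Val(\RR^n,\CC^n)$ respectively. Linearity and continuity of $\glob$ and $B$ have already been established, so only the intertwining property with the group action remains for parts (a) and (c); part (b) will then follow formally.

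For (a), I would directly compute $\glob(g\cdot \Phi)(K) = (g\cdot \Phi)(K,\RR^n) = \Phi(g^{-1}K, g^{-1}\RR^n) = \Phi(g^{-1}K,\RR^n) = (g\cdot \glob(\Phi))(K)$, using that the action of $GL(n,\RR)$ on $\Val(\RR^n)$ is $(g\cdot \phi)(K) = \phi(g^{-1}K)$. For (b), since $\Z(\RR^n) = \ker(\glob)$ and $\glob$ is continuous by the preceding section, $\Z(\RR^n)$ is automatically closed. Its invariance under $GL(n,\RR)$ follows immediately from (a): if $\glob(\Phi)=0$, then $\glob(g\cdot \Phi) = g\cdot \glob(\Phi) = 0$.

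For (c), the only subtlety is to correctly track how the action of $g$ interacts with the coordinate functions $x_i$ that define $B$. Using the identity $(g\cdot \Phi)(K,f) = \Phi(g^{-1}K, g^{-1}\cdot f)$ noted earlier, together with $(g^{-1}\cdot x_i)(x) = x_i(gx) = \sum_j g_{ij} x_j$, I would compute
\begin{align*}
B(g\cdot \Phi)(K)_i &= (g\cdot \Phi)(K,x_i) = \Phi\bigl(g^{-1}K,\, \textstyle\sum_j g_{ij} x_j\bigr) \\
&= \sum_j g_{ij}\, \Phi(g^{-1}K, x_j) = \sum_j g_{ij}\, B(\Phi)(g^{-1}K)_j \\
&= \bigl(g\, B(\Phi)(g^{-1}K)\bigr)_i = \bigl(g\cdot B(\Phi)\bigr)(K)_i,
\end{align*}
which is precisely the equivariance condition.

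None of the three steps presents a real obstacle; everything reduces to unwinding the definitions and using continuity/linearity facts already proved. The only place where one must be careful is the sign/transpose convention in the last computation, since the $GL(n,\RR)$ action on $\CC^n$-valued valuations involves an extra factor of $g$ in front (as given by the definition $(g\cdot \phi)(K) = g\,\phi(g^{-1}K)$), and one must confirm that the coordinate transformation of the test functions $x_i$ produces exactly this matrix multiplication rather than its inverse or transpose. Once this is verified, the lemma is complete.
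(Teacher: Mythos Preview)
Your proof is correct and follows essentially the same approach as the paper: (a) is a one-line unwinding of the definitions, (b) follows formally from (a) and the continuity of $\glob$, and your componentwise computation in (c) is exactly the paper's vector computation written out coordinate by coordinate, including the key identity $g^{-1}\cdot x_i = \sum_j g_{ij} x_j$.
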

\begin{proof}
	(a) is evident from the definition and (b) is an immediate consequence of (a). 
	
	To prove (c), first note that $g^{-1}\cdot x_i=   \sum_{j=1}^n g_{ij} x_j$
	for coordinate functions 
 and $$g  \alpha = (\sum_j g_{1j} \alpha_j, \ldots, \sum_j g_{nj} \alpha_j)$$ for $\alpha= (\alpha_1,\ldots, \alpha_n)\in \CC^n$. Hence
	\begin{align*} B(g\cdot \Phi)(K)& = ( \Phi(g^{-1}K, g^{-1}\cdot x_1), \ldots, \Phi(g^{-1}K, g^{-1}\cdot x_n))\\
		& =  (\sum_j g_{1j} \Phi(g^{-1}K,  x_j), \ldots, \sum_j g_{nj}\Phi(g^{-1}K, x_j))\\
		& = g B(\Phi)(g^{-1} K)= (g\cdot B(\Phi))(K).
	\end{align*}
\end{proof}

\begin{lemma}\label{lemma:CW-thm}
	$\Val(\RR^n,\CC^n)^\infty$ is a Fr\'echet space representation of moderate growth and its associated $(\fgg,K)$ module is Harish-Chandra.
\end{lemma}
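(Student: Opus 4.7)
The plan is to deduce both assertions from the corresponding facts about $\Val(\RR^n)$ itself, via the identification
$$\Val(\RR^n,\CC^n)\;\cong\;\Val(\RR^n)\otimes \CC^n$$
as $GL(n,\RR)$-representations, where $\CC^n$ carries the standard (complexified) representation. A $\CC^n$-valued valuation is just an $n$-tuple of complex valuations, and one checks directly from the formula $(g\cdot \phi)(K)=g\phi(g^{-1}K)$ that this isomorphism is equivariant. Consequently, many properties will be inherited from $\Val(\RR^n)$ by tensoring with the finite-dimensional smooth representation $\CC^n$.

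For the first assertion, I would invoke the previous lemma to note that $(\Val(\RR^n,\CC^n),\|\cdot\|_B)$ is a Banach space on which $GL(n,\RR)$ acts continuously. A general fact, recorded in the excerpt before Theorem~\ref{thm:CW}, says that for any continuous Banach representation $E$ of a real reductive group, the subspace $E^\infty$ of smooth vectors, equipped with its canonical topology as a subspace of $C^\infty(G,E)$, is automatically a smooth Fr\'echet representation of moderate growth. Applying this to $E=\Val(\RR^n,\CC^n)$ gives the Fr\'echet/moderate growth claim. Alternatively, one can proceed through the identification above, since smoothness and moderate growth are preserved under tensoring with the finite-dimensional smooth representation $\CC^n$:
$$\Val(\RR^n,\CC^n)^\infty \;\cong\; \Val(\RR^n)^\infty \otimes \CC^n.$$

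For the Harish-Chandra property, I would argue in two steps. First, I would record that the associated $(\fgg,O(n))$-module of $\Val(\RR^n)^\infty$ is Harish-Chandra. Finite length follows from Alesker's irreducibility theorem (Theorem~\ref{thm:alesker}) combined with McMullen's degree decomposition: $\Val(\RR^n)$ splits as a finite direct sum of the topologically irreducible pieces $\Val_i^\pm$, and via Lemma~\ref{lemma:K-finite}(b) this transfers to a composition series at the level of $(\fgg,O(n))$-modules, once one knows each $O(n)$-type occurs with finite multiplicity. The latter finite-multiplicity property for $\Val(\RR^n)$ is a standard input in the Alesker theory and is implicit in the references cited in Section~\ref{sec:prelim}. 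Second, tensoring a Harish-Chandra module with a finite-dimensional representation produces another Harish-Chandra module (finite length and finite $K$-multiplicity of isotypic components are both preserved), so the tensor-product identification above yields the conclusion for $\Val(\RR^n,\CC^n)^\infty$.

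The main obstacle is the finite multiplicity of $O(n)$-types in $\Val(\RR^n)$, which is a nontrivial ingredient resting on Alesker's representation-theoretic analysis of translation invariant valuations rather than anything proved in the excerpt; everything else reduces to routine tensor-product manipulations and the general Banach-to-Fr\'echet smoothing procedure.
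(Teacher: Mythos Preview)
Your proposal is correct and follows essentially the same route as the paper: the tensor identification $\Val(\RR^n,\CC^n)\cong \Val(\RR^n)\otimes \CC^n$, the Banach-space argument for the Fr\'echet/moderate growth claim, and the reduction to finite $K$-multiplicity and finite length of $\Val(\RR^n)$ via Alesker's work. The only cosmetic difference is in the finite-length step for the tensor product: you invoke the general fact that tensoring a Harish-Chandra module with a finite-dimensional representation yields a Harish-Chandra module, whereas the paper proves this concretely by passing through the equivalence ``finite length $\Leftrightarrow$ finitely generated over $U(\fgg)$'' and noting that $\{v_i\otimes e_j\}$ generates $E^K\otimes \CC^n$ when $\{v_i\}$ generates $E^K$.
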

\begin{proof}
Since $\Val(\RR^n,\CC^n)$ is Banach space, it follows that $\Val(\RR^n,\CC^n)^\infty$ is a Fr\'echet space and has moderate growth. 

Observe that $\Val(\RR^n,\CC^n)$ and $\Val(\RR^n)\otimes \CC^n$ are isomorphic as representations of  $G=GL(n,\RR)$.  
By  \cite{Alesker:Irreducibility}, each $K$-type appears in $\Val(\RR^n)$ with finite multiplicity. Hence the Pieri rule for $O(n)$, which shows how the tensor products decompose into irreducible representations, implies that also the $K$-types in $\Val(\RR^n,\CC^n)$ appear with finite multiplicity. 

Let us write $E^K$ for the $(\fgg, K)$ module of $\Val(\RR^n)$. Then $E^K\otimes \CC^n$ is the $(\fgg, K)$ module of $\Val(\RR^n,\CC^n)$, and it remains to show that it has finite length. 
  We are going to use the fact that a $(\fgg,K)$ module has finite length if and only if it is finitely generated over $U(\fgg)$, the universal enveloping algebra of $\fgg$, see \cite[p. 394]{Casselman:canonical}. 
  
It follows from Alesker's irreducibility theorem (Theorem~\ref{thm:alesker}) that $E^K$ has finite length and is hence finitely generated over $U(\fgg)$. If   $v_1,\ldots, v_m$ are the generators of $E^K$, then one immediately verifies that  $\{ v_i\otimes e_j\}$ is a finite set of generators for $E^K\otimes \CC^n$. 
\end{proof}

Now that we have introduced the necessary background from representation theory, we can easily deduce Theorem~\ref{thm:conj}  from Theorem~\ref{thm:Val2}, the proof of which will occupy the next section.

\begin{proposition} Let $k\in \{0,\ldots, n-2\}$. If $\Val_{k+1}^\mp(\RR^n,\CC^n)$ has length $2$, then $\Z^\pm_k$ is irreducible. \end{proposition}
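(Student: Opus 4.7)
The plan is to use the Bernig embedding together with the representation-theoretic machinery of Section~\ref{sec:smooth} to reduce the proposition to a short argument about modules of length at most $1$. Recall that $B\colon \Z_k^\pm \to \Val^{\mp}_{k+1}(\RR^n, \CC^n)$ is an injective, continuous $GL(n,\RR)$-homomorphism whose image is non-dense by Corollary~\ref{cor:imB}.

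First I would pass to $K$-finite vectors. Because $B$ is $K$-equivariant and injective, each $K$-isotypic component of $\Z_k^\pm$ embeds into the corresponding component of $\Val^{\mp}_{k+1}(\RR^n,\CC^n)$, which is finite-dimensional by the proof of Lemma~\ref{lemma:CW-thm}. Thus every $K$-type appears with finite multiplicity in $\Z_k^\pm$, so Lemma~\ref{lemma:K-finite}(b) provides a bijection between closed $GL(n,\RR)$-invariant subspaces of $\Z_k^\pm$ and $(\fgg,K)$-submodules of $V := (\Z_k^\pm)^K$. It therefore suffices to show that $V$ is irreducible as a $(\fgg,K)$-module.

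Setting $U := \Val^{\mp}_{k+1}(\RR^n,\CC^n)^K$, the same bijection translates the length hypothesis on $\Val^{\mp}_{k+1}(\RR^n,\CC^n)$ into the statement that $U$ has length $2$ as a $(\fgg,K)$-module, and it identifies the proper closed invariant subspace $\overline{B(\Z_k^\pm)}$ with a proper submodule $U' \subsetneq U$ containing $B(V)$. Every proper submodule of a module of length $2$ has length at most $1$, so $B(V)$ is either zero or irreducible. Since $B$ is an injective $(\fgg,K)$-homomorphism, the same dichotomy transfers to $V$ itself.

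If $V = 0$, then the density of $K$-finite vectors (Peter--Weyl for the compact group $K$ acting continuously on the complete locally convex Hausdorff space $\Z_k^\pm$) forces $\Z_k^\pm = 0$ and the assertion holds vacuously. Otherwise $V$ is irreducible and Lemma~\ref{lemma:K-finite}(b) transfers irreducibility to $\Z_k^\pm$. I do not anticipate a serious obstacle: Corollary~\ref{cor:imB} and Lemma~\ref{lemma:CW-thm} have already done the principal work, and the only mildly technical point is pulling back finite multiplicity of $K$-types through the Bernig embedding in order to invoke Lemma~\ref{lemma:K-finite}.
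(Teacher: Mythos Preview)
Your proof is correct and follows essentially the same route as the paper's: take the closure $W=\overline{B(\Z_k^\pm)}$, note it is proper by Corollary~\ref{cor:imB}, use length $2$ to force $W$ (equivalently its $(\fgg,K)$-module $B(V)$) to be irreducible, and transfer this back to $\Z_k^\pm$ via the injectivity of $B$ and Lemma~\ref{lemma:K-finite}. You have in fact spelled out two points the paper leaves implicit---why $K$-types occur with finite multiplicity in $\Z_k^\pm$ (pulled back through $B$ from Lemma~\ref{lemma:CW-thm}) and the degenerate case $\Z_k^\pm=0$---so your write-up is slightly more careful than the paper's.
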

\begin{proof}
	 Let $W$ denote the closure of $ B( \Z_k^\pm)$ in $\Val^\mp_{k+1}(\RR^n,\CC^n)$. Then $W$ is a $GL(n,\RR)$ invariant and proper (by Corollary~\ref{cor:imB}) subspace of $\Val^\mp_{k+1}(\RR^n,\CC^n)$. If the latter  representation has  length $2$, then $W$ is irreducible. Since $W$ and $\Z_k^\pm$ have isomorphic associated $(\fgg, K)$ modules, we conclude that $\Z_k^\pm$ is irreducible.
\end{proof}

\begin{definition}
A  curvature measure $\Phi\in \Curv(\RR^n)$ is called smooth if there exists a number $c \in \CC$  and a smooth differential form  $\omega\in \Omega^{n-1}(S\RR^n)^{tr}$  such that
	$$\Phi(K,\beta)=  c \vol(K\cap \beta ) + \int_{\nc(K)} \mathbf{1}_{\pi^{-1}(\beta)}\, \omega$$
	for every convex body $K$ and Borel set $\beta$. The subspace of smooth curvature measures is denoted by $\Curv^{sm}(\RR^n)$. 
\end{definition}

The action of the general linear group gives rise to another notion of smoothness for curvature measures. 

\begin{definition}
	A curvature measure $\Phi\in \Curv(\RR^n)$ is called $GL(n,\RR)$ smooth if it is a smooth vector for the representation of $GL(n,\RR)$ on $\Curv(\RR^n)$. 
	The subspace of these curvature measures is denoted by $\Curv^\infty(\RR^n)$.
\end{definition} 

For translation invariant continuous valuations,  Alesker \cite{Alesker:HLComplex} defined $\Val^\infty$ as the subspace of smooth vectors of the Banach space representation $\Val(\RR^n)$ of $GL(n,\RR)$.  Recall that $\Val^{sm}$ denotes the subspace of valuations that can be represented as in \eqref{eq:smooth-val} by smooth differential forms. The following fundamental result was proved by Alesker \cite{Alesker:manifoldsI}. We refer the reader to \cite{HK:localization} for a recent more elementary proof of this result. 

\begin{theorem} $\Val_k^{sm}= \Val_k^\infty$. 
\end{theorem}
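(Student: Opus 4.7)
The plan is to prove the two inclusions separately. The direction $\Val_k^{sm}\subset \Val_k^\infty$ is the easier one. Given $\phi\in \Val_k^{sm}$ represented by $\phi(K)=c\vol(K)+\int_{\nc(K)}\omega$ with $\omega\in \Omega^{n-1}(S\RR^n)^{tr}$, the transformation rule for the normal cycle (Lemma~\ref{lemma:prop-nc}(a)) yields
$$(g\cdot\phi)(K) = c|\det g|^{-1}\vol(K) + \operatorname{sgn}(\det g^{-1})\int_{\nc(K)}(\wt{g^{-1}})^{*}\omega.$$
The pullback $g\mapsto (\wt{g^{-1}})^{*}\omega$ is smooth into the Fr\'echet space $\Omega^{n-1}(S\RR^n)^{tr}$, and the integration map $\omega\mapsto [K\mapsto \int_{\nc(K)}\omega]$ is continuous into $\Val(\RR^n)$ by Lemma~\ref{lemma:prop-nc}(b)--(c); the composition is therefore smooth in $g$, so $\phi\in \Val^\infty$.

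For the reverse inclusion, I would study the linear surjection onto $\Val^{sm}$,
$$T\colon \CC\oplus \Omega^{n-1}(S\RR^n)^{tr}\to \Val(\RR^n),\qquad T(c,\omega)(K) = c\vol(K) + \int_{\nc(K)}\omega,$$
via the Casselman--Wallach theorem. Since translation-invariant forms on $S\RR^n$ are smooth sections of a $GL(n,\RR)$-equivariant vector bundle over the compact space $S^{n-1}$, Lemma~\ref{lemma:smooth-sec} (combined with the standard fact that $G$-actions on sections of equivariant bundles over compact homogeneous spaces are of moderate growth) shows that the source of $T$ is a smooth Fr\'echet representation of moderate growth. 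The $G$-action read off from the first paragraph makes $T$ a $G$-homomorphism, and by the forward inclusion $T$ factors through $\Val^\infty$. Repeating the argument of Lemma~\ref{lemma:CW-thm} with Theorem~\ref{thm:alesker} in place of its $\CC^n$-valued version shows that $\Val^\infty$ is also a smooth Fr\'echet representation of moderate growth whose $(\fgg,K)$-module $\Val^K$ is Harish-Chandra. Theorem~\ref{thm:CW} then forces $\Val^{sm}=T(\CC\oplus \Omega^{n-1}(S\RR^n)^{tr})$ to be closed in $\Val^\infty$.

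To upgrade closedness to equality, I would argue in each irreducible component $\Val_k^\pm$ separately. By Theorem~\ref{thm:alesker} and Lemma~\ref{lemma:K-finite}(b), the $(\fgg, K)$-module $(\Val_k^\pm)^K$ is irreducible. The subspace $\Val^{sm}\cap \Val_k^\pm$ is closed in $\Val_k^{\pm,\infty}$ and $GL(n,\RR)$-invariant; it is nonzero because smooth valuations are known to exist in every degree and parity (a byproduct of the explicit constructions in \cite{Alesker:Irreducibility}). Via the correspondence of Lemma~\ref{lemma:K-finite}(b), its associated $(\fgg, K)$-submodule is then forced to equal all of $(\Val_k^\pm)^K$. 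Since $K$-finite vectors are dense in $\Val_k^{\pm,\infty}$ and lie entirely inside the closed subspace $\Val^{sm}\cap \Val_k^\pm$, the latter coincides with $\Val_k^{\pm,\infty}$, which is the desired equality.

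The main technical hurdles are the careful verification that the source of $T$ has moderate growth, and the step of producing a nonzero smooth valuation in each intermediate \emph{odd} degree — this is what ensures that the Casselman--Wallach dichotomy lands on the nonzero side in every component.
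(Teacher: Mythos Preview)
The paper does not supply its own proof of this theorem: it is quoted as a result of Alesker \cite{Alesker:manifoldsI}, with a pointer to \cite{HK:localization} for a more elementary argument. So there is no ``paper's proof'' to compare against; what you have written is in fact a sketch of Alesker's original strategy.

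Your outline is essentially correct and matches that strategy. A few remarks on the points you flag as hurdles. First, the integration map $T$ as you wrote it is not literally $GL(n,\RR)$-equivariant because of the sign in Lemma~\ref{lemma:prop-nc}(a); you must twist the source by the orientation character $or(\RR^n)$, exactly as the paper does in the proof of $\Curv^{sm}_k\subset \Curv^\infty_k$. Second, moderate growth of $\Omega^{n-1}(S^*\RR^n)^{tr}\otimes or(\RR^n)$ is genuine input: it holds because this is the space of smooth sections of an equivariant bundle over the compact base $S^{n-1}$, hence a nuclear Fr\'echet space on which the seminorms grow polynomially in $\|g\|+\|g^{-1}\|$; this is standard but not stated in the paper. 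Third, the existence of a nonzero smooth valuation in each $\Val_k^{\pm}$ is immediate for the even part (mixed volumes, or the forms $\kappa_i$ of \eqref{eq:kappa}) but requires a construction in the odd case---this is available, e.g., via the forms used in the Klain--Schneider theorem, but you should point to a specific source rather than leave it implicit. With these points filled in, the Casselman--Wallach closed-image argument combined with Alesker irreducibility goes through exactly as you describe.
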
 

Our ultimate goal is to prove the corresponding statement for curvature measures. 

\begin{lemma}
$\Curv^{sm}_k\subset \Curv^\infty_k$. 
\end{lemma}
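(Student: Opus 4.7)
The plan is to factor the orbit map $g\mapsto g\cdot \Phi$ through a Fr\'echet space of differential forms and then deduce smoothness from the composition of a $C^\infty$ map with a continuous linear map. Let $\Phi\in \Curv_k^{sm}$ be represented by a constant $c\in \CC$ and a form $\omega\in \Omega^{n-1}(S^*\RR^n)^{tr}$ (passing from the sphere to the cosphere bundle via the Euclidean identification, which is convenient because of Lemma~\ref{lemma:prop-nc}(a)). Applying Lemma~\ref{lemma:prop-nc}(a) with $T=g^{-1}$ together with the change of variables formula for Lebesgue measure, a direct computation yields
$$ (g\cdot \Phi)(K,\beta)= c\,|\det g|^{-1}\vol(K\cap \beta) + \operatorname{sgn}(\det g^{-1})\int_{\nc(K)} \mathbf{1}_{\pi^{-1}(\beta)}\,\wt{g^{-1}}^*\omega.$$
In particular, $g\cdot \Phi$ is again a smooth curvature measure. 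The map $g\mapsto c\,|\det g|^{-1}\C_n$ is visibly $C^\infty$ into $\Curv(\RR^n)$, so only the form part requires further analysis.

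I would write the remaining term as $\Psi\circ \alpha$, where
$$ \alpha\colon GL(n,\RR)\to \Omega^{n-1}(S^*\RR^n)^{tr},\quad g\mapsto \operatorname{sgn}(\det g^{-1})\,\wt{g^{-1}}^*\omega,$$
and $\Psi\colon \Omega^{n-1}(S^*\RR^n)^{tr}\to \Curv(\RR^n)$ sends $\eta$ to the curvature measure $(K,\beta)\mapsto\int_{\nc(K)}\mathbf{1}_{\pi^{-1}(\beta)}\,\eta$; the space of forms carries its natural Fr\'echet $C^\infty$ topology. Continuity of $\Psi$ follows from the estimate
$$ p_{B,\calF}(\Psi(\eta))\le \Big(\sup_{f\in \calF}\|f\|_\infty\Big)\Big(\sup_{K\subset B}\mathbf M(\nc(K))\Big)\big\|\eta|_{B\times S^*(\RR^n)}\big\|_{C^0},$$
where the middle factor is finite by Lemma~\ref{lemma:prop-nc}(c) and the last factor is a continuous seminorm on the Fr\'echet space. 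For smoothness of $\alpha$ I would identify $S^*(\RR^n)\cong GL(n,\RR)/P$, with $P$ the stabilizer of a fixed cosphere direction, and realize the translation-invariant forms on $S^*\RR^n$ as smooth sections of the finite-dimensional $GL(n,\RR)$-equivariant vector bundle $\bigoplus_{p+q=n-1}\Lambda^p (\RR^n)^*\otimes \Lambda^qT^*S^*(\RR^n)$ over $GL(n,\RR)/P$. Under this identification, the pullback action corresponds to the natural action on smooth sections, which is smooth by Lemma~\ref{lemma:smooth-sec}; the sign prefactor is locally constant and hence smooth.

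Since the composition of a $C^\infty$ map with a continuous linear map is $C^\infty$, we conclude that $g\mapsto g\cdot \Phi$ is smooth, so $\Phi\in \Curv_k^\infty$. The principal technical step is the identification of $\Omega^{n-1}(S^*\RR^n)^{tr}$ with smooth sections of a $GL(n,\RR)$-equivariant bundle over $GL(n,\RR)/P$ in a manner compatible with the pullback action; once this bookkeeping is in place, Lemma~\ref{lemma:smooth-sec} and the uniform mass bound of Lemma~\ref{lemma:prop-nc}(c) do the remaining work.
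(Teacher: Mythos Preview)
Your proposal is correct and follows essentially the same approach as the paper. The paper packages things slightly differently: rather than carrying the locally constant sign $\operatorname{sgn}(\det g^{-1})$ by hand, it tensors the form space by the orientation line $or(\RR^n)$ so that the integration map $I$ becomes a genuine $GL(n,\RR)$-homomorphism, and then invokes the abstract principle that a continuous $G$-homomorphism from a smooth representation lands in smooth vectors; but this is exactly your ``smooth followed by continuous linear is smooth'' argument unpacked, with the same two ingredients (Lemma~\ref{lemma:smooth-sec} for smoothness of the form representation and Lemma~\ref{lemma:prop-nc}(c) for continuity of integration).
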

\begin{proof} There is nothing to prove for $k=n$. 
For $k\in\{0,\ldots, n-1\}$ define 	
		$$ I\colon \Omega^{k,n-1-k}( S^* \RR^n)^{tr} \otimes or(\RR^n ) \to \Curv_k(\RR^n)$$
		by 
		$$ I(\omega)(K,\beta)= \int_{\nc(K)} \mathbf{1}_{\pi^{-1}(\beta)} 
	\, \omega.$$
	Here $or(\RR^n)$ denotes the space of functions $\sigma\colon \Lambda^n\, \RR^n\setminus \{0\} \to \CC$ satisfying $\sigma(t w)= \operatorname{sgn}(t) \sigma(w)$ for $t\neq 0$. 
	Observe that the restriction of $\omega$ to the normal cycle of $K$ defines a density which can be integrated without a choice of orientation of $\nc(K)$. This shows that $I$ is $GL(n,\RR)$ equivariant.

	For any compact subset $\calF$ of $C_0(\RR^n)$, 
	$$ p_{B,\calF}(I(\omega)) \leq   \sup_{f\in \calF} \|f\| \cdot \sup_{K\subset B} \mathbf{M}(\nc(K))  \cdot   \sup_{\pi^{-1}(B)} |\omega|.$$
	Since by Lemma~\ref{lemma:prop-nc} the supremum $\sup_{K\subset B} \mathbf{M}(\nc(K))$ is finite, we conclude that $I$ is continuous. 
	
	By Lemma~\ref{lemma:smooth-sec}, the representation of $GL(n,\RR)$ on $\Omega^{k,n-1-k}( S^* \RR^n)^{tr} \otimes or(\RR^n )$ is smooth. Since $I$ is a $GL(n,\RR)$  homomorphism, it follows that $I(\omega)$ is a smooth vector of $\Curv_k(\RR^n)$. 
\end{proof}

The reverse inclusion 
\begin{equation} \label{eq:sm-infty}
	\Curv^\infty_k \stackrel{?}{\subset} \Curv^{sm}_k,
\end{equation} 
although perhaps innocent looking, is far from obvious. Recall that the smooth vectors of any representation  form a dense subspace. Therefore, if \eqref{eq:sm-infty} was true, we would immediately obtain that smooth curvature measure  are dense. It is far from obvious why the latter statement should be true. The following proposition provides a sufficient condition.
\begin{proposition} \label{prop:Z-irred}
	If $\Z_k^\pm$ is irreducible, then $\Curv_k^{sm}= \Curv^{\infty}_k$.  
\end{proposition}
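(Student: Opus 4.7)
The plan is to decompose by parity, reduce via globalization to showing $\Z_k^{\infty,\pm}=\Z_k^{sm,\pm}$, and then use the assumed irreducibility of $\Z_k^\pm$ together with the Casselman--Wallach theorem to close that gap.

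Fix a parity $\pm$ and consider $\Phi\in\Curv_k^{\infty,\pm}$. Its globalization lies in $\Val_k^{\infty,\pm}=\Val_k^{sm,\pm}$ by Alesker's theorem. Since for $k<n$ every smooth valuation has the form $\phi(K)=\int_{\nc(K)}\omega$ for some translation invariant smooth form $\omega$, integrating $\mathbf{1}_{\pi^{-1}(\beta)}\omega$ over the normal cycle produces a smooth curvature measure $\Phi_0$ with $\glob(\Phi_0)=\glob(\Phi)$. Subtracting $\Phi_0$ reduces the problem to showing $\Z_k^{\infty,\pm}\subseteq\Curv_k^{sm}$; the cases $k\in\{n-1,n\}$ are handled by the explicit descriptions already established in Section~\ref{sec:degree}.

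Under the irreducibility hypothesis I next want to show that $\Curv_k^{\infty,\pm}$, and in particular $\Z_k^{\infty,\pm}$, is a smooth Fr\'echet representation of moderate growth whose $(\fgg,K)$-module is Harish--Chandra, and that $\Z_k^{\infty,\pm}$ is topologically irreducible. Finite $K$-multiplicity in $\Z_k^\pm$ follows by pulling back along the Bernig embedding $B\colon\Z_k^\pm\hookrightarrow\Val_{k+1}^{\mp}(\RR^n,\CC^n)$ and invoking Lemma~\ref{lemma:CW-thm}; topological irreducibility of $\Z_k^{\infty,\pm}$ then follows from Lemma~\ref{lemma:K-finite}. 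For $\Curv_k^{\infty,\pm}$, the globalization sequence $0\to\Z_k^{\infty,\pm}\to\Curv_k^{\infty,\pm}\to\Val_k^{\infty,\pm}\to 0$ (exact since $\glob(\Curv_k^{sm,\pm})=\Val_k^{sm,\pm}=\Val_k^{\infty,\pm}$) exhibits it as an extension of two Harish--Chandra modules, hence Harish--Chandra itself. I then apply Casselman--Wallach (Theorem~\ref{thm:CW}) to the continuous equivariant map $I$ of the preceding lemma, restricted to the parity-graded component of $\Omega^{k,n-1-k}(S^*\RR^n)^{tr}\otimes or(\RR^n)$. This source is smooth Fr\'echet of moderate growth, since translation invariant forms on $S^*\RR^n$ are smooth sections of a $GL(n,\RR)$-equivariant bundle over the compact base $S^{n-1}$, on which $G$-translation of sections grows polynomially in operator norm. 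Casselman--Wallach then gives that the image $\Curv_k^{sm,\pm}$ is closed in $\Curv_k^{\infty,\pm}$, so intersecting with $\Z_k^{\infty,\pm}$ yields closedness of $\Z_k^{sm,\pm}$ in $\Z_k^{\infty,\pm}$.

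To conclude it suffices to observe $\Z_k^{sm,\pm}\neq\{0\}$: for any smooth translation invariant form $\eta$ of appropriate parity, the smooth curvature measure $(K,\beta)\mapsto\int_{\nc(K)}\mathbf{1}_{\pi^{-1}(\beta)}\,d\eta$ globalizes to zero by Stokes' theorem (the normal cycle is a cycle) while remaining generically nonzero as a measure. Hence $\Z_k^{sm,\pm}$ is a nonzero closed invariant subspace of the irreducible $\Z_k^{\infty,\pm}$, forcing $\Z_k^{sm,\pm}=\Z_k^{\infty,\pm}$. The main obstacle I expect is the careful bookkeeping needed to verify that all the moderate-growth and Harish--Chandra hypotheses required by Casselman--Wallach are met: in particular, that the space of translation invariant smooth forms has moderate growth in the sense of the theorem, and that the extension structure on $\Curv_k^{\infty,\pm}$ indeed yields a genuine Harish--Chandra module at the $(\fgg,K)$-level.
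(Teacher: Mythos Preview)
Your overall strategy matches the paper's: reduce via globalization to showing $\Z_k^{\infty,\pm}\subseteq\Curv_k^{sm}$, then invoke Casselman--Wallach and the assumed irreducibility to close the gap. The reduction step and the nonvanishing argument for $\Z_k^{sm,\pm}$ (via $I(d\eta)$ and Stokes) are both correct and essentially what the paper does.

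The genuine gap is in your application of Casselman--Wallach. You apply Theorem~\ref{thm:CW} to the map $I$ with target $\Curv_k^{\infty,\pm}$, and for this you need the target to be a smooth \emph{Fr\'echet} representation of moderate growth. But the paper equips $\Curv(\RR^n)$ with the topology of compact convergence, defined by the uncountable family of seminorms $p_{B,\calF}$ indexed by compact subsets $\calF\subset C_0(\RR^n)$; this topology is not metrizable, and passing to smooth vectors does not repair that, since the topology on $E^\infty$ is inherited from $C^\infty(G,E)$. The paper explicitly flags this obstacle at the start of its proof (``Since the topology on $\Curv(\RR^n)$ is not metrizable, we cannot directly apply the Casselmann--Wallach theorem''). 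Your closing remark that the hypotheses are a matter of ``careful bookkeeping'' underestimates this: it is a structural obstruction, not a verification.

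The paper's workaround is to route everything through the Bernig embedding into a space that \emph{is} known to satisfy the Casselman--Wallach hypotheses. Concretely, it considers $J=B\circ I\circ d\colon \Omega_{k,n-2-k}^\pm\to \Val_{k+1}^\mp(\RR^n,\CC^n)^\infty$. The target here is the space of smooth vectors in a Banach representation, hence Fr\'echet of moderate growth, and its $(\fgg,K)$-module is Harish--Chandra by Lemma~\ref{lemma:CW-thm}. Casselman--Wallach then gives $\im(J)$ closed; irreducibility of $\Z_k^\pm$ forces $\im(J)$ to equal the closure $W$ of $B(\Z_k^{\pm,\infty})$; and injectivity of $B$ pulls this back to $\Z_k^{\pm,\infty}\subseteq I(d\Omega)\subseteq\Curv_k^{sm}$. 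The key idea you are missing is precisely this detour through $B$ into a Banach-space setting.
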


For the proof we need the following fact from \cite{BernigBroecker:Rumin}.

\begin{lemma} \label{lemma:ker-int}
Let $\omega\in \Omega^{n-1}(S\RR^n)$. If, for all convex bodies $K$ and Borel sets $\beta\subset \RR^n$,
$$ \int_{\nc(K)} \mathbf{1}_{\pi^{-1}(\beta)} \, \omega=0,$$
then $\omega$ belongs to the differential ideal $\mathcal I \subset \Omega^*(S\RR^n)$ generated by the contact form 
$$ \alpha_{(x,u)}(X)  =  \langle d\pi(X), u\rangle, \quad X\in T_{(x,u)} S\RR^n.$$
\end{lemma}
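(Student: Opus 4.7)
The plan is to reduce the statement to a pointwise linear-algebraic characterization of the contact ideal, and then to use the geometry of normal cycles to verify that pointwise condition. The key idea is that normal cycles of smooth convex bodies are precisely the Legendrian $(n-1)$-submanifolds of $S\RR^n$ realizing second fundamental forms, and the hypothesis will force $\omega$ to annihilate every Legendrian tangent plane.

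First I would restrict attention to convex bodies $K$ with $C^2$ boundary that is everywhere strictly positively curved. For such $K$, the projection $\pi\colon \nc(K) \to \partial K$ is a diffeomorphism. The hypothesis states that $\int_{\nc(K)} \mathbf{1}_{\pi^{-1}(\beta)}\, \omega = 0$ for every Borel set $\beta \subset \RR^n$. Pulling back through $\pi^{-1}$, this means that the signed top-degree density $\omega|_{\nc(K)}$ assigns measure zero to every Borel subset of $\nc(K)$ of the form $\pi^{-1}(\beta)$. Since $\pi|_{\nc(K)}$ is a diffeomorphism onto $\partial K$, these exhaust the Borel subsets of $\nc(K)$, and hence $\omega|_{\nc(K)} \equiv 0$ as an $(n-1)$-form.

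Next I would extract pointwise information. Fix $(x_0,u_0) \in S\RR^n$. The tangent space to $\nc(K)$ at $(x_0,u_0)$ is parametrized by the second fundamental form $h_{x_0}^{\partial K}$, regarded as a symmetric bilinear form on $u_0^\perp$: explicitly the tangent space is the graph of the Weingarten map inside the contact hyperplane $\ker\alpha_{(x_0,u_0)}$. By translating and scaling standard smooth strictly convex bodies, one realizes every positive definite symmetric bilinear form on $u_0^\perp$ as such a second fundamental form at a prescribed point. The previous step therefore shows that $\omega_{(x_0,u_0)}$ vanishes on the Legendrian subspace corresponding to any positive definite $h$. Since the positive definite forms are dense in all symmetric bilinear forms on $u_0^\perp$, and the restriction of $\omega$ to a subspace depends continuously (even polynomially) on the subspace's defining data, $\omega_{(x_0,u_0)}$ vanishes on every Legendrian subspace of $T_{(x_0,u_0)} S\RR^n$.

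Finally, I would invoke the standard linear-algebraic fact that an $(n-1)$-form on the $(2n-1)$-dimensional contact manifold $S\RR^n$ which annihilates every Legendrian $(n-1)$-plane lies in the algebraic ideal generated by $\alpha$ and $d\alpha$, and hence in the differential ideal $\mathcal I$. At each point this is a purely exterior-algebra statement: choose a Darboux basis diagonalizing $d\alpha$ on $\ker\alpha$, expand $\omega$ in the dual basis, and observe that the coefficients of monomials containing no factor of $\alpha$ or $d\alpha$ correspond exactly to the values of $\omega$ on Legendrian planes. The main obstacle is this last step, since one must check that the pointwise algebraic decomposition can be performed smoothly in $(x,u)$; this is done by choosing a smooth Darboux frame locally on $S\RR^n$, which exists because the sphere bundle admits global smooth contact coordinates compatible with the $O(n)$-action, making the decomposition $\omega = \alpha\wedge \eta_1 + d\alpha\wedge \eta_2$ global.
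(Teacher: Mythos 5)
The paper does not prove this lemma; it is invoked as a known fact from the cited reference, so there is no internal argument to compare against. Your outline does reconstruct what appears to be the standard proof, and the overall plan — restrict to $C^2_+$ bodies to force $\omega|_{\nc(K)}=0$, realize tangent planes as Legendrian graphs of second fundamental forms, then invoke symplectic linear algebra — is sound. However, one step as written is incorrect: positive definite forms are \emph{not} dense in the space of all symmetric bilinear forms on $u_0^\perp$ (they form an open convex cone, which excludes, e.g., all negative definite forms). What saves the argument is precisely the polynomiality you mention in parentheses: for the graph Lagrangian $L_A=\{(v,Av)\}\subset\ker\alpha$, the pullback of $\omega_{(x_0,u_0)}$ to $u_0^\perp$ is a polynomial in the entries of the symmetric matrix $A$, and a polynomial vanishing on the nonempty open cone of positive definite $A$ vanishes identically. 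This yields vanishing on \emph{all} graph Lagrangians; you then still need a second step, namely that graph Lagrangians form a dense open cell in the Lagrangian Grassmannian of $(\ker\alpha,d\alpha)$ and that $L\mapsto\omega_{(x_0,u_0)}|_L$ is continuous, to conclude vanishing on every Legendrian plane. Finally, your appeal to global Darboux coordinates on $S\RR^n$ is suspect (such coordinates need not exist globally, nor need the contact distribution be parallelizable), but it is also unnecessary: set $\eta_1=i_R\omega$ for the Reeb field $R_{(x,u)}=(u,0)$, which is smooth, and then apply the Lefschetz projection on the symplectic vector bundle $\ker\alpha$ — an intrinsic, smooth bundle-map construction — to write $\omega-\alpha\wedge\eta_1$ as $d\alpha\wedge\eta_2$ with $\eta_2$ smooth. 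With these two corrections your argument is complete.
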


\begin{lemma}\label{lemma:notin-I}Let $\mathcal I \subset \Omega^*(S\RR^n)$ denote the differential ideal generated by the contact form. At the point $u=e_n$, the form
$$  dx_1\wedge \cdots \wedge dx_{k} \wedge du_{k+1}\wedge \cdots \wedge du_{n-1}$$
does not belong to $\calI_{(x,u)}= \{ \omega_{(x,u)}\colon \omega\in \mathcal I\}$.
\end{lemma}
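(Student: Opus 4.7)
The plan is to identify $\mathcal{I}_{(x,e_n)}$ concretely and then produce a decomposable $(n-1)$-vector on which $\omega = dx_1 \wedge \cdots \wedge dx_k \wedge du_{k+1} \wedge \cdots \wedge du_{n-1}$ evaluates nontrivially while every element of the ideal vanishes.

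First I would describe $\mathcal{I}_{(x,e_n)}$. Writing $\alpha = \sum_{i=1}^n u_i\, dx_i$, the differential ideal is generated algebraically at each point by $\alpha$ and $d\alpha = \sum_{i=1}^n du_i \wedge dx_i$. Since $S\RR^n = \RR^n \times S^{n-1}$ and $\sum_i u_i du_i = 0$ on the sphere, at $u = e_n$ the form $du_n$ vanishes as a covector on $T_{(x,e_n)} S\RR^n$. Consequently, at this point $\alpha = dx_n$ and $d\alpha = \sum_{i=1}^{n-1} du_i \wedge dx_i$. Hence every element of $\mathcal{I}_{(x,e_n)}$ has the form
\[
  dx_n \wedge \eta + \Bigl(\sum_{i=1}^{n-1} du_i \wedge dx_i\Bigr) \wedge \zeta
\]
for some covectors $\eta \in \Lambda^{n-2}$ and $\zeta \in \Lambda^{n-3}$ at $(x,e_n)$.

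Next I would exhibit the separating tangent vector. Consider
\[
  V := \partial_{x_1} \wedge \cdots \wedge \partial_{x_k} \wedge \partial_{u_{k+1}} \wedge \cdots \wedge \partial_{u_{n-1}} \in \Lambda^{n-1} T_{(x,e_n)} S\RR^n.
\]
Evaluating $\omega$ on $V$ gives $\pm 1$, so $\omega(V) \neq 0$. On the other hand, I claim every element of $\mathcal{I}_{(x,e_n)}$ vanishes on $V$. For the $dx_n \wedge \eta$ summand, $\partial_{x_n}$ does not occur among the factors of $V$, so $dx_n$ evaluates to zero on each factor, and by the shuffle formula the whole term is zero. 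For the second summand, I would observe that $du_i(v)$ can be nonzero for $v$ a factor of $V$ only when $i \in \{k+1, \ldots, n-1\}$, whereas $dx_i(v)$ can be nonzero only when $i \in \{1, \ldots, k\}$. Since these index ranges are disjoint, every $2$-form $du_i \wedge dx_i$ vanishes identically on pairs of factors of $V$, and so by the shuffle formula so does the entire wedge $(\sum_i du_i \wedge dx_i) \wedge \zeta$.

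Combining these two observations, any element of $\mathcal{I}_{(x,e_n)}$ annihilates $V$ while $\omega(V) \neq 0$, proving $\omega_{(x,e_n)} \notin \mathcal{I}_{(x,e_n)}$. The only subtle point is the initial reduction: one must carefully use the sphere constraint to see that $du_n$ vanishes at $e_n$, which is what makes $d\alpha$ simplify to a sum only over indices $1,\ldots,n-1$ and matches the index splitting on which the argument hinges.
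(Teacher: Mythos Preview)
Your argument is correct and takes a genuinely different route from the paper's proof. Both proofs begin the same way: at $(x,e_n)$ one has $\alpha = dx_n$ and $d\alpha = \sum_{i=1}^{n-1} du_i\wedge dx_i$, so that $\mathcal I_{(x,e_n)} = dx_n\wedge \Lambda^{n-2} + d\alpha\wedge \Lambda^{n-3}$. From there the paper invokes symplectic linear algebra: on the symplectic space $Q=\ker\alpha$ the Lefschetz decomposition gives $\Lambda^{n-1}Q^* = P \oplus d\alpha\wedge\Lambda^{n-3}Q^*$ with $P$ the primitive forms, and one checks directly that $d\alpha\wedge\omega=0$, so $\omega\in P$ is not in the image of $d\alpha\wedge(\cdot)$. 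You instead exhibit the decomposable $(n-1)$-vector $V$ whose span is the Lagrangian subspace $W=\operatorname{span}\{\partial_{x_1},\ldots,\partial_{x_k},\partial_{u_{k+1}},\ldots,\partial_{u_{n-1}}\}$; since $d\alpha|_W=0$ and $dx_n|_W=0$, every element of the ideal annihilates $V$, while $\omega(V)=\pm1$. Your argument is more elementary and self-contained, requiring no appeal to the Lefschetz decomposition; the paper's version is more conceptual and situates the fact within symplectic linear algebra, which may be preferable if one wants to generalise. The two are of course closely related: your $W$ being Lagrangian is exactly what makes $\omega$ primitive.
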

\begin{proof}
	The tangent space of $S\RR^n$ at $(x,u)$ splits as $\RR (u,0) \oplus \ker \alpha$. For $u=e_n$, we  can identify 
	$ \ker \alpha $ with $ \RR^{n-1} \oplus \RR^{n-1}$. Under this identification  $d\alpha = \sum_{i=1}^{n-1} du_i \wedge dx_i$, while $\alpha = dx_n$. The statement we want to prove can be reduced to symplectic linear algebra, as follows. Since  $Q=  \RR^{n-1} \oplus \RR^{n-1} $ together with $d\alpha$ is a symplectic vector space, the Lefschetz decomposition yields 
	$$\Lambda^{n-1} Q^* = P \oplus  d\alpha  \wedge \Lambda^{n-3} Q^*, $$
where $$ P= \{ \eta\in \Lambda^{n-1} Q^* \colon d\alpha \wedge \eta=0\}$$ 
denotes the subspace of primitive forms. 
Since $\omega$ is clearly primitive, we conclude that it cannot be a multiple of $d\alpha$. 
\end{proof}

\begin{proof}[Proof of Proposition~\ref{prop:Z-irred}] Since the topology on $\Curv(\RR^n)$ is not metrizable, we cannot directly apply the Casselmann--Wallach theorem (Theorem~\ref{thm:CW}). Instead we argue as follows.  Let $k\in \{0,\ldots, n-1\}$ and  denote by $\Omega^+_{i,j}$ and $\Omega^-_{i,j}$ the  $+1$ and $-1$  eigenspaces of the action of $-\id$ on $\Omega^{i,j}( S^* \RR^n)^{tr} \otimes or(\RR^n )$. 
	Since $\Val_{k+1}(\RR^n,\CC^n)^\infty$ satisfies the hypothesis of the Casselman-Wallach theorem  by Lemma~\ref{lemma:CW-thm}, the image of the $GL(n,\RR)$ equivariant map
	$$   J=B \circ I\circ d\colon \Omega_{k,n-2-k}^\pm \to \Val_{k+1}^\mp (\RR^n,\CC^n)^\infty$$
	is closed. 
	
	We claim that the image of $J$ is distinct from $\{0\}$.  Indeed, by Lemma~\ref{lemma:notin-I}, a function $f$ can  be chosen so that $d\omega$, where    
	$$ \omega =  f(u)\, dx_1\wedge \cdots \wedge dx_{k} \wedge du_{k+1}\wedge \cdots \wedge du_{n-2} \in \Omega_{k,n-2-k}^\pm , $$
	does not belong to the differential ideal generated by the contact form. Hence,  by Lemma~\ref{lemma:ker-int}, $I(d \omega)$ is nonzero and belongs to  $\Z_k^\pm$. By the injectivity of the Bernig embedding, $J(\omega)$ is nonzero.

	Let $W$ denote the closure of $B(\Z_k^{\pm,\infty})$ in  $\Val_{k+1}^\mp (\RR^n,\CC^n)^\infty$. Observe that $W^K \cong (\Z_k^{\pm})^K$ as $(\fgg,K)$ modules. Therefore, if $\Z_k^{\pm}$ is irreducible, then so is $W$. Since $\im(J)\subset W$, we conclude that $\im(J)=W$.

	Now let $\Phi\in \Curv^{\pm,\infty}_k$. Then $\glob(\Phi)$ is a smooth valuation and hence there exists a differential form $\omega$ such that $\glob(\Phi- I(\omega))=0$. By the previous paragraph, 
	$ B(\Phi- I(\omega))\in W=\im(J)$ and thus there exists a  form $\eta$ such 
	$$ B(\Phi- I(\omega + d\eta))=0.$$
	From the injectivity of the Bernig embedding we conclude that $\Phi=  I(\omega + d\eta)$. 
\end{proof}

While the Casselman--Wallach  theorem appears to be not applicable in the following situation,  we still obtain the same conclusion.

\begin{corollary}
	The image of $B\colon \Z_k^\infty\to \Val_{k+1}(\RR^n,\CC^n)^\infty$ is closed. 
\end{corollary}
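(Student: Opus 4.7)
The plan is to reduce the corollary to the closedness of $\im J$ (where $J = B \circ I \circ d$) that was established in the proof of Proposition~\ref{prop:Z-irred} via the Casselman--Wallach theorem. The Casselman--Wallach theorem cannot be applied directly to $B\colon \Z_k^\infty \to \Val_{k+1}(\RR^n,\CC^n)^\infty$, because the domain $\Z_k^\infty$ inherits its topology from the non-metrizable space $\Curv(\RR^n)$ and is not known to be a Fr\'echet space; however, the theorem does apply to $J$, whose source is the Fr\'echet space of smooth translation-invariant differential forms and whose target is the smooth Fr\'echet representation of moderate growth $\Val_{k+1}(\RR^n,\CC^n)^\infty$ with Harish--Chandra associated module (Lemma~\ref{lemma:CW-thm}).

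Concretely, I would proceed in two steps. First, I note the trivial inclusion $\im J \subset B(\Z_k^\infty)$: for any smooth form $\omega$, Stokes' theorem applied to the cycle $\nc(K)$ gives $\glob(I(d\omega)) = 0$, so that $I(d\omega) \in \Z_k \cap \Curv_k^{sm} \subset \Z_k^\infty$, and hence $J(\omega) = B(I(d\omega)) \in B(\Z_k^\infty)$. Second, for the reverse inclusion I invoke an intermediate step from the proof of Proposition~\ref{prop:Z-irred}: under the hypothesis that $\Z_k^\pm$ is irreducible, the closure $W$ of $B(\Z_k^{\pm,\infty})$ inside $\Val_{k+1}^\mp(\RR^n,\CC^n)^\infty$ is already equal to $\im J$ (the argument being that $W$ is irreducible as a $GL(n,\RR)$-representation by Lemma~\ref{lemma:K-finite}, and $\im J$ is a nonzero closed invariant subspace). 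Since $B(\Z_k^\infty) \subset W$ by the very definition of $W$, the identification $W = \im J$ forces $B(\Z_k^\infty) \subset \im J$.

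Combining the two inclusions yields $B(\Z_k^\infty) = \im J$, and the closedness of $\im J$ in $\Val_{k+1}(\RR^n,\CC^n)^\infty$ has already been established via the Casselman--Wallach theorem in the proof of Proposition~\ref{prop:Z-irred}. The main obstacle is the reverse inclusion $B(\Z_k^\infty) \subset \im J$, which ultimately rests on the irreducibility of $\Z_k^\pm$ entering through the identification $W = \im J$; this is precisely the ingredient that lets us bypass the failure of the Casselman--Wallach hypotheses for the non-Fr\'echet space $\Z_k^\infty$.
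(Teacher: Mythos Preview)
Your approach is essentially the same as the paper's: both extract from the proof of Proposition~\ref{prop:Z-irred} the chain
\[
\overline{B(\Z_k^{\pm,\infty})} = W = \im J \subset B(\Z_k^{\pm,\infty}),
\]
which forces $B(\Z_k^{\pm,\infty})$ to equal the closed subspace $\im J$.

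There are two small points to tighten. First, your notation conflates $B(\Z_k^\infty)$ with $B(\Z_k^{\pm,\infty})$: the space $W$ and the map $J$ are defined parity by parity, so the inclusion ``$B(\Z_k^\infty)\subset W$'' should read $B(\Z_k^{\pm,\infty})\subset W$. Second, you omit the last step of passing from the closedness of each $B(\Z_k^{\pm,\infty})$ to the closedness of the full image $B(\Z_k^\infty) = B(\Z_k^{+,\infty}) \oplus B(\Z_k^{-,\infty})$. The paper handles this by noting that the parity projections on $\Val_{k+1}(\RR^n,\CC^n)^\infty$ are continuous, so a sum of closed subspaces lying in complementary parity components is again closed. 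Once you add this sentence, your argument matches the paper's exactly.
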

\begin{proof}
	The proof of Proposition~\ref{prop:Z-irred} shows that $$\overline {B(\Z_k^{\pm,\infty})} = W = \im J \subset B(\Z_k^{\pm,\infty}).$$ 
	Since the projections in  $\Val_{k+1}(\RR^n,\CC^n)^\infty$ onto the subspaces of even and odd valuations are continuous, we conclude that $B(\Z_k^\infty)$ is closed. 
\end{proof}

\subsection{Curvature measures on the plane}

In this section, we consider curvature measures in $\RR^2$ and give   proof for $\Curv^{sm}_k = \Curv^\infty_k$ that does not rely on the irreducibility of $\Z_k^\pm$.   By the results of  Section~\ref{sec:degree}, it remains to treat the case of  curvature measures of degree $k=0$ .

Let us write  $ \partial_\phi h :=  h_\phi: =  \frac{d}{d\phi} h(e^{ i \phi})$ for smooth functions  on the unit circle. Let $\rho_{\pi/2} u = (-u_2,u_1)$ denote a rotation of $u\in \RR^2$ by the angle $\pi/2$.
\begin{lemma}\label{lemma:byparts} Let $h\colon S^1\to \CC$ be a smooth  function satisfying $\int_{S^{1}} h(u)\, du=0$. Then the curvature measure 
		$$ \Phi(K,\beta) = \int_{\pi_1^{-1}(\beta)} h(u)\, d\Theta_0(K,(x,u))$$ 
globalizes to zero and 
$$ B(\Phi)(K)= -\int_{S^{1}} h_K(u) (h_\phi (u) \cdot \rho_{\pi/2} u   -2 h(u)\cdot u)\, du.$$ 
\end{lemma}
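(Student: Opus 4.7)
The plan is to split the lemma into two claims, globalization to zero and the formula for $B(\Phi)$, and to establish each by reducing to a convenient class of convex bodies and then extending by continuity. The argument is essentially a direct calculation; the only subtle points are bookkeeping.

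For the globalization, since the integrand $h(u)$ depends only on $u$, the marginal property of support measures gives
\[ \glob(\Phi)(K) = \int_{S^1} h(u)\, d\!\S_0(K,u). \]
The key fact I would use is that on $\RR^2$ the measure $\S_0(K,\cdot)$ is a fixed multiple of the spherical Lebesgue measure for every convex body, namely $\S_0(K,\cdot) = \sigma/(2\pi)$. This can be checked first for convex polygons, where the outer normal cones at the vertices partition $S^1$ with total angle $2\pi$, using Theorem~\ref{thm:polytopes} for the degree $0$ polytopal description, and then extended to all convex bodies by the weak continuity of $\S_0$. The hypothesis $\int h\, du =0$ then forces $\glob(\Phi)(K)=0$.

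For the Bernig embedding, by the continuity of $B$ and the density of strictly convex $C^2$ bodies in $\calK(\RR^2)$, it suffices to compute $B(\Phi)(K)$ for such $K$. In that case the normal cycle is parameterized by $\phi\in[0,2\pi)$ via $u=e^{i\phi}$ and
\[ x(\phi) = p(\phi)\, u + p'(\phi)\, \rho_{\pi/2} u, \qquad p(\phi)=h_K(e^{i\phi}), \]
and a direct computation using the definition of $\kappa_0$ shows that along this parameterization $\kappa_0$ restricts to $d\phi$ (up to orientation). Substituting into the definition $B(\Phi)(K)=(\Phi(K,x_1),\Phi(K,x_2))$ produces
\[ B(\Phi)(K) = \frac{1}{2\pi}\int_0^{2\pi} h(u)\,\bigl( p(\phi)\, u + p'(\phi)\, \rho_{\pi/2} u\bigr)\, d\phi. \]

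The concluding step is an integration by parts on the summand containing $p'(\phi)$, relying on the identity $\frac{d}{d\phi}(\rho_{\pi/2} u) = -u$ for $u=e^{i\phi}$. Transferring the derivative yields
\[ \int_0^{2\pi} p'(\phi)\, h(u)\, \rho_{\pi/2} u\, d\phi = -\int_0^{2\pi} p(\phi)\bigl(h_\phi(u)\, \rho_{\pi/2} u - h(u)\, u\bigr)\, d\phi, \]
with boundary contributions vanishing by $2\pi$-periodicity. Combining this with the first summand produces the integrand $-p(\phi)\bigl(h_\phi(u)\, \rho_{\pi/2} u - 2h(u)\, u\bigr)$, matching the asserted formula up to the normalization of $du$ versus $d\phi$ on $S^1$. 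Extension from smooth to arbitrary $K$ is then automatic from the continuity of both sides in the Hausdorff metric. I do not anticipate a genuine obstacle: the only care required is in fixing the orientation of $\nc(K)$ so that $\kappa_0$ restricts to $+d\phi$, and keeping track of the normalization convention for $du$.
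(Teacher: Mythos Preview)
Your argument is correct. The globalization follows exactly as you say, and for the Bernig embedding your parameterization $x(\phi)=p(\phi)u+p'(\phi)\rho_{\pi/2}u$ together with the integration by parts on the circle produces the stated integrand; the only loose ends are the orientation of $\nc(K)$ and the normalization of $du$, both of which you flag.

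The paper takes a different route: rather than proving the planar lemma directly, it deduces it as the $n=2$ case of a proposition valid in all dimensions. There one extends $h$ to a $(-n)$-homogeneous function $f$ on $\RR^n\setminus\{0\}$, writes $B(\Phi)(K)=\int_{S^{n-1}} f(u)\,\grad h_K(u)\,du$ for $C^2_+$ bodies via the inverse Gauss map, and then moves the gradient onto $f$ by applying the divergence theorem on $S^{n-1}$ to the vector field $X_u=x-\langle u,x\rangle u$. For $n=2$ the Euclidean gradient of the $(-2)$-homogeneous extension at $u\in S^1$ is exactly $h_\phi(u)\rho_{\pi/2}u-2h(u)u$, recovering the lemma. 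Your approach is more elementary and self-contained in the plane, exploiting the explicit support-function parameterization; the paper's approach buys a uniform statement in all dimensions at the cost of invoking the divergence theorem on the sphere. Both are integration by parts at heart, just packaged differently.
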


The above lemma is a special case of the following result that holds in all dimensions.
Let us write $\grad f$ for the gradient of a smooth function $f\colon U\to \CC$ defined on an open subset of $\RR^n$. 

\begin{proposition}
	Let $f\colon \RR^n\setminus\{0\}\to \CC$  be a smooth $(-n)$-homogeneous function satisfying $\int_{S^{n-1}} f(u)\, du =0$. Then 
	the curvature measure 
		$$ \Phi(K,\beta) = \int_{\pi^{-1}(\beta)} f(u)\, d\Theta_0(K,(x,u))$$ 
	globalizes to zero and 
	$$ B(\Phi)(K)= -\int_{S^{n-1}}     h_K(u) \grad f(u)\, du.$$ 
\end{proposition}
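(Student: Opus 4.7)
My plan is to establish the identity on convex bodies $K$ with $C^2$ boundary of positive Gauss curvature, where the normal cycle admits an explicit parameterization, and then extend to all of $\calK(\RR^n)$ by continuity. Locality, translation invariance, and continuity of $\Phi$ are immediate from the corresponding properties of $\Theta_0$, so $\Phi$ is indeed a translation invariant curvature measure.

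For the globalization claim, I would use that $\S_0(K,\cdot) = \Theta_0(K, \RR^n \times \cdot)$ is a fixed multiple of the spherical Hausdorff measure independent of $K$: for $C^2$-smooth $K$ this follows from the preliminaries, since $\kappa_0$ restricts to the standard volume form on $S^{n-1}$; the general case follows by continuity of support measures. Hence $\glob(\Phi)(K) = \int_{S^{n-1}} f\, d\S_0(K,\cdot)$ is a constant multiple of $\int_{S^{n-1}} f\, du$, which vanishes by hypothesis.

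For the Bernig embedding, let $K$ have $C^2$ boundary of positive Gauss curvature, so that the reverse Gauss map defines a diffeomorphism $S^{n-1} \to \nc(K)$, $u \mapsto (x(u), u)$, with $x(u) = \grad h_K(u)$ (by Euler's identity applied to the $1$-homogeneous extension of $h_K$). Since $\kappa_0$ pulls back to the spherical volume form, one obtains
\[ B(\Phi)(K) \;=\; c \int_{S^{n-1}} \grad h_K(u)\, f(u)\, du \]
for a universal normalization constant $c$ independent of $K$.

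The core step is a vector-valued integration by parts on $S^{n-1}$, which I expect to be the main technical ingredient. The cleanest route is to apply the divergence theorem to the vector field $h_K f \cdot e_j$ on the annulus $A_\epsilon = \{\epsilon \leq |x| \leq 1\}$: because $h_K f$ is $(1-n)$-homogeneous, a direct scaling check shows that the inner and outer boundary contributions cancel, so $\int_{A_\epsilon} \partial_j(h_K f)\, dx = 0$. On the other hand, both $(\partial_j h_K) f$ and $h_K (\partial_j f)$ are $(-n)$-homogeneous, and integration in polar coordinates produces a common factor of $-\log \epsilon \neq 0$, yielding the pointwise identity
\[ \int_{S^{n-1}} (\partial_j h_K)(u)\, f(u)\, du \;=\; -\int_{S^{n-1}} h_K(u)\, (\partial_j f)(u)\, du. \]
Collecting the $n$ coordinates gives $B(\Phi)(K) = -c\int_{S^{n-1}} h_K\, \grad f\, du$ on the dense subset of smooth strictly convex bodies. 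Both sides are continuous in $K$---the right-hand side because $K \mapsto h_K$ is continuous in $C(S^{n-1})$ and $\grad f$ is bounded on $S^{n-1}$---so the identity extends by density. The normalization $c=1$ can be fixed by comparison with the $n=2$ case in Lemma~\ref{lemma:byparts}; the only delicate point is tracking the homogeneity degrees in the annulus integration by parts.
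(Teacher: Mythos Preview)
Your proposal is correct and follows the same overall strategy as the paper: reduce to $C^2_+$ bodies, parameterize the normal cycle by the inverse Gauss map $u\mapsto(\grad h_K(u),u)$, and then integrate by parts on the sphere. The one genuine difference is in how the integration by parts is carried out. The paper works intrinsically on $S^{n-1}$: it applies the divergence theorem to the tangential field $X_u=x-\langle u,x\rangle u$ and observes that for any $-(n-1)$-homogeneous $\varphi$ the combination $\langle\grad_{S^{n-1}}\varphi,x\rangle-(n-1)\langle u,x\rangle\varphi(u)$ equals $\langle\grad\varphi,x\rangle$, so the spherical divergence theorem gives $\int_{S^{n-1}}\grad(h_Kf)\,du=0$ in one step. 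Your annulus argument reaches the same identity via ambient integration and homogeneity scaling; it is equally valid, perhaps slightly longer but requiring no Riemannian calculus.

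One small point: you should not leave the constant $c$ floating and then fix it by appeal to Lemma~\ref{lemma:byparts}. In the paper that lemma is stated \emph{as} the $n=2$ case of this proposition and has no independent proof, so that route is circular. The constant is simply $1$, read off directly from the standard formula $\int g\,d\Theta_0(K,\cdot)=\int_{S^{n-1}} g(n_K^{-1}(u),u)\,du$ for smooth $K$ (Schneider, Lemma~4.2.2), which the paper cites explicitly.
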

\begin{proof}
It suffices to prove the expression for $B(\Phi)(K)$ for convex bodies $K$ with smooth and strictly positively curved boundary. For such bodies the Gauss map $n_K\colon \partial K\to S^{n-1}$ is a diffeomorphism and $n_K^{-1}(u)= \grad h_K(u)$. By \cite[Lemma 4.2.2]{Schneider:BM},
$$ \Phi(K,g)  = \int_{ S^{n-1}}  g(n_K^{-1}(u)) f(u)\,  du $$ 
and hence 
$$ B(\Phi)(K)= \int_{ S^{n-1}}  f(u) \grad h_K(u) \, du.$$

If $M$ is a compact Riemannian manifold without boundary, 
$$ 0 = \int_M \div(\varphi X)= \int_M X\varphi  + \varphi \div X$$
for every smooth function $\varphi$ and vector field $X$ on $M$. 
We will apply this identity in the case $M=S^{n-1}$ to the vector field
$ X_u= x - \langle u,x\rangle u$, where $x$ is a fixed vector in $\RR^n$.
Since 
$$ \div X=-  (n-1) \langle u,x\rangle,$$
we conclude 
$$ 0=\int_{S^{n-1}} \langle \grad_{S^{n-1}} \varphi, x\rangle -  (n-1) \langle u,x\rangle \varphi (u)\,  du= \int_{S^{n-1}} \langle \grad\varphi ,x\rangle \, du$$
if $\varphi$ is extended to an $-(n-1)$-homogeneous function on $\RR^n\setminus\{0\}$. 

Since $\varphi= h_K f$ is homogeneous of degree $-(n-1)$, we obtain
$$  0= \int_{S^{n-1}} f(u) \grad h_K(u)  + h_K(u) \grad f(u)\, du$$
and hence the desired conclusion.
\end{proof}

\begin{theorem}
	For each  $\Phi\in \Curv^\infty_0(\RR^2)$, there exists a unique smooth function $h\colon S^1\to\CC$  such that 
	$$ \Phi(K,A) = \int_{\pi_1^{-1}(A)} h(u)\, d\Theta_0(K,(x,u)).$$ 
	
\end{theorem}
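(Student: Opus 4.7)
The plan is to pass from $\Phi$ through the Bernig embedding, exploit Alesker's smoothness theorem ($\Val^\infty=\Val^{sm}$), and recover $h$ from the radial component of the representing function. Since $\Val_0(\RR^2)$ consists of constant valuations and $\glob(\C_0)\equiv 1$, a unique $c\in\CC$ makes $\Phi':=\Phi-c\,\C_0$ lie in $\Z_0^\infty(\RR^2)$; because $B$ is a continuous $GL(n,\RR)$-homomorphism, $B(\Phi')\in\Val_1(\RR^2,\CC^2)^\infty$, and Alesker's theorem applied componentwise makes $B(\Phi')$ a smooth valuation. Theorem~\ref{thm:1-hom} then furnishes a unique smooth $F\colon S^1\to\CC^2$ with $\int_{S^1} F\,du=0$ satisfying
$$B(\Phi')(K)=\int_{S^1} h_K(u)\,F(u)\,du.$$

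The key observation is that every $\Phi'\in\Z_0(\RR^2)$ has the form $\Psi_{h^*}$ for some, a priori distributional, $h^*\colon S^1\to\CC$ with $\int h^*\,du=0$: this follows from Theorem~\ref{thm:polytopes} with $i=0$, $n=2$, together with the cocycle condition forced by the valuation property on two-dimensional polyhedral cones, which yields $\varphi([u_1,u_2])=\tfrac{1}{2\pi}\int_{u_1}^{u_2} h^*(u)\,du$ for the simple valuation $\varphi$ associated to $\Phi'$. Applying Lemma~\ref{lemma:byparts} in distributional form to $\Psi_{h^*}$ gives
$$B(\Psi_{h^*})(K)=\int_{S^1} h_K(u)\bigl[2h^*(u)\,u-(h^*)'(u)\,\rho_{\pi/2}u\bigr]\,du.$$
Comparing with the smooth representation of $B(\Phi')$ and invoking uniqueness in Theorem~\ref{thm:1-hom}, one obtains $F=2h^*u-(h^*)'\rho_{\pi/2}u$ as distributions, whence $2h^*=\langle F,u\rangle$ and $h^*=\tfrac12\langle F,u\rangle$ is in fact smooth.

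Setting $h_0:=\tfrac12\langle F,u\rangle$, the identification $\Phi'=\Psi_{h_0}$ from the previous step combined with $\Phi=\Phi'+c\,\C_0=\Psi_{c+h_0}$ produces a smooth $h:=c+h_0$ with $\Phi=\Psi_h$. Uniqueness follows directly from Theorem~\ref{thm:polytopes}: if $\Psi_h\equiv 0$, evaluating on polygons whose vertex normal cones exhaust arbitrary open arcs of $S^1$ of angle less than $\pi$ forces $\int_{u_1}^{u_2} h\,du=0$ for every such arc, hence $h\equiv 0$. The main obstacle I anticipate is formalizing the cocycle argument that produces the distributional $h^*$, together with the distributional integration-by-parts needed to apply the formula of Lemma~\ref{lemma:byparts} to a merely continuous $h^*$; both should be manageable via approximation by polytopes and by pairing against smooth support functions in Theorem~\ref{thm:1-hom}.
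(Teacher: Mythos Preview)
Your route is genuinely different from the paper's, and the difference matters. The paper does not pass through an a priori distributional $h^*$. Instead it represents $B(\Phi)\in\Val_1(\RR^2,\CC^2)=\Val_{n-1}(\RR^2,\CC^2)$ via McMullen's theorem, i.e.\ by a continuous $f\colon S^1\to\CC^2$ with $B(\Phi)(K)=\int f\,d\!\S_1(K,\cdot)$ as in \eqref{eq:B-f}, and then invokes Proposition~\ref{prop:imB}: $\langle f(u),u\rangle$ is the restriction of a quadratic polynomial, so after subtracting a linear map one may assume $\langle f(u),u\rangle=0$, whence $f(u)=g(u)\,\rho_{\pi/2}u$ for a smooth scalar $g$. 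Setting $h:=-g_\phi$, a direct computation with Lemma~\ref{lemma:byparts} and the identity $d\!\S_1(K,u)=(\Delta h_K+h_K)\,du$ yields $B(\Psi_h)=B(\Phi)$, and injectivity of $B$ finishes. No pre-existing $h^*$ is needed; Proposition~\ref{prop:imB} supplies exactly the constraint on the representing function that makes the construction close up.

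Your approach, by contrast, hinges on the claim that every $\Phi'\in\Z_0(\RR^2)$ already equals $\Psi_{h^*}$ for some distributional $h^*$. This is where the real work hides. Theorem~\ref{thm:polytopes} only gives a finitely additive $\varphi$ on two-dimensional cones; weak continuity of $\Phi'$ can be used to show the primitive $\theta\mapsto\varphi([0,\theta])$ is continuous, but the expression $\Psi_{h^*}(K,\beta)=\int_{\pi^{-1}(\beta)}h^*(u)\,d\Theta_0(K,(x,u))$ has no direct meaning when $h^*$ is a genuine distribution, and Lemma~\ref{lemma:byparts} is stated only for smooth $h$. Without this a priori identification you cannot conclude $F=2h^*u-(h^*)'\rho_{\pi/2}u$; and if you instead simply \emph{define} $h_0:=\tfrac12\langle F,u\rangle$, you would still need the compatibility $\langle F',u\rangle=-3\langle F,\rho_{\pi/2}u\rangle$ to verify $B(\Psi_{h_0})=B(\Phi')$, which is precisely the content that Proposition~\ref{prop:imB} delivers in the McMullen picture. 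Your plan is probably completable through a mollification argument ($F_j\to F$, $h_j^*:=F_j'$, $\Psi_{h_j^*}\to\Phi'$), but the paper's use of Proposition~\ref{prop:imB} bypasses all of this in a couple of lines.
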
 
\begin{proof} If $\Phi$ is $GL(2,\RR)$ smooth, then so is $B(\Phi)$. Hence the function $f$ appearing in  \eqref{eq:B-f} is smooth, see Theorem~A.2 in the appendix of \cite{Schuster:log}.
 By Proposition~\ref{prop:imB}, we may modify $f$ by a linear map $\RR^2\to \CC^2$ so that 
$$  \langle f(u),u\rangle =0$$
holds  for all $u\in S^1$. Consequently, we have $f(u)= g(u) \cdot \rho_{\pi/2} u$ for a smooth function $g$ on the unit circle. 

 Put $h= -g_\phi$ and define 
 $$ \Phi_h(K,A) =\int_{\pi_1^{-1}(A)} h(u)\, d\Theta_0(K,(x,u)).$$ 
 We claim that $\Phi= \Phi_h$. Indeed, using Lemma~\ref{lemma:byparts} we obtain

\begin{align*} B(\Phi_h)(K) & =  - \int_{S^1} h_K ( h_\phi  \cdot  \rho_{\pi/2} u  -2 h \cdot u)\,   du\\
  & =   \int_{S^1} h_K ( g_{\phi\phi}  \cdot   \rho_{\pi/2} u  -2 g_\phi \cdot u)\,   du\\
 & =   \int_{S^1} h_K ( \Delta f + f  )\,    du\\
 & =   \int_{S^1} f(u)\,  d\!\S_1(K,u)\\
 & =  B(\Phi)(K).
\end{align*} 
Here $\Delta f= f_{\phi\phi}$ and we used the identity $d\! \S_1(K,u) = (\Delta h_K+ h_K)\, du$ for $C^2_+$ convex bodies, see \cite[eq. (2.56)]{Schneider:BM}. 
Since the Bernig embedding is injective, we obtain $\Phi_h = \Phi$, as desired. 
\end{proof}

\section{Homogeneous functions on $\RR^n$}

\label{sec:rep}

In this section we treat a seemingly unrelated problem: we will investigate the irreducibility and composition series of certain spaces of homogeneous functions under the action of the general linear group. 

\begin{definition}
	Let $\calF_\alpha$ denote the vector space of $C^\infty$-smooth functions  
	$f\colon \RR^n\setminus \{0\} \to \CC$ that are homogeneous of degree $\alpha\in \RR$. 
	We let the group $GL(n,\RR)$ act on $\RR^n$ by 
	$$ \sigma(g)x= g^{-\top} x$$ and 
	on functions by 
	$$ \pi(g) f(x)= f(g^\top x).$$

Similarly, we let $\calV_\alpha$ denote the vector space of $C^\infty$-smooth functions  
$F\colon \RR^n\setminus \{0\} \to V^*$ that are homogeneous of degree $\alpha\in \RR$. Here $V= \CC^n$ denotes the complexification of $\RR^n$. 
We let the group $GL(n,\RR)$ act on these functions by 
$$ \pi(g) F(x)= \sigma(g)^{-\top} F(g^\top x).$$	
Let $\calV_\alpha^+$ and $\calV_\alpha^-$ denote the subspaces of even $\pi(-\id )f =f$ and odd $\pi(-\id )f =-f$ functions. 
\end{definition}

The following lemma is well known.
\begin{lemma} \label{lemma:ell}
	The linear functional defined on $\calF_{-n}$ by 
	$$ \ell(f)= \int_{S^{n-1}} f(u)\,  du$$
	has the property
	$$ \ell (\pi(g) f) =  |\det g|^{-1}\,  \ell(f), \quad g\in GL(n,\RR).$$
\end{lemma}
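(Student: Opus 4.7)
The plan is to prove the lemma by rewriting the integral over the sphere as an integral over all of $\RR^n$ with a radial cutoff, and then to apply a linear change of variables. Since $f$ is homogeneous of degree $-n$, the density $f(x)\,dx$ is scale-invariant along each ray, which is what makes this trick work.

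Concretely, I would fix a smooth, compactly supported $\chi\colon (0,\infty)\to\RR$ normalized so that $\int_0^\infty \chi(r)\,dr/r=1$. Polar coordinates together with the homogeneity relation $f(ru)=r^{-n}f(u)$ give
$$\int_{\RR^n} f(x)\chi(|x|)\,dx \;=\; \int_0^\infty\chi(r)\frac{dr}{r}\cdot\int_{S^{n-1}} f(u)\,du \;=\; \ell(f).$$
Applying this identity to $\pi(g)f$ and substituting $y=g^\top x$, whose Jacobian determinant has absolute value $|\det g|^{-1}$, yields
$$\ell(\pi(g)f) \;=\; \int_{\RR^n} f(g^\top x)\chi(|x|)\,dx \;=\; |\det g|^{-1}\int_{\RR^n} f(y)\chi(|g^{-\top}y|)\,dy.$$
Passing again to polar coordinates $y=ru$, using the homogeneity of $f$, and then making the substitution $s=r|g^{-\top}u|$ in the radial integral, the inner integral becomes
$$\int_0^\infty \chi(r\,|g^{-\top}u|)\frac{dr}{r}\;=\;\int_0^\infty \chi(s)\frac{ds}{s}\;=\;1,$$
\emph{independently of $u$}. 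What remains is precisely $\ell(f)$, and the claim follows.

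There is no serious obstacle here; the lemma is called well known for good reason. The one conceptual point is the dilation invariance of the multiplicative Haar measure $dr/r$ on $(0,\infty)$, which is exactly what causes the $u$-dependence inside the radial integral to drop out and produces the scalar $|\det g|^{-1}$.
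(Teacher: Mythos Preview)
Your argument is correct: rewriting $\ell(f)$ as $\int_{\RR^n} f(x)\chi(|x|)\,dx$ via the $(-n)$-homogeneity, changing variables $y=g^\top x$, and then using the dilation invariance of $dr/r$ to eliminate the $u$-dependence is exactly the standard computation. The paper itself gives no proof, simply declaring the lemma well known, so there is nothing further to compare.
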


By the previous lemma, the closed subspace $\calU \subset \calV_{-(n+1)}$   defined by 
$$ \ell ( \xi F)=0$$
for every linear functional $\xi\colon \RR^n\to \CC$, is $GL(n,\RR)$ invariant.
Let $\calU^+$ and $\calU^-$ denote the subspaces of even $\pi(-\id )f =f$ and odd $\pi(-\id )f =-f$ functions. 
The following is the  main result of this section.

\begin{theorem}\label{thm:length2}
	
For $n\geq 2$, the representation of $GL(n,\RR)$ on 	$\calU^\pm$ has   length $2$. 
	
\end{theorem}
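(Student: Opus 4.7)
The plan is to decompose $\calV_{-(n+1)}$ into $GL(n,\RR)$-invariant summands via natural equivariant operators, identify $\calU^\pm$ inside this decomposition, and reduce the length statement to the irreducibility of the resulting summands.

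\emph{Step 1: Splitting via contraction and gradient.} The $GL(n,\RR)$-equivariant maps $F \mapsto \langle F, x\rangle\colon \calV_\alpha \to \calF_{\alpha+1}$ and $\grad\colon \calF_{\alpha+1} \to \calV_\alpha$ satisfy $\langle \grad h, x\rangle = (\alpha+1) h$ by Euler's identity. For $\alpha=-(n+1)$, the factor $\alpha+1=-n$ is nonzero, and setting $\calK := \{F\in \calV_{-(n+1)} : \langle F, x\rangle = 0\}$, we obtain a $GL(n,\RR)$-equivariant direct sum
\[ \calV_{-(n+1)} = \calK \oplus \grad(\calF_{-n}), \]
with $\grad(\calF_{-n}) \cong \calF_{-n}$ via $h \mapsto -\tfrac{1}{n}\grad h$. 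The splitting respects parity.

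\emph{Step 2: Moment map and identification of $\calU^\pm$.} The $GL(n,\RR)$-equivariant moment map
\[ M \colon \calV_{-(n+1)} \to \End(\CC^n), \qquad M(F)_{ij} = \int_{S^{n-1}} u_i F_j(u)\, du, \]
has kernel $\calU$. A Stokes-type argument on the sphere using the tangent vector field $e_j - u_j u$ (of the kind appearing in the proof of Proposition~\ref{prop:imB}) yields $M(\grad h) = -\ell(h) \cdot I$, so $M$ sends the gradient summand to scalar matrices. On the tangent summand, $\tr M(F) = \int \langle u, F(u)\rangle\, du = 0$, so $M(\calK)$ lies in the traceless matrices. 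A simple antipodal symmetry argument shows $M$ vanishes identically on $\calV^-_{-(n+1)}$. Combined with the $GL(n,\RR)$-equivariant splitting of the target into scalars plus traceless matrices (both irreducible, tensored with the appropriate determinantal character) and with the surjectivity of $M$ onto each summand in the $+$ parity, we obtain
\[ \calU^- = \calV^-_{-(n+1)} \cong \calK^- \oplus \calF^-_{-n}, \qquad \calU^+ \cong \ker(M|_{\calK^+}) \oplus \ker(\ell|_{\calF^+_{-n}}). \]

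\emph{Step 3: Irreducibility of the summands.} It remains to show that each of the four summands $\calK^-$, $\calF^-_{-n}$, $\ker(M|_{\calK^+})$, and $\ker(\ell|_{\calF^+_{-n}})$ is irreducible as a $GL(n,\RR)$-representation; this will give both $\calU^+$ and $\calU^-$ length exactly $2$. These four spaces are smooth sections of natural $GL(n,\RR)$-equivariant vector bundles over $\RR\PP^{n-1}$ (twisted by the orientation line bundle according to parity) and constitute degenerate principal series representations at the distinguished homogeneity degrees $-n$ and $-(n+1)$. At generic degrees such principal series are irreducible; the only reducibility at these two special values is the one detected by the invariants $\ell$ (on $\calF^+_{-n}$) and $M$ (on $\calV^+_{-(n+1)}$), and both invariants have already been explicitly factored out, leaving only the irreducible pieces.

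The main obstacle will be Step 3. A rigorous verification proceeds by passing to the $(\fgg,K)$-modules with $K=O(n)$, computing the $K$-type decomposition via the $O(n)$ Pieri rule $V \otimes H_k = H_{k+1} \oplus H_{k-1} \oplus Y_{k,1}$, where $Y_{k,1}$ is the hook-shaped representation, and analyzing the raising and lowering operators coming from $\Sym^2(\RR^n) \subset \fgg$. These operators are nonzero precisely outside the loci where $\ell$ or $M$ produce an invariant, so no proper nontrivial $(\fgg,K)$-invariant submodule survives in any of the four summands. Casselman--Wallach theory (Lemma~\ref{lemma:K-finite} and Theorem~\ref{thm:CW}) then lifts this $(\fgg,K)$-module irreducibility back to the smooth $GL(n,\RR)$-representations, completing the proof.
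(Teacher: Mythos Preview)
Your Steps~1 and~2 are correct and amount to the same reduction the paper carries out: the paper's map $\rho(F)(x)=\langle F(x),x\rangle$ gives the exact sequence $0\to\calY^\pm\to\calU^\pm\to\calG^\pm\to 0$ with $\calY=\calK\cap\calU$ and $\calG=\ker\ell$, and your observation that $\grad$ splits this sequence $GL(n,\RR)$-equivariantly is a pleasant addition not stated in the paper. Your identification $\ker(M|_{\calK^+})=\calY^+$, $\calK^-=\calY^-$, and $\grad(\calG^\pm)\cong\calG^\pm$ is exactly right, and the computation $M(\grad h)=-\ell(h)\cdot I$ is correct.

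The gap is Step~3. The sentence ``the only reducibility at these two special values is the one detected by the invariants $\ell$ and $M$, and both invariants have already been explicitly factored out, leaving only the irreducible pieces'' is not a citation of a known fact; it is precisely the content of the theorem. The paper spends essentially all of Section~\ref{sec:rep} establishing this. Irreducibility of $\calG^\pm$ is obtained from the Howe--Lee classification of scalar degenerate principal series (Lemma~\ref{lemma:calG}). Irreducibility of $\calY^\pm$ (your $\ker(M|_{\calK^+})$ and $\calK^-$) for $n\geq 3$ requires: the explicit $K$-type decomposition of $\calX_\alpha^\pm$ (Corollary~\ref{cor:K-types}), the computation of the raising operator $p_{11}^+$ on highest weight vectors (Lemma~\ref{lemma:+2}), the lowering operator via Hermitian duality (Lemma~\ref{lemma:-2}), and the diagonal transitions between the two $K$-type strings $V_k\leftrightarrow W_{k\pm1}$ (Lemmas~\ref{lemma:special-K} and~\ref{lemma:n=3}). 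One then checks that at $\alpha=-(n+1)$ every relevant transition coefficient is nonzero, with the sole vanishing occurring exactly at the $K$-types removed by the condition $M=0$ (Lemma~\ref{lemma:yk}). For $n=2$ a separate argument is needed (Lemma~\ref{lemma:dim2}). Your outline of the method is correct and matches the paper's, but none of the actual computations are carried out; without them the claim that ``no proper nontrivial $(\fgg,K)$-invariant submodule survives'' is unsupported.
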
 

Let us  show how Theorem~\ref{thm:length2} implies Theorem~\ref{thm:Val2}.

\begin{proof}[Proof of Theorem~\ref{thm:Val2}] For every  $F\in \calU$ and every convex body $K\subset \RR^n$ define
	
	 $$ \varphi_F(K)= \int_{S^{n-1}} h_K(u) F(u) \, du.$$
$\varphi_F$ evidently satisfies 
$$ \varphi_F(K+L)= \varphi_F(K)+ \varphi_F(L)$$
for all convex bodies  $K$ and $L$. As is well-known, see, e.g., \cite[p. 186]{Schneider:BM}  this implies that $\varphi_F$ is a valuation. For $x\in \RR^n$, 
$$ \varphi_F(K+x)= \int_{ S^{n-1}} (h_K(u)+ \langle x,u\rangle) F(u) \, du = \varphi_F(K)$$ 
by the definition of $\calU$. It follows that $\varphi_F\in \Val_1(\RR^n, V^*)$. 

We now slightly modify the definition of $\varphi_F$ to ensure correct behavior under the action of $GL(n,\RR)$. Let $\Dens((\RR^n)^*)$ denote the $1$-dimensional space of densities on $(\RR^n)^*$ with a  representation of $GL(n,\RR)$ defined  by
$$ g\cdot \mu= |\det g| \mu.$$
Given such a density $\mu$, we define a density on $S^{n-1}\subset (\RR^n)^*$ by 
$$\mu_{S^{n-1}}(w)= \mu(u\wedge w), \quad w\in \Lambda^{n-1}\, T_uS^{n-1}.$$
One immediately verifies that  
$$ T\colon \calU\otimes \Dens((\RR^n)^*) \to \Val_1(\RR^n,V^*), \quad F\otimes \mu \mapsto \int_{S^{n-1}}  h_K F  \mu_{S^{n-1}},$$
is well-defined, continuous, and $GL(n,\RR)$ equivariant. Theorem~\ref{thm:1-hom} and the open mapping theorem imply that the induced map 
$$ T\colon \calU^\mp\otimes \Dens((\RR^n)^*) \to \Val_1^\pm(\RR^n, V^*)^\infty $$ 
is a $GL(n,\RR)$ isomorphism. Since tensoring with a $1$-dimensional representation does not change the length of a representation, we conclude that 
$\Val_1^\pm(\RR^n,\CC^n)$ has length $2$. 

The  corresponding statement for $\Val_{n-1}^\pm(\RR^n,\CC^n)$ now follows immediately from the fact that the Fourier transform of valuations (see \cite{Alesker:Fourier} or \cite{FW:Fourier}) gives rise to a $GL(n,\RR)$ isomorphism
$$ \Val_1^\pm (\RR^n,\CC^n)^\infty \to \Val_{n-1}^\pm ((\RR^n)^*,\CC^n)^\infty \otimes \Dens(\RR^n).$$
	 \end{proof}

\subsection{Roots and weights}

Let $V$ and $Q$ denote the complexification of $\RR^n$ and its standard euclidean inner product.
Let $e_1,\ldots, e_n$ denote the standard basis of $V=\CC^n$. We define another basis of $V$
by 
\begin{equation}\label{eq:v-basis} v_j =  e_{2j-1} - i e_{2j},\quad v_{n+1-j}  = \b v_j= e_{2j-1}+ i e_{2j},\end{equation}
for $1\leq j \leq m= \lfloor n/2 \rfloor$. If $n$ is odd, we also set 
$$ v_{m+1} =  \sqrt 2    e_n.$$
With respect to this basis, the defining equation of the Lie algebra $\so(n)_\CC$,  the complexification of $\so(n)$, becomes 
$$ X^\top J + JX=0$$
where $J$ is the matrix with antidiagonal $(2,\ldots, 2)$ and zeros elsewhere.  
A basis of $\so(n)_\CC$ is therefore given by 
\begin{align}\label{eq:roots} \begin{split} e_{ab} & = E_{ab} - E_{n+1-b,n+1-a} \quad (1\leq a,b\leq m),\\
	s^+_{ab} & =  E_{b,n+1-a} -  E_{a,n+1- b}\quad (1\leq a<b\leq m),\\
		s^-_{ab} & = E_{n+1-a,b} - E_{n+1-b,a} \quad (1\leq a<b\leq m),
		\end{split}
\end{align}
and, if $n$ is odd, 
\begin{align} \label{eq:roots-odd}  \begin{split} f^+_ a  & = E_{a, m+1} -  E_{m+1,n+1-a}\quad (1\leq a\leq m),\\
 f^-_ a  & =  E_{ m+1, a} -  E_{n+1-a,m+1}\quad (1\leq a\leq m).
 \end{split}
\end{align}
Here  $E_{j,k}$ is the matrix with a $1$  in the 
$(j,k)$ entry and zeros elsewhere.

A Cartan subalgebra $\mathfrak h$ is spanned by $e_{jj}$ ($1\leq j\leq m$). We define $\epsilon_j \colon \mathfrak h \to \CC$ by $\epsilon_j(e_{kk})= \delta_{jk}$.
Observe  that 
$e_{ab}$ spans the root space of $
\epsilon_a-\epsilon_b$,  $s^+_{ab}$ and $s^-_{ab}$ span the root spaces of the ``sum roots'' $\epsilon_a+ \epsilon_b$ and $-(\epsilon_a+ \epsilon_b)$, and $f^+_a$ and $f_a^-$ span the root spaces of the ``short roots'' $\epsilon_a$ and $-\epsilon_a$. 

Note   that for $1\leq j\leq m$ the vector  $v_j$ is a weight vector of weight $\epsilon_j$ and $\b v_j$ is a weight vector of weight $-\epsilon_j$. This fact  can be used to easily verify the  root space decomposition.

A Borel subalgebra $\mathfrak b$ is spanned by $e_{ab}$ ($1\leq a\leq b\leq m$) and $s^+_{ab}$ ($1\leq a<b \leq m$) (and, if $n$ is odd, by $f_a^+$, $1\leq a\leq m$).

Let $\gl(n,\RR)= \so(n)\oplus \mathfrak p$ denote the Cartan decomposition into skew-symmetric and symmetric matrices.
If we  define as usual  $uv= \frac 12 (  u\otimes  v + v\otimes u)$, then $T\colon \Sym^2 \RR^n \to \mathfrak p$, defined by  
$$\langle T(uv ) x,y\rangle = \frac 12 ( \langle u,x\rangle \langle v,y\rangle +\langle u,y\rangle \langle v,x\rangle),$$ is an $O(n)$ equivariant isomorphism.
Its complexification $T\colon \Sym^2 V\to \mathfrak p_\CC$ satisfies    
$$ T(v_jv_k)=E_{j,n+1-k}+E_{k,n+1-j}$$
and hence defines via
\begin{equation}\label{eq:T-p} T(v_a v_b) = p_{ab}^+, \quad T(\b v_a \b v_b)= p_{ab}^-, \quad T(v_a \b v_b) = 
	q_{ab},\end{equation}
where $1\leq a,b\leq \lfloor (n+1)/2\rfloor$,  a convenient basis 
of $\mathfrak p_\CC$.

Recall that the representations of the group $O(n)$ are parametrized by certain Young diagrams. Let $\Lambda^+(n)$ denote the set of $m$-tuples  $\lambda= (\lambda_1,\ldots, \lambda_m)$, $m= \lfloor n/2\rfloor$,  of nonnegative integers. We identify $\lambda$ with the Young diagram $(\lambda_1,\ldots, \lambda_m, 0,\ldots)$ and denote the corresponding representation of $O(n)$ by $\rho^\lambda$.

The highest weights of representations of $\so(n)_\CC$ with respect to $\mathfrak b$ corresponding to representations of $SO(n)$ are the integer linear combinations 
$\mu=  \mu_1\epsilon_1 + \cdots + \mu_m \epsilon_m$ such that 
$$ \begin{cases}
\mu_1\geq \cdots \geq \mu_{m}\geq 0, & \text{if } n \text{ is odd}, \\
\mu_1\geq \cdots \geq \mu_{m-1} \geq  |\mu_{m}|, & \text{if } n \text{ is even.}
\end{cases}
$$
We shall denote the representations of $SO(n)$ corresponding to $\mu$ by $\pi^\mu$. 

If $n$ is even,  we define $\mu^-=(\mu_1,\ldots, \mu_{m-1}, -\mu_m)$ for $\mu\in \Lambda^+(n)$. In the following, we will often identify the $m$-tuples $\mu$ and $\mu^-$ with the corresponding highest weights. 

The relationship between the irreducible representations of $O(n)$ and $SO(n)$ is as follows. If 
\begin{enuma} 
	\item $n$ is odd; or if
	\item $n$ is even and $\mu_m=0$,
\end{enuma}
then $\rho^\mu$ is irreducible  as an $SO(n)$ representation and 
$ \rho^\mu = \pi^\mu$. 
However, if $n$ is even and $\mu_m\neq 0$, then \begin{equation}\label{eq:O-SO}\rho^\mu|_{SO(n)} =  \pi^\mu \oplus \pi^{\mu^-}.\end{equation}

\subsection{Construction of highest weight vectors}

We assume $n\geq 3$ throughout.
	Recall that we  let the group $GL(n,\RR)$ act on $\RR^n$ and its complexification $V$ by 
$ \sigma(g)= g^{-\top} x$.
Observe that $E=\coprod_{L}V/L $, where $L$ ranges over all complex lines through the origin in $V$, is the total space of a smooth vector bundle over projective space. Hence we may speak of smooth maps with values in $E$. 

\begin{definition}

Let $\calX_\alpha$ denote the vector space of $C^\infty$-smooth functions  
$F\colon \RR^n\setminus \{0\} \to V^*$ that are homogeneous of degree $\alpha$ and satisfy 
$$ \langle  F(x),x\rangle = 0.$$
We let the group $GL(n,\RR)$ act on these functions by 
$$ \pi(g) F(x)= \sigma(g)^{-\top} F(g^\top x).$$	
Let $\wt \calX_\alpha$ denote the vector space of $\alpha$-homogeneous  $C^\infty$-smooth functions  $G\colon \RR^n\setminus \{0\}\to \coprod_{L}V/L $, where $L$ ranges over all complex lines through the origin in $V$, satisfying 
$$ G(x)\in V/\CC x.$$ 
The group $GL(n,\RR)$ acts on these functions by 
$$ \pi(g) G(x)= \sigma(g) G(g^\top x).$$	
We denote by $\calX^\pm_\alpha$ and $\wt \calX^\pm_\alpha$ the $+1$ and $-1$ eigenspaces under the action of $-\id$. 
\end{definition}

We define a basis of $V^*$ by 
$$ z_j=x_{2j-1} - i x_{2j}, \quad z_{n+1-j}= \b z_j=x_{2j-1} + i x_{2j}$$ 
for $1\leq j \leq \lfloor n/2 \rfloor$ and, if $n$ is odd, by 
$$ z_{m+1} = \b z_{m+1} = \sqrt2 x_n.$$
By abuse of notation,  $x_1,\ldots, x_n$ denote both the canonical coordinates on $\RR^n$ and  their complex linear extension to $V$. 
Observe that $\sum_{j=1}^n z_j \b z_j= 2 \sum_{j=1}^n x_j^2$  as functions on $\RR^n$.

We also define the differential operators
$$ \frac{\partial }{\partial z_j} = \frac{1}{2} \Big( \frac {\partial }{\partial x_{2j-1}} +i \frac {\partial }{\partial x_{2j}}\Big), \quad 
\frac{\partial }{\partial z_{n+1-j}} = \frac{\partial }{\partial \b z_{j}} =  \frac{1}{2} \Big( \frac {\partial }{\partial x_{2j-1}} -i \frac {\partial }{\partial x_{2j}}\Big)$$
for $1\leq j \leq \lfloor n/2 \rfloor$  and, if $n$ is odd,  
$$ \frac{\partial }{\partial z_{m+1}} = \frac{1}{\sqrt 2} \frac{\partial }{\partial x_n}.$$
The  rationale behind these definitions is of course that the complexification of the differential of a function $f$ satisfies 
$$ df_p = \sum_{j=1}^n  \frac{\partial f }{\partial z_j}(p)  z_j \in V^*.$$

\begin{lemma} 
The Lie algebra $\so(n)_\CC$ acts on functions  from $\calF_\alpha$ by the operators
\begin{align*}e_{ab} &  = z_a \frac{\partial }{\partial z_b} - \b z_b \frac{\partial }{\partial \b z_a},\\ 
s_{ab}^+ &  = z_b \frac{\partial }{\partial \b z_a} -  z_a \frac{\partial }{\partial \b z_b},\\ 
s_{ab}^- &  = \b z_a \frac{\partial }{\partial z_b} - \b z_b \frac{\partial }{\partial z_a}.
\intertext{If $n$ is odd, we also have the operators}
f_{a}^+ &  = z_a \frac{\partial }{\partial  z_{m+1}} -  z_{m+1} \frac{\partial }{\partial \b z_a},\\ 
f_{a}^- &  =  z_{m+1} \frac{\partial }{\partial z_a} - \b z_a \frac{\partial }{\partial z_{m+1}}.
\end{align*} 
The  noncompact part  $\mathfrak p_\CC$ acts by the operators
\begin{align*}
	p_{ab}^+ &  = z_a \frac{\partial }{\partial \b z_b} +  z_b \frac{\partial }{\partial \b z_a},\\ 
	p_{ab}^- &  = \b z_a \frac{\partial }{\partial z_b} + \b z_b \frac{\partial }{\partial z_a},\\
	q_{ab} &  = z_a \frac{\partial }{\partial z_b} + \b z_b \frac{\partial }{\partial \b z_a}.
\end{align*} 
\end{lemma}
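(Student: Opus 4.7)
The plan is to derive the infinitesimal action directly from the definition of $\pi$ and then identify the resulting first-order differential operators in the $z$-coordinates.

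First, differentiating $(\pi(g)f)(x) = f(g^\top x)$ at $g = e^{tX}$ gives
\[
\pi(X) f(x) = \frac{d}{dt}\bigg|_{t=0} f(e^{tX^\top}x) = df_x(X^\top x).
\]
Expanded in the standard basis $(e_i)$ with coordinates $(x_i)$, this reads $\pi(X)f = \sum_{i,j} X_{ji}\, x_j\, \partial_{x_i} f$; in particular $\pi(E^{\mathrm{std}}_{ij}) = x_i\partial_{x_j}$ for the matrix unit $E^{\mathrm{std}}_{ij}$ in the $e$-basis. By complex linearity the same formula extends to $\gl(n,\CC)$.

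The essential step is to establish the analogous formula in the $v$-basis, namely
\[
\pi(E_{ab}) = z_a\, \partial_{z_b},
\]
where $E_{ab}$ now denotes the matrix unit with respect to $(v_1,\ldots,v_n)$ from \eqref{eq:v-basis} and $\partial_{z_b}$ is the Wirtinger derivative defined in the paper. To verify this I would expand $E_{ab}$ in the standard basis using $v_j = e_{2j-1} - i e_{2j}$ and $\bar v_j = e_{2j-1} + i e_{2j}$, apply $\pi(E^{\mathrm{std}}_{ij}) = x_i\partial_{x_j}$ coefficient by coefficient, and regroup the resulting real vector fields. Morally, this is the covariance of the expression $\sum X_{ij}\, y_i \partial_{y_j}$ under any $\CC$-linear change of coordinates, once one interprets $z_j$ as complex-valued ``coordinates'' whose ``dual'' Wirtinger derivatives satisfy $\partial_{z_k} z_j = \delta_{jk}$ and $\partial_{z_k}\bar z_j = 0$.

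Given this identity, each formula in the lemma follows mechanically from the matrix expressions in \eqref{eq:roots}, \eqref{eq:roots-odd}, and \eqref{eq:T-p}, together with $z_{n+1-j} = \bar z_j$ and $\partial_{z_{n+1-j}} = \partial_{\bar z_j}$. For example, $e_{ab} = E_{ab} - E_{n+1-b,n+1-a}$ yields
\[
\pi(e_{ab}) = z_a\partial_{z_b} - z_{n+1-b}\partial_{z_{n+1-a}} = z_a\partial_{z_b} - \bar z_b\partial_{\bar z_a},
\]
and the remaining formulas for $s^\pm_{ab}$, $f^\pm_a$, $p^\pm_{ab}$, and $q_{ab}$ are obtained identically. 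The only genuine work lies in the change-of-basis computation for $\pi(E_{ab})$; no new ideas beyond careful bookkeeping are required.
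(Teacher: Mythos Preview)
Your proposal is correct and follows essentially the same strategy as the paper: both start from $\pi(X)f(p)=df_p(X^\top p)$, identify the action of matrix units in the $v$-basis as first-order operators in the $z$-coordinates, and then read off each formula from \eqref{eq:roots}, \eqref{eq:roots-odd}, \eqref{eq:T-p}. Your explicit intermediate identity $\pi(E_{ab})=z_a\,\partial_{z_b}$ is exactly the content the paper compresses into its one-line remark about the basis change; the rest is the same bookkeeping.
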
  
\begin{proof}  Since $\pi(X) f (p) = df_p ( X^\top p)$, $X\in \gl(n,\RR)$,  the computation is reduced to changing the basis from $v_1,\ldots,v_n$ to $\partial/\partial z_1, \ldots, \partial/\partial z_n$, which is dual to $z_1,\ldots, z_n$. The effect of this change of basis  is that 	$E_{j,k}  \longleftrightarrow E_{\b j, \b k}$, where $\b j =n+1-j$ for brevity.
	Using this, it is straightforward to read off the differential operators  directly from the definition of the matrices \eqref{eq:roots}, \eqref{eq:roots-odd},  and \eqref{eq:T-p}. 
\end{proof}	
Define $r^2(x)=2|x|^2= \sum_{j=1}^n z_j\b z_j$.
\begin{lemma}\label{lemma:hwv}
\begin{enuma}
	\item For $k\geq 1$, the functions 
	\begin{align*}
		\xi_{k}(x) &=   r^{\alpha -k+1} z_1^{k-1}(x)  z_1 -  r^{\alpha-k-1}  z_1^{k}(x) \sum_{j=1}^n  z_j(x) \b z_{j}\\ 
		\eta_{k}(x) &=  r^{\alpha-k}(x) z_1^{k-1}(x) ( z_2(x) z_1 -z_1(x) z_2) 
	\end{align*}
	in $\calX_{\alpha}$ are  highest weights vectors. If $n=3$, both have weight $k\epsilon_1$. If $n\geq 4$, they have  weight $k \epsilon_1$ and $k \epsilon_1+ \epsilon_2$. 
		\item Let $P_{V/\CC x} \colon V\to V/\CC x$ denote the canonical projection. The functions 
	\begin{align*}
		\wt\xi_k(x) &=   r^{\alpha -k+1} z_1^{k-1}(x) P_{V/\CC x} (v_1)\\ 
		\wt \eta_k(x) &=  r^{\alpha-k}(x) z_1^{k-1}(x) P_{V/\CC x} ( z_2(x) v_1 -z_1(x) v_2) 
	\end{align*}
	in $\wt \calX_{\alpha}$ are  highest weight vectors.
	If $n=3$, both have weight $k\epsilon_1$. If $n\geq 4$, they have  weight $k \epsilon_1$ and $k \epsilon_1+ \epsilon_2$. 	
\end{enuma} 
\end{lemma}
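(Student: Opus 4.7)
I plan to verify each of the four claims by exhibiting each proposed vector as a product of simpler, geometrically meaningful highest weight vectors and invoking equivariance. First, the membership claims: for $\xi_k$ one has $\langle z_1, x\rangle = z_1(x)$ and $\langle \sum_j z_j(x) \bar z_j, x\rangle = \sum_j z_j(x) \bar z_j(x) = r^2(x)$, so the two terms of $\xi_k(x)$ pair with $x$ to give $r^{\alpha-k+1}(x) z_1^k(x) - r^{\alpha-k+1}(x) z_1^k(x) = 0$. For $\eta_k$ the pairing vanishes by antisymmetry in $z_2(x) z_1 - z_1(x) z_2$. Membership of $\wt\xi_k,\wt\eta_k$ in $\wt\calX_\alpha$ is automatic from the construction via $P_{V/\CC x}$.

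The key structural remark is that each proposed vector is built from pieces which are individually annihilated by all raising operators of $\so(n)_\CC$. Using the explicit differential operators given just before the lemma, I would check that $r^2 = \sum_j z_j \bar z_j$ is $SO(n)$-invariant, hence so is any power $r^\beta$, which has weight $0$ and is annihilated by every $e_{ab}$ ($a<b$), $s^+_{ab}$, and $f^+_a$. Similarly $z_1^{k-1}\in \calF_{k-1}$ has weight $(k-1)\epsilon_1$ and is annihilated by all raising operators, since each of them involves a partial derivative of the form $\partial_{z_b}$ with $b\geq 2$ or $\partial_{\bar z_a}$, all of which kill any power of $z_1$. At the level of fiber representations, the constant-valued functions $z_1\in\calV_0$ and $v_1\in \calW_0$ are highest weight vectors of weight $\epsilon_1$, and the skew combinations $z_2(x)z_1 - z_1(x)z_2 = -\iota_x(z_1\wedge z_2)$ and $z_2(x)v_1 - z_1(x) v_2 = -\iota_x(v_1\wedge v_2)$ are the images of the highest weight vectors $z_1\wedge z_2\in \Lambda^2 V^*$ and $v_1\wedge v_2\in \Lambda^2 V$ under the $GL(n,\RR)$-equivariant interior product $\omega\mapsto [x\mapsto \iota_x\omega]$; for $n\geq 4$ their weight is $\epsilon_1+\epsilon_2$, while for $n=3$ the vectors $z_2=\sqrt{2}x_3$ and $v_2=\sqrt{2}e_n$ lie in the zero-weight line and the weight collapses to $\epsilon_1$.

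Next I would observe that the two ``projections'' appearing in the formulas carry highest weight vectors to highest weight vectors. The orthogonal projection
\[
P_\perp(F)(x) = F(x) - \tfrac{\langle F(x),x\rangle}{r^2(x)} \sum_j z_j(x) \bar z_j
\]
is $O(n)$-equivariant since it uses only the Euclidean inner product and the radial covector $\tfrac12 dr^2(x)$; hence $P_\perp(z_1)\in \calX_0$ inherits from $z_1$ the property of being a highest weight vector of weight $\epsilon_1$. The quotient map $P_{V/\CC x}\colon \calW_\alpha\to \wt\calX_\alpha$ is $GL(n,\RR)$-equivariant by the very construction of the quotient bundle, so $P_{V/\CC x}(v_1)$ is also highest weight, and likewise for the skew combinations above projected to $\wt\calX_1$.

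Finally, the pointwise product $\calF_\alpha \otimes \calX_\beta \to \calX_{\alpha+\beta}$ (and its $\wt$-analogue) is $GL(n,\RR)$-equivariant, so the Leibniz rule implies that the product of vectors annihilated by all raising operators is again annihilated. Writing
\[
\xi_k = (r^{\alpha-k+1} z_1^{k-1})\cdot P_\perp(z_1), \qquad \eta_k = (r^{\alpha-k} z_1^{k-1})\cdot (z_2(x) z_1 - z_1(x) z_2),
\]
and analogously for $\wt\xi_k,\wt\eta_k$, yields highest weight vectors of the weights claimed in the lemma, with nonvanishing evident from the leading term at any $x$ with $z_1(x)\neq 0$. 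The one step requiring real care is the special case $n=3$: here $\so(3)_\CC$ has only the short root, and the raising operator $f^+_1$ mixes $z_1$ with $z_{m+1}=z_2$ (respectively $v_1$ with $v_2$). The antisymmetric combination $z_2(x)z_1 - z_1(x)z_2$ was chosen precisely so that, under the Leibniz rule, the two contributions from $f^+_1$ produce equal expressions of the form $z_1\otimes z_1$ that cancel. This last cancellation is the main place where a direct, short calculation using the operators $f^\pm_a$ must be performed; everything else follows from the equivariance and Leibniz arguments above.
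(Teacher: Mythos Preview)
Your argument is correct. For part~(b) it coincides with the paper's own proof, which likewise exploits the $GL(n,\RR)$-equivariance of the quotient projection $P_{V/\CC x}$ to separate the scalar factor $r^{\alpha-k+1}z_1^{k-1}$ from the fibrewise piece and then differentiate.

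For part~(a), the paper simply declares the verification ``straightforward,'' i.e., a direct application of the differential operators $e_{ab}$, $s^+_{ab}$, $f^+_a$ to $\xi_k$ and $\eta_k$. Your route is more structured: you factor $\xi_k=(r^{\alpha-k+1}z_1^{k-1})\cdot P_\perp(z_1)$ and $\eta_k=(r^{\alpha-k}z_1^{k-1})\cdot(-\iota_x(z_1\wedge z_2))$, observe that each factor is annihilated by the raising operators (the $O(n)$-equivariance of $P_\perp$ and the $GL(n,\RR)$-equivariance of $\iota_x$ doing the work), and conclude via Leibniz. This buys you a uniform treatment of (a) and~(b) and makes the $n=3$ case transparent: the cancellation you isolate is just the statement $f^+_1(z_1\wedge z_2)=z_1\wedge z_1=0$ in $\Lambda^2 V^*$, which needs no special pleading. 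The one point to keep straight---and you do---is that $P_\perp$ is only $O(n)$-equivariant, not $GL(n,\RR)$-equivariant, which is all that is required since the raising operators lie in $\so(n)_\CC$.
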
 
\begin{proof}
	We have to show that the vectors in question are annihilated by $e_{ab}$ and $s_{ab}^+$ for $1\leq a<b\leq m$ and, if $n$ is odd, by  $f_a^+$ for $1\leq a\leq m$. 
	For (a)  this is straightforward.

	For (b) observe that $\sigma(g) \circ  P_{V/\CC g^\top x}= P_{V/\CC x} \circ  \sigma(g) $ and hence   $$\pi(g)(\wt\xi_k)(x) = (r^{\alpha-k+1} z_1^{k-1})(g^\top x) P_{V/\CC x}(\sigma(g) v_1),$$
	which can be immediately differentiated with respect to $g$. Similar reasoning applies to  $\wt \eta_k$. 
\end{proof}

\begin{remark}
For $n=4$  there exist also highest weight vectors  $\eta_k^-$ of weight $k \epsilon_1- \epsilon_2$. 
	However, since  $\eta_k$ and $\eta_k^-$ belong to the same irreducible subrepresentation of $K=O(n)$, we only need one of them to determine the $GL(n,\RR)$ module structure of $\calX_\alpha^\pm$.
\end{remark}

\begin{corollary} \label{cor:K-types} For $n\geq 3$, the representations  $\calX_\alpha^\pm|_K$ are multiplicity free. The spaces of $K$-finite vectors 
	are 
	\begin{align*} \calX_{\alpha}^{+,K} & =   \bigoplus_{l\geq 1} V_{2l} \oplus  W_{2l-1}\\
		\calX_{\alpha}^{-,K} & =   \bigoplus_{l\geq 1} V_{2l-1} \oplus  W_{2l},
		\end{align*} 
	where $V_k \cong \rho^{(k,0)}$ and $W_k  \cong \rho^{(k,1)}$ for $n\geq 4$ and $W_k \cong V_k\cong \rho^{(k)}$ for $n=3$. 
\end{corollary}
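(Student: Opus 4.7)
The plan is to identify $\calX_\alpha$ $O(n)$-equivariantly with the space of smooth sections of the complexified cotangent bundle of $S^{n-1}$, and then read off the $K$-type decomposition from the Peter--Weyl theorem combined with the Pieri rule for $O(n)$. Since the homogeneity degree $\alpha$ enters only as a rescaling that is invisible to $O(n)$, the answer will be independent of $\alpha$. Restriction to $S^{n-1}$ gives the identification $\calX_\alpha \cong \Gamma(S^{n-1}, T^*_\CC S^{n-1})$: the constraint $\langle F(u),u\rangle=0$ for $u\in S^{n-1}$ places $F(u)$ in the complexified cotangent space, and homogeneity recovers $F$ off the sphere. Under this identification, the condition $\pi(-\id)F=\pm F$ becomes $F(-u)=\mp F(u)$, so $\calX_\alpha^+$ corresponds to antipodally odd sections and $\calX_\alpha^-$ to antipodally even ones.

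Next, I would exploit the $O(n)$-equivariant short exact sequence
\begin{equation*}
0 \to \Gamma(T^*_\CC S^{n-1}) \to C^\infty(S^{n-1})\otimes V^* \to C^\infty(S^{n-1}) \to 0,
\end{equation*}
where the surjection sends $F$ to $u\mapsto \langle F(u),u\rangle$ (split by $f\mapsto f(u)\,u^\flat$, using the inner product identification $V\cong V^*$). By Peter--Weyl / spherical harmonics, the $K$-finite vectors of $C^\infty(S^{n-1})$ are $\bigoplus_{k\geq 0}\rho^{(k)}$, each with multiplicity one. Applying the Pieri rule for $O(n)$ (for $n\geq 4$),
\begin{equation*}
\rho^{(k)}\otimes \rho^{(1)} = \rho^{(k+1)} \oplus \rho^{(k,1)} \oplus \rho^{(k-1)} \quad (k\geq 1), \qquad \rho^{(0)}\otimes \rho^{(1)} = \rho^{(1)},
\end{equation*}
one sees that in $C^\infty(S^{n-1})\otimes V^*$ the type $\rho^{(k,0)}$ appears with multiplicity $2$ for $k\geq 1$ and once for $k=0$, while $\rho^{(k,1)}$ appears once for $k\geq 1$. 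Since the quotient $C^\infty(S^{n-1})$ carries each $\rho^{(k,0)}$ with multiplicity one and no $\rho^{(k,1)}$, equivariance of the SES forces
\begin{equation*}
\Gamma(T^*_\CC S^{n-1})^K = \bigoplus_{k\geq 1}\!\left[\rho^{(k)} \oplus \rho^{(k,1)}\right],
\end{equation*}
which is multiplicity free and matches $\bigoplus_{k\geq 1}(V_k\oplus W_k)$. The case $n=3$ is identical, except that the middle Pieri term is the $O(3)$-irrep sharing the $SO(3)$-content of $\rho^{(k)}$ but on which $-\id$ acts by $(-1)^{k+1}$; as an $SO(3)$-representation it agrees with $V_k=\rho^{(k)}$, hence the collapsed notation $W_k\cong V_k\cong \rho^{(k)}$ in the statement.

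Finally, the parity splitting comes from computing the scalar by which the central element $-\id$ acts on each summand. Since $-\id$ acts on $\rho^{(k)}\subset C^\infty(S^{n-1})$ by $(-1)^k$ and on $V^*$ by $-1$, it acts on every irreducible constituent of $\rho^{(k)}\otimes V^*$ by $(-1)^{k+1}$; tracing through the Pieri decomposition yields $-\id|_{\rho^{(j,0)}}=(-1)^j$ and $-\id|_{\rho^{(j,1)}}=(-1)^{j+1}$. Combined with $\pi(-\id)F(x)=-F(-x)$, this gives $V_j\subset \calX_\alpha^+$ iff $j$ is even and $W_j\subset \calX_\alpha^+$ iff $j$ is odd, producing precisely the advertised decomposition. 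As a sanity check, one can verify this parity directly from the explicit highest weight vectors $\xi_k,\eta_k$ of the preceding lemma: substituting $-x$ gives $\xi_k(-x)=(-1)^{k-1}\xi_k(x)$ and $\eta_k(-x)=(-1)^k\eta_k(x)$. No step presents a genuine obstacle; the only subtlety worth flagging is the care needed in applying the Pieri rule in the small case $n=3$, where the ``$(k,1)$''-component lies outside the paper's indexing set $\Lambda^+(3)$ but nonetheless represents a legitimate $O(3)$-irrep distinct from $\rho^{(k)}$.
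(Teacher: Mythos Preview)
Your argument is correct and takes a genuinely different route from the paper's. The paper identifies $\calX_\alpha$ with the induced representation $\operatorname{Ind}_{SO(n-1)}^{SO(n)}\CC^{n-1}$ via $F\mapsto \wt F(k)=k^{-1}F(ke_1)$, then applies Frobenius reciprocity and the branching rule $SO(n)\downarrow SO(n-1)$ to compute multiplicities; parity is determined afterwards by inspecting which of the explicit highest weight vectors $\xi_k,\eta_k$ lands in $\calX_\alpha^+$ or $\calX_\alpha^-$. You instead use the geometric identification with $\Gamma(T^*_\CC S^{n-1})$, fit it into a short exact sequence with $C^\infty(S^{n-1})\otimes V^*$, and read off multiplicities from the spherical-harmonic decomposition together with the Pieri rule for $O(n)$. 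Your approach works directly at the level of $O(n)$ and handles the parity split conceptually via the scalar action of the central element $-\id$, whereas the paper computes over $SO(n)$ first and checks parity by hand. The two arguments are of comparable depth (one needs the branching rule, the other the Pieri rule), and your treatment of $n=3$---noting that the middle Pieri term is $\rho^{(k)}\otimes\det$ rather than $\rho^{(k)}$ as an $O(3)$-module---is in fact sharper than the paper's formulation, which tacitly means the isomorphism $W_k\cong V_k$ only over $SO(3)$.
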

\begin{proof}Consider the space
	$$ \mathrm{Ind}_{SO(n-1)}^{SO(n)} \CC^{n-1} = \left\{ f\in C^\infty( SO(n), \CC^{n-1})\colon  f(kl) = l^{-1} f(k) \text{ for all } l\in SO(n-1)\right\}.$$
	The group $SO(n)$ acts on these functions by left translation 
	$ (k\cdot f)(l)= f(k^{-1}l)$.
	
	For $F\in \calX_\alpha$ consider the function $\wt F\colon K\to  e_1^\perp \subset V^*$, defined by 
	$$ \wt F (k)=  k^{-1} F(k e_1).$$
	Then $F\mapsto \wt F$ defines an $SO(n)$ equivariant isomorphism $\calX_\alpha \cong \mathrm{Ind}_{SO(n-1)}^{SO(n)} \CC^{n-1}$. 
	
	Suppose $n\geq 4$. 
	By Frobenius reciprocity, the multiplicity of the $SO(n)$ representation $\pi^\mu$ in $\calX_\alpha|_{SO(n)}$ equals the multiplicity of $\CC^{n-1}$ in $\pi^\mu|_{SO(n-1)}$. 
	By the branching rule from $SO(n)$ to $SO(n-1)$, see \cite[Theorem 9.16]{Knapp:beyond}, $\pi^\mu$ decomposes with multiplicity $1$ and $\CC^{n-1}$ appears if and only if 
	$$ \mu_1 \geq 1 \geq \mu_2 \quad \text{and}   \quad    \mu_3=\cdots = \mu_m=0.$$
	
	If $n=3$, then $\CC^{n-1}$ is no longer irreducible, but decomposes as $\pi^{(1)}\oplus \pi^{(-1)}$. Frobenius reciprocity and the branching rule applied to both summands imply that, for  $k\geq 1$, the multiplicity of $\rho^{(k)}$  is $2$. 
	
	The  highest weight vectors of Lemma~\ref{lemma:hwv} belong to either $\calX_\alpha^+$ or $\calX_\alpha^-$. This establishes the claimed decompositions.	
\end{proof}

\subsection{Transition between $K$-types}

In this subsection, we investigate the $(\fgg, K)$ modules   $\calX^{\pm,K}_{\alpha}$. We will follow the strategy of \cite{HoweLee:degenerate}, which can be described as follows. For each $K$ type $V_\mu$, where, according to Corollary~\ref{cor:K-types}, $\mu=(k,0,\ldots,0)$ or $\mu=(k,1,0,\ldots, 0)$, consider the $K$ representation $V_\mu\otimes  \mathfrak p_\CC$ and the equivariant map 
$$ m_\mu\colon   V_\mu\otimes  \mathfrak p_\CC \to \calX_\alpha^{\pm,K}, \quad m_\mu(v\otimes X) = \pi(X)v.$$
If $w$ is a highest weight vector of weight $\lambda$ in $V_\mu\otimes \mathfrak p_\CC$, then $m_\mu(w)$ is a highest weight vector in $V_\lambda$ or zero. Note that if $m_\mu(w)$ is nonzero, then $V_\mu$ can be transformed to $V_\lambda$ under the action of $\fgg$.  

It is straightforward to investigate the  transition from $V_{\mu}$ to $V_{\mu +2\epsilon_1}$. By passing to the Hermitian dual as in \cite{HoweLee:degenerate}, we will obtain information about the  transition from $V_{\lambda}$ to $V_{\lambda-2\epsilon_1}$. To establish the irreducibility of (certain subquotients of) $\calX_{\alpha}^\pm$ under the action of $GL(n,\RR)$, it will then suffice to explicitly construct highest weight vectors in $V_\mu\otimes \mathfrak p_\CC$ for a small number of $K$-types $V_\mu$. 
Since the $GL(n,\RR)$ module structure  of  $\calF_\alpha^\pm$ is well known,  we will able to complete  the proof of Theorem~\ref{thm:length2}. 

\subsubsection{Transition from $V_\mu$ to $V_{\mu+2e_1}$}

Observe that for every $K$-type $V_\mu$ with highest weight vector $u_\mu$, $$u_\mu \otimes v_1^2\in V_\mu\otimes \Sym^2 V$$
is a highest weight vector of weight $\mu+2\epsilon_1$. 
 Therefore  $$p_{11}^+ v_\mu= m_\mu(v_\mu \otimes T(v_1^2))$$ is either zero or a highest weight vector. 

\begin{lemma}\label{lemma:+2}
	For $k\geq 1$ 
	\begin{enuma}

\item 
\begin{align*}
	p_{11}^+ \xi_{k} &= 2(\alpha-k-1) \xi_{k+2}\\
	p_{11}^+ \eta_{k} &= 2(\alpha-k) \eta_{k+2}
\end{align*}   
\item 
\begin{align*}
	p_{11}^+ \wt\xi_{k} &= 2(\alpha-k+1) \wt\xi_{k+2}\\
	p_{11}^+ \wt \eta_{k} &= 2(\alpha-k) \wt \eta_{k+2}
\end{align*}   

	\end{enuma}
\end{lemma}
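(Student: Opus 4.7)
The plan is to compute $\pi(p_{11}^+)$ on each of the four vectors by direct differentiation, combining the scalar-level formula $p_{11}^+ = 2 z_1\,\partial/\partial \b z_1$ from the preceding lemma with the induced action on vector values. For $F\in\calX_\alpha$ the Lie algebra action decomposes as
$$\pi(X) F(x) = X\cdot F(x) + dF_x(X^\top x),$$
where $X\cdot$ denotes the action on $V^*$ inherited from $\sigma(g)^{-\top} = g$; in the $e$-basis one checks directly that $p_{11}^+$ is symmetric and that it sends $\b z_1 \mapsto 2 z_1$ while annihilating every other basis vector of $V^*$. Equivalently, treating the constant $V^*$-factors $z_j, \b z_j$ as additional formal variables on which $2 z_1\,\partial/\partial \b z_1$ also acts collapses the entire $\calX_\alpha$-action of $p_{11}^+$ into that one differential operator. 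For $\wt\calX_\alpha$ the corresponding formula is $\pi(X) G(x) = P_{V/\CC x}(-X^\top \wt G(x) + d\wt G_x(X^\top x))$, applied to any $V$-valued lift $\wt G$ of $G$.

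Two elementary identities then drive everything. Differentiating $r^2 = \sum_{j=1}^n z_j \b z_j$ in $\b z_1$, with equal contributions from $j=1$ and $j=n$, gives $\partial_{\b z_1} r^2 = 2 z_1$ and hence $\partial_{\b z_1} r^m = m z_1 r^{m-2}$. Expanding $x$ in the $v$-basis and invoking $p_{11}^+ = 2 E_{1,n}$ there yields $p_{11}^+ \cdot x = z_1 v_1$. With these in hand the rest is polynomial bookkeeping.

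For $\xi_k$ the differential operator applied to the first summand produces $2(\alpha-k+1)\, r^{\alpha-k-1} z_1^{k+1} z_1$; applied to the second summand it produces the main term $-2(\alpha-k-1)\, r^{\alpha-k-3} z_1^{k+2} \sum_j z_j \b z_j$ together with a cross-correction $-2\, r^{\alpha-k-1} z_1^{k+1} z_1$ coming from $\partial_{\b z_1}(z_n) = 1$ at the index $j = n$; the linear action on $V^*$ then contributes a further $-2\, r^{\alpha-k-1} z_1^{k+1} z_1$ from the $j = 1$ summand via $\b z_1 \mapsto 2 z_1$. The two correction terms shift the leading coefficient from $2(\alpha-k+1)$ to $2(\alpha-k-1)$, matching the prefactor of the differentiated sum, and the expression reassembles as exactly $2(\alpha-k-1)\,\xi_{k+2}$. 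For $\eta_k$ the vector part $z_2 z_1 - z_1 z_2$ is annihilated by the linear action of $p_{11}^+$, and no cross-correction arises, so only the differentiation of $r^{\alpha-k}$ contributes and delivers the cleaner factor $2(\alpha-k)$.

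Part (b) follows the same template but is cleaner: lifting $\wt\xi_k$ to $r^{\alpha-k+1} z_1^{k-1} v_1$ and $\wt\eta_k$ to $r^{\alpha-k} z_1^{k-1}(z_2 v_1 - z_1 v_2)$, one uses $p_{11}^+ v_1 = p_{11}^+ v_2 = 0$ so that the linear term $-X^\top \wt G$ vanishes and only $d\wt G_x(z_1 v_1) = 2 z_1\,\partial_{\b z_1} \wt G$ survives; the identity $\partial_{\b z_1} r^m = m z_1 r^{m-2}$ then delivers the stated coefficients $2(\alpha-k+1)$ and $2(\alpha-k)$ directly. The main obstacle throughout is purely bookkeeping — distinguishing the scalar coordinate $z_j(x)$ from the $V^*$-basis element $z_j$, and correctly pairing differential-operator with linear-action contributions — but once one sees that the two correction terms in the $\xi_k$ computation combine to shift the coefficient by $-4$, the lemma falls out.
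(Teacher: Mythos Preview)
Your proof is correct and follows essentially the same direct-computation approach as the paper. The paper compresses your two correction terms for $\xi_k$ into the single identity $p_{11}^+\big(\sum_j z_j(x)\,\b z_j\big)=4\,z_1(x)\,z_1$, which simultaneously records the differential contribution from $\partial_{\b z_1} z_n(x)=1$ and the linear-action contribution $\b z_1\mapsto 2z_1$; otherwise the arguments coincide.
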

\begin{proof}
	This follows from a straightforward computation using 
	\begin{equation}\label{eq:p+r}p_{ab}^+ (r^\alpha)=  2 \alpha r^{\alpha-2}  z_a(x)z_b(x), \qquad p_{ab}^+ \Big(  \sum_{j=1}^n z_j(x)\b  z_j \Big)= 2 ( z_a(x)z_b + z_b(x)z_a).\end{equation}	
\end{proof}

\subsubsection{Transition from $V_\mu$ to $V_{\mu-2e_1}$}
Recall that the linear functional $\ell \colon \calF_{-n}\to \CC$  from Lemma~\ref{lemma:ell} 
is $SL(n,\RR)$ invariant. Since $F\in \calX_\alpha$ takes values in the complexification of a real vector space, its complex conjugate $\b F$ is well defined. Consequently, the natural pairing 
$$ \calX_{\alpha}^\pm\times  \wt\calX_{-n-\alpha}^\pm \to \calF_{-n}$$
can be composed with $\ell$ to obtain  
a sesquilinear form on 
$\calX_{\alpha}^\pm\times  \wt\calX_{-n-\alpha}^\pm$, 
$$\langle F,G\rangle = \ell( \langle \b F(x) ,  G(x)\rangle  ),$$
that is nondegenerate and  $SL(n,\RR)$ invariant.

We denote by $\pi_\mu \colon \calX_\alpha^{\pm,K}\to V_\mu$ the projection onto the $K$-type $V_\mu$. 

\begin{lemma}\label{lemma:-2}In the following, $h_0(k)$ and $h_1(k)$  denote  nonzero numbers that do not depend on $\alpha$. 
	For $k\geq 3$,  \begin{align*}
		\pi_{\mu-2e_1} (p_{11}^- \xi_k)  &=  (\alpha +k+n-3)h_0(k) \xi_{k-2}\\
		\pi_{\mu-2e_1} (p_{11}^- \eta_{k})  &=  (\alpha +k+n-2)h_1(k) \eta_{k-2}, 
	\end{align*} 
where $\mu=(k,0,\ldots, 0)$ and $\mu=(k,1,0\ldots, 0)$, respectively.
\end{lemma}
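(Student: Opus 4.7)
The plan is to exploit the natural sesquilinear pairing
$$\langle F, G\rangle := \ell\bigl(\langle \overline{F(x)},G(x)\rangle\bigr)$$
between $\calX_\alpha$ and $\wt\calX_{-n-\alpha}$ introduced just before the lemma; it is well defined because the constraint $\langle F(x),x\rangle=0$ makes the pointwise pairing descend to $V/\CC x$, and because $\overline{F}\cdot G$ is homogeneous of degree $-n$. By Lemma~\ref{lemma:ell}, $\ell$ is $SL(n,\RR)$-invariant, so differentiating yields $\langle \pi(X)F,G\rangle+\langle F,\pi(X)G\rangle=0$ for every real $X\in\mathfrak{sl}(n,\RR)$. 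Extending complex-linearly in $X$ while accounting for the sesquilinearity of the pairing in the first slot, one obtains the adjoint relation
$$\langle \pi(X)F,G\rangle = -\langle F,\pi(\overline X)G\rangle, \qquad X\in\mathfrak{sl}(n,\CC).$$
Since the map $T$ of~\eqref{eq:T-p} is given by a real bilinear formula and $\overline{v_a}=v_{n+1-a}$, one has $\overline{p_{11}^-}=p_{11}^+$, whence $\langle p_{11}^- F,G\rangle = -\langle F,p_{11}^+G\rangle$.

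With this adjoint identity in hand, I use $K$-type and weight orthogonality to isolate the component $c_0\,\xi_{k-2}$ of $p_{11}^-\xi_k$ in the $V_{k-2}$-isotypic piece (where $\mu=(k,0,\ldots,0)$). Because $\ell$ is $SO(n)$-invariant it annihilates every non-zero weight space of $\calF_{-n}$; in combination with the multiplicity-one decomposition of Corollary~\ref{cor:K-types}, this forces $\langle \xi_j,\wt\xi_k\rangle=0$ unless $j=k$ and $\langle \xi_j,\wt\eta_k\rangle=0$ for all $j,k$, and symmetrically for $\eta$. Pairing $p_{11}^-\xi_k$ with $\wt\xi_{k-2}\in\wt\calX_{-n-\alpha}$ therefore picks up only the $c_0\,\xi_{k-2}$ contribution, and the adjoint relation together with Lemma~\ref{lemma:+2}(b) applied in degree $-n-\alpha$ gives
$$c_0\,\langle \xi_{k-2},\wt\xi_{k-2}\rangle = -\langle \xi_k, p_{11}^+\wt\xi_{k-2}\rangle = 2(n+\alpha+k-3)\,\langle\xi_k,\wt\xi_k\rangle.$$
The analogous computation with $\eta$ in place of $\xi$ produces the factor $2(n+\alpha+k-2)$, since Lemma~\ref{lemma:+2}(b) yields $p_{11}^+\wt\eta_{k-2}=2((-n-\alpha)-(k-2))\wt\eta_k$.

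It remains to verify that $h_0(k)=2\langle\xi_k,\wt\xi_k\rangle/\langle\xi_{k-2},\wt\xi_{k-2}\rangle$ and $h_1(k)$ are nonzero and $\alpha$-free. Restricting the defining expressions to the sphere $|u|=1$, the combined factor $r^{\alpha}\cdot r^{-n-\alpha}$ collapses to a fixed power of $\sqrt 2$, so the $\alpha$ dependence cancels. A short explicit computation using $\langle z_1,v_1\rangle=0$ and $\langle \overline{z_1},v_1\rangle=2$ produces the strictly positive integral
$$\langle \xi_k,\wt\xi_k\rangle = 2^{1-(n+2k)/2}\int_{S^{n-1}} (u_1^2+u_2^2)^{k-1}\bigl(2-u_1^2-u_2^2\bigr)\,du,$$
and a parallel calculation handles $\langle \eta_k,\wt\eta_k\rangle$. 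The main obstacle I foresee is not the scalar calculations or the weight bookkeeping but the careful handling of the adjoint relation when $X\in\mathfrak{p}_\CC$: one must pass cleanly between the $v$-basis used to write the root space decomposition and the standard basis in which reality is defined, verify the identification $\overline{p_{11}^-}=p_{11}^+$, and correctly handle the conjugate-linear first slot of the pairing when extending the $SL(n,\RR)$-invariance complex-linearly from $\mathfrak{sl}(n,\RR)$ to $\mathfrak{sl}(n,\CC)$.
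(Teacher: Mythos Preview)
Your proposal is correct and follows essentially the same route as the paper: both use the $SL(n,\RR)$-invariant sesquilinear pairing between $\calX_\alpha$ and $\wt\calX_{-n-\alpha}$, the adjoint identity $\langle p_{11}^- F,G\rangle = -\langle F,p_{11}^+ G\rangle$, and Lemma~\ref{lemma:+2}(b) applied at degree $-n-\alpha$ to extract the transition coefficient. Your treatment is in fact slightly more explicit than the paper's in two places --- you spell out why $\overline{p_{11}^-}=p_{11}^+$ and you compute $\langle\xi_k,\wt\xi_k\rangle$ as a manifestly positive integral, whereas the paper infers nonvanishing abstractly from nondegeneracy of the pairing on each $K$-type --- but the underlying argument is the same Howe--Lee duality trick.
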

\begin{proof}
Let us write $\wt \xi_k$ for the highest weight vector in $\wt \calX_{-n-\alpha}$ of highest weight $k\epsilon_1$. 

Note that if we write $p_{11}^+ = X+iY$ with $X,Y\in \mathfrak p$, then $p_{11}^- = X-iY$. 
Since $p_{11}^-\in \mathfrak{sl}(n)_\CC$ and the sesquilinear form is skew-Hermitian for the action of $\mathfrak{sl}(n)_\CC$, 	we have 
	\begin{align*} \langle p_{11}^- \xi_k, \wt \xi_{k-2} \rangle  & = - \langle \xi_k,p_{11}^+ \wt \xi_{k-2}\rangle  \\
	&  =2(\alpha +k+n-3)	\langle \xi_k,\wt\xi_k\rangle.
				\end{align*}  
On the other hand, observe that 
$$  \pi_{\mu-2e_1} ( p_{11}^- \xi_k) = \pi_{\mu-2e_1} ( m_\mu( \xi_k \otimes p_{11}^- ))$$
is a weight vector of weight $(k-2)\epsilon_1$ and hence proportional to the highest weight vector $\xi_{k-2}$. If $a(\alpha,k)$ is a constant such that $ \pi_{\mu-2e_1} ( p_{11}^- \xi_k) = a(\alpha,k) \xi_{k-2}$, then 
$$  \langle p_{11}^- \xi_k, \wt \xi_{k-2} \rangle  = a(\alpha,k)\langle \xi_{k-2},\wt\xi_{k-2}\rangle.$$
It follows that 
$$ a(\alpha,k)= (\alpha +k+n-3) h_0(k),$$ 
where 
$$h_0(k)=  \frac{2	\langle \xi_k,\wt\xi_k\rangle}{\langle \xi_{k-2},\wt\xi_{k-2}\rangle }$$
Since the sesquilinear form is nondegenerate and invariant, the restriction to each $K$-type $V_\mu \otimes \wt V_\mu$ is nondegenerate. Since
$V_\mu \otimes \wt V_\mu\cong \rho^\mu\otimes \rho^\mu$ and the space of $K$ invariant sesquilinear forms on $\rho^\mu$ is $1$-dimensional, we conclude that $\langle \xi_k,\wt\xi_k\rangle$ is nonzero.  Moreover, inspecting the formulas for the highest weight vectors shows that $\langle \xi_k,\wt\xi_k\rangle$ 
is  independent of $\alpha$. 

A parallel reasoning proves the second identity.
\end{proof}

\subsubsection{Transition between special $K$-types}

In this section we will use the Schur functors $\SS_\lambda V$ and $\SS_{[\lambda]} V$ from \cite{FultonHarris} to obtain irreducible representations explicitly as certain spaces of tensors.  Instead of recalling the general construction here, we will describe it only for $\lambda=(k,1)$, which will be the case relevant to us. For $\lambda=(k,1)$, the Young symmetrizer $c_\lambda \in \End(V^{\otimes (k+1)})$ is given by 
$ c_\lambda = a_\lambda \cdot b_\lambda$ with 
$$ a_\lambda = \sum_{\substack{\sigma\in \mathfrak S_{k+1},\\ \sigma(k+1)=k+1}} e_\sigma, \qquad b_\lambda = 1 - e_{(1\ k+1)},$$
where  $(v_1\otimes \cdots \otimes v_{k+1} ) \cdot e_\sigma = v_{\sigma(1)}\otimes \cdots \otimes v_{\sigma(k+1)}$. The image of $c_\lambda$ is denoted by $\SS_\lambda V$ and is an irreducible representation of $GL(n,\CC)$.  
The subspace $\SS_{[\lambda]} V\subset \SS_\lambda V$ consists of the completely trace-free tensors,  defined as  the common intersection of the kernels of  all contractions 
$$ w_1\otimes \cdots  \cdots \otimes w_{k+1} \mapsto  Q(w_j,w_l)w_1\otimes \cdots \otimes \widehat w_j \otimes \cdots \otimes \widehat w_l \otimes \cdots \otimes w_{k+1}. $$ 
The space $\SS_{[k,1]} V$ is an irreducible representation of $K=O(n)$. Its restriction to $SO(n)$ is irreducible with highest weight $\lambda = k\epsilon_1 +\epsilon_2$, provided $n\geq 5$. For $n=4$, its reducible and decomposes as $\pi^{\lambda} \oplus \pi^{\lambda^-}$. We denote by $\pi_{(k,1)}$ the projection $\SS_{(k,1)} V\to  \SS_{[k,1]} V$.

\begin{lemma}\label{lemma:hwv} Let $n\geq 4$ and $k\geq 1$. A highest weight vector in 
\begin{enuma} 

	\item   $\Sym^k V\otimes \Sym^2 V$ of   weight $(k+1)\epsilon_1 + \epsilon_2$ is
	   $$ \nu_{(k+1,1)}= v_1^k \otimes v_1v_2 - v_1^{k-1} v_2\otimes v_1^2$$

	\item
$\SS_{[k,1]} V \otimes \Sym^2 V$ of weight $(k+1)\epsilon_1$ is
$$ \nu_{(k+1,0)}= \big(\pi_{(k,1)}\otimes \id\big)  \Big ( 
\sum_{j=1}^{n}  \big (v_1^{\otimes k}\otimes   v_j - v_j\otimes v_1^{\otimes k}  \big) \otimes v_1 \b v_j  \Big).$$
Explicitly,
\begin{align*}  \nu_{(2,0)} & = \sum_{j=1}^{n} \big( v_1^{\otimes 2} \otimes  v_j -   v_j\otimes  v_1^{\otimes 2} \big)  \otimes v_1 \b  v_j 
	  + \frac{1}{n-1} ( \wt Q\otimes    v_1- v_1\otimes \wt Q)\otimes v_1v_1\\
	\intertext{and}
\nu_{(3,0)} & = \sum_{j=1}^{n}  \big( v_1^{\otimes 3} \otimes  v_j -   v_j\otimes  v_1^{\otimes 3} \big)  \otimes v_1 \b  v_j  + \frac{1}{n} \Big( \wt Q\otimes v_1^{\otimes 2}  + \sum_{j=1}^n v_j  \otimes v_1  \otimes  \b v_j \otimes v_1  \\
& \quad   -\sum_{j=1}^n v_1\otimes v_j \otimes v_1 \otimes  \b v_j
 - v_1^{\otimes 2} \otimes \wt Q \Big) \otimes v_1v_1
\end{align*}
where $\wt Q=\sum_{j=1}^n v_j \otimes \b v_j \in\Sym^2 V$ is, up to normalization, the metric tensor.
\end{enuma}
\end{lemma}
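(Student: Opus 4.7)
The proof of each part reduces to two checks: the weight under the Cartan subalgebra $\mathfrak h$, and annihilation by every raising operator of $\so(n)_\CC$ (namely $e_{ab}$ and $s^+_{ab}$ for $1\leq a<b\leq m$, and $f^+_a$ for $1\leq a\leq m$ when $n$ is odd).

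For part (a), both checks are direct. The weight of both terms equals $(k+1)\epsilon_1+\epsilon_2$ since $v_j$ carries weight $\epsilon_j$. From the explicit formulas
$$ e_{ab}v_c = \delta_{bc}v_a - \delta_{n+1-a,c}v_{n+1-b}, \qquad s^+_{ab}v_c = \delta_{n+1-a,c}v_b - \delta_{n+1-b,c}v_a, $$
together with the analogous expression for $f^+_a$, one checks that for $n\geq 4$ every raising operator annihilates $v_1$, and also $v_2$ except for $e_{12}v_2=v_1$. The derivation rule then shows that only $e_{12}$ can contribute, and one obtains the cancellation
$$ e_{12}\nu_{(k+1,1)} = v_1^k\otimes v_1^2 - v_1^k\otimes v_1^2 = 0. $$

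For part (b), the essential tool is the $\so(n)_\CC$-invariance of the metric tensor $\wt Q=\sum_j v_j\otimes \bar v_j$. Each term in the defining sum has weight $(k+1)\epsilon_1$, because $\mathrm{wt}(v_j)+\mathrm{wt}(\bar v_j)=0$ for every $j$. For the raising operators, the derivation rule together with $Xv_1=0$ (the same calculation as in (a)) reduces the action of $X$ on the first summand $\sum_j v_1^{\otimes k}\otimes v_j\otimes v_1\bar v_j$ to
$$ v_1^{\otimes k}\otimes \sum_j \bigl((Xv_j)\otimes v_1\bar v_j + v_j\otimes v_1(X\bar v_j)\bigr). $$
Pulling back through the $SO(n)$-equivariant surjection $V^{\otimes 3}\to V\otimes \Sym^2 V$, $a\otimes b\otimes c\mapsto a\otimes(bc)$, the inner sum is the image of $\sum_j (Xv_j)\otimes v_1\otimes \bar v_j + v_j\otimes v_1\otimes (X\bar v_j)$, which vanishes in $V^{\otimes 3}$ by the $\so(n)_\CC$-invariance of $\wt Q$ applied at positions $1$ and $3$. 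The same argument applies to the second summand; since $\pi_{(k,1)}$ is $SO(n)$-equivariant, we conclude that $X\nu_{(k+1,0)}=0$.

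The explicit formulas for $\nu_{(2,0)}$ and $\nu_{(3,0)}$ are obtained by computing $\pi_{(k,1)}$ on the raw expression: one adds correction terms built from $\wt Q$ so that every contraction of the $(k+1)$-tensor factor vanishes, and the coefficients $1/(n-1)$ and $1/n$ are pinned down by this trace matching. The main obstacle will be verifying the nonvanishing of $\nu_{(k+1,0)}$ after the projection --- otherwise the statement is vacuous. For small $k$ this is immediate from the explicit formulas, but in general it would be established either by exhibiting a monomial of nonzero coefficient, or by a multiplicity count for the weight $(k+1)\epsilon_1$ in the Pieri-type decomposition of $\SS_{[k,1]}V\otimes \Sym^2 V$.
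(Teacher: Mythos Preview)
Your proof is correct and follows essentially the same route as the paper. For part (b), the paper phrases the annihilation argument slightly differently---it observes that the raising operators already kill $\sum_j (v_1^{\otimes k}\otimes v_j)\otimes v_1\bar v_j$ in $V^{\otimes(k+1)}\otimes\Sym^2 V$ (by the invariance of $\wt Q$, exactly as you argue), and then applies the equivariant maps $c_{(k,1)}$ and $\pi_{(k,1)}$---but this is the same mechanism as yours. For the explicit formulas, the paper likewise computes $\pi_{(k,1)}$ by identifying $\SS_{[k,1]}V$ as the kernel of a single contraction and solving for the trace correction. Your concern about nonvanishing of $\nu_{(k+1,0)}$ for general $k$ is legitimate and not addressed in the paper either; however, only $k=2$ and $k=3$ are actually used downstream (in the lemma on transitions between special $K$-types), and for those the explicit formulas make nonvanishing evident.
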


\begin{proof} We only need the check that the given vectors are annihilated by  the elements $e_{ab}$  and $s_{ab}^+$  for $1\leq a < b\leq m$ and, if $n$ is odd, by  $f_a^+$ for $1\leq a\leq m$.  This is straightforward for  (a). For (b) this is also easy, since 
	$c_{(k,1)} (v_1^{\otimes k} \otimes   v_j)$ 
	is proportional to 
$$v_1^{\otimes k}\otimes  v_j - v_j\otimes v_1^{\otimes k}, $$
$\pi_{(k,1)}$ and $c_{(k,1)}$ are $O(n)$ equivariant,    
	 and said Lie algebra elements annihilate 
	$$ \sum_{j=1}^{n}  \big (v_1^{\otimes k} \otimes v_j \big) \otimes v_1 \b v_j  \in V^{\otimes k} \otimes \Sym^2 V.$$

It remains to find explicit expressions for $\nu_{(2,0)}$ and $\nu_{(3,0)}$. 
We discuss this  only for $\nu_{(3,0)}$ in detail, since the argument for  $\nu_{(2,0)}$ is parallel and simpler.

Denoting by $\operatorname{ctr}_{3,4} \colon V^{\otimes 4}\to V^{\otimes 2}$ the contraction of the third and fourth factor, we obtain a $K$ equivariant map to the space of trace-free $2$-tensors,
\begin{equation}\label{eq:contraction} \SS_{(3,1)} V\to V_{[2]} \oplus \Lambda^2 V.\end{equation} Here $V_{[2]}\subset \Sym^2 V$ denotes the subspace of trace-free tensors. On the other hand, the $K$ equivariant map  $\psi \colon V^{\otimes 2}\to \SS_{(3,1)}$, $ v\otimes w\mapsto c_{(3,1)}(\wt Q\otimes v\otimes w)$, satisfies 
$$ \operatorname{ctr}_{3,4} \circ \psi|_{V_{[2]}} = -4n \id_{V_{[2]}}, \quad 
		\operatorname{ctr}_{3,4} \circ \psi|_{V_{[2]}} = 4(n+2) \id_{\Lambda^2 V}.$$
Since $n\geq 4$, the Littlewood restriction rule (see, e.g., \cite[Theorem 1.1]{Howe-etal:stable}) implies 
$$ \SS_{(3,1)}|_K = \SS_{[3,1]} \oplus V_{[2]} \oplus \Lambda^2 V$$
and hence $\SS_{[3,1]} =\ker( \operatorname{ctr}_{3,4})$.
Since $\operatorname{ctr}_{3,4} \big( v_1^{\otimes 3} \otimes \b v_1 - \b v_1 \otimes  v_1^{\otimes 3}\big)= 2v_1^2$, we conclude that 
$$   \pi_{(3,1)}\big( v_1^{\otimes 3} \otimes \b v_1 - \b v_1 \otimes  v_1^{\otimes 3} \big) =  v_1^{\otimes 3} \otimes \b v_1 - \b v_1 \otimes  v_1^{\otimes 3}   + \frac{1}{2n} \psi( v^2_1). $$

For  $\nu_{(2,0)}$,  contracting the factors $1$ and $2$, we obtain an equivariant map $ \SS_{(2,1)} V\to V$. Comparing dimension shows that $\SS_{(2,1)} V = \SS_{[2,1]} V \oplus V$. The rest of the argument remains essentially the same.
\end{proof}

It remains to explicitly describe $K$ equivariant isomorphisms from the representations appearing in Lemma~\ref{lemma:hwv} to the $K$ types $V_{\mu}$ of $\calX^{\pm}_\alpha$  and to evaluate  the module map $m_\mu \colon V_\mu\otimes \mathfrak p_\CC\to \calX_\alpha^{\pm,K}$ on the highest weight vectors.
We will use the following $K$ equivariant isomorphisms:
\begin{enuma}

	\item $F\colon \SS_{[k]} V\to V_k$ defined on $\Sym^k V$ by 
	$$ F(v_{j_1}\cdots  v_{j_{k}}) =    r^{\alpha-k+1}  d(z_{j_1}\cdots  z_{j_{k}}) -   k r^{\alpha-k-1} 
	z_{j_1}\cdots  z_{j_{k}} \sum_{j=1}^n  z_j(x) \b z_j ,$$
	where 
	$$d(z_{j_1}\cdots  z_{j_{k}}) = \sum_{l=1}^k z_{j_1}(x) \cdots \widehat{ z_{j_l}(x)} \cdots z_{j_k}(x)  z_{j_l}$$ is the exterior derivative.
\item $G\colon \SS_{[k,1]} V\to W_k$ defined as the restriction of $V^{\otimes (k+1)}\to \calV_\alpha$,  
$$  v_{j_1}\otimes \cdots \otimes v_{j_{k+1}} \mapsto  r^{\alpha-k}z_{j_1}(x)\cdots z_{j_k}(x) z_{j_{k+1}}.$$

\end{enuma}

\begin{lemma}\label{lemma:special-K} Let $n\geq 4$ and $k\geq 1$. Then   
\begin{align*}
	  m_{(k,0)} \big( (F\otimes T) (\nu_{(k+1,1)})\big)   & =2 (\alpha +1) \eta_{k+1},\\ 
	  m_{(2,1)} \big( (F\otimes T) (\nu_{(3,0)})\big)   & =  c_1(n)
	   (\alpha -2) \xi_3. \\
	    m_{(3,1)} \big( (F\otimes T) (\nu_{(4,0)})\big)   & = c_2(n)(\alpha -3) \xi_4,
\end{align*} 
where $c_1(n), c_2(n)$ are  numbers depending only on $n$.

\end{lemma}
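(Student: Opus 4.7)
The plan is to prove each of the three identities by a direct component-wise computation, streamlined by a representation-theoretic observation. Since $F$, $G$, $T$, and $m_\mu$ are all $K$-equivariant, and by Lemma \ref{lemma:hwv} the vectors $\nu_{(k+1,1)}$, $\nu_{(3,0)}$, and $\nu_{(4,0)}$ are highest weight vectors in $V_\mu\otimes\mathfrak p_\CC$, their images under $m_\mu$ are automatically highest weight vectors (or zero) in $\calX_\alpha^{\pm,K}$ of weights $(k+1)\epsilon_1+\epsilon_2$, $3\epsilon_1$, and $4\epsilon_1$ respectively. By the multiplicity-one decomposition of Corollary \ref{cor:K-types}, each of these weight spaces contains a unique (up to scalar) highest weight vector in the appropriate parity sector, namely $\eta_{k+1}$, $\xi_3$, and $\xi_4$. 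Each identity therefore reduces to determining a single scalar of proportionality, which I would extract by computing one coefficient in the $z$-basis of $V^*$.

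The ingredients needed are, first, the formula $F(v_1^k)=k\xi_k$, which is immediate from $d(z_1^k)=kz_1^{k-1}(x)z_1$, together with the analogous explicit expansion of $F(v_1^{k-1}v_2)$; second, the action of $p_{ab}^+$ on a $V^*$-valued function, namely $\pi(p_{ab}^+)F(x)=p_{ab}^+\cdot F(x)+dF_x((p_{ab}^+)^\top x)$, where the matrix action in the $z$-basis satisfies $p_{ab}^+\cdot z_c=\delta_{c,n+1-b}z_a+\delta_{c,n+1-a}z_b$ and the differential part is the scalar operator $z_a\partial/\partial\bar z_b+z_b\partial/\partial\bar z_a$; and third, the identity $\partial r^\beta/\partial\bar z_j=(\beta/2)r^{\beta-2}z_j$, which is the source of the $\alpha$-dependence in the final answer. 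For the first identity I would compute the $z_2$-component of $k\pi(p_{12}^+)\xi_k-\pi(p_{11}^+)F(v_1^{k-1}v_2)$; the cross-terms proportional to $r^{\alpha-k-3}z_1^{k+1}z_2\bar z_2$ must cancel between the two summands (as forced by the highest-weight argument above), leaving a pure multiple of $r^{\alpha-k-1}z_1^{k+1}$, which is exactly the $z_2$-component of $\eta_{k+1}$ and yields the asserted linear factor in $\alpha$. For the second and third identities the same strategy applies, with $G$ in place of $F$; the dimension-dependent normalizations $1/(n-1)$ and $1/n$ in the explicit expansions of $\nu_{(3,0)}$ and $\nu_{(4,0)}$ from Lemma \ref{lemma:hwv} propagate into the constants $c_1(n)$ and $c_2(n)$.

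The main obstacle is the sheer volume of bookkeeping: each $V^*$-valued function has $n$ components in the $z$-basis, and each raising operator acts on both the coefficient (via a differential operator) and the basis index (via a matrix action). In intermediate steps many terms appear, and the cancellations that reduce them to a pure highest-weight vector are not obvious from the formulas alone. The decisive simplification is the \emph{a priori} knowledge that the result is a highest weight vector, which reduces the verification to a single component and makes the computation tractable. A secondary subtlety is the isotropy $Q(v_1,v_1)=Q(v_1,v_2)=0$, which is what places $v_1^k$ and $v_1^{k-1}v_2$ in $\SS_{[k]}V$ and kills the would-be trace terms; this has to be invoked to see that the explicit tensors in Lemma \ref{lemma:hwv} indeed lie in the intended subspaces.
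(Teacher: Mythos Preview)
Your approach is essentially the paper's: the proof there simply records the identity $q_{ab}(r^\alpha)=2\alpha r^{\alpha-2}z_a(x)\bar z_b(x)$ alongside the already-stated formulas for $p_{ab}^+$ acting on $r^\alpha$ and on $\sum_j z_j(x)\bar z_j$, and declares the rest a straightforward evaluation. Your representation-theoretic shortcut---using the $K$-equivariance and multiplicity-freeness of Corollary~\ref{cor:K-types} to reduce each identity to the computation of a single coefficient---is a genuine improvement in exposition over the paper's bare ``straightforward,'' and you are also right that the second and third identities require the map $G\colon \SS_{[k,1]}V\to W_k$ rather than $F$ (the symbol $F$ in the statement appears to be a typo, since $\nu_{(3,0)}$ and $\nu_{(4,0)}$ live in $\SS_{[k,1]}V\otimes\Sym^2 V$ and the source $K$-types $m_{(2,1)}$, $m_{(3,1)}$ are $W_2$, $W_3$).
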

\begin{proof} Observe that 
	$$ q_{ab}(r^\alpha)=  2 \alpha r^{\alpha-2}  z_a(x)\b z_b(x).$$	
Using this and \eqref{eq:p+r}, evaluating $m_\lambda$ on the highest weight vectors is  straightforward.
\end{proof}

\begin{remark}
	The presence of the factor $\alpha+1$ in Lemma~\ref{lemma:special-K} hints at the existence of a $GL(n,\RR)$ invariant subspace for the value $\alpha=-1$. Such a subspace indeed exists: Since $df\in \calX_{-1}$ for $f\in \calF_0$, the image of the differential $d\colon \calF_0\to \calX_{-1}$ yields a proper invariant subspace.
\end{remark}

Finally, we consider $n=3$. In this special dimension, the Hodge star gives rise to an $SO(3)$ equivariant map,  
$$\calF_{\alpha} \to \calX_{\alpha}, \quad f \mapsto   i_E ({*} df) ,$$
where $E(x)= x_1 \frac{\partial}{\partial x_1} + x_2 \frac{\partial}{\partial x_2} +x_3 \frac{\partial}{\partial x_3}$ is the Euler vector field. One immediately verifies that 
$$ i_E ({*}dz_1) = \frac{i}{\sqrt 2} (z_1 dz_2 -z_2 dz_1), \quad i_E ({*} dz_3) =  \frac{i}{\sqrt 2} ( z_2 dz_3-z_3 dz_2), \quad i_E (* dz_2)= -\frac{i}{\sqrt 2} 
(z_1 dz_3 -z_3  dz_2)$$
This leads us to define a map $\Sym^k V\to \calX_\alpha$ by 
\begin{align*}  v_{j_1}\cdots  v_{j_{k}} \mapsto 
& r^{\alpha-k} \Big(  \frac{\partial f}{\partial z_1}  (z_2(x) z_1 -z_1(x)  z_2) - \frac{\partial f}{\partial \b z_1}  (z_2(x) \b z_1 - \b z_1(x)  z_2) + \frac{\partial f}{\partial z_2}  (z_1(x) \b z_1 -\b z_1(x)  z_1)
\Big),
\end{align*}
where $f= z_{j_1}\cdots  z_{j_{k}}$. 
Observe that by restriction we obtain a map $H\colon \Sym_{[k]} V\to W_{k}$. As above, let $F\colon  \Sym_{[k]} V\to V_{k}$ denote the map induced by the differential. 

\begin{lemma}\label{lemma:n=3} Let $n=3$ and $k\geq 1$. The vector 
	$$u_k =  v_1^k \otimes v_1v_2 - v_1^{k-1} v_2\otimes v_1^2 $$
	in $\Sym^k V \otimes \Sym^2 V$
	is a highest weight vector of highest weight $(k+1)\epsilon_1$. Put $\mu= k \epsilon_1$. 
Then 
\begin{align*} 
	m_{\mu  } ( (F\otimes T)(u_k))  &=2(\alpha +1 ) \eta_{k+1},\\
m_{\mu} ( (H\otimes T)(u_k))  &= (\alpha -k)  \xi_{k+1}.
\end{align*} 
\end{lemma}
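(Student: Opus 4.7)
The argument has three steps, parallel to the $n\geq 4$ calculations in Lemmas~\ref{lemma:hwv} and~\ref{lemma:special-K}.

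First, to verify that $u_k$ is a highest weight vector of weight $(k+1)\epsilon_1$: for $n=3$ we have $m=1$, so the positive part of $\so(3)_\CC$ is spanned solely by $f_1^+=E_{12}-E_{23}$ (the index range $1\leq a<b\leq m$ is empty, so there are no $s_{ab}^+$). In the $v$-basis $f_1^+$ acts by $f_1^+v_1=0$, $f_1^+v_2=v_1$, $f_1^+v_3=-v_2$. Since $v_1$ has weight $\epsilon_1$ and $v_2$ has weight $0$, both summands of $u_k$ have weight $(k+1)\epsilon_1$. A Leibniz computation then gives
\begin{equation*}
f_1^+\cdot u_k \;=\; v_1^k \otimes f_1^+(v_1v_2) - f_1^+(v_1^{k-1}v_2)\otimes v_1^2 \;=\; v_1^k\otimes v_1^2 - v_1^k\otimes v_1^2 \;=\; 0.
\end{equation*}

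For the first identity, writing $F(v_1^k)=k\,\xi_k$, $T(v_1v_2)=q_{12}$ and $T(v_1^2)=p_{11}^+$, I obtain
\begin{equation*}
m_\mu\bigl((F\otimes T)(u_k)\bigr) \;=\; k\,\pi(q_{12})\xi_k \,-\, \pi(p_{11}^+)\,F(v_1^{k-1}v_2).
\end{equation*}
By $K$-equivariance this is a highest weight vector of weight $(k+1)\epsilon_1$ in $\calX_\alpha^{\pm,K}$, and so by Corollary~\ref{cor:K-types} it lies in $\CC\xi_{k+1}\oplus\CC\eta_{k+1}$. Expanding via the differential operators for $\pi(q_{12})$ and $\pi(p_{11}^+)$ (splitting each operator into an action on scalar coefficients plus the contragredient action on the $V^*$-basis elements $z_j$), and then using $p_{ab}^+(r^\alpha)=2\alpha\,r^{\alpha-2}z_az_b$, the $\xi_{k+1}$ contribution cancels and the $\eta_{k+1}$ coefficient simplifies to $2(\alpha+1)$.

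The second identity is obtained analogously. The map $H$, built from the Hodge star on $\RR^3$, lands in the copy $W_k$ of $\rho^{(k)}$ inside $\calX_\alpha^{\pm,K}$ that is distinct from $F(\Sym_{[k]}V)=V_k$. Consequently the $\mathfrak p_\CC$-action on $W_k$ transits to $V_{k+1}$ rather than $W_{k+1}$, so the image is a highest weight vector with vanishing $\eta_{k+1}$ part; explicit evaluation then yields the coefficient $\alpha-k$ on $\xi_{k+1}$.

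The principal technical obstacle is the disciplined bookkeeping required in the last two steps: for each $X\in\mathfrak p_\CC$ the action on a $V^*$-valued function decomposes into the scalar differential operator from the preceding lemma plus the contragredient matrix action on the basis vectors $z_j$, and one must arrange the computation so that the two highest weight components separate cleanly. As a cross-check, the vanishing factors can be predicted abstractly: the invariant subspace $d(\calF_0)\subset\calX_{-1}$ accounts for the factor $\alpha+1$, and an analogous degenerate subrepresentation accounts for $\alpha-k$.
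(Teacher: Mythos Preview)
Your plan is correct and matches the paper's approach: the paper's own proof reads ``We omit the straightforward computation,'' and what you outline is exactly that computation, with the right ingredients (the action of $f_1^+$ for the highest-weight check, the identifications $F(v_1^k)=k\xi_k$, $T(v_1^2)=p_{11}^+$, $T(v_1v_2)=q_{12}=p_{12}^+$ since $v_2=\bar v_2$ for $n=3$, and the differential-operator formulas for $\pi(X)$ on $\calX_\alpha$). One caution: your sentence ``the $\mathfrak p_\CC$-action on $W_k$ transits to $V_{k+1}$ rather than $W_{k+1}$'' is not something you can conclude a priori from the $K$-module structure---both $V_{k+1}$ and $W_{k+1}$ carry the same $SO(3)$-type, so the vanishing of the $\eta_{k+1}$-component really does require carrying out the explicit evaluation you mention, not an abstract argument.
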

\begin{proof}
We omit the straightforward computation.
\end{proof}

\subsection{Decomposition series for $\alpha=-(n+1)$}
Throughout this subsection let $n\geq 2$. 
Consider the subspace $\calY \subset \calX_{-(n+1)}$ defined by 
$$ \ell ( \xi F)=0$$
for every linear functional $\xi\colon \RR^n\to \CC$.  Let $\calY^\pm = \calX_{-(n+1)}^\pm\cap \calY$ denote the subspaces of even and odd elements under the action of $\pi(-\id)$. 

\begin{lemma} \label{lemma:yk} For $n\geq 3$, 
	$$\calY^- = \calX_{-(n+1)}^- ,\quad   \calY^{+,K} = \bigoplus_{l\geq 2}  V_{2l} \oplus W_{2l-1}.$$ 

\end{lemma}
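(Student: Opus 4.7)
The plan is to analyze the two statements by parity.

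\emph{Odd part.} Unwinding the action, $\pi(-\id)F(x) = -F(-x)$, so the condition $F \in \calX^-_{-(n+1)}$ is equivalent to $F(-x) = F(x)$, i.e., $F$ is an even function on $\RR^n\setminus\{0\}$. For any linear functional $\xi$ on $\RR^n$, the product $\xi F$ is then an odd $V^*$-valued function homogeneous of degree $-n$. Applying $\ell$ componentwise and using antipodal symmetry on $S^{n-1}$, $\ell(\xi F) = 0$. Hence $\calX^-_{-(n+1)} \subset \calY$, which gives the first identity.

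\emph{Even part.} The strategy is to package the defining conditions of $\calY^+$ into a single $O(n)$-equivariant linear map and invoke Schur's lemma. Define
\[
\Psi \colon \calX^+_{-(n+1)} \to V \otimes V^*, \qquad \Psi(F)(\xi) = \ell(\xi F) \quad (\xi \in V^*),
\]
so that by construction $\calY^+ = \ker \Psi$. By Lemma~\ref{lemma:ell}, $\ell$ is $O(n)$-invariant, and the formation of $\xi F$ is natural in all inputs, so $\Psi$ is $O(n)$-equivariant. The crucial observation is that the image of $\Psi$ lies in the trace-free subspace of $V \otimes V^*$: writing $F = \sum_i F_i\, e_i^*$ in standard coordinates,
\[
\tr \Psi(F) = \sum_i \ell(x_i F_i) = \ell(\langle F(x), x\rangle) = 0,
\]
by the defining condition $\langle F(x), x\rangle = 0$ of $\calX$.

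As an $O(n)$-module, the trace-free part of $V \otimes V^*$ decomposes as $V_2 \oplus W_1$ (the $(2)$ and $(1,1)$ representations; for $n = 3$, $W_1 \cong V_1$ in the paper's convention). By Corollary~\ref{cor:K-types}, the $K$-types in $\calX^{+,K}_{-(n+1)}$ are precisely $V_{2l}$ and $W_{2l-1}$ for $l \geq 1$, each of multiplicity one. Schur's lemma then forces $\Psi$ to vanish on every $K$-type not isomorphic to $V_2$ or $W_1$, yielding the inclusion $\bigoplus_{l \geq 2} V_{2l} \oplus W_{2l-1} \subset \calY^{+,K}$.

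To establish equality, I will show that $\Psi$ is nonzero on both $V_2$ and $W_1$; combined with the above, this forces those two $K$-types out of $\calY^{+,K}$. Since both $V_2$ and $W_1$ appear with multiplicity one in the target, Schur reduces this to evaluating $\Psi$ on the highest weight vectors $\xi_2 \in V_2$ and $\eta_1 \in W_1$ supplied by Lemma~\ref{lemma:hwv} and checking that $\Psi(\xi_2)$ and $\Psi(\eta_1)$ are nonzero. This is a direct integration of explicit polynomials over $S^{n-1}$. The conceptual content of the proof is concentrated in the definition of $\Psi$ and the trace computation; the main (but unavoidable) computational step will be this final nonvanishing check on $\xi_2$ and $\eta_1$.
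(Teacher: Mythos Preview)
Your proof is correct and follows essentially the same approach as the paper: define the $K$-equivariant map $\Psi\colon \calX_{-(n+1)}\to V\otimes V^*$, use Schur's lemma together with the known $K$-type decomposition of both sides to obtain the inclusion $\bigoplus_{l\geq 2} V_{2l}\oplus W_{2l-1}\subset \calY^{+,K}$, and then check nonvanishing on the explicit highest weight vectors $\xi_2$ and $\eta_1$ to get equality. Two minor differences: you handle the odd case by a direct parity argument (the paper instead notes that $-\id$ acts trivially on $V\otimes V^*$ and invokes Schur), and your trace-free observation, while correct, is not strictly needed since the trivial $K$-type $\CC$ does not occur in $\calX_{-(n+1)}^+$ anyway by Corollary~\ref{cor:K-types}.
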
 
\begin{proof} Observe that $F\in \calY$ if and only if $\ell(\xi \langle F, y\rangle)=0$ for all $\xi \in V^*$ and $y\in V$. 
	The map $\calX_{-(n+1)}\to V\otimes V^*$, $F\mapsto [ \xi\otimes y\mapsto \ell(\xi\langle F,y\rangle)]$, is $K$ equivariant. Since $V\otimes V^*= \CC \oplus \rho^{(2)} \oplus \rho^{(1,1)}$, Schur's lemma implies the first statement and  
	$\calY^{+,K} \supseteq \bigoplus_{l\geq 2}  V_{2l} \oplus W_{2l-1}$.
	Inspecting the highest weight vectors $\xi_2$ and $\eta_1$, shows that the inclusion is actually an equality.
\end{proof}

Denote by $\calG\subset \calF_{-n}$ the kernel of $\ell$ and let $\calG^\pm$ denote the subspace of even and odd functions. 
\begin{lemma}\label{lemma:calG} For $n\geq 2$, the  representation of  $GL(n,\RR)$ on $\calG^\pm$ is irreducible.
\end{lemma}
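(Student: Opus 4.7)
The strategy is to apply Lemma~\ref{lemma:K-finite}(b), which reduces irreducibility of the continuous $GL(n,\RR)$-representation $\calG^\pm$ to irreducibility of the associated $(\fgg,K)$-module, and then to verify the latter by exhibiting the $K$-type decomposition of $\calG^{\pm,K}$ and computing the transitions effected by $\mathfrak p_\CC$ on highest weight vectors.

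Restriction to $S^{n-1}$ gives a $K$-equivariant isomorphism $\calF_{-n}\cong C^\infty(S^{n-1})$. Consequently each spherical representation of $K=O(n)$, that is, each $V_l\cong \rho^{(l,0,\dots,0)}$, appears in $\calF_{-n}$ with multiplicity one, and the $\pm$-eigenspaces of $\pi(-\id)$ are
$$\calF_{-n}^{+,K}=\bigoplus_{l\ge 0}V_{2l},\qquad \calF_{-n}^{-,K}=\bigoplus_{l\ge 0}V_{2l+1}.$$
By Lemma~\ref{lemma:ell} the functional $\ell$ is $K$-invariant, so Schur's lemma forces $\ell$ to vanish on $V_l$ for $l\ge 1$ and to be nonzero on $V_0=\CC\cdot r^{-n}$; hence $\calG^{+,K}=\bigoplus_{l\ge 1}V_{2l}$ and $\calG^{-,K}=\calF_{-n}^{-,K}$.

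Next I compute transitions on highest weight vectors. For each $l\ge 0$ the function $\xi_l(x)=r^{\alpha-l}z_1(x)^l\in \calF_\alpha$ is a highest weight vector of $V_l$, and a direct computation using the formulas of Section~\ref{sec:rep} together with \eqref{eq:p+r} yields $p_{11}^+\xi_l=2(\alpha-l)\xi_{l+2}$, which at $\alpha=-n$ reads $-2(n+l)\xi_{l+2}\ne 0$ for every $l\ge 0$. For the lowering direction I use the nondegenerate $SL(n,\RR)$-invariant sesquilinear pairing $\langle f,g\rangle=\ell(\bar f g)$ on $\calF_\alpha\times\calF_{-n-\alpha}$, under which $\langle p_{11}^-f,g\rangle=-\langle f,p_{11}^+g\rangle$. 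Running the adjoint argument from the proof of Lemma~\ref{lemma:-2} against the analogous highest weight vector $\tilde\xi_l=r^{-n-\alpha-l}z_1^l\in \calF_{-n-\alpha}$ gives
$$\pi_{V_{l-2}}\bigl(p_{11}^-\xi_l\bigr)= c(l)\,(\alpha+l+n-2)\,\xi_{l-2}$$
with $c(l)\ne 0$ independent of $\alpha$. At $\alpha=-n$ this scalar equals $c(l)(l-2)$: nonzero for $l\ge 3$ and vanishing precisely at $l=2$, which is forced since $\calG$ is $\fgg$-invariant and $V_0\not\subset \calG^{+,K}$.

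A proper $(\fgg,K)$-submodule $W\subsetneq \calG^{\pm,K}$ must, by multiplicity freeness and $K$-invariance, be a direct sum $\bigoplus_{l\in S}V_{2l+\epsilon}$ over some strict subset $S$ of the allowed indices, with $\epsilon\in \{0,1\}$. The raising and lowering computations show that $S$ is both upward and downward closed inside $\calG^{\pm,K}$---the only potential failure of downward closure is the transition $V_2\to V_0$ in the even case, which is harmless since $V_0\not\in \calG^{+,K}$. Hence any nonempty $S$ exhausts $\calG^{\pm,K}$, so $W=\calG^{\pm,K}$; applying Lemma~\ref{lemma:K-finite}(b) concludes the argument. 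The case $n=2$ admits the same analysis with $m=1$. The main obstacle is the sesquilinear-adjointness bookkeeping that pins down the lowering scalar as exactly $c(l)(\alpha+l+n-2)$: the vanishing of precisely one transition at $\alpha=-n$ simultaneously creates the codimension-one invariant subspace $\calG\subset \calF_{-n}$ and prevents $\calG^+$ from further decomposing, so this scalar must be computed without error.
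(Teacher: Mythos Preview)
Your proof is correct but takes a different route from the paper's. The paper does not compute anything directly on $\calG^\pm$; instead it quotes \cite{HoweLee:degenerate} for the fact that $\calF_0^-$ is irreducible and $\calF_0^+/\CC$ is irreducible, and then transfers these statements to $\calG^\pm\subset\calF_{-n}$ via the $SL(n,\RR)$-invariant duality pairing induced by $\ell$ between $\calF_0$ and $\calF_{-n}$. Your argument, by contrast, works intrinsically on $\calF_{-n}$: you write down the multiplicity-free $K$-type decomposition via spherical harmonics, compute the raising transition $p_{11}^+\xi_l=-2(n+l)\xi_{l+2}$ directly, and obtain the lowering scalar by the adjoint trick already used in Lemma~\ref{lemma:-2}. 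This is essentially a reproduction, in the specific case $\alpha=-n$, of the Howe--Lee analysis that the paper cites as a black box. The paper's approach is shorter and avoids repeating a computation already in the literature; your approach is self-contained and meshes nicely with the explicit transition calculations carried out for $\calX_\alpha^\pm$ elsewhere in Section~\ref{sec:rep}, so that the whole section becomes independent of \cite{HoweLee:degenerate} for this lemma.
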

\begin{proof}
	 By  \cite[Theorem 3.4.3]{HoweLee:degenerate}, the $GL(n,\RR)$ representation $\calF_0^-$ is irreducible.
	 On the other hand, $\calF_0^+$ obviously contains the invariant subspace of constant functions denoted $R^+(0,0)$.  According to \cite[Theorem 3.4.2]{HoweLee:degenerate}, the quotient $\calF_0/R^+(0,0)$ is irreducible. 
	 Via duality induced by $\ell$ between $\calF_{-n}$ and $\calF_0$, we conclude that  $\calG^\pm$ is irreducible. 
\end{proof}

The following lemma takes care of the $2$-dimensional case of the theorem we wish to prove.

\begin{lemma}\label{lemma:dim2}
	If $n=2$, then the $GL(2,\RR)$ representations $\calX_{\alpha}^\pm$ and $\calF_{\alpha-1}^\pm \otimes \Lambda^2 V^*$ are isomorphic. Under this isomorphism, $ \calY^\pm$ corresponds to the functions in  $\calF_{-4}^\pm \otimes \Lambda^2V^*$ satisfying, for $1\leq i, j\leq 2$, 
	\begin{equation}\label{eq:dim2-cond} \int_{S^1} f(u) u_iu_j \, du=0.\end{equation}
	Consequently, $\calY^\pm$ is irreducible.
\end{lemma}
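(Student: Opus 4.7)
The plan proceeds in three stages. First, I construct the isomorphism by setting $\Phi(f\otimes\omega)(x)=f(x)\,\iota_x\omega\in V^*$, using the interior product $(\iota_x\omega)(y)=\omega(x,y)$. The identity $\omega(x,x)=0$ forces the image to lie in $\calX_\alpha$, and the homogeneity is correct because $\iota_x\omega$ is linear in $x$. Since $n=2$, the annihilator of $x$ in $V^*$ is one-dimensional, so fixing a nonzero $\omega_0=e_1^*\wedge e_2^*$ lets one invert $\Phi$ uniquely: every $F\in\calX_\alpha$ takes the form $F(x)=f(x)\,\iota_x\omega_0$ for a unique smooth $(\alpha-1)$-homogeneous $f$. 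Equivariance under $GL(2,\RR)$ is then a direct computation using that $\Lambda^2 V^*$ carries the determinant character; the parities match because $\pi(-\id)$ acts trivially on $\Lambda^2 V^*$ when $n=2$.

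Next, I push the condition defining $\calY$ through $\Phi$. The requirement $\ell(\xi\langle F,y\rangle)=0$ for all $\xi\in V^*$ and $y\in V$ becomes $\int_{S^1}\xi(u)\,\omega_0(u,y)\,f(u)\,du=0$, which, upon choosing $\xi=x_i$ and $y=e_j$, reduces precisely to the moment condition $\int_{S^1}u_iu_jf(u)\,du=0$ for $i,j\in\{1,2\}$. Because $u_iu_j$ is even on $S^1$, this condition is automatic when $f\in\calF_{-4}^-$, so $\calY^-\cong\calF_{-4}^-\otimes\Lambda^2 V^*$, while $\calY^+$ corresponds to the codimension-three subspace of $\calF_{-4}^+\otimes\Lambda^2 V^*$ annihilating the three functions $u_iu_j$.

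For the final irreducibility claim I would use the $SL(2,\RR)$-invariant pairing $(f,g)\mapsto\ell(fg)$ between $\calF_{-4}^\pm$ and $\calF_2^\pm$ supplied by Lemma~\ref{lemma:ell}. Under this pairing the span of $\{u_iu_j\}$ in $\calF_2^+$ is exactly the space $\Sym^2 V^*$ of homogeneous degree-two polynomials on $\RR^2$, a finite-dimensional $GL(2,\RR)$-invariant subspace. It follows that $\calY^+$ is the contragredient of $\calF_2^+/\Sym^2 V^*$ and $\calY^-$ is the contragredient of $\calF_2^-$, so irreducibility of $\calY^\pm$ reduces to irreducibility of these two representations. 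This follows from the Howe--Lee structure theorems for $\calF_\alpha^\pm$ used already in the proof of Lemma~\ref{lemma:calG}, specialized to $n=2$ and $\alpha=2$; a self-contained alternative is to compute $p_{11}^+f_k=(2+k)f_{k-2}$ and $p_{11}^-f_k=(2-k)f_{k+2}$ in the Fourier basis $f_k=r^2e^{ik\phi}$, from which the vanishing coefficients show that $\Sym^2 V^*=\mathrm{span}\{f_{-2},f_0,f_2\}$ is the unique proper invariant subspace of $\calF_2^+$ and that $\calF_2^-$ is irreducible. The main obstacle will be the careful bookkeeping of the actions of $GL(2,\RR)$ on $V^*$, on $\Lambda^2 V^*$, and on parities when verifying the equivariance of $\Phi$; once that is in place, the remainder is a clean duality reduction to representation theory on $\RR^2$.
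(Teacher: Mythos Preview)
Your argument is correct and follows essentially the same route as the paper: the paper builds the inverse of your map $\Phi$ via $\varphi\colon V^*\to V\otimes\Lambda^2 V^*$, $\xi\mapsto(\xi(u)v-\xi(v)u)\otimes\omega$, and then invokes the same duality with $\calF_2^\pm$ and the Howe--Lee structure results to conclude irreducibility. Your interior-product construction and the paper's $\varphi$ are mutual inverses, and the remaining steps (identifying the moment conditions and reducing to $\calF_2^+/R^+(2,0)$ and $\calF_2^-$) match exactly.
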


\begin{proof}
	Consider the linear  map $\varphi\colon V^*\to V\otimes \Lambda^2 V^*$ defined by 
	$$ \xi \mapsto \big( \xi(u)v - \xi(v)u)\otimes \omega,$$
	where $\omega$ is a nonzero $2$-form and $u,v\in V$ are vectors satisfying $\omega(u,v)=1$. This definition is independent of  the choice $\omega$, and the linear map thus defined is $GL(2,\RR)$ equivariant.
	
	If $\xi(x)=0$ for some nonzero $x\in \RR^2$, then $\varphi(\xi)$ is proportional to $x\otimes \omega$. Consequently, $F\mapsto \varphi \circ F$ defines a $GL(2,\RR)$ isomorphism  
	$ \calX_\alpha^\pm \to \calF_{\alpha-1}^\pm \otimes \Lambda^2 V^*$.  Moreover, under this isomorphism, the subspace  $ \calY^\pm$ corresponds to the functions in  $\calF_{-4}^\pm \otimes \Lambda^2V^*$ satisfying \eqref{eq:dim2-cond}. According to \cite[Theorem 3.4.3]{HoweLee:degenerate}, $\calF_2^-$ is irreducible. By \cite[Theorem 3.4.2]{HoweLee:degenerate},  $\calF_2^+$ has a finite-dimensional invariant subspace $R^+(2,0)$, corresponding to $2$-homogeneous polynomials, and $\calF_2^+/R^+(2,0)$ is irreducible. Via duality induced by $\ell$ between $\calF_{-4}$ and $\calF_2$, we conclude that  $\calY^\pm$ is irreducible. 	
\end{proof}

\begin{theorem} \label{thm:irred} 
For $n\geq 2$, the representation of  $GL(n,\RR)$ on $\calY^\pm$ is irreducible.
\end{theorem}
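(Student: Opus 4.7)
The case $n=2$ is covered by Lemma~\ref{lemma:dim2}, so assume $n\geq 3$. The plan is to reduce the problem to a connectedness argument on the $K$-isotypic decomposition of $\calY^{\pm,K}$. By Lemma~\ref{lemma:yk} and Corollary~\ref{cor:K-types},
\[
\calY^{-,K}=\bigoplus_{l\geq 1}(V_{2l-1}\oplus W_{2l}), \qquad
\calY^{+,K}=\bigoplus_{l\geq 2}(V_{2l}\oplus W_{2l-1}),
\]
and in both cases the decomposition is multiplicity free. By Lemma~\ref{lemma:K-finite}, closed $GL(n,\RR)$-invariant subspaces of $\calY^\pm$ correspond bijectively to $(\fgg,K)$-invariant subspaces of $\calY^{\pm,K}$, and multiplicity-freeness forces any such subspace to be a direct sum of $K$-isotypic components. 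It therefore suffices to show that the directed graph $\Gamma$ on these $K$-types---with an edge from $V_\mu$ to $V_\nu$ whenever the projection of $\pi(\mathfrak p_\CC) V_\mu$ onto $V_\nu$ is nonzero---is strongly connected.

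The edges of $\Gamma$ can be read off from the material in Section~\ref{sec:rep} evaluated at $\alpha=-(n+1)$. Lemma~\ref{lemma:+2} supplies raising edges $V_k\to V_{k+2}$ and $W_k\to W_{k+2}$ with coefficients $2(\alpha-k-1)$ and $2(\alpha-k)$, both strictly negative for every $k\geq 1$. Lemma~\ref{lemma:-2} supplies lowering edges with coefficients $(k-4)h_0(k)$ and $(k-3)h_1(k)$; a pleasant feature of the value $\alpha=-(n+1)$ is that these vanish \emph{precisely} at $k=4$ and $k=3$, exactly where $\calY^+$ truncates (the excluded $K$-types being $V_2$ and $W_1$). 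For cross-tower edges, Lemma~\ref{lemma:special-K} (for $n\geq 4$) and Lemma~\ref{lemma:n=3} (for $n=3$) yield $V_k\to W_{k+1}$ with coefficient $2(\alpha+1)=-2n\neq 0$, together with the backward edges $W_2\to V_3$ and $W_3\to V_4$ of coefficient $c_i(n)(\alpha-i-1)$ for $n\geq 4$, or the full family $W_k\to V_{k+1}$ of coefficient $\alpha-k=-(n+k+1)\neq 0$ for $n=3$. Combining raising, lowering, and both kinds of crossings makes $\Gamma$ strongly connected for both parities, which is what is needed.

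The hard step I anticipate is the verification that the constants $c_1(n)$ and $c_2(n)$ in Lemma~\ref{lemma:special-K} are nonzero for every $n\geq 4$: in that dimension these supply the only backward cross-edges in $\Gamma$, and their vanishing would split the graph into two disconnected towers. I would attempt this by substituting the explicit expressions for $\nu_{(3,0)}$ and $\nu_{(4,0)}$ from Lemma~\ref{lemma:hwv}(b) into the module map $m_{(k,1)}$ via $T$ and the differential-operator formulas for $\mathfrak p_\CC$, tracking only the coefficient of the monomial $r^{\alpha-k+1}z_1^k$ that defines $\xi_k$; this should isolate each $c_i(n)$ as an explicit, visibly nonzero rational function of $n$. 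A more conceptual fallback, should that bookkeeping become unwieldy, is to use the $SL(n,\RR)$-invariant sesquilinear pairing between $\calX_\alpha$ and $\wt\calX_{-n-\alpha}$ employed in the proof of Lemma~\ref{lemma:-2}: adjointness under this pairing forces a nonzero backward transition to accompany any nonzero forward one, bypassing the need to evaluate $c_i(n)$ directly.
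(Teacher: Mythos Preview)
Your argument is essentially the paper's own proof, spelled out in more detail. The paper's proof is three sentences: it asserts that for $\alpha=-(n+1)$ the transition coefficients in Lemmas~\ref{lemma:+2}, \ref{lemma:-2}, \ref{lemma:special-K}, \ref{lemma:n=3} are nonzero, observes (as you do) that the vanishing at $k=4$ and $k=3$ in Lemma~\ref{lemma:-2} is harmless because the target $K$-types $V_2,W_1$ are absent from $\calY^+$, and concludes irreducibility of the $(\fgg,K)$-module. Your strongly-connected-graph framing makes the same logic explicit.

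One caution on your duality fallback for the nonvanishing of $c_1(n),c_2(n)$. The sesquilinear pairing of Lemma~\ref{lemma:-2} is between $\calX_\alpha$ and $\wt\calX_{-n-\alpha}$, not $\calX_\alpha$ with itself. Adjointness therefore converts a forward transition in $\calX_\alpha$ into a backward transition in $\wt\calX_{-n-\alpha}$, not in $\calX_\alpha$. To deduce $W_k\to V_{k+1}$ in $\calX_{-(n+1)}$ by this route you would first need the forward crossing $\wt V_{k+1}\to \wt W_k$ in $\wt\calX_1$ (an analogue of Lemma~\ref{lemma:special-K} for the tilde side) and then dualize---so the pairing does not bypass the computation as cleanly as you suggest. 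Your direct approach, tracking the coefficient of $\xi_k$ after applying the explicit $\nu_{(k+1,0)}$ from Lemma~\ref{lemma:hwv}(b), is the more reliable plan; the paper itself simply asserts these constants are nonzero without displaying the computation.
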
 
\begin{proof} Since Lemma~\ref{lemma:dim2} takes care of $n=2$, we  can assume $n\geq 3$. 
For the value $\alpha = -(n+1)$ the numbers appearing in Lemmas~\ref{lemma:+2}, \ref{lemma:-2}, \ref{lemma:special-K} and \ref{lemma:n=3} are nonzero. 
They could be zero in Lemma~\ref{lemma:-2} for $\xi_4$ and $\eta_3$, but precisely  these cases are irrelevant by Lemma~\ref{lemma:yk}. 
It follows that the $(\fgg, K)$ modules 
$ \calY^{\pm,K}$ are irreducible. 
\end{proof}

\begin{proof}[Proof of Theorem~\ref{thm:length2}]
Consider the $GL(n,\RR)$ equivariant linear map  $\rho \colon \calV_\alpha^\pm \to \calF_{\alpha+1}^\pm$ defined by  $\rho(F)(x)= \langle F(x),x\rangle$.  Observe that $\rho$ is surjective and $\ker \rho =\calX_{\alpha}^\pm$.  

Suppose now that $\alpha= -(n+1)$. Note that for  every linear functional $\xi\colon \RR^n\to \CC$, if $\ell(\xi F)=0$, then $\ell( \xi \rho(F))=0$. Hence $\calU^\pm$ fits into an exact sequence 
$$ 0 \to \calY^\pm \to  \calU^\pm \to \calG^\pm\to 0,$$
and Lemma~\ref{lemma:calG}	and Theorem~\ref{thm:irred} imply that $\calU^\pm$ has length $2$. 	
\end{proof}

\section{Applications}

\subsection{The probability kernel property of curvature measures}
The set of all Borel functions admits the following inductive description. 
For every countable ordinal number $\alpha<\omega_1$, the set $\calB_\alpha$ of Baire functions of class $\alpha$ is defined as follows:
$\calB_0$ is the set of continuous functions $f\colon \RR^n\to \RR$. If $\alpha>0$, then $\calB_\alpha$ consists of all pointwise limits of sequences $(f_i)$ of functions  $f_i\in \calB_\gamma$ with $\gamma<\alpha$. 
A classical theorem asserts  that 
$$ \bigcup_{\alpha<\omega_1} \calB_{\alpha}$$
coincides with the set of all Borel functions, see \cite[Theorem 24.3]{Kechris:dst}.
\begin{lemma} \label{lemma:bounded-baire} Let $0<\alpha < \omega_1$. If $f\in \calB_\alpha$ is  bounded, $\sup_{\RR^n} |f|\leq C$, then $f$ is the limit of a sequence of functions $f_i\in\calB_\gamma$,  $\gamma <\alpha$, satisfying $\sup_{\RR^n}|f_i|\leq C$. 
\end{lemma}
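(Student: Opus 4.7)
The plan is to start from an arbitrary representing sequence and truncate it at height $C$ using a continuous cutoff. By definition of $\mathcal{B}_\alpha$, we may choose a sequence $(g_i)$ with $g_i\in\mathcal{B}_{\gamma_i}$ for some $\gamma_i<\alpha$, such that $g_i\to f$ pointwise. The only obstruction is that the $g_i$ need not be bounded by $C$. So I would introduce the continuous truncation $T_C(t)=\max(-C,\min(C,t))$ and set $f_i=T_C\circ g_i$. By construction $\sup_{\RR^n}|f_i|\le C$, and since $|f|\le C$ we have $T_C(f(x))=f(x)$ for every $x$, hence $f_i(x)=T_C(g_i(x))\to T_C(f(x))=f(x)$ by continuity of $T_C$.

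It remains to verify that $f_i\in\mathcal{B}_{\gamma_i}$, i.e.\ that composition with a continuous real function preserves the Baire class. I would establish this as a short auxiliary lemma by transfinite induction: if $h\colon \RR\to\RR$ is continuous and $g\in\mathcal{B}_0$, then $h\circ g$ is continuous; and if $g=\lim_j g^{(j)}$ with $g^{(j)}\in\mathcal{B}_{\delta_j}$, $\delta_j<\gamma$, then by continuity of $h$ we have $h\circ g^{(j)}\to h\circ g$ pointwise, and by the inductive hypothesis $h\circ g^{(j)}\in \mathcal{B}_{\delta_j}$, so $h\circ g\in\mathcal{B}_\gamma$. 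Applied to $h=T_C$, this yields $f_i\in\mathcal{B}_{\gamma_i}\subset\bigcup_{\gamma<\alpha}\mathcal{B}_\gamma$, as required.

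The main (mild) subtlety is just the inductive verification that continuous composition preserves the Baire hierarchy; nothing deeper is needed, and there is no real obstacle.
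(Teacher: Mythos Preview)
Your proof is correct and follows essentially the same approach as the paper: the paper also argues by transfinite induction that truncation via $\max$ and $\min$ preserves the Baire class, and then implicitly applies this to cut off an approximating sequence at height $C$. Your version is slightly more explicit (and your auxiliary lemma that continuous postcomposition preserves the Baire class is marginally more general than just $\max/\min$), but the content is the same.
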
 
\begin{proof} 
Transfinite induction shows that if $f,g\in \calB_\gamma$, then  $\max(f,g)$ and $ \min (f,g)$ belong to the same Baire class. 
\end{proof}

\begin{proof}[Proof of Proposition~\ref{prop:baire}] If $f$ is a bounded Borel function, then $f\in \calB_\alpha$ for some $\alpha <\omega_1$. We now argue by transfinite induction. If $\alpha=0$, then $f$ is bounded and continuous. By definition, 
	$K\mapsto \Phi(K,f)$ is continuous, hence in particular Borel.  For the inductive step, suppose $ K\mapsto \Phi(K,g)$ is Borel for all bounded functions $g\in \calB_\gamma$ with $\gamma<\alpha$. By Lemma~\ref{lemma:bounded-baire}, a bounded function $f\in \calB_\alpha$ 
	is the limit of a sequence of functions $f_i\in\calB_\gamma$,  $\gamma <\alpha$, satisfying $\sup_{\RR^n}|f_i|\leq C$ for all $i$. By the dominated convergence theorem, 
	$$ \Phi(K,f)= \lim_{i\to \infty}  \Phi(K, f_i)$$
	and hence $ K\mapsto \Phi(K,f)$ is Borel.
\end{proof}

\subsection{Characterization of Federer's curvature measures}

We say that a   curvature measure $\Phi\in \Curv(\RR^n)$ is $SO(n)$ invariant if   $$k\cdot \Phi = \Phi\quad \text{for all }k\in SO(n).$$

\begin{proposition} \label{prop:Z-invariant} Let $n\geq 2$. If $\Phi \in \Z(\RR^n)$ is $SO(n)$ invariant, then $\Phi=0$. 
\end{proposition}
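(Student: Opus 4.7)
My plan is to reduce to homogeneous degree via the Kiderlen--Weil decomposition (Theorem~\ref{thm:KW}), which commutes with the $SO(n)$-action, and thereby assume $\Phi \in \Z_k$ homogeneous of degree $k$. The extremal degrees are easy: for $k=n$, $\Curv_n = \CC \cdot \C_n$ with $\glob(\C_n) = \vol \neq 0$, so $\Z_n = 0$; for $k = n-1$, Theorem~\ref{thm:deg-n-1} represents $\Phi$ by a unique continuous function $f \colon S^{n-1} \to \CC$, which by the uniqueness clause and the transitivity of $SO(n)$ on $S^{n-1}$ must be constant, and then $\glob(\Phi) = f \cdot S_{n-1} = 0$ forces $f = 0$ since $S_{n-1}$ is not identically zero.

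For $0 \le k \le n-2$, I would pass to the Bernig embedding. Since $B$ is $GL(n,\RR)$-equivariant, $B(\Phi) \in \Val_{k+1}(\RR^n, \CC^n)^{SO(n)}$, and under the natural identification
\[
\Val_{k+1}(\RR^n, \CC^n)^{SO(n)} \;\cong\; \Hom_{SO(n)}(\CC^n, \Val_{k+1}(\RR^n))
\]
(using the $SO(n)$-invariant pairing on $\CC^n$), this corresponds to an $SO(n)$-equivariant map $\psi \colon \CC^n \to \Val_{k+1}(\RR^n)$ given by $\psi(v)(K) = \langle B(\Phi)(K), v\rangle$. If the standard representation $\tau_1$ of $SO(n)$ does not appear in $\Val_{k+1}(\RR^n)$, then $\psi = 0$, so $B(\Phi) = 0$, and the injectivity of the Bernig embedding yields $\Phi = 0$.

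It therefore suffices to show that $\tau_1$ is absent from $\Val_k(\RR^n)$ for every $1 \le k \le n-1$. For $k = 1$, this is immediate from Theorem~\ref{thm:1-hom}: smoothness and translation invariance force the representing function to lie in $\bigoplus_{\ell \ge 2} H_\ell$, so the $K$-types of $\Val_1^{sm}$ are the spherical harmonics of degree $\ge 2$, and the same holds for the full continuous space $\Val_1$ since $K$-types are determined by the subspace of smooth vectors (Lemma~\ref{lemma:K-finite}). For $k = n-1$, the analogous statement follows either directly from Theorem~\ref{thm:McMullen-n-1} or by applying Alesker's Fourier transform to the $k = 1$ case. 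The main obstacle is the intermediate range $2 \le k \le n-2$, which does not arise for $n \le 4$ but for higher $n$ forces one to invoke the classification of $O(n)$-types in $\Val^{sm}$ due to Alesker--Bernig--Schuster; this classification rules out the one-row diagram $\lambda = \epsilon_1$ across all middle degrees, completing the argument.
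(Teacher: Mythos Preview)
Your argument is correct and takes a genuinely different route from the paper. The paper handles degree $n-2$ directly---using McMullen's description of $\Val_{n-1}$, $SO(n)$-equivariance of the representing vector field $f$, and the hairy ball theorem (plus a separate check for $n=2$)---and then runs an induction on the ambient dimension: restricting $\Phi$ to hyperplanes and invoking the characterization of simple curvature measures (Theorem~\ref{thm:simple}) to kill the remaining degrees. Your approach instead treats all degrees $k\le n-2$ uniformly via Schur's lemma, reducing to the absence of the standard $SO(n)$-type $\tau_1$ in $\Val_{k+1}(\RR^n)$. For $k+1\in\{1,n-1\}$ this is immediate from Theorems~\ref{thm:1-hom} and \ref{thm:McMullen-n-1} (the small check that translation invariance kills the degree-one spherical harmonic component of the representing function is the only thing you left implicit), so for $n\le 4$ your argument is actually cleaner and entirely self-contained. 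The trade-off is that for $n\ge 5$ you must import the Alesker--Bernig--Schuster classification of $K$-types in $\Val_k$, a substantial external result not used elsewhere in the paper, whereas the paper's inductive argument stays within the tools it has already developed.
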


\begin{proof}
	Without loss of generality, we may assume that $\Phi$ is homogeneous of degree $i$. If $i=n$, there is nothing to prove. If $i=n-1$, then the claim follows from Theorem~\ref{thm:deg-n-1}. 
	
	Suppose that $i=n-2$. Then $B(\Phi)$  is homogeneous of degree $n-1$ and  $SO(n)$ invariant. Consequently, there exists a continuous  function $f\colon S^{n-1}\to \CC^n$ satisfying \eqref{eq:B-f} and  $k f(k^{-1}u)= f(u)$ for every $k\in SO(n)$ and $u\in S^{n-1}$. It follows that $\langle f(u),u\rangle =c  $  is  constant. Replacing $f$ by $f(u)-cu$ if necessary, we may assume that, for all $u\in S^{n-1}$, 
	$$ \langle f(u),u \rangle =0.$$
	Since $f(u)$ is invariant under the stabilizer subgroup of $u$ in $SO(n)$, we conclude that $f(u)=0$ provided $n\geq 3$. 
	 If $n=2$, then $SO(2)$ invariance  implies that $f$ is proportional to $\rho_{\pi/2} u$. In any case, $B(\Phi)=0$ and hence $\Phi=0$ by the injectivity of the Bernig embedding.
	
	The degrees of homogeneity  treated so far imply the proposition  in particular in dimension $n=2$. Now let $n\geq 3$ and assume that the proposition has been proved in dimension $n-1$. Let $H$ be a linear hyperplane in $\RR^n$ and denote by $\Phi|_H\in \Curv(H)$ the restriction of $\Phi$  to convex bodies in $H$. Since $\Phi|_H$ is $SO(H)$ invariant, the inductive hypothesis implies $\Phi|_H=0$. We conclude that  $\Phi$ is simple. The characterization of simple curvature measures (Theorem~\ref{thm:simple}) and the degrees of homogeneity treated so far imply $\Phi=0$, as desired.
\end{proof}

\begin{proof}[Proof of Theorem~\ref{thm:federer}] By Hadwiger's characterization of linear combinations of the  intrinsic volumes as the  only $SO(n)$ invariant elements in $\Val(\RR^n)$, there exist  complex numbers $a_0,\ldots, a_n$ such that 
	$$ \glob(\Phi)= \sum_{i=0}^n a_i \mathrm V_i,$$
where $\mathrm V_i = \glob \C_i$ are the intrinsic volumes. It follows that $\Phi- \sum_{i=0}^n a_i \C_i$ belongs to $\Z(\RR^n)$. Proposition~\ref{prop:Z-invariant} implies
$\Phi- \sum_{i=0}^n a_i \C_i=0$, which concludes the proof.  
\end{proof}

\bibliographystyle{abbrv}
\bibliography{ref}

\end{document}